\def\subsection{\@startsection{subsection}{2}%
  \z@{.5\linespacing\@plus.7\linespacing}{.5\linespacing}%
  {\normalfont\bfseries}}
\theoremstyle{plain}
\newtheorem{theorem}[subsubsection]{Theorem}
\newtheorem*{theorem*}{Theorem}
\newtheorem{proposition}[subsubsection]{Proposition}
\newtheorem*{proposition*}{Proposition}
\newtheorem{corollary}[subsubsection]{Corollary}
\newtheorem{lemma}[subsubsection]{Lemma}
\theoremstyle{definition}
\newtheorem{definition}[subsubsection]{Definition}
\newcommand{\N}{{\mathbb N}}
\newcommand{\cO}{{\mathcal O}}
\newcommand{\T}{{\mathcal T}}
\newcommand{\Z}{{\mathbb Z}}
\newcommand{\idealgen}[1]{(#1)}
\newcommand{\gen}[1]{\langle#1\rangle}
\newcommand{\norm}[1]{\left| {#1}\right|}
\newcommand{\join}{\vee}
\newcommand{\bigjoin}{\bigvee}
\newcommand{\meet}{\wedge}
\newcommand{\qc}{\operatorname{qc}}
\newcommand{\Hom}{\operatorname{Hom}}
\newcommand{\End}{\operatorname{End}}
\newcommand{\supp}{\operatorname{supph}}
\newcommand{\supz}{\operatorname{supp}}
\providecommand{\kat}[1]{\text{\textbf{\textsl{#1}}}}
\newcommand{\Zar}{\kat{Zar}}
\newcommand{\RadId}{\kat{RadId}}
\newcommand{\fgRadId}{\kat{RadfgId}}
\newcommand{\CGLoc}{\kat{CGLoc}}
\newcommand{\RfGLoc}{\kat{RfGLoc}}
\newcommand{\fgRfGLoc}{\kat{fgRfGLoc}}
\newcommand{\Thick}{\kat{Thick}}
\newcommand{\op}{^{\text{{\rm{op}}}}}
\newcommand{\zeros}[1]{V(#1)}
\newcommand{\setofobjects}{S}
\newcommand{\Id}{\operatorname{Id}}
\newcommand{\id}{\operatorname{Id}}
\newcommand{\downarrowright}[1]{\downarrow
\rlap{\raise0.1cm\hbox{$\scriptstyle{#1}$}}}
\newcommand{\downarrowleft}[1]{\rlap{\kern-0.2cm
\raise0.1cm\hbox{$\scriptstyle{#1}$}}\downarrow}
\newcommand{\uparrowright}[1]{\uparrow
\rlap{\lower0.1cm\hbox{$\scriptstyle{#1}$}}}
\newcommand{\uparrowleft}[1]{\rlap{\kern-0.2cm
\lower0.1cm\hbox{$\scriptstyle{#1}$}}\uparrow}
\newcommand{\longmapsfrom}{\mathrel{\reflectbox{\ensuremath{\longmapsto}}}}
\def\umono{\ar@{_{(}->}[u]}
\def\uumono{\ar@{_{(}->}[uu]}
\def\lmono{\ar@{_{(}->}[l]}
\def\llmono{\ar@{_{(}->}[ll]}
\newcommand{\Loc}{\operatorname{Loc}}
\newcommand{\Coloc}{\operatorname{Coloc}}
\newcommand{\Spec}{\operatorname{Spec}}
\newcommand{\hocolim}{\operatornamewithlimits{hocolim}}
\begin{document}
\title[Hochster duality in derived categories]{Hochster duality in derived categories
and point-free  reconstruction of schemes}
\author{Joachim Kock}
\address{Universitat Aut\`onoma de Barcelona \\ Departament de Matem\`atiques\\
E-08193 Bellaterra, Spain}
\email{kock@mat.uab.es}

\author{Wolfgang Pitsch}
\address{Universitat Aut\`onoma de Barcelona \\ Departament de Matem\`atiques\\
E-08193 Bellaterra, Spain}
\email{pitsch@mat.uab.es}
\thanks{Both authors were supported by  FEDER/MEC  grant
MTM2010-20692  and  SGR grant SGR119-2009.}

\keywords{Frames, Hochster duality, triangulated categories, localizing subcategories,  reconstruction of schemes}
\subjclass[2010]{Primary: 18E30, Secondary: 06D22, 14A15}


\begin{abstract}
  For a commutative ring $R$, we exploit localization techniques and point-free
  topology to give an explicit realization of both the Zariski frame of $R$ (the
  frame of radical ideals in $R$) and its Hochster dual frame, as lattices in 
  the poset of localizing subcategories of the unbounded derived category
  $D(R)$.  This yields new conceptual proofs of the classical theorems of
  Hopkins-Neeman and Thomason.  Next we revisit and simplify Balmer's theory of
  spectra and supports for tensor triangulated categories from the viewpoint
  of frames and Hochster duality.    Finally we exploit our results to show how
  a coherent scheme $(X,\cO_X)$  can be 
  reconstructed from the tensor triangulated structure of its  derived category of
  perfect complexes.
\end{abstract}

\maketitle

\tableofcontents

\section*{Introduction}

One of the remarkable achievements of stable homotopy theory is the
classification of thick subcategories in the finite stable homotopy category by
Devinatz-Hopkins-Smith~\cite{MR960945}.  This result migrated to commutative
algebra in the work of Hopkins~\cite{MR932260} and Neeman~\cite{MR1174255}, was generalized to
the category of perfect complexes over a coherent (i.e.~quasi-compact
quasi-separated) scheme by Thomason~\cite{Thomason:1997}, and found a version in
modular representation theory in the work of
Benson-Carlson-Rickard~\cite{Benson-Carlson-Rickard}.  A theorem of a similar
flavor is the classification of radical thick tensor ideals in a tensor
triangulated category by Balmer~\cite{Balmer:2005}.  In each case, the thick
subcategories (or radical thick tensor ideals) are classified in terms of unions
of closed subsets with quasi-compact complement in a coherent scheme $X$.

What apparently was not noticed is that these classifying subsets are precisely
the open sets in the Hochster dual topology of the Zariski topology on
$X$, and that Hochster duality, originally a rather puzzling result of
Hochster~\cite{MR0251026}, has a very simple description in the setting of
point-free topology~\cite{Johnstone:Stone-spaces}, i.e.~working with frames of open sets instead of with
points.  That Hochster duality is involved in these classification results was
first noticed by Buan, Krause and Solberg~\cite{Buan-Krause-Solberg}, and
independently in \cite{kock:specletter} where the frame viewpoint was perhaps
first exploited; see also the recent~\cite{Iyengar-Krause:1105.1799}.

For $R$ a commutative ring, we denote by $D^\omega(R)$ its derived category of perfect
complexes.  The Zariski frame of $R$ is the frame
of radical ideals in $R$, or equivalently, the frame
of Zariski open sets in $\Spec R$.  The affine case of Thomason's
theorem can be phrased as follows:
\begin{theorem*} 
  The thick subcategories of $D^\omega(R)$ form a coherent 
  frame which is Hochster dual to the Zariski frame of $R$.
\end{theorem*}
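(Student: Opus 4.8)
The plan is to realise $\Thick(D^\omega(R))$ as the lattice of \emph{Thomason subsets} of $\Spec R$ — those that are unions of closed sets with quasi-compact complement — via homological support, and then to recognise that lattice point-free as the Hochster dual of the Zariski frame $\Zar(R)=\RadId(R)$. For a complex $X$ put $\supp(X):=\{\mathfrak p\in\Spec R:X_{\mathfrak p}\not\simeq 0\}$; for a perfect complex $P$ this is not merely specialization-closed but closed with quasi-compact complement, since locally $P$ is a bounded complex of finite free modules whose acyclicity locus is the complement of the zero set of finitely many (finitely generated) Fitting ideals of the differentials. For a thick subcategory $\T$ set $\supp(\T):=\bigcup_{P\in\T}\supp(P)$; as $\supp$ sends triangles, shifts and retracts to unions and containments, this is well defined and is a Thomason subset, and conversely $Y\mapsto\T_Y:=\{P\in D^\omega(R):\supp(P)\subseteq Y\}$ produces thick subcategories. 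Both assignments are order-preserving, and since every Thomason-closed $V(I)$ with $I=(f_1,\dots,f_n)$ is the support of the Koszul complex $\operatorname{Kos}(f_1,\dots,f_n)$, one gets $\supp(\T_Y)=Y$ for every Thomason $Y$.

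The heart of the matter — and essentially the whole difficulty — is the reverse identity $\T_{\supp(\T)}=\T$, i.e.\ that $\supp(P)\subseteq\supp(\T)$ forces $P\in\T$; this is the Hopkins--Neeman classification (Thomason in the non-Noetherian case). I would deduce it by passing to the big derived category $D(R)$ and using the technology of the body of the paper, namely that $Y\mapsto D_Y(R):=\{X\in D(R):\supp(X)\subseteq Y\}$ is an order-isomorphism from the frame of Thomason subsets of $\Spec R$ onto a lattice of \emph{smashing} localizing subcategories of $D(R)$, each generated by the (perfect) Koszul complexes $\operatorname{Kos}(I)$ with $V(I)\subseteq Y$. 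This realisation is where the real content lies: one needs the \v{C}ech/local-cohomology idempotent triangle $\Gamma_{V(I)}\to\operatorname{Id}\to\check{C}_I$ built from stabilized Koszul complexes, together with the analysis of residue-field objects. Granting it, the theorem follows by intersecting with compacts: for a smashing localizing subcategory $\mathcal{L}=D_Y(R)$ the Neeman--Thomason theorem identifies its compact objects $\mathcal{L}^c=\mathcal{L}\cap D^\omega(R)$ with the thick subcategory generated by the Koszul complexes $\operatorname{Kos}(I)$, $V(I)\subseteq Y$, and says these compactly generate $\mathcal{L}$; hence $\mathcal{L}\mapsto\mathcal{L}^c$ is an order-isomorphism from the smashing localizing subcategories back onto $\Thick(D^\omega(R))$, which by construction carries $D_Y(R)$ to $\T_Y$. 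In particular $\T_{\supp(\T)}=\T$, and $\supp$ and $Y\mapsto\T_Y$ are mutually inverse order-isomorphisms between $\Thick(D^\omega(R))$ and the Thomason subsets of $\Spec R$.

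It then remains to check, formally, that this poset is a coherent frame Hochster dual to $\Zar(R)$. Thomason subsets are closed under arbitrary unions and under finite intersections (their complements being quasi-compact opens, a finite union of which is again quasi-compact open), and they inherit the frame distributive law from the topology of $\Spec R$; so $\Thick(D^\omega(R))$ is a frame and the order-isomorphism above is an isomorphism of frames. Its compact elements are exactly the Thomason-\emph{closed} sets $V(I)$, $I$ finitely generated — such a $V(I)$ is compact because its complement is a quasi-compact open, and every Thomason subset is the directed union of the $V(I)$ contained in it — and these are closed under finite meet and include the top $\Spec R=V(0)$; hence the frame is coherent, and the distributive lattice of its compacts (the $V(I)$ under $\cap$ and $\cup$) is the \emph{opposite} of the lattice $\{\sqrt{(f_1,\dots,f_n)}\}$ of compacts of $\Zar(R)$, via the order-reversing bijection $V(I)\leftrightarrow\sqrt{I}$. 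By the point-free description of Hochster duality — a coherent frame is the ideal frame $\operatorname{Idl}(K)$ of its lattice $K$ of compacts, with Hochster dual $\operatorname{Idl}(K\op)$ — this exhibits $\Thick(D^\omega(R))$ as the Hochster dual of $\Zar(R)=\RadId(R)$, as asserted. The one substantive obstacle is the localization-theoretic realisation used in the second paragraph; everything else is frame-theoretic bookkeeping.
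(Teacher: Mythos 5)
There is a genuine gap, and it sits exactly where you say the ``heart of the matter'' is. The identity $\T_{\supp(\T)}=\T$ --- equivalently, that $\supp(P)\subseteq\supp(\T)$ forces $P\in\T$ --- \emph{is} the affine Hopkins--Neeman--Thomason classification, i.e.\ the theorem to be proved, so it cannot be cited; and the derivation you offer for it is not carried out. You assert that $D_Y(R)=\{X\in D(R):\supp(X)\subseteq Y\}$ coincides with the localizing subcategory generated by the Koszul complexes $K(I)$ with $V(I)\subseteq Y$, and then say ``granting it, the theorem follows.'' That assertion is the entire content: nothing in your proposal establishes it, and the two tools you gesture at do not suffice as stated. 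The \v{C}ech/local-cohomology idempotents handle a single $V(I)$ with $I$ finitely generated, but not an arbitrary Thomason union; and the ``analysis of residue-field objects'' is precisely the step that breaks down for non-Noetherian $R$ (this is why Thomason's original argument needs the Tensor Nilpotence Theorem and Absolute Noetherian Approximation rather than residue fields). As written, your second paragraph is an outline of the classical proof with its hard kernel left open, not a proof. (Your first and third paragraphs --- that perfect complexes have Thomason-closed support, that $\supp(\T_Y)=Y$ via Koszul complexes, and that the Thomason subsets form a coherent frame which is $\operatorname{Idl}\bigl((\fgRadId(R))\op\bigr)$, hence the Hochster dual of $\RadId(R)$ --- are correct and agree with the paper's frame-theoretic bookkeeping, cf.\ Definition~\ref{def hochdualframe} and Theorem~\ref{thm main}.)

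For contrast, the paper closes this gap without supports or points at all. The key inputs are: the Dwyer--Greenlees equivalence $\Loc(R/I)=\Loc(K(I))$ (Proposition~\ref{DG-6.4}) together with the torsion criterion for membership in $\Loc(R/I)$ (Lemma~\ref{lem blongstoLR/I}); the fact that \emph{every} perfect complex $C$ satisfies $\Loc(C)=\Loc(R/I)$ for some finitely generated $I$ (Proposition~\ref{thm comp-celleq-cyc}, proved by descending $C$ to a Noetherian subring and inducting on the number of generators of homology); and the resulting order-reversing bijection $\Loc(R/I)\leftrightarrow\sqrt I$ with $\fgRadId(R)\op$ (Corollary~\ref{cor inc}, Proposition~\ref{prop main}). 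The frame statement then follows by ideal completion exactly as in your last paragraph, using $\Loc(\setofobjects)^\omega=\Loc(\setofobjects)\cap\T^\omega$ (Corollary~\ref{cor LocXTomega}) to pass between thick subcategories of $D^\omega(R)$ and compactly generated localizing subcategories of $D(R)$. If you want to salvage your route, you must actually prove your realization claim for arbitrary Thomason $Y$ over an arbitrary commutative ring; the paper's torsion-theoretic argument is one way to do that, and it replaces both tensor nilpotence and the residue-field analysis.
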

This formulation is the starting point for our investigations: since
it states a clean conceptual relationship between two algebraic structures,
with no mention of point sets, there should be a conceptual and 
point-free explanation of it.
We achieve such an explanation as a byproduct of a more general
analysis of frames and lattices of thick subcategories, localizing
subcategories, and tensor ideals in derived categories of a
commutative ring, and more generally of a coherent scheme.

We work in the unbounded derived 
category $D(R)$.  The thick subcategories of $D^\omega(R)$ are in 
one-to-one correspondence with the {\em compactly generated} 
localizing subcategories of $D(R)$.  
A key ingredient in our proof of the
above theorem is the following (Proposition~\ref{thm comp-celleq-cyc}):
\begin{proposition*}
Every localizing subcategory of $D(R)$ generated by a \emph{finite} set of compact
  objects is of the form   $\Loc(R/I)$ for $I$  a finitely generated ideal of $R$,
  and depends only on its radical $\sqrt{I}$.
\end{proposition*}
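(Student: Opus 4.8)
The plan is to reduce the statement first to a single compact object, then to a Koszul complex, and finally to identify all the relevant localizing subcategories with the subcategory of complexes supported on one fixed closed set. Since a finite family of compact objects generates the same localizing subcategory as its direct sum, and a finite direct sum of perfect complexes is perfect, it suffices to treat $\Loc(P)$ for a single perfect complex $P$. The homological support $\supz P\subseteq\Spec R$ of a perfect complex is closed with quasi-compact complement, hence equals $\zeros{I}$ for some finitely generated ideal $I=(f_1,\dots,f_n)$; write $K=K(f_1,\dots,f_n)$ for the corresponding Koszul complex, so $\supz K=\zeros{I}=\supz P$. The claim then becomes $\Loc(P)=\Loc(R/I)$, with $\Loc(R/I)$ depending only on $\sqrt I$.

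The first, elementary, part is the chain of equalities $\Loc(R/I)=\Loc(K)=\{\,X\in D(R):\supz X\subseteq\zeros{I}\,\}$. In the principal case one identifies both $\Loc(R/f)$ and $\Loc(K(f))$ with the kernel of the smashing localization $D(R)\to D(R_f)$: for $K(f)=\Cone(f\colon R\to R)$ because the fibre of $R\to R_f$ is a homotopy colimit of shifts of the Koszul complexes $K(f^k)$, and $K(f^k)\in\Thick(K(f))$ by repeated use of the octahedral axiom; for $R/f$ because the truncation triangle $\Ann(f)[1]\to K(f)\to R/f$, together with the observation that the $R/f$-module $\Ann(f)$ --- indeed every $R/f$-module --- lies in $\Loc(R/f)$, pins $\Loc(K(f))$ between $\Loc(R/f)$ and itself. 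The general finitely generated case then follows by writing $K(f_1,\dots,f_n)=K(f_1)\otimes_R K(f_2,\dots,f_n)$ and inducting via base change along $R\to R/f_1$, using that $(R/f_1)\otimes_R K(f_2,\dots,f_n)$ is the Koszul complex on the images of $f_2,\dots,f_n$ in $R/f_1$. The third equality follows by filtering a module supported on $\zeros{I}$ by its $I^{k}$-torsion submodules, each of which is a module over $R/I^{k}\in\Thick(R/I)$ and hence lies in $\Loc(R/I)$, and then passing to the cohomology of complexes. Since $\zeros{I}=\zeros{\sqrt I}$ and the right-hand side depends only on this closed set, $\Loc(R/I)$ depends only on $\sqrt I$.

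This already yields one inclusion: $\supz P=\zeros{I}$ forces $P\in\Loc(K)$, hence $\Loc(P)\subseteq\Loc(K)$. The reverse inclusion $K\in\Loc(P)$ is the heart of the matter. Here the relevant point is that $P$, being perfect and hence rigid, has $\Loc(P)^{\perp}=\{Z:\operatorname{RHom}_R(P,Z)\simeq P^{\vee}\otimes_R Z\simeq 0\}$ tensor-closed, so $\Loc(P)={}^{\perp}\bigl(\Loc(P)^{\perp}\bigr)$ is a compactly generated localizing tensor-ideal; it is therefore the subcategory of acyclics of a smashing localization, whose associated idempotent has support $\supz P=\zeros{I}$. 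Thus $\Loc(P)$ and $\Loc(K)$ are compactly generated localizing tensor-ideals with the same support, and the decisive input --- which is exactly what the point-free identification of the Zariski and Hochster-dual frames inside the poset of localizing subcategories is designed to supply --- is that such an ideal is determined by its support. This forces $\Loc(P)=\Loc(K)=\Loc(R/I)$, completing the proof.

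I expect the main obstacle to be precisely this last inclusion $K\in\Loc(P)$: the reductions above and the identification $\Loc(R/I)=\Loc(K)$ are routine manipulations with triangles, tensor products and homotopy colimits, but the statement that a single perfect complex generates the whole localizing subcategory of complexes supported on its support is the affine case of the Hopkins--Neeman theorem and carries genuine content; it is exactly what the frame-theoretic analysis of localizing subcategories is built to deliver.
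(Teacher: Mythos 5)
Your reductions at the start are fine, and your identification $\Loc(R/I)=\Loc(K(I))$ in the principal case, extended by tensoring, is essentially the Dwyer--Greenlees input the paper also uses. But there are two genuine problems. First, the equality $\Loc(K)=\{X : \supz X\subseteq \zeros{I}\}$ is not available over a general commutative ring: the correct characterization of $\Loc(R/I)$ (Dwyer--Greenlees) is that every homology class is killed by a \emph{power of $I$}, and for non-noetherian $R$ this is strictly stronger than the stalk-support condition. Your filtration argument breaks exactly here: a module whose localizations at all primes outside $\zeros{I}$ vanish need not be the union of its $I^k$-torsion submodules, so the claimed third equality is unjustified (and with it the ``easy'' inclusion $P\in\Loc(K)$, which for a perfect $P$ with $\supz P\subseteq\zeros{I}$ is itself not a formality).

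Second, and more seriously, the decisive step $K\in\Loc(P)$ is not proved but outsourced to the statement that a compactly generated localizing tensor ideal is determined by its support. That statement is the affine Hopkins--Neeman/Thomason classification --- precisely the theorem for which the present proposition is advertised as a key ingredient. In the logical order of the paper your argument is circular, and even read independently it amounts to restating the problem, as you concede in your final paragraph. The paper's actual route avoids this entirely and makes no mention of supports: it descends $P$ to a perfect complex over a noetherian subring $S\subseteq R$ (Neeman's appendix lemma), where the homology is finitely generated; it then inducts on the number of homogeneous generators of $H_*$, peeling off cyclic quotients $S/J_i$ via the lemma that a chain map onto $R/J$ surjective in homology forces $R/J\in\Loc(E)$, and merges them into a single $\Loc(S/J)$; finally it transports the resulting equality $\Loc(C_S)=\Loc(K(J))$ back to $R$ by applying $-\otimes_S R$ to the \emph{finite} recipes guaranteed by compactness. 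The radical statement then comes from the $I$-power-torsion criterion, not from supports. To repair your proof you would need to replace the appeal to ``determined by its support'' with an argument of this kind.
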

Note that $R/I$ is typically {\em not} a compact object,
but it generates the same localizing subcategory as the Koszul complex of $I$, which {\em is} compact.
The radicals of finitely generated ideals  form a distributive lattice
called the {\em Zariski lattice} and we show: (Proposition~\ref{prop main}):

\begin{proposition*}
    These localizing subcategories form a distributive lattice 
    isomorphic to the opposite of the Zariski lattice.  The 
    correspondence is given by
\begin{eqnarray*}
  \Loc(R/I) & \leftrightarrow & \sqrt{I} .
\end{eqnarray*}
\end{proposition*}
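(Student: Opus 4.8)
The plan is to verify that $\sqrt I\mapsto\Loc(R/I)$ is an order-reversing bijection from the Zariski lattice onto the poset $\mathcal{P}$ of those localizing subcategories, and then invoke the formal fact that a poset anti-isomorphic to a distributive lattice is itself a distributive lattice (and inherits explicit formulas for its meets and joins). By the first proposition the assignment is well defined, independent of the chosen finitely generated representative, and surjective onto $\mathcal{P}$, so the whole problem reduces to the order statement
\[
  \Loc(R/I)\subseteq\Loc(R/J)\quad\Longleftrightarrow\quad\sqrt J\subseteq\sqrt I\quad(\text{equivalently }\zeros{I}\subseteq\zeros{J}).
\]
An order-reversing, order-reflecting surjection is automatically injective and an anti-isomorphism, so $\mathcal{P}$ becomes a distributive lattice opposite to the Zariski lattice, with $\Loc(R/I)\join\Loc(R/J)=\Loc(R/IJ)$ and $\Loc(R/I)\meet\Loc(R/J)=\Loc(R/(I+J))$ corresponding to $\sqrt I\meet\sqrt J=\sqrt{IJ}$ and $\sqrt I\join\sqrt J=\sqrt{I+J}$; here one uses that $IJ$ and $I+J$ are again finitely generated, so these formulas live in $\mathcal{P}$, and that the Zariski lattice is distributive.

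For the implication $\sqrt J\subseteq\sqrt I\Rightarrow\Loc(R/I)\subseteq\Loc(R/J)$: finite generation of $J$ yields $J^n\subseteq I$ for some $n$, so $R/I$ is (the restriction of) an $R/J^n$-module. Restriction of scalars along $R\to R/J^n$ is an exact, coproduct-preserving functor $D(R/J^n)\to D(R)$, hence sends $\Loc_{D(R/J^n)}(R/J^n)=D(R/J^n)$ into $\Loc_{D(R)}(R/J^n)$; in particular $R/I$, regarded in $D(R)$, lies in $\Loc(R/J^n)$. Since $\sqrt{J^n}=\sqrt J$, the first proposition gives $\Loc(R/J^n)=\Loc(R/J)$, so $R/I\in\Loc(R/J)$ and therefore $\Loc(R/I)\subseteq\Loc(R/J)$.

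For the converse I will detect supports by residue fields. For a prime $\mathfrak p$ the functor $X\mapsto X\otimes^{\mathbf{L}}_R k(\mathfrak p)$ to $D(k(\mathfrak p))$ is triangulated and preserves coproducts, hence kills every object of $\Loc(Y)$ as soon as it kills $Y$. Since $H_0\bigl(R/I\otimes^{\mathbf{L}}_R k(\mathfrak p)\bigr)=R/I\otimes_R k(\mathfrak p)$ equals $k(\mathfrak p)$ when $I\subseteq\mathfrak p$ and $0$ otherwise, we have $R/I\otimes^{\mathbf{L}}_R k(\mathfrak p)\neq0$ precisely for $\mathfrak p\in\zeros{I}$. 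Thus if $\Loc(R/I)\subseteq\Loc(R/J)$ and $\mathfrak p\in\zeros{I}$, then also $R/J\otimes^{\mathbf{L}}_R k(\mathfrak p)\neq0$, forcing $\mathfrak p\in\zeros{J}$; hence $\zeros{I}\subseteq\zeros{J}$. (This is just the monotonicity of the homological support $\supp$ under inclusions of localizing subcategories, together with $\supp(R/I)=\zeros{I}$.)

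The step I expect to be the real obstacle is this converse: one must certify that ideals with different radicals really do produce different localizing subcategories, and essentially the only invariant of $\Loc(R/I)$ one can get a grip on is its support, so the argument must pass through residue fields (or, equivalently, through the support theory developed earlier). Once that is in place, everything else — injectivity, the anti-isomorphism, distributivity, and the explicit description of $\join$ and $\meet$ in $\mathcal{P}$ — is purely formal.
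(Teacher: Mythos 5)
Your proof is correct, but it reaches the crucial ``order-reflecting'' direction by a genuinely different route than the paper. The paper derives both implications of
\[
\Loc(R/I)\subseteq\Loc(R/J)\ \Longleftrightarrow\ \sqrt J\subseteq\sqrt I
\]
in one stroke from the Dwyer--Greenlees criterion (their Proposition~6.12): $E\in\Loc(R/I)$ iff every homology class of $E$ is killed by a power of $I$; this immediately gives ``$\Loc(R/J)\subseteq\Loc(R/I)$ iff $I^n\subseteq J$ for some $n$'' and is entirely point-free, in keeping with the paper's stated philosophy. You instead prove the easy inclusion by restriction of scalars along $R\to R/J^n$ (fine, and arguably more self-contained than quoting the torsion criterion), and you prove injectivity by evaluating the coproduct-preserving functors $-\otimes^{\mathbf L}_R k(\mathfrak p)$, i.e.\ by detecting supports at points. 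This works, but note two costs. First, your parenthetical only computes $H_0\bigl(R/J\otimes^{\mathbf L}_R k(\mathfrak p)\bigr)$; to conclude that the \emph{whole} derived tensor product vanishes when $J\not\subseteq\mathfrak p$ (which is the direction you actually use, to force $\mathfrak p\in\zeros{J}$ from $R/J\otimes^{\mathbf L}_R k(\mathfrak p)\neq 0$) you should first base-change along the flat map $R\to R_{\mathfrak p}$ and observe $(R/J)_{\mathfrak p}=0$, so all higher $\Tor$'s vanish as well --- easily repaired, but as written it is a gap. Second, passing back from $\zeros{I}\subseteq\zeros{J}$ to $\sqrt J\subseteq\sqrt I$ uses $\sqrt J=\bigcap_{\mathfrak p\supseteq J}\mathfrak p$, hence prime ideals and Zorn's lemma; this is legitimate under the paper's standing assumption of choice, but it is exactly the kind of detour through points that the paper's argument is designed to avoid. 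The remaining reductions --- well-definedness and surjectivity from the first proposition, and the purely formal transport of the distributive lattice structure (with $\join$ and $\meet$ given by $I\cdot J$ and $I+J$) across the order anti-isomorphism --- match what the paper does.
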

This result contains the essence of Thomason's affine result quoted 
above.  More precisely, Thomason's result follows  by {\em 
coherence}, namely the fact that the frame of compactly generated 
localizing subcategories is determined by its finite part --- this is
the lattice counterpart of compact generation.

Having described  the dual of the Zariski frame explicitly inside 
$D(R)$, we proceed to show that also the Zariski frame itself can be realized 
inside $D(R)$, see Theorem~\ref{thm loccatisZar}:
\begin{theorem*}
    The localizing subcategories of $D(R)$ generated by modules of 
    the form $R_f$ form a coherent frame isomorphic to the Zariski 
    frame of $R$.
\end{theorem*}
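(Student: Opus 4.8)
The plan is to exhibit an explicit order-isomorphism between the Zariski frame $\Zar(R)$ --- the frame of radical ideals of $R$, equivalently of open subsets of $\Spec R$ --- and the poset of those localizing subcategories of $D(R)$ that are generated by modules of the form $R_f$. Since $\Zar(R)$ is a coherent frame, an order-isomorphism will automatically transport that structure. To a radical ideal $J$ I would assign
\[
  \Phi(J)\;=\;\Loc\bigl(R_f : f\in J\bigr),
\]
a visibly monotone construction. The point to settle first, and the technical heart of the argument, is that $\Phi$ is \emph{well defined on radicals}: if a subset $S\subseteq R$ generates the radical ideal $J$, then $\Loc(R_f : f\in S)=\Phi(J)$. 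Granting this, every localizing subcategory generated by modules $R_f$ is of the form $\Phi(J)$, so $\Phi$ surjects onto the poset in question.

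Since the inclusion $\Loc(R_f:f\in S)\subseteq\Phi(J)$ is trivial, well-definedness reduces to the assertion: \emph{if $D(h)\subseteq\bigcup_{g\in S}D(g)$, then $R_h\in\Loc(R_g:g\in S)$} (note that $h\in\sqrt{(S)}$ is exactly the condition $D(h)\subseteq\bigcup_{g\in S}D(g)$). This is where I expect the real obstacle. Since the principal open $D(h)$ is quasi-compact (being homeomorphic to $\Spec R_h$), finitely many $g_1,\dots,g_n\in S$ already cover it; equivalently $g_1,\dots,g_n$ generate the unit ideal of $R_h$. Then the augmented \v{C}ech complex
\[
  0\to R_h\to \prod_i R_{hg_i}\to \prod_{i<j}R_{hg_ig_j}\to\cdots\to R_{hg_1\cdots g_n}\to 0
\]
is exact, so in $D(R)$ the module $R_h$ is isomorphic to the bounded totalization of the complex obtained by deleting the term $R_h$. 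That totalization is built from its terms by finitely many cones, and each term $R_{hg_{i_0}\cdots g_{i_k}}$ is a one-element localization $R_{g_{i_0}}[1/t]$, hence the (homotopy) colimit of a mapping telescope $R_{g_{i_0}}\xrightarrow{\,t\,}R_{g_{i_0}}\xrightarrow{\,t\,}\cdots$ and therefore an object of $\Loc(R_{g_{i_0}})\subseteq\Loc(R_{g_1},\dots,R_{g_n})$. Thus $R_h\in\Loc(R_{g_1},\dots,R_{g_n})$, as required. This part genuinely uses that principal opens are quasi-compact, together with exactness of the \v{C}ech complex and the realization of one-element localizations as homotopy colimits; everything else is formal.

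Next I would show that $\Phi$ is order-reflecting, hence an order-embedding. If $J\not\subseteq J'$, pick $f\in J\setminus J'$; as $f\notin\sqrt{J'}=J'$, there is a prime $\mathfrak p\supseteq J'$ with $f\notin\mathfrak p$. The functor $-\otimes^{\mathbf L}_R\kappa(\mathfrak p)\colon D(R)\to D(\kappa(\mathfrak p))$ is triangulated and preserves coproducts, so its kernel is a localizing subcategory. Every generator $R_g$ of $\Phi(J')$ has $g\in J'\subseteq\mathfrak p$, whence $R_g\otimes^{\mathbf L}_R\kappa(\mathfrak p)=\kappa(\mathfrak p)[1/\bar g]=0$; hence all generators of $\Phi(J')$, and so $\Phi(J')$ itself, lie in that kernel. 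But $R_f\in\Phi(J)$ and $R_f\otimes^{\mathbf L}_R\kappa(\mathfrak p)=\kappa(\mathfrak p)\neq 0$ since $f\notin\mathfrak p$. Therefore $\Phi(J)\not\subseteq\Phi(J')$. Combined with surjectivity, $\Phi$ is an isomorphism of posets onto the collection of localizing subcategories generated by modules $R_f$.

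Consequently this collection inherits all joins and meets and is a frame, with $\Phi$ a frame isomorphism; concretely joins are formed by uniting generating sets, $\bigvee_\alpha\Phi(J_\alpha)=\Loc\bigl(R_f : f\in\bigcup_\alpha J_\alpha\bigr)$. Coherence of $\Zar(R)$ --- its compact elements being the radicals of finitely generated ideals, which are closed under finite meets, contain the top element, and join-generate --- then passes through $\Phi$, so the collection is a coherent frame isomorphic to the Zariski frame, its compact elements being exactly the subcategories $\Loc(R_{f_1},\dots,R_{f_n})$ generated by finitely many modules $R_{f_i}$.
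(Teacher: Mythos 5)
Your proposal is correct, but it reaches the theorem by a genuinely different route than the paper. The paper's proof realizes $\Loc(R_{f_1},\dots,R_{f_n})$ as the right orthogonal $\Loc(R/(f_1,\dots,f_n))^{\bot}$, i.e.\ as the essential image of a Bousfield localization functor computed via the Dwyer--Greenlees stable Koszul complexes $K^{\bullet}(f_i^{\infty})$; radical-invariance and the order-reversal/order-reflection statements are then inherited from the corresponding facts about $\Loc(R/I)$ by passing to orthogonals, and the identification of the finite elements is done intrinsically (ordinal filtrations, transfinite induction on ``length,'' and an injective-envelope argument to rule out $\Loc(R_f\mid f\in J)=D(R)$ for $J$ proper). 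You instead prove the crucial membership $R_h\in\Loc(R_{g_1},\dots,R_{g_n})$ for $h\in\sqrt{(g_1,\dots,g_n)}$ head-on with the extended \v{C}ech complex --- which is essentially the same complex that appears in the paper's computation of $L_{R/I}(R)$, but deployed without any orthogonality or localization-functor formalism --- and you get injectivity from the residue-field functors $-\otimes^{\mathbf L}_R\kappa(\mathfrak p)$ rather than from the torsion criterion. Two remarks on the trade-offs. First, your detection by $\kappa(\mathfrak p)$ and your appeal to quasi-compactness of $D(h)$ use points (hence Zorn), which cuts against the paper's point-free ethos but is explicitly permitted by its standing assumption of choice (the paper itself uses an injective envelope at the analogous juncture). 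Second, you transport the coherent-frame structure abstractly along the poset isomorphism; since completeness, distributivity, and coherence are order-theoretic properties this is perfectly legitimate for the statement as given and is shorter, whereas the paper's intrinsic analysis buys extra information not contained in the bare statement (e.g.\ that these subcategories are simultaneously colocalizing, that the join is ``localization closure'' of the union, and that the meet with a finite element is honest intersection).
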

Again by coherence, the essence of this correspondence
is in the finite part, where it is given by the surprisingly simple 
correspondence
$$
\Loc( R_{f_1},\ldots,R_{f_n}) \leftrightarrow 
\sqrt{(f_1,\ldots,f_n)} .
$$

The results explained so far make up Section~\ref{sec LocsubcatDR},
which finishes with a short explanation of the standard procedure of
extracting points; this is convenient for comparison with results
of Neeman and Thomason.

Before coming to the general case of a coherent scheme in
Section~\ref{sec RecSpecSch}, we need some
abstract theory of radical thick tensor ideals in a tensor
triangulated category.  We revisit Balmer's theory of spectra and
supports, and provide a substantial simplification of this theory,
using a point-free approach.  Large parts of Balmer's paper
\cite{Balmer:2005} are subsumed in the following single theorem 
(\ref{thm coherence}):
\begin{theorem*}
In a tensor triangulated category $\T$,  the
radical thick tensor ideals  form a coherent frame,  provided there is 
only a set of them.
\end{theorem*}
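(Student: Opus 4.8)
The plan is to realize the poset $L$ of radical thick tensor ideals of $\T$, ordered by inclusion, as the ideal completion $\operatorname{Idl}(D)$ of a bounded distributive lattice $D$, namely its lattice of compact elements; by the standard characterization of coherent frames (\cite{Johnstone:Stone-spaces}) the ideal completion of a bounded distributive lattice is exactly a coherent frame, so such an identification does the job, and it has the merit of never requiring us to check the frame distributive law in $L$ directly. The first, easy, observation is that $L$ is a complete lattice: an arbitrary intersection of radical thick tensor ideals is again one (thickness and the $\otimes$-ideal property pass obviously to intersections, and if $a^{\otimes n} \in \bigcap_i \mathcal J_i$ then $a$ lies in every $\mathcal J_i$), and $\T$ is a top element, so under the standing hypothesis that $L$ is a set it is a complete lattice; meet in $L$ is intersection, and the join is $\bigjoin_i \mathcal J_i = \sqrt{\clas{\bigcup_i \mathcal J_i}}$.

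The heart of the matter is to identify the compact elements of $L$ with the singly generated radical ideals $\sqrt{\clas a}$ --- equivalently, since $\sqrt{\clas a}\join\sqrt{\clas b} = \sqrt{\clas{a\oplus b}}$, the finitely generated ones --- and to show they behave well. Two inputs are needed. The first, purely triangulated, is that the thick tensor ideal $\clas S$ generated by a class $S$ is the directed union $\bigcup_{S_0}\clas{S_0}$ over \emph{finite} subsets $S_0 \subseteq S$: the right-hand side is already a thick tensor ideal, since passing to shifts or retracts, or tensoring with an arbitrary object, stays inside a single $\clas{S_0}$, while completing a triangle with vertices in $\clas{S_0}$ and $\clas{S_1}$ lands in $\clas{S_0 \cup S_1}$. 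The second, which I expect to be the main obstacle, is Balmer's tensor-nilpotence description of the radical, $\sqrt{\mathcal J} = \{\, a : a^{\otimes n} \in \mathcal J \text{ for some } n \geq 1 \,\}$; the only delicate point in establishing this is that the displayed set is closed under triangles, which is a binomial-expansion argument invoking the octahedral axiom to bound the nilpotency exponent of a cone by the sum of those of its two neighbours. Granting these, compactness of $\sqrt{\clas a}$ is immediate: if $\sqrt{\clas a} \subseteq \bigjoin_i \mathcal J_i$ then $a^{\otimes n} \in \clas{\bigcup_i \mathcal J_i}$ for some $n$, hence $a^{\otimes n} \in \clas{\bigcup_{i \in F}\mathcal J_i}$ for a finite $F$, hence $a \in \bigjoin_{i \in F}\mathcal J_i$ and so $\sqrt{\clas a} \subseteq \bigjoin_{i \in F}\mathcal J_i$. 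The same computation shows that every radical thick tensor ideal is the join of the compact ideals it contains, and hence (by compactness) that a compact $\mathcal J$ is some $\sqrt{\clas{a_1 \oplus \cdots \oplus a_k}}$.

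It remains to see that the compact elements form a bounded distributive sublattice $D \subseteq L$ and to assemble the isomorphism. The top $\T = \clas{\mathbf 1}$ is compact, finite joins of compacts are compact by $\sqrt{\clas a}\join\sqrt{\clas b} = \sqrt{\clas{a \oplus b}}$, and finite meets are compact because $\sqrt{\clas a} \cap \sqrt{\clas b} = \sqrt{\clas{a \otimes b}}$: here $\supseteq$ is clear, and $\subseteq$ follows from the tensor-nilpotence lemma together with the elementary fact --- two devissages using exactness of tensoring --- that $x \in \clas a$ and $y \in \clas b$ force $x \otimes y \in \clas{a \otimes b}$. Distributivity of $D$ is then automatic from $a \otimes (b \oplus c) \cong (a \otimes b) \oplus (a \otimes c)$. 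Finally, $\mathcal J \mapsto \{\, K \in D : K \subseteq \mathcal J \,\}$ and $I \mapsto \bigjoin I$ are order-preserving maps between $L$ and $\operatorname{Idl}(D)$, and that they are mutually inverse reduces precisely to the two facts already in hand: every radical thick tensor ideal is the join of the compact ideals it contains, and a compact ideal contained in a join of ideals is contained in a finite subjoin. This exhibits $L \cong \operatorname{Idl}(D)$ as a coherent frame, completing the proof.
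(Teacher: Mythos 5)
Your proof is correct, but it reaches coherence by a genuinely different route from the paper's. The paper verifies the infinite distributive law in the poset $L$ of radical thick tensor ideals head-on (Lemma~\ref{prop ZarFrameTriaCat}), via the auxiliary subcategory $C_x=\{k \mid x\otimes k\in\bigjoin_\alpha(J\meet I_\alpha)\}$, which it checks is itself a radical thick tensor ideal containing every $I_\alpha$; it then identifies the finite elements as the principal ideals $\sqrt{a}$ exactly as you do, using the same finiteness-of-generation lemma (Lemma~\ref{lem fingen} and Corollary~\ref{finitethickrad}). You never touch the distributive law in $L$: you build the distributive lattice $D$ of principal radical ideals and transport coherence across the order isomorphism $L\simeq\operatorname{Idl}(D)$, so the only distributivity you must check is the finite identity $\sqrt{a}\meet(\sqrt{b}\join\sqrt{c})=(\sqrt{a}\meet\sqrt{b})\join(\sqrt{a}\meet\sqrt{c})$, which falls out of $a\otimes(b\oplus c)\cong(a\otimes b)\oplus(a\otimes c)$ together with $\sqrt{a}\join\sqrt{b}=\sqrt{a\oplus b}$ and $\sqrt{a}\cap\sqrt{b}=\sqrt{a\otimes b}$. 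The price is that you need this last meet formula up front, whereas the paper only proves it later (Lemma~\ref{lem ZarSup}, when verifying the support axioms) by exactly the two-devissage argument you cite; the gain is that the hardest verification in the paper's proof disappears, replaced by an appeal to the standard fact that the ideal completion of a distributive lattice is a coherent frame --- which is very much in the spirit of the paper's own slogan that a coherent frame is determined by its finite part. Both routes rest on the same two genuinely triangulated inputs: finiteness of generation of thick tensor ideals, and the binomial/cone lemma showing that $\{a\mid a^{\otimes n}\in I\}$ is again a radical thick tensor ideal; and both use the standing set-theoretic hypothesis in the same place, namely to index the join of the compact ideals contained in a given radical ideal over a genuine set.
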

This coherent frame we call the {\em Zariski frame} of $\T$ 
and denote it $\Zar(\T)$, as it is
constructed from the ring-like object $(\T,\otimes,\mathbf{1})$ in the same way
as classically a commutative ring $R$ gives rise to the frame of radical ideals in $R$.
The Zariski frame of $\T=D^\omega(R)$ is naturally identified with the frame
of compactly general localizing subcategories of $D(R)$ featured in our first 
main theorem.

Furthermore, just as in the case of rings as observed by
Joyal~\cite{Joyal:Cahiers1975} in the early 1970s, the Zariski frame
enjoys a universal property,
see Theorem~\ref{thm ZarisInitialTria}:

\begin{theorem*}
The support 
  \begin{eqnarray*}
    \T & \longrightarrow & \Zar(\T)  \\
    a & \longmapsto & \sqrt{a}
  \end{eqnarray*}
  is initial among supports.
\end{theorem*}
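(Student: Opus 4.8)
The plan is to transport to the tensor triangulated setting the argument, due to Joyal, that the Zariski lattice of a commutative ring is the initial support. Recall that a support consists of a frame $L$ together with a map $\sigma\colon\T\to L$ with $\sigma(0)=0$, $\sigma(\mathbf 1)=1$, $\sigma(\Sigma a)=\sigma(a)$, $\sigma(a\oplus b)=\sigma(a)\join\sigma(b)$, $\sigma(c)\le\sigma(a)\join\sigma(b)$ for every distinguished triangle $a\to b\to c\to\Sigma a$, and $\sigma(a\otimes b)=\sigma(a)\meet\sigma(b)$; a morphism of supports $(L,\sigma)\to(M,\tau)$ is a frame homomorphism $f\colon L\to M$ with $f\circ\sigma=\tau$. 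The first, routine, step is to check that $\sqrt{(-)}\colon\T\to\Zar(\T)$ is a support: this is immediate from the description of the lattice structure of $\Zar(\T)$ obtained in Theorem~\ref{thm coherence} --- bottom $\sqrt 0$, top $\sqrt{\mathbf 1}$, $\sqrt a\join\sqrt b=\sqrt{a\oplus b}$, $\sqrt a\meet\sqrt b=\sqrt{a\otimes b}$ --- together with the fact that thick subcategories are stable under shifts and cones.

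Now let $(L,\sigma)$ be an arbitrary support. The heart of the argument is to show that $\sigma$ depends only on the radical thick tensor ideal generated by each object, i.e.\ that $\sqrt a=\sqrt b$ implies $\sigma(a)=\sigma(b)$. I would deduce this from the stronger assertion that $b\in\sqrt{\langle a\rangle}$ implies $\sigma(b)\le\sigma(a)$. Indeed, the subset $\{z\in\T : \sigma(z)\le\sigma(a)\}$ contains $a$ and, by the support axioms, is stable under shifts, finite sums, retracts, the third object of a distinguished triangle, and tensoring with an arbitrary object of $\T$; hence it contains the thick tensor ideal $\langle a\rangle$. If moreover $b^{\otimes n}\in\langle a\rangle$, then $\sigma(b)=\sigma(b^{\otimes n})\le\sigma(a)$ by the $\otimes$-axiom, so the subset contains $\sqrt{\langle a\rangle}$ as well. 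Applying this in both directions gives the factorization; since every compact element of $\Zar(\T)$ is of the form $\sqrt a$ (write a finite set of generators as a single direct sum), we obtain a well-defined map $\phi(\sqrt a):=\sigma(a)$ on the compact part $\Zar^\omega(\T)$.

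That $\phi$ is a homomorphism of distributive lattices $\Zar^\omega(\T)\to L$ is now just a rereading of the support axioms through the identities $\sqrt 0=\bot$, $\sqrt{\mathbf 1}=\top$, $\sqrt a\join\sqrt b=\sqrt{a\oplus b}$, $\sqrt a\meet\sqrt b=\sqrt{a\otimes b}$. Since by coherence (Theorem~\ref{thm coherence}) $\Zar(\T)$ is the ideal completion of $\Zar^\omega(\T)$, this $\phi$ extends uniquely to a frame homomorphism $\bar\phi\colon\Zar(\T)\to L$, namely $\bar\phi(I)=\bigjoin_{a\in I}\sigma(a)$, and $\bar\phi\circ\sqrt{(-)}=\sigma$ by construction. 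Uniqueness of $\bar\phi$ is automatic, since a frame homomorphism preserves all joins and every element of $\Zar(\T)$ is the directed join of the compact elements $\sqrt a$ below it.

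The main obstacle is the factorization step of the second paragraph --- the observation that a support cannot distinguish two objects generating the same radical thick tensor ideal. Once that is in place everything is formal: the closure argument for $\langle a\rangle$ is a straightforward induction over the generation process, the radical refinement is a one-line use of the tensor axiom, and the passage from $\Zar^\omega(\T)$ to $\Zar(\T)$ is exactly the universal property of the ideal completion supplied by the coherence theorem. The only points requiring care are pinning down the precise list of axioms the paper adopts for a support (and whether its codomain is taken to be a frame or merely a distributive lattice, in which case one works with $\Zar^\omega(\T)$ throughout), and remembering that the bottom element of $\Zar(\T)$ is $\sqrt 0$ rather than the zero subcategory.
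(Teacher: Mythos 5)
Your proposal is correct and follows essentially the same route as the paper: the key step in both is to observe that for an arbitrary support $d$ the class $\{c \in \T \mid d(c) \leq d(a)\}$ is a radical thick tensor ideal containing $a$, hence containing $\sqrt{a}$, which makes $\sqrt{a} \mapsto d(a)$ well defined on finite elements, after which coherence of $\Zar(\T)$ gives the unique frame-map extension. The paper merely packages the verification more tersely; your extra care about the exact form of the triangle axiom (middle versus third object) is harmless since the two formulations agree up to rotation and suspension invariance.
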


With these general results in hand, we can finally assemble our 
precise affine results to establish the following version of Thomason's 
theorem~\cite[Theorem 3.15]{Thomason:1997}:
\begin{theorem*}
  Let $X$ be a coherent scheme.  Then the Zariski frame of
  $D^\omega_{\qc}(X)$ is Hochster dual to the Zariski frame of $X$.
\end{theorem*}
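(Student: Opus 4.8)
The plan is to reduce the global statement to the affine case already established in Section~\ref{sec LocsubcatDR}, using the coherence of the Zariski frame (Theorem~\ref{thm coherence}) together with the universal property of the support (Theorem~\ref{thm ZarisInitialTria}), and then to glue along an affine cover of $X$. First I would recall that for a coherent scheme $X$ the category $D^\omega_{\qc}(X)$ of perfect complexes is a tensor triangulated category with only a set of radical thick tensor ideals, so $\Zar(D^\omega_{\qc}(X))$ is a well-defined coherent frame; dually, the Zariski frame of $X$ in the point-free sense is the frame of open subsets of the underlying space $|X|$, which is a coherent space precisely because $X$ is quasi-compact and quasi-separated. Since Hochster duality is an involution on coherent frames, it suffices to produce an isomorphism $\Zar(D^\omega_{\qc}(X)) \cong \mathrm{Hochster\text{-}dual}(\cO(|X|))$, and by the self-duality of Hochster duality this is equivalent to exhibiting a natural isomorphism on the compact parts, i.e.\ between the distributive lattices of \emph{finitely generated} radical thick tensor ideals on one side and the \emph{quasi-compact open subsets} of $|X|$ (equivalently, the closed subsets with quasi-compact complement) on the other.

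The key computation is local. Choose a finite affine cover $X = \bigcup_{i=1}^{n} U_i$ with $U_i = \Spec R_i$; quasi-separatedness ensures the intersections $U_i \cap U_j$ are again quasi-compact. For a perfect complex $a$ on $X$, define its support $\sqrt{a} \in \Zar(D^\omega_{\qc}(X))$ as in Theorem~\ref{thm ZarisInitialTria}. The first step is to check that restriction $D^\omega_{\qc}(X) \to D^\omega(R_i)$ is a tensor triangulated functor sending $\sqrt{a}$ to $\sqrt{a|_{U_i}}$, and that by the affine Theorem~\ref{thm loccatisZar} (more precisely its incarnation identifying $\Zar(D^\omega(R_i))$ with the Zariski frame of $R_i$) the support $\sqrt{a|_{U_i}}$ corresponds to the open set $\{x \in U_i : a_x \not\simeq 0\}$, which is quasi-compact open in $U_i$. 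The second step is to assemble these: an element of $\Zar(D^\omega_{\qc}(X))$ is determined by its restrictions to the $U_i$, which must agree on the overlaps, so one gets a map from $\Zar(D^\omega_{\qc}(X))$ to the frame obtained by gluing the coherent frames $\cO(|U_i|)$ along $\cO(|U_i \cap U_j|)$ — and this glued frame is exactly $\cO(|X|)$ since $X$ is covered by the $U_i$. That this gluing map is an isomorphism of \emph{frames} follows because it is an isomorphism of coherent frames iff it is an isomorphism on compact elements, where it is identified locally with the affine statement, and the descent of quasi-compact opens along a finite affine cover of a quasi-compact quasi-separated scheme is standard.

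The main obstacle, and where the argument needs genuine care rather than formal nonsense, is the \emph{descent/gluing step}: one must verify that a finitely generated radical thick tensor ideal of $D^\omega_{\qc}(X)$ is recovered from its collection of restrictions to the affine pieces. Concretely, the delicate points are (i) that restriction to $U_i$ is surjective enough — every perfect complex on $U_i$ extends, up to the equivalence relevant for supports, to a perfect complex on $X$ (Thomason--Trobaugh extension of perfect complexes, using quasi-compactness of the overlaps); and (ii) that a radical thick tensor ideal is \emph{local}, i.e.\ an object $a$ lies in the ideal generated by $b$ iff this holds after restriction to each $U_i$ — this is the tensor-ideal analogue of a sheaf gluing condition and is really where quasi-separatedness is used, via a Mayer--Vietoris / Koszul-style argument on the nerve of the cover. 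Once locality of radical thick tensor ideals is in hand, the frame isomorphism on compact parts is forced, and then coherence (Theorem~\ref{thm coherence}) upgrades it to the asserted isomorphism of frames, whence Hochster duality follows by applying the involution.
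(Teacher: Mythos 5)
Your overall architecture differs from the paper's, so let me first note the contrast. The paper does not glue frames along an affine cover: it defines the homological support $C \mapsto \supp C$ with values in $\Zar(X)^\vee$, checks that this is a support in the sense of Definition~\ref{def TriaSupport} (Lemma~\ref{lem supp=supp}), and then invokes the initiality of $a \mapsto \sqrt{a}$ (Theorem~\ref{thm ZarisInitialTria}) to obtain a single global frame map $\Zar(D^\omega_{\qc}(X)) \to \Zar(X)^\vee$. The affine results and the Reduction Principle enter only afterwards, to prove that this map is surjective (Lemma~\ref{lem one}, via the Thomason--Trobaugh extension result) and injective (Lemma~\ref{lem two}).

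The genuine gap in your proposal is your descent step (ii), the ``locality'' of radical thick tensor ideals: that $a \in \sqrt{b}$ in $D^\omega_{\qc}(X)$ as soon as $a|_{U_i} \in \sqrt{b|_{U_i}}$ for every $i$. You correctly identify this as the point requiring real work, but you then merely assert that it follows from a ``Mayer--Vietoris / Koszul-style argument on the nerve of the cover'' without giving any such argument --- and this is precisely the hard content of the theorem. It is in substance Thomason's Lemma~3.14, whose classical proof invokes the Tensor Nilpotence theorem and Absolute Noetherian Approximation, the very tools this paper is at pains to avoid. The paper replaces it by Lemma~\ref{lem two}: form the Bousfield localization $L_F$ attached to $\Loc(F) \subset D_{\qc}(X)$, observe that $E \in \Loc(F)$ iff $L_F E = 0$, and check acyclicity of $L_F E$ on stalks using the explicit Dwyer--Greenlees model of $L_{R/I}(R)$ over each affine together with Proposition~\ref{thm comp-celleq-cyc}. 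Without supplying either that argument or a genuine nerve induction (which would itself require compatibility of cellularization with restriction, on top of the Thomason--Trobaugh gluing you cite in (i)), your proof is incomplete at its central step. The remaining ingredients of your outline --- coherence reducing everything to finite elements, and extension of perfect complexes for surjectivity --- do match what the paper uses.
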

Once again the point-free methods give a more elementary and conceptual
proof, avoiding for example technical tools such as Absolute Noetherian
Approximation.  

Finally, our explicit description of the Zariski frame inside $D(R)$ 
readily endows it with a sheaf of rings, 
encompassing the local structure necessary to reconstruct also the
structure sheaf of $X$:
\begin{theorem*}
    A coherent scheme $(X,\cO_X)$ can be reconstructed from the
    tensor triangulated category $D_{\qc}^\omega(X)$ of perfect complexes.
\end{theorem*}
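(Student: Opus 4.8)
The plan is to reconstruct first the topological space $X$ and then the structure sheaf $\cO_X$, using nothing but the tensor triangulated structure $(\T,\otimes,\mathbf{1})$ of $\T:=D^\omega_{\qc}(X)$.

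\emph{Step 1: the space.} From $(\T,\otimes,\mathbf{1})$ one forms the Zariski frame $\Zar(\T)$ of radical thick tensor ideals; it exists and is coherent by Theorem~\ref{thm coherence}. By the coherent‑scheme form of Thomason's theorem stated above, $\Zar(\T)$ is the Hochster dual of the Zariski frame $\Zar(X)$ of open subsets of $X$. Since $X$ is coherent, $\Zar(X)$ is a coherent frame, and Hochster duality is an involution on coherent frames (the point‑free form of Hochster's theorem); hence the Hochster dual of $\Zar(\T)$ is isomorphic to $\Zar(X)$. The underlying space of a scheme is sober, so $X$ is recovered as a topological space as the space of points of $\Zar(X)$:
\[
  X \;\cong\; \operatorname{pt}\bigl(\text{Hochster dual of }\Zar(\T)\bigr).
\]
Under this identification the quasi‑compact open subsets $U\subseteq X$ are the compact elements of the recovered frame; each such $U$, equivalently its closed complement $Z=X\setminus U$, corresponds via the support $a\mapsto\sqrt{a}$ to a radical thick tensor ideal $\T_Z\subseteq\T$ (the objects supported on $Z$), which is itself an element of $\Zar(\T)$ and hence intrinsic.

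\emph{Step 2: the structure sheaf.} The quasi‑compact opens form a basis of $X$, closed under finite intersection, so it suffices to recover the presheaf $U\mapsto\cO_X(U)$ on this basis, together with its restriction maps; $\cO_X$ is then its sheafification. Fix a quasi‑compact open $U$ with complement $Z$ and pass to the big derived category $D_{\qc}(X)$, compactly generated by the objects of $\T$. The localizing subcategory generated by $\T_Z$ is smashing, and the associated Bousfield localization $L=L_U$ satisfies $L\mathbf{1}\simeq R\Gamma(U,\cO_X|_U)$; in the affine case $U=\Spec R$ this is precisely the explicit realization of $\Zar(X)$ inside $D(R)$ from Section~\ref{sec LocsubcatDR}, where $\Loc(R_{f_1},\dots,R_{f_n})$ corresponds to $\sqrt{(f_1,\dots,f_n)}$ and $L_{D(f)}\mathbf{1}\simeq R_f$. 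One then reads off the ring
\[
  \cO_\T(U)\;:=\;\End_{D_{\qc}(X)}(L_U\mathbf{1})\;\cong\;H^0\bigl(R\Gamma(U,\cO_X|_U)\bigr)\;=\;\cO_X(U),
\]
and an inclusion $V\subseteq U$ of quasi‑compact opens induces (by the map of localization triangles) a morphism $L_U\mathbf{1}\to L_V\mathbf{1}$, hence a ring homomorphism $\cO_\T(U)\to\cO_\T(V)$ which under these identifications is the restriction map of $\cO_X$. Thus $U\mapsto\cO_\T(U)$ is a presheaf of rings on the basis of quasi‑compact opens, built entirely inside $\T$ and naturally isomorphic to the restriction of $\cO_X$; sheafifying yields $(X,\cO_X)$. (Equivalently, staying inside the small categories, $\cO_\T(U)=\End_{(\T/\T_Z)^{\natural}}(\mathbf{1})$, using the equivalence $(\T/\T_Z)^{\natural}\simeq D^\omega_{\qc}(U)$ of the localization theorem \cite{Thomason:1997}.)

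\emph{Conclusion and main obstacle.} Combining the two steps produces, from $(\T,\otimes,\mathbf{1})$ alone, a ringed space canonically isomorphic to $(X,\cO_X)$; since any equivalence of tensor triangulated categories transports all the data used, this is the claimed reconstruction. The hard part is Step 2: one must invoke the localization theorem to identify the local pieces $(\T/\T_Z)^{\natural}$ with $D^\omega_{\qc}(U)$, then check that the rings $\cO_\T(U)$ together with the restriction maps coming from iterated localization genuinely glue into a sheaf naturally isomorphic to $\cO_X$ on the basis of quasi‑compact opens --- a compatibility check which also relies, through the support, on the fact that the ideals $\T_Z$ and their localizations are intrinsic (Step 1). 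By contrast, in the affine case the identification $\End_{D(R)}(L_U R)\cong\cO_X(U)$ is immediate from the explicit description of the Zariski frame inside $D(R)$.
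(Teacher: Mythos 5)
Your proposal is correct and follows essentially the same route as the paper: the space is recovered as the Hochster dual of $\Zar(D^\omega_{\qc}(X))$ via Theorem~\ref{thm Zar D = Hoch}, and the structure sheaf as $U\mapsto \End(L_U\mathbf{1})$ over the lattice of quasi-compact opens (realized as the ideals $\Loc(C)^\bot$), with the identification with $\cO_X$ ultimately resting on the affine computation $\End_{D(R)}(L_{\Loc(R_f)}(R))=R_f$. The only place you assert more than you justify is the global identification $L_U\mathbf{1}\simeq Rj_*\cO_U$ for an arbitrary quasi-compact open $U$; the paper sidesteps this by verifying the sheaf isomorphism only on affine open subsets, and in particular never needs the Thomason--Trobaugh localization theorem or the Verdier quotients $(\T/\T_Z)^{\natural}$ that your parenthetical alternative invokes.
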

\noindent
Balmer~\cite{MR1938458} had previously obtained such a
  reconstruction theorem in the special case where $X$ is
  topologically noetherian. The general case was obtained by Buan-Krause-Solberg in~\cite[Theorem 9.5]{Buan-Krause-Solberg}. However  both these results rely on point   sets and invoke Thomason's classification theorem, whereas  our
  reconstruction is more direct in the sense that it refers
  directly to the frame of open sets, and exploits the reduction
  to affine schemes to exhibit the sheaf locally as simply
  $\Loc(R_f) \mapsto R_f$. 

\bigskip

Indeed, the main characteristic of our work is that the approach
is essentially point-free.
Point-free methods have a
distinctive constructive flavor contrasted with point-set topology: in general,
points (e.g.~maximal ideals in rings) are only available through choice 
principles such as Zorn's lemma.
%
%
%
%
In the present paper we assume the axiom of
choice, but exploit the fact that point-free
arguments tend to be simpler and more direct.

Another novelty is that we work with
unbounded chain complexes, which is the key to understanding Hochster duality:
while for compactly generated localizing subcategories this is mostly 
for convenience, its Hochster dual frame, consisting of localizing 
subcategories generated by localizations of the ring, is something we 
think cannot be realized inside $D^\omega(R)$.  Having this realization in $D(R)$
is of course the key point in getting at the structure sheaf in so 
elegant a way.

Finally it is noteworthy that while the classical proofs of 
the classification theorems relied on the
Tensor Nilpotence Theorem, see  for instance Rouquier~\cite{MR2681712} for a discussion of these, we follow Balmer in instead {\em deducing} the
Tensor Nilpotence Theorem, and tie it to the fact that the Zariski frame (like any coherent frame) gives rise to a Kolmogorov topological space.

To finish this introduction we comment on the role of Hochster duality
in these developments, leading to some questions about duality that are poorly 
understood.  Starting with a commutative ring $R$, on one hand we 
can study its ``internal'' structure,  performing the construction of the
Zariski frame on $(R, \cdot, 1)$, which gives the spectrum of $R$ with the 
Zariski topology.  On the other hand we can study its ``external'' structure
by performing the Zariski frame construction now of the ring-like 
object $(D^\omega(R), \otimes, \mathbf{1})$.  The Zariski frame of this object 
is
the Hochster dual of $\Spec(R)$!  
This duality puts in correspondence, from the short exact sequence
$$
0 \to I \to R \to R/I \to 0
$$
(for $I$ a finitely generated ideal) the Zariski open set $D(I)$ with the 
Hochster open set $\Loc(R/I)$.
The Zariski construction is essentially the same in both cases, and enjoy 
similar universal properties.  The duality is therefore somehow encoded
in taking $D^\omega$.  We think this phenomenon deserves further exploration.
When passing to the finer data of structure sheaves, the duality points towards a 
remarkable duality between local rings and domains, which has only been little 
studied;
see Johnstone~\cite[V.4]{Johnstone:Stone-spaces} for a starting point.

\section{Preliminaries}\label{sec framework}

\subsection{Localizing subcategories in triangulated categories}

In this subsection we review some basic properties of localizing categories. 
Expert readers can skip this subsection.  
Further details can be found in
\cite{MR1812507} or in \cite{MR2681709}.
Let $\T$ be a triangulated category, throughout this subsection
assumed to admit arbitrary
sums.  We denote by $\T^\omega$ the
full subcategory of \emph{compact} objects, namely those objects $c \in
\T$ for which the functor $\Hom_\T(c,-): \T \rightarrow
\kat{Ab}$ commutes with
arbitrary sums or indeed all homotopy colimits. 
Our main example will be $D(R)$, the derived category of a commutative ring $R$.





\begin{definition}\label{def locsubcat}
A \emph{localizing} subcategory is a full
triangulated subcategory $\mathcal{L} \subset \T$  stable under
arbitrary sums.   It is then automatically closed under 
retracts, by the Eilenberg swindle argument.
If $\setofobjects$ is a set of objects in $\T$,
then the smallest triangulated category containing $\setofobjects$ is called the
localizing subcategory generated by $\setofobjects$ and denoted by
$\Loc(\setofobjects)$.
Similarly, using products instead of sums we get the notion of a
\emph{colocalizing} subcategory and a colocalizing subcategory generated by
$\setofobjects$, denoted by $\Coloc(\setofobjects)$.
\end{definition}

Informally, $\Loc(\setofobjects)$ is the smallest category whose objects can be
built out of the objects in $\setofobjects$ using suspensions, extensions (triangles)
and arbitrary sums.  For instance, in the derived
category of a ring $R$ we have $\Loc(R) = D(R)$.  If two objects $a$ and $b$
generate the same localizing subcategory, $\Loc(a) = \Loc(b)$ then, inspired by
topology and Bousfield localizations (see below), we will say that they are \emph{cellularly
equivalent}.  
For other notions of cellularity in chain complexes, see \cite{MR2970877}.  
A full, triangulated subcategory $\mathcal{S}\subset \T$ is
called \emph{compactly generated} if it is of the form $\mathcal{S}=
\Loc(\setofobjects)$ for some set $\setofobjects$ of compact objects.  In this case
there is a uniform way to describe the objects in $\Loc(\setofobjects)$ which turns
the informal description into the following rigorous statement.

\begin{proposition}[Rouquier~\cite{MR2434186} Thm.~4.22 and Prop.~3.13]
    \label{prop objincomLoc}\label{cor compincompgen}
Let $\T$ be a triangulated category, and $\setofobjects$ a set of compact
objects.
Then for any object $a \in \Loc(\setofobjects)$, there
exists a sequence of maps
\[
F_0 \stackrel{f_0}{\longrightarrow} F_1 
\stackrel{f_1}{\longrightarrow} \cdots F_i \stackrel{f_i}{\longrightarrow}
F_{i+1} \cdots
\]
such that $F_0$ and the cone of each $f_i$ is a direct 
sum of copies of suspensions of objects in $\setofobjects$, and 
\[
a = \hocolim_{i} F_i.
\]

If moreover $a$ is compact, then
there is $n \in \mathbb{N}$ such that $a$ is a direct summand of
$F_n$, and $F_0$ and the cone of each $f_i$ (for $i<n$) can be taken to be
finite sums of objects in $\setofobjects$.  In
particular there exists a {\em finite} subset $K \subset \setofobjects$ such
that $a \in \Loc(K)$.
\end{proposition}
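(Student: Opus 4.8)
The plan is to prove the two parts of Proposition~\ref{prop objincomLoc} in sequence, building the Milnor-type colimit presentation and then cutting it off using compactness. First I would set up the inductive construction of the tower $F_0 \to F_1 \to \cdots$. Start with $a \in \Loc(\setofobjects)$ and try to approximate it by sums of suspensions of objects of $\setofobjects$. The key tool is that, since the objects of $\setofobjects$ are compact, the class of objects admitting such a presentation is closed under the triangulated operations and arbitrary sums; so it must be all of $\Loc(\setofobjects)$. Concretely, I would build $F_0$ as a coproduct $\bigsqcup_\alpha \Sigma^{n_\alpha} s_\alpha$ together with a map $F_0 \to a$ chosen so that it is ``full on $\setofobjects$'' — i.e.\ every morphism from a suspension of an object of $\setofobjects$ into $a$ factors through $F_0 \to a$ (take the coproduct indexed by \emph{all} such morphisms). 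Then form the triangle $K_0 \to F_0 \to a \to \Sigma K_0$, and repeat the construction on $K_0$ to get $F_1$ as the cofiber of a map into $F_0$, etc. The homotopy colimit of the resulting tower is then $a$: one checks that $\Hom(s, -)$ kills the connecting maps in the limit for every $s \in \setofobjects$, hence by compactness (which lets $\Hom(s,-)$ commute with the $\hocolim$) the map $\hocolim F_i \to a$ induces an isomorphism on $\Hom(s,-)$ for all $s$, and since $\setofobjects$ generates, it is an isomorphism. This is the standard ``cellular tower'' argument; I would cite Rouquier for the bookkeeping rather than reproduce every diagram.

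For the second part, suppose $a$ is moreover compact. Then the identity map $a \to a = \hocolim_i F_i$ factors (by compactness of $a$) through some finite stage $F_n$, say via $g\colon a \to F_n$ with composite $a \xrightarrow{g} F_n \to a$ equal to $\id_a$; hence $a$ is a retract of $F_n$. It remains to arrange that $F_0$ and the cones of $f_0, \dots, f_{n-1}$ are \emph{finite} sums of objects of $\setofobjects$ rather than arbitrary ones. The point is that only finitely much of each infinite coproduct is ``seen'' by the maps into the compact object $a$ and by the finitely many compatibility constraints down the tower; so one can replace each infinite coproduct $\bigsqcup_\alpha \Sigma^{n_\alpha} s_\alpha$ appearing below level $n$ by a finite sub-coproduct without changing the relevant factorization. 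Formally, I would argue by downward induction on the level: the finite retraction data at level $n$ involves only finitely many summands of $F_n$; each such summand's attaching map comes from a cone at level $n-1$ built from finitely many (after pruning) summands of $F_{n-1}$; continue down to $F_0$. Once the tower below level $n$ is finite, we have $a \in \Loc(K)$ where $K \subset \setofobjects$ is the (finite) set of objects actually used, since a retract of an object built by finitely many triangles from finite sums of objects of $K$ lies in $\Loc(K)$ (localizing subcategories are closed under retracts by the Eilenberg swindle, as recorded in Definition~\ref{def locsubcat}).

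The main obstacle is the pruning step in the second part: one must be careful that replacing an infinite coproduct by a finite sub-coproduct is compatible all the way up and down the finite portion of the tower simultaneously, i.e.\ that the finitely many morphisms one needs to preserve ($\id_a$ factoring through $F_n$, and the connecting maps $f_i$ for $i < n$ restricted to the surviving summands) only involve finitely many indices in each coproduct. This is a finiteness-of-support argument that uses compactness of each $s \in \setofobjects$ repeatedly (so that $\Hom(s, \bigsqcup_\alpha t_\alpha) = \bigoplus_\alpha \Hom(s, t_\alpha)$, whence any single map out of $s$ hits only finitely many coordinates), combined with compactness of $a$. Everything else — the existence of the tower, the $\hocolim$ identification, closure under retracts — is routine once this finiteness is in place. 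I would present the pruning as a short lemma and otherwise defer to Rouquier's account for the diagram-chasing.
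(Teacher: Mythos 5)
The paper offers no proof of this statement; it is quoted directly from Rouquier (Thm.~4.22 and Prop.~3.13), so there is nothing internal to compare against. Your sketch is the standard cellular-tower argument underlying the cited results: the construction of the tower by taking coproducts over all maps from (suspensions of) generators, the identification $a \simeq \hocolim F_i$ via compactness of the generators together with the fact that the cone of $\hocolim F_i \to a$ lies in $\Loc(\setofobjects)\cap\setofobjects^{\bot}=0$, and the factorization of $\id_a$ through a finite stage when $a$ is compact are all correct as written.

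The one place where your outline is looser than the actual argument is the pruning step, which you rightly single out as the main obstacle. Since $F_n$ is an iterated extension rather than a coproduct, the pruning has to be organized around the triangles $F_i \to F_{i+1} \to \Sigma G_i$, and it is an induction on the length of the tower rather than a summand-by-summand restriction of fixed index sets. Given $a \to F_{i+1}$, compactness of $a$ lets you shrink $\Sigma G_i$ to a finite subcoproduct $\Sigma G_i'$ and factor $a \to F_{i+1}$ through $\operatorname{Cone}(G_i' \to F_i)$; but to descend one more level you must then factor a map from the \emph{enlarged} compact object $\operatorname{Cone}(\Sigma^{-1}a \to G_i')$ --- not just from $G_i'$ or from $a$ separately --- through a finitely built approximation of $F_i$. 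It is this enlargement of the test object at each step that makes the factorizations compatible up and down the tower; merely ``tracing which summands are hit'' does not obviously close up, because the lift $a \to \operatorname{Cone}(G_i' \to x_i)$ is not automatic once $F_i$ is replaced. Since you explicitly defer this diagram chase to Rouquier, exactly as the paper does, the proposal is acceptable, but if you were to write the pruning lemma yourself this is the point that needs the cone trick.
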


Given an object $a$ in a compactly generated localizing subcategory
$\Loc(\setofobjects)$, we will call any of the sequences provided by
Proposition~\ref{prop objincomLoc} a \emph{recipe} for $a$.
%
\begin{corollary}\label{cor LocXTomega}
  For $\setofobjects$ a set of compact objects, we have
\[
 \Loc(\setofobjects)^\omega = \Loc(\setofobjects) \cap \T^\omega,
\]
where the superscript $\omega$ stands for the full subcategory of compact objects.
\end{corollary}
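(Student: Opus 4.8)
The plan is to prove the two inclusions $\Loc(\setofobjects)\cap\T^\omega \subseteq \Loc(\setofobjects)^\omega$ and $\Loc(\setofobjects)^\omega \subseteq \Loc(\setofobjects)\cap\T^\omega$ separately; essentially all the work is in the second. The basic remark underlying everything is that $\Loc(\setofobjects)$, being a full triangulated subcategory of $\T$ stable under arbitrary sums, is itself a triangulated category with arbitrary sums, and that the inclusion $\Loc(\setofobjects)\hookrightarrow\T$ is full and preserves coproducts. Consequently, for $a\in\Loc(\setofobjects)$ the functors $\Hom_\T(a,-)$ and $\Hom_{\Loc(\setofobjects)}(a,-)$ agree on $\Loc(\setofobjects)$ and send a coproduct of objects of $\Loc(\setofobjects)$ to the same group; this immediately gives the first inclusion, since a $\T$-compact object lying in $\Loc(\setofobjects)$ is then also $\Loc(\setofobjects)$-compact.

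For the reverse inclusion the only point is to show that an object $a$ that is compact \emph{as an object of} $\Loc(\setofobjects)$ is already compact \emph{as an object of} $\T$. Here I would apply Proposition~\ref{prop objincomLoc} not to $\T$ but to the category $\Loc(\setofobjects)$ itself: by the remark above it admits arbitrary sums, the objects of $\setofobjects$ are compact in it, and it is generated as a localizing subcategory of itself by $\setofobjects$. Applying the second half of Proposition~\ref{prop objincomLoc} to $a\in\Loc(\setofobjects)^\omega$ therefore produces an integer $n$ and a recipe $F_0\to\cdots\to F_n$ in which $F_0$ and the cone of each map are \emph{finite} sums of suspensions of objects of $\setofobjects$, with $a$ a direct summand of $F_n$.

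It then remains to check that $F_n$, and hence its retract $a$, is compact in $\T$. This is the routine part: suspensions and finite sums of $\T$-compact objects are $\T$-compact; if two of the three vertices of a triangle in $\T$ are $\T$-compact then so is the third (compare the long exact sequences obtained by applying $\Hom_\T(-,\bigoplus_i x_i)$ and $\bigoplus_i\Hom_\T(-,x_i)$ to the triangle and use the five lemma, coproducts being exact in $\kat{Ab}$); and a direct summand of a $\T$-compact object is $\T$-compact. Since the objects of $\setofobjects$ are $\T$-compact by hypothesis, building $F_n$ out of them by finitely many suspensions, finite sums and cones stays inside $\T^\omega$, so $F_n\in\T^\omega$ and therefore $a\in\Loc(\setofobjects)\cap\T^\omega$. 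The one thing that needs care — and which I regard as the crux — is the legitimacy of running Proposition~\ref{prop objincomLoc} with $\Loc(\setofobjects)$ in the role of the ambient triangulated category; once that is granted, the statement follows from the standard permanence properties of compact objects.
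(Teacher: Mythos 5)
Your proof is correct and is exactly the intended deduction: the paper states this as an immediate corollary of Proposition~\ref{prop objincomLoc} without writing out the argument, and your route --- the easy inclusion via fullness and closure under coproducts, then applying the finite-recipe half of Proposition~\ref{prop objincomLoc} with $\Loc(\setofobjects)$ itself as the ambient compactly generated category and invoking thickness of $\T^\omega$ --- is the standard one. The point you flag as the crux (legitimacy of running the Proposition inside $\Loc(\setofobjects)$) is indeed the only thing to check, and you check it.
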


\subsubsection{Bousfield colocalizations}\label{subsubsec BousfieldLoc}

Recall that a \emph{Bousfield colocalization functor} in $\T$ is a pair
$(\Gamma,\eta)$, where $\Gamma : \T \rightarrow \T$ is an endofunctor, and $\eta: \Gamma  
\rightarrow \Id$ is a natural transformation such that $\Gamma\eta : \Gamma^2 \rightarrow \Gamma$
is an isomorphism, so $\Gamma$ is idempotent, and $\Gamma \eta = \eta \Gamma$.  A Bousfield colocalization functor on $\T\op$
is called a \emph{Bousfield localization functor}.

Consider also, for any full subcategory $\mathcal{C} \subset \T$ 
closed under suspension, its right and left orthogonal categories:
\begin{enumerate}
 \item $\mathcal{C}^\bot = \{ x \in \T \mid 
 \Hom(c,x)=0 \text{ for all } c \in \mathcal{C}\}$. This is always a colocalizing subcategory and $\mathcal{C}^\bot = \Loc(\mathcal{C})^\bot$. 
 \item ${}^\bot \mathcal{C} = \{ x \in \T \mid 
 \Hom(x,c)=0 \text{ for all } c \in \mathcal{C}\}$.  This is always a localizing subcategory and  ${}^\bot \mathcal{C} ={}^\bot \Coloc(\mathcal{C})$. 
\end{enumerate}

In the compactly generated case, which is enough for our purposes, Brown Representability \cite[Ch.8]{MR1812507}
ensures that both Bousfield localization and colocalization functors exist:

\begin{proposition}\label{prop  LOcCisBous}
Let   $\setofobjects$ be a set of compact
objects in $\T.$  Then there are both a Bousfield localization $L_\setofobjects$ and colocalization $\Gamma_\setofobjects$ functors associated to $\setofobjects$. The essential image of $\Gamma_\setofobjects$ is then $\Loc(\setofobjects)$ and every object $x \in \mathcal{T}$ fits into a localization triangle:
\[
\Gamma_\setofobjects (x) \longrightarrow x \longrightarrow L_\setofobjects (x) \longrightarrow \Sigma \Gamma_\setofobjects(x).
\]
Moreover in this case $\Loc (\setofobjects) = {}^\bot( \Loc (\setofobjects)^\bot)$.
\end{proposition}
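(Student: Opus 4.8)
The plan is to bootstrap everything from Brown Representability for the compactly generated category $\Loc(\setofobjects)$ and then run the standard orthogonal-decomposition machinery. First I would note that $\Loc(\setofobjects)$ is itself a compactly generated triangulated category: it is closed under arbitrary coproducts (being localizing), those coproducts are computed in $\T$, so each $c \in \setofobjects$, being compact in $\T$, stays compact in $\Loc(\setofobjects)$; and $\setofobjects$ is a set of generators of $\Loc(\setofobjects)$. Hence the inclusion $\iota\colon \Loc(\setofobjects) \hookrightarrow \T$ is a coproduct-preserving triangulated functor from a compactly generated category, so by Brown Representability \cite[Ch.~8]{MR1812507} it has a right adjoint $G$; composing with $\iota$ and retaining the counit, I get an endofunctor $\Gamma_\setofobjects := \iota G$ of $\T$ with a natural transformation $\eta\colon \Gamma_\setofobjects \to \Id_\T$, and $\Gamma_\setofobjects$ is triangulated because it is a right adjoint of a triangulated functor.

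Next, for each $x$ I would complete the counit to a triangle
\[
\Gamma_\setofobjects(x) \xrightarrow{\ \eta_x\ } x \xrightarrow{\ q_x\ } L_\setofobjects(x) \longrightarrow \Sigma\Gamma_\setofobjects(x),
\]
which defines $L_\setofobjects$ on objects and is the asserted localization triangle. The one point requiring verification is $L_\setofobjects(x) \in \Loc(\setofobjects)^\bot$: applying $\Hom_\T(c,-)$ for $c \in \setofobjects$ to the triangle and using the adjunction isomorphisms $\Hom_\T(c,\Gamma_\setofobjects(y)) \cong \Hom_\T(c,y)$ (and their suspensions) forces $\Hom_\T(c,L_\setofobjects(x)) = 0$, and since the right orthogonal of $\setofobjects$ coincides with that of $\Loc(\setofobjects)$ (recalled in \S\ref{subsubsec BousfieldLoc}) this is exactly membership in $\Loc(\setofobjects)^\bot$. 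Functoriality of $L_\setofobjects$ and of the whole triangle is the usual uniqueness argument: a morphism $x \to x'$ lifts uniquely to a morphism of triangles because the obstruction group $\Hom_\T(\Gamma_\setofobjects(x), L_\setofobjects(x'))$ vanishes, $\Gamma_\setofobjects(x)$ lying in $\Loc(\setofobjects)$ and $L_\setofobjects(x')$ in $\Loc(\setofobjects)^\bot$.

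Idempotency and the description of the essential image then follow formally from the orthogonality ($\Hom_\T(a,b) = 0$ whenever $a \in \Loc(\setofobjects)$ and $b \in \Loc(\setofobjects)^\bot$). For $y \in \Loc(\setofobjects)$ the triangle forces $L_\setofobjects(y) \in \Loc(\setofobjects) \cap \Loc(\setofobjects)^\bot = 0$, so $\eta_y$ is an isomorphism; thus $\Gamma_\setofobjects$ restricts to the identity on $\Loc(\setofobjects)$ and has essential image exactly $\Loc(\setofobjects)$. Dually, for $z \in \Loc(\setofobjects)^\bot$ the identity of $\Gamma_\setofobjects(z)$ lies in $\Hom_\T(\Gamma_\setofobjects(z),\Gamma_\setofobjects(z)) \cong \Hom_\T(\Gamma_\setofobjects(z),z) = 0$, so $\Gamma_\setofobjects(z) = 0$; applying the triangulated functor $\Gamma_\setofobjects$ to the localization triangle of an arbitrary $x$ and using $\Gamma_\setofobjects L_\setofobjects(x) = 0$ shows $\Gamma_\setofobjects(\eta_x)$ is an isomorphism, i.e. $\Gamma_\setofobjects\eta$ is invertible, and the same diagram yields $\Gamma_\setofobjects\eta = \eta\Gamma_\setofobjects$ and the idempotency of $L_\setofobjects$. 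So $(\Gamma_\setofobjects,\eta)$ is a Bousfield colocalization functor and $L_\setofobjects$ a Bousfield localization functor.

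Finally, for $\Loc(\setofobjects) = {}^\bot(\Loc(\setofobjects)^\bot)$ the inclusion $\subseteq$ is immediate from the definition of $\Loc(\setofobjects)^\bot$. For $\supseteq$ I would take $x$ with $\Hom_\T(x,y) = 0$ for all $y \in \Loc(\setofobjects)^\bot$ and apply $\Hom_\T(-,L_\setofobjects(x))$ to the localization triangle: the term $\Hom_\T(\Sigma\Gamma_\setofobjects(x),L_\setofobjects(x))$ vanishes by orthogonality and $\Hom_\T(x,L_\setofobjects(x))$ vanishes by hypothesis, so the identity of $L_\setofobjects(x)$ is zero, whence $L_\setofobjects(x) = 0$ and $x \cong \Gamma_\setofobjects(x) \in \Loc(\setofobjects)$. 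I do not expect a genuine obstacle: the only nontrivial input is Brown Representability supplying the right adjoint $\Gamma_\setofobjects$ (which is why the hypothesis that $\setofobjects$ be a \emph{set} of \emph{compact} objects matters), and after that everything is a diagram chase with orthogonal subcategories; the only places demanding care are the inheritance of compactness by the generators and the check that $L_\setofobjects$ lands in $\Loc(\setofobjects)^\bot$.
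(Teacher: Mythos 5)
Your argument is correct and is precisely the standard proof that the paper delegates to the literature: Proposition~\ref{prop LOcCisBous} is stated without proof, with a citation to Brown Representability in \cite[Ch.~8]{MR1812507}, and your route (right adjoint to the inclusion of the compactly generated subcategory $\Loc(\setofobjects)$, cone of the counit, orthogonality and the diagram chases for idempotency and for ${}^\bot(\Loc(\setofobjects)^\bot)=\Loc(\setofobjects)$) is exactly the one carried out there and in \cite{MR2681709}. No gaps; the only steps you compress (that $\setofobjects$ generates $\Loc(\setofobjects)$ in the technical sense, and that $\Gamma_\setofobjects\eta=\eta\Gamma_\setofobjects$ via the triangle identities) are routine.
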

Borrowing from the terminology in algebraic topology, we will call  the  Bousfield colocalization functor $\Gamma_\setofobjects$,   a \emph{cellularization} functor. For more properties of Bousfield (co)lo\-ca\-li\-za\-tions in triangulated categories, we refer the interested reader to 
\cite[Chapter 9]{MR1812507} or \cite{MR2681709}.

\subsubsection{Tensor triangulated categories}

Sometimes we will use a \emph{tensor} triangulated category, $(\T, \otimes, \mathbf{1})$, that is a
 symmetric monoidal structure $\otimes$ on $\mathcal{T}$, compatible with the triangulated structure, i.e.~tensoring with an
object is an exact, triangulated endofunctor of
$\T$. The unit of the tensor product will be denoted by $\mathbf{1}$.
For our main example, the derived category $D(R)$, the tensor product
is the {\em derived} tensor product, denoted plainly $\otimes$, and henceforth 
plainly called the tensor product.  Note that
the tensor product of two perfect complexes is again perfect, and that
the unit object $R$ is perfect; hence also $D^\omega(R)$ is a tensor
triangulated category.

\begin{definition}\label{def tens-ideal}
A full subcategory $I$ of  $(\T, \otimes, \mathbf{1})$, 
is a \emph{thick tensor ideal}, if it is 
\begin{enumerate}
 \item a full triangulated subcategory,
 \item closed under finite sums,
 \item thick: if $a\oplus b \in  I$ then $a \in I$ 
 and $b \in I$,
 \item absorbing for the tensor product: if $a \in I$ and 
 $b \in \T$ then $a \otimes b \in I$.
\end{enumerate}
\end{definition}


The following two observations follow from straightforward arguments.
\begin{lemma}\label{lem locistensideal}\label{lem tensideal}
Let $(\T, \otimes, \mathbf{1})$ be a tensor triangulated 
category such that $\Loc(\mathbf{1})=\T$.

\begin{enumerate}
  \item  Any localizing subcategory is a tensor ideal. 

  \item Let $x$, $y$ and $z$ be objects in $\T$. 
If $y \in \Loc(x)$, then $y \otimes z \in \Loc(x\otimes z)$.

\end{enumerate}

\end{lemma}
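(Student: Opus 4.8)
The plan is to prove both statements by the standard "generators-to-all-objects" technique: fix what we want to prove, observe that the class of objects for which it holds is a localizing subcategory, and check it contains a generating set. For part (1), let $\L \subset \T$ be localizing; we must show it is a tensor ideal, i.e.\ closed under tensoring by arbitrary objects of $\T$. Fix an object $a \in \L$ and consider the full subcategory $\mathcal{D}_a = \{ z \in \T \mid a \otimes z \in \L \}$. Since $- \otimes -$ is exact and triangulated in each variable and commutes with arbitrary sums, and since $\L$ is triangulated and closed under sums, $\mathcal{D}_a$ is a localizing subcategory of $\T$. Moreover $\mathbf{1} \in \mathcal{D}_a$ because $a \otimes \mathbf{1} \cong a \in \L$. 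Hence $\mathcal{D}_a \supseteq \Loc(\mathbf{1}) = \T$, so $a \otimes z \in \L$ for every $z \in \T$. Together with the already-required closure under finite sums, suspensions and triangles, and thickness (automatic for localizing subcategories by the Eilenberg swindle, as recalled in Definition \ref{def locsubcat}), this shows $\L$ satisfies all four conditions of Definition \ref{def tens-ideal}.

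For part (2), fix $x$ and $z$ and consider $\mathcal{E} = \{ y \in \T \mid y \otimes z \in \Loc(x \otimes z) \}$. Again, because tensoring with the fixed object $z$ is an exact triangulated functor commuting with arbitrary sums, $\mathcal{E}$ is a localizing subcategory of $\T$; and it obviously contains $x$, since $x \otimes z \in \Loc(x \otimes z)$. Therefore $\mathcal{E} \supseteq \Loc(x)$, which is exactly the assertion: if $y \in \Loc(x)$ then $y \otimes z \in \Loc(x \otimes z)$.

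The only point requiring any care — and the place where one should be slightly explicit rather than wave hands — is the verification that $\mathcal{D}_a$ (resp.\ $\mathcal{E}$) is genuinely \emph{localizing}: one needs that the endofunctor $a \otimes -$ (resp.\ $- \otimes z$) sends distinguished triangles to distinguished triangles and commutes with coproducts, which is precisely the compatibility of $\otimes$ with the triangulated structure built into the definition of a tensor triangulated category, plus the fact that a triangulated functor preserving coproducts pulls back a localizing subcategory to a localizing subcategory. Given that, both statements are immediate. There is no real obstacle here; this is the routine "the good objects form a localizing subcategory containing the generators" argument, which is why the excerpt calls them straightforward.
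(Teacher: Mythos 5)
Your proof is correct, and since the paper omits the argument entirely (it only remarks that the lemma ``follows from straightforward arguments''), your write-up simply supplies the standard details the authors had in mind: the class of good objects is localizing and contains the generator $\mathbf{1}$ (resp.\ $x$). The one caveat you rightly flag --- that $a\otimes-$ must commute with coproducts, which the paper's loose definition of tensor triangulated category does not literally state but which holds in all its examples --- is exactly the right place to be careful, so nothing is missing.
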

%
%
%

In presence of a tensor structure a Bousfield (co)localization functor has often a
very peculiar form, for a proof see for instance Balmer and Favi~\cite{MR2806103}:

\begin{theorem}\label{thm tensorloc}
Let $\mathcal{S}$ be a thick tensor ideal in $(\T,\otimes,\mathbf{1})$
 for which there are both a Bousfield localization  $L_\mathcal{S}$ and a Bousfield colocalization $\Gamma_{\mathcal{S}}$ functors, for instance $S$ is generated by compact objects.  Let
\[
\Gamma_\mathcal{S} (\mathbf{1}) \longrightarrow \mathbf{1}
\longrightarrow L_\mathcal{S}(\mathbf{1}) \longrightarrow
\Sigma \Gamma_\mathcal{S}( \mathbf{1})
\]
denote the localization triangle for the tensor unit.
Then the following are equivalent:
\begin{enumerate}
 \item The subcategory $\mathcal{S}^\bot$ is a tensor ideal.
\item There is an isomorphism of functors
$L_{\mathcal{S}} \simeq L_{\mathcal{S}}(\mathbf{1}) \otimes -$.
\item There is an isomorphism of functors 
$\Gamma_{\mathcal{S}} \simeq \Gamma_\mathcal{S}(\mathbf{1}) \otimes-$.
\end{enumerate}

\end{theorem}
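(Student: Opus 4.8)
The plan is to use the localization triangle for the tensor unit as a universal template, tensoring it with an arbitrary object. First I would fix $x \in \T$ and apply the exact functor $-\otimes x$ to $\Gamma_\mathcal{S}(\mathbf{1}) \to \mathbf{1} \to L_\mathcal{S}(\mathbf{1}) \to \Sigma\Gamma_\mathcal{S}(\mathbf{1})$, obtaining a distinguished triangle
\[
\Gamma_\mathcal{S}(\mathbf{1}) \otimes x \longrightarrow x \longrightarrow L_\mathcal{S}(\mathbf{1}) \otimes x \longrightarrow \Sigma\bigl(\Gamma_\mathcal{S}(\mathbf{1}) \otimes x\bigr).
\]
Since $\Gamma_\mathcal{S}(\mathbf{1}) \in \Loc(\mathcal{S})$ and $\Loc(\mathcal{S})$ is a tensor ideal (being generated by the tensor ideal $\mathcal{S}$), the left-hand term lies in $\Loc(\mathcal{S})$ for every $x$, with no extra hypothesis; the content of the theorem is thus exactly whether the right-hand term lands in $\mathcal{S}^\bot$.

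To get $(1)\Rightarrow(2)$ and $(1)\Rightarrow(3)$ at once, I would assume $\mathcal{S}^\bot$ is a tensor ideal. Then, since $L_\mathcal{S}(\mathbf{1}) \in \mathcal{S}^\bot$, also $L_\mathcal{S}(\mathbf{1}) \otimes x \in \mathcal{S}^\bot$, so the tensored triangle above \emph{is} a localization triangle for $x$. By uniqueness of localization triangles it is canonically isomorphic to $\Gamma_\mathcal{S}(x) \to x \to L_\mathcal{S}(x) \to \Sigma\Gamma_\mathcal{S}(x)$, which gives isomorphisms $\Gamma_\mathcal{S}(\mathbf{1}) \otimes x \cong \Gamma_\mathcal{S}(x)$ and $L_\mathcal{S}(\mathbf{1}) \otimes x \cong L_\mathcal{S}(x)$. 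To turn these pointwise isomorphisms into isomorphisms of functors, I would exhibit the comparison map directly: the morphism $\Gamma_\mathcal{S}(\mathbf{1}) \otimes x \to x$ obtained from $\eta_{\mathbf{1}} \otimes \id_x$ factors uniquely through $\eta_x \colon \Gamma_\mathcal{S}(x) \to x$, because $\Gamma_\mathcal{S}(\mathbf{1}) \otimes x \in \Loc(\mathcal{S})$ and $\Gamma_\mathcal{S}(x)$ is the universal approximation of $x$ by an object of $\Loc(\mathcal{S})$; this factorization is natural in $x$ by uniqueness, and it realizes the isomorphism found above. The statement for $L_\mathcal{S}$ then follows by passing to cones.

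For the converses I would use that $\mathcal{S}^\bot$ is precisely the essential image of $L_\mathcal{S}$, equivalently the full subcategory of objects $y$ with $\Gamma_\mathcal{S}(y) = 0$ (equivalently, the canonical map $y \to L_\mathcal{S}(y)$ an isomorphism), and that $\mathcal{S}^\bot$, being an orthogonal, is automatically triangulated and closed under products and retracts, so that only absorption under $\otimes$ remains to be verified. Take $y \in \mathcal{S}^\bot$ and $z \in \T$. Assuming $(3)$: $\Gamma_\mathcal{S}(y \otimes z) \cong \Gamma_\mathcal{S}(\mathbf{1}) \otimes y \otimes z \cong \Gamma_\mathcal{S}(y) \otimes z = 0$, hence $y \otimes z \in \mathcal{S}^\bot$. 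Assuming $(2)$: $y \cong L_\mathcal{S}(y) \cong L_\mathcal{S}(\mathbf{1}) \otimes y$, hence $y \otimes z \cong L_\mathcal{S}(\mathbf{1}) \otimes (y \otimes z) \cong L_\mathcal{S}(y \otimes z)$, which lies in the essential image of $L_\mathcal{S}$, i.e.\ in $\mathcal{S}^\bot$. Either way $\mathcal{S}^\bot$ is a tensor ideal, so that $(1)$, $(2)$ and $(3)$ are all equivalent.

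I expect the main obstacle to be the passage from pointwise isomorphisms to an isomorphism of \emph{functors} in the step $(1)\Rightarrow(2),(3)$: since $\Gamma_\mathcal{S}$ is only a Bousfield colocalization, a priori not compatible with $\otimes$, naturality cannot simply be read off the triangle and must be produced from the universal property — and it is precisely there that the hypothesis that $\mathcal{S}$ (hence $\Loc(\mathcal{S})$) absorbs $\otimes$ does the work. The remaining ingredients — uniqueness of the localization triangle and the identification of $\mathcal{S}^\bot$ with the essential image of $L_\mathcal{S}$ — are standard facts about Bousfield (co)localizations, which need only be cited.
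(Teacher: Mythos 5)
Your proof is correct. Note that the paper does not actually prove this theorem: it is quoted from Balmer--Favi \cite{MR2806103}, and your argument --- tensoring the idempotent triangle for $\mathbf{1}$ with $x$, observing that under (1) the result is again a localization triangle, and invoking uniqueness of such triangles together with the universal property of the counit $\Gamma_\mathcal{S}(x)\to x$ to get naturality --- is essentially the standard proof given there. The only point worth making explicit is that ``$\Loc(\mathcal{S})$ is a tensor ideal because $\mathcal{S}$ is'' uses that $-\otimes z$ preserves coproducts (so that $\{y \mid y\otimes z\in\Loc(\mathcal{S})\}$ is localizing); this is a standing assumption in the smashing-localization literature and holds in $D(R)$, but it is not spelled out in the paper's definition of tensor triangulated category.
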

\subsection{Frames and Hochster duality}\label{subsec Fr-Hoc-Sup}

In this subsection we recall some generalities on frames and
Hochster duality.  Our main reference for this material 
is Johnstone~\cite{Johnstone:Stone-spaces}.
 
\begin{definition}
  A {\em frame} is a complete lattice $F$
 in which finite
meets distribute over arbitrary joins: 
\[
\forall a \in F, \ \forall S \subset F, \qquad  a \meet \bigjoin_{s \in S} s = 
\bigjoin_{s \in S} (a \meet s) .
\]
A {\em frame map} is a lattice map required to preserve arbitrary joins.
Let $\kat{Frm}$ denote the category of frames and frame 
maps.
\end{definition}

The motivating example is the frame of open sets in a topological space.  There
the join operation is given by union of open sets, and
finite meet is given by intersection.  
Sending a topological space to its frame of open sets
constitutes a functor $\kat{Top}\to\kat{Frm}\op$.   This functor has a
right adjoint, the functor of \emph{points}: a {\em point} of a frame $F$ is a
frame map $x:F \to \{0,1\}$, and the set of points form a topological space in
which the open subsets are those of the form $\{x:F\to\{0,1\}\mid x(u) = 1\} $
for some $u\in F$.
The topological spaces occurring in this way are precisely the \emph{sober
spaces} (i.e.~every irreducible closed set has a unique generic point); these
include any Hausdorff space and the underlying topological space of any
scheme. 
Altogether the adjunction between topological spaces and frames restricts
to a contravariant equivalence between sober spaces and {\em spatial}
frames, i.e.~having enough points~\cite[II.1.5]{Johnstone:Stone-spaces}.

Topological spaces homeomorphic to the spectrum of a ring were called
\emph{spectral spaces} by Hochster~\cite{MR0251026}, now more commonly called
{\em coherent spaces}~\cite{Johnstone:Stone-spaces}.  Hochster~\cite{MR0251026}
characterized the spectral spaces intrinsically as the sober spaces for which
the quasi-compact open sets form a sub-lattice that is a basis for the topology.
A {\em spectral map} between spectral spaces is a continuous map for which the
inverse image of a quasi-compact open is quasi-compact.

The frame-theoretic counterpart of a spectral space is a coherent frame.  Recall
from \cite[II.3.1]{Johnstone:Stone-spaces} that an element $c$ in a frame $F$ is called
\emph{finite} if whenever we have $c \leq \bigjoin_{s \in S} s$ for some subset
$S\subset F$, then there exists a finite subset $K\subset S$ such that already
$c \leq \bigjoin_{s \in K} s$.  A frame is \emph{coherent} when every element
can be expressed as a join of finite elements, and the finite elements form a
sub-lattice.  This amounts to requiring that $1$ is finite and that the meet of
two finite elements is finite.  A coherent frame is spatial~\cite[Thm.~II.3.4]{Johnstone:Stone-spaces}. 
A frame map is called {\em coherent} if it
takes finite elements to finite elements.

A coherent frame $F$ can be reconstructed
from its sub-lattice $F^\omega$ of finite elements~\cite[Proposition
II.3.2]{Johnstone:Stone-spaces}: $F$ is canonically isomorphic
to the frame of {\em ideals} in the lattice $F^\omega$.  Recall that an 
{\em ideal} of a lattice is a down-set, closed under finite joins. 
%

The
functors `taking-ideals' (which amounts to join completion) 
and `taking-finite-elements' constitute an equivalence
of categories between distributive lattices and coherent frames (with coherent
maps)~\cite[Corollary II.3.3]{Johnstone:Stone-spaces}.
Altogether the relationships are summarized in the following theorem known as 
Stone duality:

\begin{theorem}[Stone 1939; Joyal~\cite{Joyal:NAMS1971}, 1971]
\label{spsp=cohfr=dlat}
  The category of spectral spaces and spectral maps is contravariantly
  equivalent to the category of coherent frames and coherent maps, which in turn
  is equivalent to the category of distributive lattices.
\end{theorem}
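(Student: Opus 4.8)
Since this is a classical result, the proof is essentially an assembly of the facts recalled just above. The plan is to establish the two stated equivalences separately and then compose them. The equivalence between coherent frames with coherent maps and distributive lattices is the purely order-theoretic half; I would realize it by the pair of functors sending a distributive lattice $L$ to its frame of ideals $\operatorname{Idl}(L)$ and a coherent frame $F$ to its sublattice $F^\omega$ of finite elements. Distributivity of $F^\omega$ is inherited from $F$; the finite elements of $\operatorname{Idl}(L)$ are exactly the principal ideals $\mathord{\downarrow}a$, so $a\mapsto\mathord{\downarrow}a$ is a lattice isomorphism $L\cong\operatorname{Idl}(L)^\omega$, while the reverse isomorphism $F\cong\operatorname{Idl}(F^\omega)$ is Proposition~II.3.2 of \cite{Johnstone:Stone-spaces}. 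On morphisms, a coherent frame map restricts to its finite parts, whereas a lattice map $f\colon L\to M$ extends to the frame map $\operatorname{Idl}(L)\to\operatorname{Idl}(M)$, $J\mapsto\mathord{\downarrow}f(J)$; I would then check functoriality and that the two assignments are mutually quasi-inverse --- this is Corollary~II.3.3 of \cite{Johnstone:Stone-spaces}.

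For the equivalence between spectral spaces with spectral maps and coherent frames with coherent maps, I would restrict the adjunction $\Omega\dashv\operatorname{pt}$ between $\kat{Top}$ and $\kat{Frm}\op$. By \cite[II.1.5]{Johnstone:Stone-spaces} it already restricts to an equivalence between sober spaces and spatial frames, so it remains only to match the relevant subcategories: spectral spaces with coherent frames, and spectral maps with coherent maps. The crucial bookkeeping observation is that for a spectral space $X$ an open set is quasi-compact if and only if it is a finite element of the frame $\Omega(X)$ --- immediate once one remarks that every open is the join of the quasi-compact opens below it. Hence the finite elements of $\Omega(X)$ are precisely the quasi-compact opens, and Hochster's characterization says these form a sublattice that is a basis, so $\Omega(X)$ is coherent. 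Conversely, a coherent frame $F$ is spatial by \cite[Thm.~II.3.4]{Johnstone:Stone-spaces}, hence $F\cong\Omega(\operatorname{pt}F)$; transporting the finite elements of $F$ across this isomorphism and using coherence, one sees that $\operatorname{pt}F$ is sober with a basis of quasi-compact opens closed under finite intersection, i.e.\ spectral. Finally, a continuous map $f\colon X\to Y$ of spectral spaces sends quasi-compact opens back to quasi-compact opens under $f^{-1}$ exactly when the frame map $\Omega(f)$ preserves finite elements, which is precisely the condition that it be coherent; so spectral maps correspond to coherent maps under the restricted equivalence.

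The step I expect to be the main obstacle is the identification, for a coherent frame $F$, of the finite elements of $F$ with the quasi-compact open subsets of the point space $\operatorname{pt}F$. One inclusion follows directly from spatiality, but for the other --- if the open set cut out by $u\in F$ is quasi-compact then $u$ is finite in $F$ --- one genuinely has to use that $F$ is coherent, hence has enough points to detect joins, so that a covering of $u$ by a family of elements is already witnessed at the level of points and can therefore be refined to a finite subcover. Once this is settled, the remainder is routine verification that the restricted functors land in the required subcategories and that the unit and counit of $\Omega\dashv\operatorname{pt}$ restrict to natural isomorphisms there, together with the analogous but easier bookkeeping on the lattice side.
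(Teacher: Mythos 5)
Your proposal is correct and matches what the paper intends: the theorem is stated as a classical result with the preceding paragraphs (the adjunction restricting to sober spaces and spatial frames, Johnstone's II.3.2 and II.3.3, and Hochster's characterization) serving as the proof, and you assemble exactly those ingredients. The only remark worth making is that your ``main obstacle'' is in fact immediate: quasi-compactness of an open set is literally finiteness in the frame of opens (joins there are unions), so once $F\cong\Omega(\operatorname{pt}F)$ the identification of finite elements with quasi-compact opens requires no further argument.
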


\subsubsection{Hochster duality}

For a spectral space $X$, Hochster~\cite{MR0251026}
con\-struc\-ted a new topology on $X$ by 
taking as basic open subsets the closed sets with quasi-compact complements.
This space $X^\vee$ is called the {\em Hochster dual} of $X$.
(When $X$ is a noetherian space, the Hochster open sets can also be characterized
as the specialization-closed subsets.)
He proved that $X^\vee$ is spectral again, and that $X^\vee{}^\vee \simeq X$.

Hochster duality becomes a triviality in the setting of distributive lattices 
and frames: under the equivalences of Theorem~\ref{spsp=cohfr=dlat}, a spectral space 
$X$ corresponds to a coherent frame $F$ (the frame of open sets in $X$) and to a 
distributive lattice $F^\omega$ (the finite elements in $F$, or equivalently,
the lattice of quasi-compact open sets in $X$).  Now the following definitions
match the topological ones:

\begin{definition}\label{def hochdualframe}
The Hochster dual of a distributive lattice is simply the opposite 
lattice. The Hochster dual of a coherent frame $F$ is the ideal
lattice of $(F^\omega)\op$, i.e.~its join completion (corresponding to the
way Hochster defined the dual by generating a topology from the closed sets
with quasi-compact complement).
\end{definition}

\subsubsection{Points}\label{subsubsec points}
%
%
%

The points of a frame $F$ correspond bijectively to
\emph{prime ideals} of $F$, that is, ideals $\mathcal I$ for which $ 1 \in
\mathcal{I}^c$ and ($a \meet b \in \mathcal{I} \Rightarrow a \in
\mathcal{I}$ or $ b \in \mathcal{I}$).  Namely, to a point $x: F \to 
\{0,1\}$ corresponds the prime ideal $x^{-1}(0)$.  In a frame, every prime 
ideal $\mathcal P$ is principal: the {\em generating element} is 
$u_{\mathcal P} := \bigjoin_{b \in \mathcal{P}} b$, then we have
$$
\mathcal{P} = \idealgen{u_{\mathcal P}} = \{ b \in F \mid b \leq  u_{\mathcal 
P}\}.
$$
%
A frame element generating a prime ideal is called a {\em prime element}.

It should be noted that for $F$ a coherent frame, the points of $F$ are in
natural bijection with the points of the Hochster dual frame $F^\vee$.

\subsection{The Zariski frame}

Let $R$ be a commutative ring.  The prime spectrum $\Spec_Z(R)$ (with the
Zariski topology) is by definition a spectral space.  The corresponding coherent
frame of open subsets of $\Spec_Z(R)$ is called the {\em Zariski frame} of $R$;
it can be described directly as the frame of radical ideals in $R$: the join of
a family of radical ideals is the radical of the ideal generated by their union,
and the bottom element is $\sqrt{0}$; the meet is intersection in the ring $R$,
and the top element is $\sqrt{1}=R$.  We denote this frame by $\RadId(R)$.  The
finite elements in $\RadId(R)$ are the radicals of finitely generated ideals.
These form the distributive lattice $\RadId(R)^\omega = \fgRadId(R)$, called the
\emph{Zariski lattice}.

%

Under the above correspondences between notions of point, the
points of the Zariski frame coincide with the usual points of $\Spec_Z(R)$,
the prime ideals of $R$. Precisely:

\begin{lemma}\label{lem pointHochster}
Given a frame-theoretic prime ideal $\mathcal{P} \subset \RadId(R)$,
its generating element $u_{\mathcal P} = \bigjoin_{I \in \mathcal{P}} I \in 
\RadId(R)$ is a prime ideal of the ring $R$.
Conversely, any ring-theoretic prime ideal $\mathfrak p \subset R$
defines a frame prime ideal $\{b\in \RadId(R) \mid b \subset \mathfrak p\}$.
\end{lemma}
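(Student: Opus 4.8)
The plan is to verify the two directions separately, in each case translating the primeness condition across the dictionary ``radical ideals $\leftrightarrow$ Zariski frame'', under which $\meet$ becomes intersection of ideals and the bottom element is the nilradical $\sqrt 0$. The one genuinely ring-theoretic input needed is the elementary identity $\sqrt{(ab)} = \sqrt{(a)}\cap\sqrt{(b)}$ for $a,b\in R$; this is exactly what makes multiplicative primeness of a ring ideal match $\meet$-primeness of a frame ideal.

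First I would treat a frame-prime ideal $\mathcal P\subset\RadId(R)$. As recalled above, such a $\mathcal P$ is principal: $\mathcal P = \idealgen{u_{\mathcal P}} = \{b\in\RadId(R)\mid b\subseteq u_{\mathcal P}\}$ with $u_{\mathcal P} = \bigjoin_{I\in\mathcal P}I\in\RadId(R)$, and since $1 = R\notin\mathcal P$ the radical ideal $u_{\mathcal P}$ is proper. Now suppose $ab\in u_{\mathcal P}$ for some $a,b\in R$; then $(ab)\subseteq u_{\mathcal P}$, hence $\sqrt{(ab)}\subseteq u_{\mathcal P}$ because $u_{\mathcal P}$ is radical, i.e.\ $\sqrt{(ab)}\in\mathcal P$. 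Rewriting $\sqrt{(ab)} = \sqrt{(a)}\meet\sqrt{(b)}$ and invoking primeness of $\mathcal P$ gives (say) $\sqrt{(a)}\in\mathcal P$, whence $a\in\sqrt{(a)}\subseteq u_{\mathcal P}$. Thus $u_{\mathcal P}$ is a prime ideal of $R$.

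Conversely, given a ring-prime ideal $\mathfrak p\subset R$, I would set $\mathcal P := \{b\in\RadId(R)\mid b\subseteq\mathfrak p\}$ and check directly that it is a frame-prime ideal. It is visibly a down-set; it is closed under finite joins since $b,c\subseteq\mathfrak p$ forces $b\join c = \sqrt{b+c}\subseteq\sqrt{\mathfrak p} = \mathfrak p$, and it contains $\sqrt 0$; it is proper because $R\not\subseteq\mathfrak p$. For the prime condition, if $a\meet b = a\cap b\subseteq\mathfrak p$ while $a\not\subseteq\mathfrak p$ and $b\not\subseteq\mathfrak p$, then choosing $x\in a\setminus\mathfrak p$ and $y\in b\setminus\mathfrak p$ yields $xy\in a\cap b\subseteq\mathfrak p$, contradicting primeness of $\mathfrak p$; so $a\subseteq\mathfrak p$ or $b\subseteq\mathfrak p$.

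There is no real obstacle here; the only points requiring attention are keeping the two meanings of ``prime'' apart and noting that the generating element of a frame-prime ideal of $\RadId(R)$ is proper. For completeness I would also record that the two assignments are mutually inverse: $\bigjoin\{b\in\RadId(R)\mid b\subseteq\mathfrak p\} = \mathfrak p$, since $\mathfrak p$ is itself radical and is the largest such $b$, while $\{b\mid b\subseteq u_{\mathcal P}\} = \mathcal P$ by principality; this gives the promised identification of the points of the Zariski frame with the prime ideals of $R$.
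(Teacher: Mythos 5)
Your proof is correct. The paper states Lemma~\ref{lem pointHochster} without proof, and your argument is exactly the expected one: the identity $\sqrt{(ab)}=\sqrt{(a)}\cap\sqrt{(b)}$ translates multiplicative primeness of the radical ideal $u_{\mathcal P}$ into $\meet$-primeness of the principal frame ideal it generates, and conversely; the checks that $\{b\mid b\subseteq\mathfrak p\}$ is a proper ideal of the frame and that the two assignments are mutually inverse are all handled correctly.
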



\begin{definition}\label{def classSupport}
(Joyal~\cite{Joyal:Cahiers1975}, 1975.)
A {\em support} for $R$ (with
values in a frame) is a pair $(F,d)$ where $F$ is a frame and $d$ is a map
$d: R \to F$ satisfying 
\begin{align*}
d(1)&=1 &  d(fg)&=d(f)\wedge d(g)\\
d(0)&=0 & d(f+g)&\leq d(f) \vee d(g).
\end{align*}
A {\em morphism of supports} is a frame map
compatible with the map from $R$.

The {\em Zariski support} is the frame of radical ideals, with the map
\begin{eqnarray*}
  R & \longrightarrow & \RadId(R)  \\
  f & \longmapsto & \sqrt{(f)} .
\end{eqnarray*}
\end{definition}

\begin{theorem}[Joyal~\cite{Joyal:Cahiers1975}, 1975]\label{thm JoyalSupIsinit}
For any commutative ring  the Zariski support is the  initial support.
\end{theorem}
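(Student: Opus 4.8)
The plan is to verify the universal property directly: given any support $(F,d)$ for $R$, I must produce a unique frame map $\phi\colon \RadId(R)\to F$ with $\phi(\sqrt{(f)})=d(f)$ for all $f\in R$. First I would observe that the axioms $d(fg)=d(f)\wedge d(g)$, $d(0)=0$, $d(f+g)\le d(f)\vee d(g)$ force the following: for a finitely generated ideal $I=(f_1,\dots,f_n)$ the only possible value is $\phi(\sqrt{I})=d(f_1)\vee\cdots\vee d(f_n)$, and one checks this is well-defined on radicals of finitely generated ideals, i.e. depends only on $\sqrt I$. The key point here is that if $g\in\sqrt{(f_1,\dots,f_n)}$, say $g^k=\sum a_i f_i$, then $d(g)=d(g^k)\le\bigvee_i d(a_i f_i)\le\bigvee_i d(f_i)$; conversely each $f_i\in\sqrt{(g)}$ when $(g)=\sqrt{(f_1,\ldots,f_n)}$, so the two joins coincide. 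This defines a monotone map $\fgRadId(R)\to F$, and one verifies it preserves finite meets (using $d(fg)=d(f)\wedge d(g)$ together with distributivity in $F$: $(\bigvee_i d(f_i))\wedge(\bigvee_j d(g_j))=\bigvee_{i,j}d(f_ig_j)$) and finite joins (immediate) and sends $\sqrt 0\mapsto 0$, $R\mapsto 1$; so it is a lattice homomorphism $\fgRadId(R)\to F$.

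Next I would invoke coherence: by the discussion in Section~\ref{subsec Fr-Hoc-Sup}, $\RadId(R)$ is the coherent frame obtained as the ideal completion of the distributive lattice $\fgRadId(R)$, and the universal property of ideal completion (equivalently, \cite[Corollary II.3.3]{Johnstone:Stone-spaces}) says that a lattice homomorphism out of $\fgRadId(R)$ into any frame $F$ extends uniquely to a frame map out of $\RadId(R)$. Concretely, an arbitrary radical ideal $J$ is the join of the finitely generated radical ideals it contains, $J=\bigvee\{\sqrt{(f_1,\dots,f_n)}\mid (f_1,\dots,f_n)\subseteq J\}$, and we are forced to set $\phi(J)=\bigvee_{f\in J}d(f)$; one checks this is the unique join-preserving extension and that it still preserves finite meets, using that finite meets distribute over arbitrary joins in $F$. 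Finally $\phi(\sqrt{(f)})=d(f)$ since $\sqrt{(f)}$ is already finite, so $(\RadId(R),f\mapsto\sqrt{(f)})\to(F,d)$ is a morphism of supports, and it is the unique one because its restriction to finite elements was forced and a frame map is determined by its values on a join-generating sub-lattice.

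The main obstacle, such as it is, is the well-definedness step — checking that $\phi$ depends only on the radical and is genuinely a lattice map into $F$ rather than merely a monotone map — since this is where the support axioms are actually used; the rest is a formal consequence of the coherence of $\RadId(R)$ established earlier. One should also take a little care that the extension $\phi$ preserves finite meets and not merely joins, but this follows from frame distributivity in $F$ combined with the identity $\phi(\sqrt I)\wedge\phi(\sqrt{I'})=\phi(\sqrt{II'})$ already verified on finite elements, passed to joins. No choice principles are needed, in keeping with the paper's point-free philosophy.
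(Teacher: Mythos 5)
Your proof is correct and follows exactly the route the paper has in mind: the paper cites Joyal and merely records the formula $J\mapsto\bigvee\{d(f)\mid f\in J\}$ together with the reduction to the distributive lattice $\fgRadId(R)$, and your write-up supplies the verification (well-definedness on radicals via $g^k=\sum a_if_i$, lattice-map axioms via distributivity, then the unique extension by coherence/ideal completion). It is also the same strategy the paper uses for the tensor-triangulated analogue in Theorem~\ref{thm ZarisInitialTria}, where the well-definedness step is handled by the slightly slicker observation that $\{g\mid d(g)\leq u\}$ is a radical ideal.
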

 
In other words, for any support $d:R\to F$, there is a unique frame map 
$\RadId(R) \to F$  making the following diagram commute:
$$\xymatrix{
R  \ar[rr]^d  \ar[rd]_{\sqrt{\phantom{i}}} && F    \\
&\RadId (R).\ar@{..>}[ru]_{\exists!}  }$$
This map is defined as $J\mapsto \bigvee \{d(f) \mid f\in J\}$.

Equivalently, this result can be formulated in terms of distributive lattices:
the initial distributive-lattice-valued support is the Zariski lattice
$\fgRadId(R)$.  In fact Joyal constructed this distributive lattice
syntactically by freely generating it by symbols $d(f)$ and dividing out by the
relations.  The syntactic Zariski lattice, often called the Joyal spectrum, is a
cornerstone in constructive commutative algebra (Hilbert's program), see for
example Coquand-Lombardi-Schuster~\cite{MR2595208} and the many references
therein.

\section{Localizing subcategories in $D(R)$}\label{sec LocsubcatDR}
Throughout this section we fix a commutative ring $R$, and we work in the
derived category $D(R)$.  We will write our complexes homologically:
differentials lower degree by one.

\subsection{Compactly generated localizing categories and ${\Spec_{H} R}$}

Recall that the compact objects in $D(R)$ are precisely the perfect complexes,
i.e.~quasi-isomorphic to bounded complexes of finitely generated projective 
modules, a.k.a.~strictly perfect complexes (for a precise definition of perfect complexes see \cite[Def. 2.2.10]{Thomason-Trobaugh} and for the relation to strictly perfect complexes see \cite[Thm. 2.4.3]{Thomason-Trobaugh}).
Since there is only a set of isomorphism classes of finitely
projective modules, there is only a set of compact objects in $D(R)$, up to
isomorphism.  As a consequence there is only a set of compactly generated
localizing subcategories in $D(R)$, and they are naturally ordered by inclusion.
The bottom element is clearly $\Loc(0) = \{0\}$ and the top element is $\Loc(R)
= D(R)$.  We wish now to understand this poset.

We first turn to a local description of these localizing subcategories. It
happens that in order to find nice parameterizing objects for the localizing
subcategories it is important not to stick to perfect complexes but also to
consider the non-compact objects in $D(R)$.

\subsubsection{Local structure of compactly generated localizing categories}

The following result due to Dwyer and Greenlees is an important 
building block in the present 
work.
\begin{proposition}[\cite{MR1879003}, Proposition~6.4] 
  \label{DG-6.4}
  If $I\subset R$ a finitely generated ideal then $\Loc(R/I) = \Loc(K(I))$,
  where $K(I)$ is any Koszul complex of the ideal $I$.
\end{proposition}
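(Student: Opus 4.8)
The plan is to establish the equality by two membership statements: it is enough to show $K(I)\in\Loc(R/I)$ and $R/I\in\Loc(K(I))$, since both sides are generated by a single object. (The first of these also takes care of the phrase ``\emph{any} Koszul complex'', as the answer $\Loc(R/I)$ visibly depends only on $I$.) Write $I=(f_1,\dots,f_n)$, so $K(I)=K(f_1)\otimes\cdots\otimes K(f_n)$ with $K(f)$ the two-term complex $R\xrightarrow{f}R$. For $K(I)\in\Loc(R/I)$ I would use that $K(I)$ is \emph{bounded} and that each of its finitely many homology modules $H_j(K(I))$ is killed by $I$ --- because multiplication by each $f_i$ is already null-homotopic on $K(f_i)$, hence on $K(I)$. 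Thus every $H_j(K(I))$ is an $R/I$-module; every $R/I$-module lies in $\Loc(R/I)$ (resolve it by free $R/I$-modules and write it as the homotopy colimit of the brutal truncations of the resolution, which are finite complexes of sums of copies of $R/I$); and $K(I)$, being bounded, is built from the shifted modules $H_j(K(I))$ by finitely many triangles, i.e.\ by its Postnikov tower. Hence $K(I)\in\Loc(R/I)$.

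The reverse inclusion $R/I\in\Loc(K(I))$ is the heart of the matter, and I would settle the one-element case $R/f\in\Loc(K(f))$ first. Consider the Bousfield localization $L_{K(f)}$ and colocalization $\Gamma_{K(f)}$, which exist by Proposition~\ref{prop LOcCisBous} since $K(f)$ is compact. Because $K(f)$ is a perfect complex it is strongly dualizable, with $K(f)^\vee\simeq\Sigma^{-1}K(f)$, and from this one reads off $\Loc(K(f))^\bot=\{\,Y\mid K(f)\otimes Y\simeq 0\,\}$, which in particular is a tensor ideal; Theorem~\ref{thm tensorloc} then yields $L_{K(f)}\simeq L_{K(f)}(R)\otimes(-)$. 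The key computation is $L_{K(f)}(R)=R[1/f]$: in the triangle $\operatorname{fib}(R\to R[1/f])\to R\to R[1/f]$ the fibre is, up to a shift, the homotopy colimit of the Koszul complexes $K(f^m)$ (the stable Koszul complex computing derived $f$-power torsion), and each $K(f^m)$ lies in $\Loc(K(f))$ by induction on $m$ --- the octahedral axiom applied to $R\xrightarrow{f}R\xrightarrow{f^{m-1}}R$ gives a triangle $K(f)\to K(f^m)\to K(f^{m-1})$; moreover $R[1/f]\in\Loc(K(f))^\bot$ since $f$ is a unit there, so $K(f)\otimes R[1/f]=(R[1/f]\xrightarrow{f}R[1/f])$ is acyclic. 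By uniqueness of the localization triangle this \emph{is} the localization triangle of $R$, so $L_{K(f)}(R)=R[1/f]$. Therefore $L_{K(f)}(R/f)=R/f\otimes R[1/f]=(R/f)[1/f]=0$, so $\Gamma_{K(f)}(R/f)\xrightarrow{\sim}R/f$, placing $R/f$ in the essential image of $\Gamma_{K(f)}$, which is $\Loc(K(f))$.

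For general $n$ I would induct. With $\bar R:=R/(f_1,\dots,f_{n-1})$ and $\bar f_n$ the image of $f_n$, the inductive hypothesis reads $\Loc(K(f_1,\dots,f_{n-1}))=\Loc(\bar R)$; tensoring with $K(f_n)$ and using Lemma~\ref{lem tensideal}(2) gives $\Loc(K(I))=\Loc(\bar R\otimes K(f_n))$. But $\bar R\otimes K(f_n)$ is the one-element Koszul complex $\bar R\xrightarrow{\bar f_n}\bar R$ of $\bar f_n$ inside $D(\bar R)$; the one-element case applied over $\bar R$ gives $\Loc_{D(\bar R)}(K_{\bar R}(\bar f_n))=\Loc_{D(\bar R)}(\bar R/\bar f_n)$, and transporting this equality along the restriction functor $D(\bar R)\to D(R)$ --- which is triangulated and preserves coproducts, hence sends a localizing subcategory of $D(\bar R)$ into any localizing subcategory of $D(R)$ containing the image of a generator --- yields $\Loc(\bar R\otimes K(f_n))=\Loc(\bar R/\bar f_n)=\Loc(R/I)$. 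In particular $R/I\in\Loc(K(I))$, and the proof is complete.

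I expect the one delicate point to be the one-element identification $L_{K(f)}(R)=R[1/f]$: it is precisely here that the non-compact object $R[1/f]$ and the compatibility of the tensor product with Bousfield localization (Theorem~\ref{thm tensorloc}) are indispensable --- and it is the reason the statement sits most naturally in the unbounded derived category. The rest is routine manipulation of localizing subcategories.
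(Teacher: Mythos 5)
Your argument is correct, but there is nothing in the paper to compare it against: the authors state Proposition~\ref{DG-6.4} and cite it from Dwyer--Greenlees \cite{MR1879003} without reproducing a proof. What you have written is therefore a self-contained substitute for that citation, and it holds together. The easy direction ($K(I)\in\Loc(R/I)$ via null-homotopy of multiplication by each $f_i$ on $K(f_i)$, the finite Postnikov tower of the bounded complex $K(I)$, and the fact that every $R/I$-module lies in $\Loc(R/I)$) is fine. The hard direction is handled by a Bousfield-localization computation that the paper itself performs later, in the proof of Proposition~\ref{prop orthOneGen}: your identification $L_{K(f)}(R)\simeq R_f$ via the stable Koszul complex $\hocolim_m K(f^m)\simeq(R\to R_f)$ is exactly the Dwyer--Greenlees computation quoted there, and your use of Theorem~\ref{thm tensorloc} to obtain $L_{K(f)}\simeq R_f\otimes(-)$ and hence $L_{K(f)}(R/f)=0$, so that $R/f$ is $\Gamma_{K(f)}$-colocal, is the intended mechanism. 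The induction on the number of generators via $\bar R\otimes K(f_n)$ and transport along restriction of scalars $D(\bar R)\to D(R)$ is also sound, since that functor is exact and coproduct-preserving and therefore carries generation statements in $D(\bar R)$ to generation statements in $D(R)$. Two cosmetic remarks: with the paper's grading convention $K(f)^\vee$ is $\Sigma K(f)$ rather than $\Sigma^{-1}K(f)$, which changes nothing; and you were right not to invoke Lemma~\ref{lem blongstoLR/I}, which would shortcut both inclusions but is itself a Dwyer--Greenlees result logically downstream of the statement you are proving.
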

Recall that the Koszul complex of an element
$f\in R$ is the perfect complex
\[
\xymatrix{
K(f) :  0 \ar[r] & R \ar[r]^f & R \ar[r] & 0 \\
}
\]
where the source of $f$ is in degree $0$. Given a finite family of elements 
$f_1, \dots, f_n$ in $R$, 
the Koszul complex of this family is by definition the complex
\[
K(f_1) \otimes \cdots \otimes K(f_n).
\]

A Koszul complex $K(I)$ for a finitely generated ideal $I$ is the Koszul complex
of any of its finite generating subsets.  Complexes of the form $R/I$ in fact 
abound in localizing subcategories as shown by the next result:

\begin{lemma}[\cite{MR2970877}]\label{lem cycquot}
Let $E$ be a chain complex and $I \subset R$ an ideal. 
Assume that there  is a chain map $f : E \to  R/I$ 
that induces an epimorphism in homology. Then $R/I \in  \Loc(E)$.
\end{lemma}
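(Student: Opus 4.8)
The plan is to reduce the statement to the special case $I=0$ --- where it asserts that a chain map $f\colon E\to R$ inducing a surjection on $H_0$ forces $R\in\Loc(E)$ --- and then to dispatch that case by a one-line retract argument. The naive attempt to argue directly, namely to lift $\operatorname{id}_{R/I}$ through $f_*\colon\Hom_{D(R)}(R/I,E)\to\Hom_{D(R)}(R/I,R/I)$ so as to exhibit $R/I$ as a retract of $E$, breaks down because $R/I$ is in general not a projective (hence not a ``nice'') object of $D(R)$; tensoring up to $D(R/I)$ is precisely what repairs this.

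So first I would carry out the reduction. Put $E' := E\otimes_R R/I$ (derived tensor product). Since $\Loc(R)=D(R)$, Lemma~\ref{lem locistensideal}(1) applies and the localizing subcategory $\Loc(E)\subseteq D(R)$ is a tensor ideal; hence $E'\in\Loc(E)$ and $\Loc(E')\subseteq\Loc(E)$, so it suffices to place $R/I$ inside $\Loc(E')$. Now regard $E'$ as an object of $D(R/I)$ via derived base change along $R\to R/I$. Under the adjunction (extension of scalars $\dashv$ restriction) the map $f$ corresponds to an $R/I$-linear map $f'\colon E'\to R/I$ with $f$ equal to the restriction of $f'$ precomposed with the unit $\eta_E\colon E\to E'$; applying $H_0$ yields $H_0(f)=H_0(f')\circ H_0(\eta_E)$, so surjectivity of $H_0(f)$ forces surjectivity of $H_0(f')\colon H_0(E')\to R/I$ --- crucially this never requires computing the (possibly complicated) module $H_0(E')$. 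Finally, since coproducts, mapping cones, suspensions and retracts in $D(R/I)$ are carried by the restriction functor to the corresponding constructions in $D(R)$, we get $\Loc_{D(R/I)}(E')\subseteq\Loc_{D(R)}(E')$; so it is enough to show $R/I\in\Loc_{D(R/I)}(E')$. Replacing the data $(R,E,f,I)$ by $(R/I,E',f',0)$, we are reduced to the case $I=0$.

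It then remains to show: if $f\colon E\to R$ has $H_0(f)\colon H_0(E)\to R$ surjective, then $R\in\Loc(E)$. Here the unit $R$ of $D(R)$ is compact, so $\Hom_{D(R)}(R,E)=H_0(E)$, and under this identification $f_*\colon\Hom_{D(R)}(R,E)\to\Hom_{D(R)}(R,R)=R$ is precisely $H_0(f)$. Surjectivity produces $s\colon R\to E$ with $f\circ s=\operatorname{id}_R$, exhibiting $R$ as a retract of $E$; as localizing subcategories are closed under retracts (Definition~\ref{def locsubcat}), $R\in\Loc(E)$.

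The step needing the most care is the reduction: one must be sure that derived base change along $R\to R/I$ interacts well both with $H_0$ and with the formation of localizing subcategories. The two observations that make this routine are (i) one only needs the factorization $H_0(f)=H_0(f')\circ H_0(\eta_E)$, not any description of $H_0(E')$, and (ii) passing from $D(R)$ to $D(R/I)$ merely shrinks the ambient category, so it can only shrink the generated localizing subcategory --- there is no danger of leaving it. (When $I$ is finitely generated one can moreover see $E'\in\Loc(E)$ completely concretely, as an iterated mapping cone of multiplication maps on $E$, via the Koszul description; the tensor-ideal argument above handles all $I$ uniformly.)
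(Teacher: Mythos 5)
Your proof is correct. Note that the paper does not actually prove this lemma --- it is imported wholesale from Kiessling \cite{MR2970877} --- so there is no in-paper argument to measure against; what you have written is a valid, self-contained substitute. Both pivots of your reduction check out: the passage to $E':=E\otimes^{\mathbf{L}}_R R/I$ stays inside $\Loc(E)$ because every localizing subcategory of $D(R)$ is a tensor ideal (Lemma~\ref{lem locistensideal}, applicable since $\Loc(R)=D(R)$), and the passage back from $D(R/I)$ to $D(R)$ is safe because restriction of scalars is exact and coproduct-preserving, so the preimage of $\Loc_{D(R)}(E')$ is a localizing subcategory of $D(R/I)$ containing $E'$. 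The adjunction bookkeeping ($H_0(f)=H_0(f')\circ H_0(\eta_E)$, hence $H_0(f')$ epi) is fine, and the endgame --- lifting $\operatorname{id}_{R/I}$ through $\Hom_{D(R/I)}(R/I,E')\cong H_0(E')\twoheadrightarrow R/I$ to exhibit $R/I$ as a retract of $E'$, then invoking closure of localizing subcategories under retracts (Definition~\ref{def locsubcat}) --- is exactly right. One cosmetic remark: what makes $\Hom_{D(S)}(S,Y)\cong H_0(Y)$ work is that $S$ is the unit (a projective generator), not its compactness, but nothing in your argument leans on that phrasing. Compared with the more hands-on arguments one typically sees for statements of this type (building $R/I$ from $E$ by explicit cones or telescopes), your base-change-to-where-the-target-becomes-the-unit trick is cleaner and handles arbitrary, not necessarily finitely generated, ideals uniformly.
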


We now build up to Proposition~\ref{thm comp-celleq-cyc} which generalizes
the Dwyer-Greenlees result by showing that in fact any
perfect complex $C$ is cellularly equivalent to some quotient $R/I$ for some
finitely generated ideal $I$.

\begin{proposition}\label{prop comp-cycquot}
Let $C$ be a complex such that its homology $H_ \ast(C)$ is a finitely generated $R$-module. Then there exist finitely many ideals $J_1,\dots,J_m$ in $R$ such that
$$\Loc(C) = \Loc(R/J_1, \dots, R/J_m).$$
\end{proposition}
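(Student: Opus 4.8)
The proof rests on two ingredients. The first is the module case: for a finitely generated $R$-module $M$ one has $\Loc(M)=\Loc(R/I_1,\dots,R/I_s)$ for suitable ideals $I_j$. Indeed, choosing generators $m_1,\dots,m_s$ of $M$ and setting $M_j=\langle m_1,\dots,m_j\rangle$ gives a filtration $0=M_0\subset M_1\subset\cdots\subset M_s=M$ with each $M_j/M_{j-1}$ cyclic, say $M_j/M_{j-1}\cong R/I_j$; the triangles $M_{j-1}\to M_j\to R/I_j\to\Sigma M_{j-1}$ yield $M\in\Loc(R/I_1,\dots,R/I_s)$, while the surjection $M_j\twoheadrightarrow M_j/M_{j-1}=R/I_j$ is a chain map which is onto in homology, so $R/I_j\in\Loc(M_j)$ by Lemma~\ref{lem cycquot}, and running through the same triangles from the top down shows $M_{j-1},R/I_j\in\Loc(M)$ for all $j$, hence equality.

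Now let $n_1<\cdots<n_k$ be the finitely many degrees in which the finitely generated module $H_*(C)$ is nonzero, so that each $H_{n_i}(C)$ is finitely generated. I would argue by induction on $k$, showing along the way that $H_{n_i}(C)[n_i]\in\Loc(C)$ for every $i$. The canonical truncation triangle $\tau_{\le n_1}C\to C\to\tau_{>n_1}C\to\Sigma\tau_{\le n_1}C$, in which $\tau_{\le n_1}C$ has homology concentrated in degree $n_1$ and is therefore isomorphic in $D(R)$ to $H_{n_1}(C)[n_1]$, gives $\Loc(C)\subseteq\Loc\bigl(H_{n_1}(C)[n_1],\tau_{>n_1}C\bigr)$; so once $H_{n_1}(C)[n_1]\in\Loc(C)$, also $\tau_{>n_1}C=\Cone\bigl(H_{n_1}(C)[n_1]\to C\bigr)\in\Loc(C)$, and $\Loc(C)=\Loc\bigl(H_{n_1}(C)[n_1],\tau_{>n_1}C\bigr)$, which is of the required form by the module case applied to $H_{n_1}(C)$ and the inductive hypothesis applied to $\tau_{>n_1}C$ (which has $k-1$ nonzero homology modules). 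The core step is thus to show $H_{n_1}(C)[n_1]\in\Loc(C)$. Filtering $M:=H_{n_1}(C)$ as above, I construct a tower $C=C_s\to C_{s-1}\to\cdots\to C_0$ with each $C_j\in\Loc(C)$, $H_{n_1}(C_j)=M_j$ and $H_i(C_j)=H_i(C)$ for $i\neq n_1$: given $C_{j+1}$, its homology lies in degrees $\ge n_1$, so $\tau_{\le n_1}C_{j+1}\simeq M_{j+1}[n_1]$ and the composite $C_{j+1}\to\tau_{\le n_1}C_{j+1}\simeq M_{j+1}[n_1]\to (M_{j+1}/M_j)[n_1]=(R/I_{j+1})[n_1]$ is onto in homology; by Lemma~\ref{lem cycquot} (after a shift) $(R/I_{j+1})[n_1]\in\Loc(C_{j+1})$, and letting $C_j$ complete this composite to a triangle we get $C_j\in\Loc\bigl(C_{j+1},(R/I_{j+1})[n_1]\bigr)=\Loc(C_{j+1})$, while the long exact homology sequence identifies $H_{n_1}(C_j)$ with $\ker(M_{j+1}\to M_{j+1}/M_j)=M_j$ and leaves the rest of the homology unchanged. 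In the course of this construction each $R/I_{j}$ is exhibited as an object of $\Loc(C)$, so by the module case $H_{n_1}(C)[n_1]\in\Loc(R/I_1,\dots,R/I_s)\subseteq\Loc(C)$, as wanted.

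Two points will require care. Lemma~\ref{lem cycquot} is stated for chain maps, but the maps above live in $D(R)$; this is harmless, since replacing the source by a $K$-projective complex (which does not alter the generated localizing subcategory) turns any $D(R)$-morphism to a module into an honest chain map, so the lemma applies after shifting by $-n_1$. The genuine technical heart is the homological bookkeeping for the tower: one must verify that passing to the fibre $C_j$ cuts the bottom homology down from $M_{j+1}$ to the submodule $M_j$ \emph{without creating homology in degree $n_1-1$} and without disturbing the higher homology, which is precisely where surjectivity in homology of the chosen map $C_{j+1}\to(R/I_{j+1})[n_1]$ enters, through the connecting homomorphisms.
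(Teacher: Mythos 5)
Your proof is correct and follows essentially the same strategy as the paper's: both peel off cyclic quotients of the lowest nonvanishing homology module via Lemma~\ref{lem cycquot} and the associated fibre triangles, the only difference being that you organize this as a double induction (on the number of nonzero homology degrees and then on the number of generators of the bottom module, after isolating the module case) where the paper runs a single induction on the total number of homogeneous generators of $H_*(C)$. Your explicit check that forming the fibre creates no homology in degree $n_1-1$ is exactly the content of the paper's ``direct inspection'' of $H_*(C')$, and your identification of the new bottom homology with the kernel $M_j$ is in fact more precise than the formula displayed in the paper at that point.
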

\begin{proof}
We argue by induction on the minimal number $d$ of homogenous generators of
$H_\ast C$.  If $d=0$, $C$ is acyclic, hence quasi-isomorphic to the $0$ complex
and $J=R$ is the desired ideal.

If $d=1$, then $C$ is quasi-isomorphic to a cyclic module.  Indeed, without loss
of generality we may assume that the only non-zero homology module is in degree
$0$.  We then have zig-zag of quasi-isomorphisms:
\[
\xymatrix{
\cdots \ar[r] & C_1 \ar[r]  & C_0 \ar[r] & C_{-1} \ar[r] & \cdots \\
\cdots \ar[r]  &C_1 \ar[r] \ar@{=}[u] \ar[d] & Z_0 \ar@{^(->}[u] \ar[r] 1\ar@{->>}[d] & 0 \ar[u] \ar@{=}[d] \ar[r] & \cdots \\
\cdots \ar[r] & 0 \ar[r] & H_0C \ar[r] & 0 \ar[r] & \cdots 
}
\]
and since in $D(R)$ we have $C \simeq H_0 C \simeq R/J$ for some ideal $J$, in this case the Proposition is trivial.

Assume that the Proposition has been proved for all complexes with homology generated by less that $d \geq 1$ homogeneous generators. Let $C$ be a complex with homology generated by $d+1$ homogeneous generators $x_1, \cdots, x_{d+1}$, which we may assume are ordered in decreasing homological degree. 
Without loss of generality we may also assume that $x_{d+1}$ is in degree $0$.
As  in the case $d=1$  we may also assume that $C$ is in fact zero in degree $< 0$. If $x_{j}, \dots ,x_{d+1}$ are the generators of $H_0 C$, denote by $D$ the submodule generated by $x_{j}, \dots, x_{d}$. Then we have a chain map, which by construction is an epimorphism in homology:
\[
\xymatrix{
\cdots \ar[r] &C_1 \ar[r]  \ar[d] & C_0  \ar[r] \ar@{->>}[d] & 0  \ar[d] \ar[r] & \cdots \\
\cdots \ar[r] & 0 \ar[r] \ar[d] & H_0C \ar@{->>}[d] \ar[r] &  0 \ar[d] \ar[r] & \cdots  \\
\cdots \ar[r] & 0  \ar[r] & H_0 C/D \simeq R/J_{d+1} \ar[r] & 0 \ar[r] & \cdots 
}
\]
Let $C' =  \ker(C \rightarrow R/J_{m+1})$. By direct inspection, the homology of the complex $C'$ is given by $H_{n} C' = H_{n} C$ if $n \neq 0$, and $H_0 C' = H_0 C/D = R/J_{d+1}$, in particular $H_{\ast} C'$ is generated by $d$ elements. The short exact sequence of complexes 
\[
\xymatrix{ 0 \ar[r] & C' \ar[r] & C \ar[r] & R/J_{d+1} \ar[r]  & 0}
\]
induces a triangle
\[
\xymatrix{
C' \ar[r] & C \ar[r] & R/J_{d+1} \ar[r] & \Sigma C',
}
\]
where the middle arrow is  surjective in homology by construction.
From Lemma~\ref{lem cycquot} we conclude that $R/J_{d+1} \in \Loc(C)$, 
and from the triangle that $C' \in \Loc(C)$. In particular 
$\Loc(C',R/J_{d+1} ) \subset \Loc(C)$. Conversely, the triangle 
shows that $\Loc(C',R/J_{d+1} ) \supset \Loc(C)$ and we conclude by
applying the induction hypothesis to the complex $C'$.
\end{proof}
Another important result by Dwyer--Greenlees provides a characterization
of the complexes in $\Loc(R/I)$:

\begin{lemma}[\cite{MR1879003}, Proposition 6.12]\label{lem blongstoLR/I}
Let $I \subset R$ be a finitely generated ideal.
Then a complex $E$ belongs to $\Loc(R/I)$ if and only if
for any $x \in H_*(E)$ there exists an 
integer $p \in \N$ such that $I^p\cdot x = 0$.
\end{lemma}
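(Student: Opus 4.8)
The plan is to prove the two inclusions separately, the easy one by a closure argument and the hard one by exhibiting an explicit recipe. First, for the \emph{only if} direction, I would observe that the class of complexes $E$ satisfying the torsion condition --- for every $x \in H_*(E)$ there is $p$ with $I^p x = 0$ --- is itself a localizing subcategory. Indeed it is closed under suspension trivially; it is closed under arbitrary sums since a homology class of a sum involves only finitely many summands; and it is closed under extensions (triangles) by a short diagram chase on the long exact homology sequence: if $x \in H_*(E)$ maps to a class killed by $I^p$ in the quotient term, then $I^p x$ lifts to the sub-term, where it is killed by some $I^q$, so $I^{p+q} x = 0$. Since $R/I$ visibly lies in this subcategory (every element of $R/I$ is killed by $I$), and $\Loc(R/I)$ is the smallest localizing subcategory containing $R/I$, the inclusion $\Loc(R/I) \subset \{E : E \text{ is } I\text{-torsion}\}$ follows. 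Here I would be using, implicitly, that $H_*$ of a homotopy colimit along the maps $F_i$ of a recipe is the colimit of the $H_*(F_i)$, together with the recipe description of Proposition~\ref{prop objincomLoc}; alternatively one argues purely by the minimality of $\Loc(R/I)$ and the closure properties just listed, which is cleaner.

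For the \emph{if} direction --- the substantive one --- suppose $E$ is $I$-torsion; I must build $E$ out of $R/I$. The natural strategy is to use the cellularization functor $\Gamma_{R/I}$ from Proposition~\ref{prop LOcCisBous} (equivalently $\Gamma_{K(I)}$, since $\Loc(R/I) = \Loc(K(I))$ by Proposition~\ref{DG-6.4} and $K(I)$ is compact, so the Bousfield functors exist). One has the localization triangle $\Gamma_{R/I}(E) \to E \to L_{R/I}(E) \to \Sigma\Gamma_{R/I}(E)$, and it suffices to show $L_{R/I}(E) = 0$, i.e.\ that $E \in {}^\bot(\Loc(R/I)^\bot)$, which by Proposition~\ref{prop LOcCisBous} equals $\Loc(R/I)$. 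So the task reduces to: \emph{if $E$ is $I$-torsion and $\Hom(R/I, W)$-acyclic conditions hold... } --- more precisely, to showing that if $E$ is $I$-torsion then $\Hom_{D(R)}(E, W) = 0$ for every $W \in \Loc(R/I)^\bot$. Now $W \in \Loc(R/I)^\bot = \Loc(K(I))^\bot$ means $\Hom(K(I), \Sigma^n W) = 0$ for all $n$; for a single element $f$, $\Hom(K(f), W) = 0$ says multiplication by $f$ is an isomorphism on $H_*(W)$, so $H_*(W)$ is a module over $R[f^{-1}]$, and iterating over the generators of $I$, $H_*(W)$ is a module on which every element of $I$ acts invertibly. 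On the other hand $H_*(E)$ is $I$-torsion. The hard part will be upgrading this statement about homology --- that $\Hom_{R}(I\text{-torsion}, I\text{-local}) = 0$ at the level of modules --- to a statement about $\Hom$ in the derived category between $E$ and $W$, which requires a spectral sequence (the universal coefficient / hyperext spectral sequence $\mathrm{Ext}^{p}_R(H_*E, H_*W) \Rightarrow \Hom_{D(R)}(E, \Sigma^? W)$ does not converge in general for unbounded complexes) or a more careful filtration argument.

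Because of that convergence issue, I expect the cleanest route is actually the reverse of the above: rather than killing $L_{R/I}(E)$ abstractly, I would show directly that an $I$-torsion complex $E$ admits a recipe in the sense following Proposition~\ref{prop objincomLoc}, built from copies of $R/I$ (or of $K(I)$), using Lemma~\ref{lem cycquot} as the engine --- at each stage pick homology classes of the current complex, map a sum of suspensions of $R/I$ onto them (possible since the classes are $I$-torsion, so each generates a copy of a cyclic $R/I^{p}$-module, which itself lies in $\Loc(R/I)$ by a finite filtration $R/I \subset R/I^2 \subset \cdots$ — wait, one needs $R/I^p \in \Loc(R/I)$, which follows from the short exact sequences $0 \to I^k/I^{k+1} \to R/I^{k+1} \to R/I^k \to 0$ and the fact that $I^k/I^{k+1}$ is a finitely generated $R/I$-module, hence in $\Loc(R/I)$), form the homotopy cofiber to strip off those classes, and iterate transfinitely; the homotopy colimit is then $E$ and lies in $\Loc(R/I)$ by construction. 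The main obstacle throughout is bookkeeping the unboundedness: one must organize the killing of homology classes so that the transfinite (or merely $\omega$-indexed, if $H_*E$ is suitably small) sequence of cofiber maps actually converges to $E$ and not merely to a complex with the same homology in a range. This is where invoking the recipe machinery of Proposition~\ref{prop objincomLoc} in the contrapositive --- characterizing membership via vanishing against $\Loc(R/I)^\bot$ --- together with a direct homology computation of what $\Loc(R/I)^\bot$ is (namely the $I$-local complexes), gives the shortest correct argument, deferring the convergence subtleties to the already-established Brown representability package.
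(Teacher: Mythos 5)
The paper offers no proof of this lemma: it is quoted verbatim from Dwyer--Greenlees \cite{MR1879003}, Proposition~6.12. So the only question is whether your blind argument is complete. Your \emph{only if} direction is correct: the $I$-power-torsion complexes form a localizing subcategory (the long-exact-sequence chase for triangles and the finiteness of homology classes in a coproduct are exactly right), and it contains $R/I$, hence contains $\Loc(R/I)$ by minimality.

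The \emph{if} direction, however, is not a proof: you lay out two strategies, correctly flag the obstruction to each (non-convergence of the hyperext spectral sequence for unbounded complexes; convergence of the transfinite cell attachment to $E$ itself), and then defer to a combination of the two without resolving either. Worse, the fallback you settle on rests on a false identification of $\Loc(R/I)^\bot$: for non-principal $I=(f_1,\dots,f_n)$ it is \emph{not} the class of complexes on which every element of $I$ acts invertibly in homology. By the paper's own Theorem~\ref{thm orthfinitgen}, $\Loc(R/I)^\bot=\Loc(R_{f_1},\dots,R_{f_n})$, which contains $R_{f_1}$, on which $f_2$ need not act invertibly (take $R=\Z$, $I=(2,3)$). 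Even granting the correct description, $\{E\}^\bot$ is colocalizing but not localizing, so checking $\Hom(E,R_{f_i})=0$ would not propagate to all of $\Loc(R_{f_1},\dots,R_{f_n})$. The repair is the one Dwyer--Greenlees actually use, and which sidesteps both of your obstructions: since $\Loc(R/I)$ is a compactly generated tensor ideal, Theorem~\ref{thm tensorloc} gives $L_{R/I}\simeq L_{R/I}(R)\otimes-$, and $L_{R/I}(R)$ is the finite \v{C}ech complex $R_{f_1}\oplus\cdots\oplus R_{f_n}\to\cdots\to R_{f_1\cdots f_n}$ of \emph{flat} modules (as in the proof of Theorem~\ref{thm orthfinitgen}). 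If $H_*(E)$ is elementwise $I$-power-torsion, then $H_*(E)_g=0$ for every product $g$ of the generators, so each term $R_g\otimes E$ is acyclic; a finite induction on the (bounded!) filtration of the \v{C}ech complex then gives $L_{R/I}(E)=0$, whence $E\simeq\Gamma_{R/I}(E)\in\Loc(R/I)$. This replaces your unbounded spectral sequence by a bounded flat resolution, which is exactly where the convergence problem disappears.
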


\begin{corollary}\label{cor inc}
For any two finitely generated ideals $I,  J$ in $R$  we have
  $$
\sqrt I \subset \sqrt  J   \Longleftrightarrow \Loc(R/J) \subset \Loc(R/I).
  $$
\end{corollary}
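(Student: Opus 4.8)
The plan is to establish the biconditional by proving the two implications separately, exploiting Lemma~\ref{lem blongstoLR/I} as the main tool in both directions.

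For the implication $\sqrt{I}\subset\sqrt{J}\Rightarrow\Loc(R/J)\subset\Loc(R/I)$, it suffices, since $\Loc(R/J)$ is by definition the smallest localizing subcategory containing $R/J$, to show that $R/J$ itself belongs to $\Loc(R/I)$. By Lemma~\ref{lem blongstoLR/I} applied to the complex $E=R/J$, this amounts to checking that every element $x\in H_*(R/J)=R/J$ is annihilated by some power $I^p$. But $\sqrt{I}\subset\sqrt{J}$ means that each generator $f_1,\dots,f_n$ of the finitely generated ideal $I$ satisfies $f_i\in\sqrt{J}$, so $f_i^{m_i}\in J$ for suitable $m_i$; taking $p$ large enough (e.g.\ $p\geq\sum(m_i-1)+1$) forces every generator of $I^p$ to lie in $J$, hence $I^p\cdot(R/J)=0$. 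This gives $R/J\in\Loc(R/I)$, and therefore $\Loc(R/J)\subset\Loc(R/I)$.

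For the converse $\Loc(R/J)\subset\Loc(R/I)\Rightarrow\sqrt{I}\subset\sqrt{J}$, I would argue contrapositively. Suppose $\sqrt{I}\not\subset\sqrt{J}$, so there is an element $f\in I$ with $f\notin\sqrt{J}$, i.e.\ no power of $f$ lies in $J$. Consider the element $x=1\in R/J=H_0(R/J)$; for any $p$, the submodule $I^p\cdot x$ contains $f^p\cdot 1\neq 0$ in $R/J$, so $I^p\cdot x\neq 0$ for all $p$. By Lemma~\ref{lem blongstoLR/I} this shows $R/J\notin\Loc(R/I)$, contradicting $\Loc(R/J)\subset\Loc(R/I)$ (which contains $R/J$). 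Hence $\sqrt{I}\subset\sqrt{J}$.

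I expect the proof to be essentially routine given Lemma~\ref{lem blongstoLR/I}: the only point requiring mild care is the bookkeeping of exponents in the first direction (translating $f_i\in\sqrt{J}$ into $I^p\subset J$), and making sure in the second direction that one correctly identifies a nonzero torsion-obstructing element. The one subtlety worth flagging is that the statement is about radicals of \emph{finitely generated} ideals, which is exactly the hypothesis under which Lemma~\ref{lem blongstoLR/I} applies, so no additional finiteness argument is needed.
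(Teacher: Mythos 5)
Your proof is correct and follows essentially the same route as the paper, which proves the statement as a one-line chain of equivalences via Lemma~\ref{lem blongstoLR/I}: $\Loc(R/J)\subset\Loc(R/I)$ iff $R/J$ is $I$-torsion iff $I^n\subset J$ for some $n$ iff $\sqrt I\subset\sqrt J$. You merely make explicit the exponent bookkeeping and the witness element $1\in R/J$ that the paper leaves implicit.
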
 
\begin{proof}
  According to Lemma~\ref{lem blongstoLR/I}, $\Loc(R/J) \subset \Loc(R/I)$ if
  and only if $R/J$ is an $I$-torsion complex, and this happens if and only if
  $\exists n \in \N$ such that $I^n \subset J$ and hence if and only if $\sqrt
  I= \sqrt{I^n} \subset \sqrt J$.
\end{proof}


\begin{corollary}\label{cor rad}
  For finitely generated ideals $I$ and $J$, we have $\sqrt I = \sqrt J$
  if and only if $\Loc(R/I) = \Loc(R/J)$.
\end{corollary}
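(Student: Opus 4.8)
The plan is to deduce this immediately from the preceding Corollary~\ref{cor inc}, which already characterizes each of the two possible inclusions between $\Loc(R/I)$ and $\Loc(R/J)$ in terms of the reverse inclusion of radicals. First I would write the equality $\sqrt I = \sqrt J$ as the conjunction of $\sqrt I \subset \sqrt J$ and $\sqrt J \subset \sqrt I$. Applying Corollary~\ref{cor inc} to the first inclusion yields $\Loc(R/J) \subset \Loc(R/I)$, and applying it to the second yields $\Loc(R/I) \subset \Loc(R/J)$; together these say exactly $\Loc(R/I) = \Loc(R/J)$. Since each instance of Corollary~\ref{cor inc} is a genuine ``if and only if'', the converse implication is obtained simply by reading the same pair of equivalences backwards.

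There is essentially no obstacle: the mathematical content sits entirely in the Dwyer--Greenlees torsion characterization (Lemma~\ref{lem blongstoLR/I}) and in Corollary~\ref{cor inc}, and the present statement is just the antisymmetry of that order-reversing correspondence between radicals of finitely generated ideals and the corresponding localizing subcategories. If a self-contained phrasing were wanted instead, one could unwind the definitions once more and observe that $R/I \in \Loc(R/J)$ holds iff $J^n \subset I$ for some $n \in \N$ (by Lemma~\ref{lem blongstoLR/I}), so that $\Loc(R/I) = \Loc(R/J)$ amounts to mutual such containments, which is precisely the statement $\sqrt I = \sqrt J$; but the two-fold appeal to Corollary~\ref{cor inc} is the most economical route and is the one I would write down.
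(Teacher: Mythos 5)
Your proposal is correct and is exactly the argument the paper intends: the corollary is stated without proof precisely because it follows by applying the biconditional of Corollary~\ref{cor inc} to both inclusions $\sqrt I \subset \sqrt J$ and $\sqrt J \subset \sqrt I$. Nothing further is needed.
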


\begin{corollary}\label{cor contain-I}
  Let $I$  be a finitely generated ideal in $R$, then 
  $$
  \Loc( R/J \mid J \supset I, J \text{ fin.~gen.}) = \Loc(R/I) .
  $$
\end{corollary}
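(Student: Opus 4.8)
The plan is to prove the two inclusions separately, both of which follow almost immediately from Corollary~\ref{cor inc}. For the inclusion $\Loc(R/I) \subseteq \Loc(R/J \mid J \supseteq I, J \text{ fin.~gen.})$, observe that $I$ itself is finitely generated and satisfies $I \supseteq I$, so $R/I$ is literally one of the generators on the right-hand side; hence the localizing subcategory it generates is contained in the right-hand side.

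For the reverse inclusion, I would argue that each individual generator $R/J$ on the right already lies in $\Loc(R/I)$. Indeed, if $J$ is finitely generated with $J \supseteq I$, then $\sqrt{I} \subseteq \sqrt{J}$, and Corollary~\ref{cor inc} gives $\Loc(R/J) \subseteq \Loc(R/I)$. Since every generator of the left-hand side lies in $\Loc(R/I)$, and $\Loc(R/I)$ is itself a localizing subcategory, the subcategory generated by all these $R/J$ is contained in $\Loc(R/I)$. (Alternatively, one can bypass Corollary~\ref{cor inc} and invoke Lemma~\ref{lem blongstoLR/I} directly: since $I \subseteq J$, one has $I \cdot (R/J) = 0$, so $R/J$ is trivially $I$-torsion and therefore belongs to $\Loc(R/I)$.)

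There is no real obstacle here: the statement is a formal consequence of the torsion characterization of membership in $\Loc(R/I)$ established earlier, packaged through Corollary~\ref{cor inc}. The only point worth spelling out is that the set of ideals $J \supseteq I$ that are finitely generated is non-empty and in fact contains $I$, which is what makes the first inclusion work. The proof is therefore a couple of lines.

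\begin{proof}
  Since $I$ is itself finitely generated, $R/I$ occurs among the generators on
  the left, whence $\Loc(R/I) \subseteq \Loc(R/J \mid J \supseteq I, J \text{
  fin.~gen.})$. Conversely, if $J$ is finitely generated with $J \supseteq I$,
  then $\sqrt I \subseteq \sqrt J$, so $\Loc(R/J) \subseteq \Loc(R/I)$ by
  Corollary~\ref{cor inc}. Thus every generator $R/J$ lies in $\Loc(R/I)$, and
  therefore so does the localizing subcategory they generate. Combining the two
  inclusions gives the claim.
\end{proof}
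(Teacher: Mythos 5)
Your proof is correct, and it is exactly the argument the paper has in mind: the corollary is stated without proof as an immediate consequence of Corollary~\ref{cor inc} (equivalently, of the torsion criterion in Lemma~\ref{lem blongstoLR/I}), which is precisely how you derive it. Both inclusions are justified properly, so nothing further is needed.
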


In \cite[Proposition 6.11]{MR1879003}, Dwyer and Greenlees show
that the cellularization of a module $M$ with respect to $R/I$ computes the
$I$-local cohomology of $M$. In particular, Corollary~\ref{cor rad} has the
following well-known interpretation in terms of local cohomology.
Denote by $H_{\ast}^I(M)$ the $I$-local cohomology of an $R$-module $M$.

\begin{proposition}\label{propo loccohointerp}
  Let $M$ be an $R$-module and $I$, $J$ two finitely generated ideals.
  If $\sqrt{I} = \sqrt{J}$, then there is a canonical isomorphism $H_{\ast}^I(M)
\simeq H_{\ast}^J(M)$.
\end{proposition}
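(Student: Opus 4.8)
The plan is to read this off as a direct consequence of Corollary~\ref{cor rad} together with the Dwyer--Greenlees identification of local cohomology with cellularization. First I would recall that, since $R/I$ generates the same localizing subcategory as the Koszul complex $K(I)$ by Proposition~\ref{DG-6.4}, and $K(I)$ is compact, the cellularization functor $\Gamma_{R/I} = \Gamma_{K(I)}$ exists by Proposition~\ref{prop LOcCisBous}. Moreover, by \cite[Proposition~6.11]{MR1879003} there is a natural isomorphism $H^I_\ast(M) \cong H_\ast\bigl(\Gamma_{R/I}(M)\bigr)$ for every $R$-module $M$, viewed as a complex concentrated in degree $0$; this is exactly the interpretation already flagged in the paragraph preceding the statement.

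The key observation is that the cellularization functor $\Gamma_\mathcal{S}$ depends only on the localizing subcategory $\mathcal{S}$ and not on any chosen set of generators: it is pinned down, up to canonical natural isomorphism, as the Bousfield colocalization onto $\mathcal{S}$, i.e.~by the localization triangle $\Gamma_\mathcal{S}(x) \to x \to L_\mathcal{S}(x) \to \Sigma\Gamma_\mathcal{S}(x)$ with $\Gamma_\mathcal{S}(x) \in \mathcal{S}$ and $L_\mathcal{S}(x) \in \mathcal{S}^\bot$ (Proposition~\ref{prop LOcCisBous}). Now if $\sqrt{I} = \sqrt{J}$, then Corollary~\ref{cor rad} gives $\Loc(R/I) = \Loc(R/J)$, so $\Gamma_{R/I}$ and $\Gamma_{R/J}$ are canonically naturally isomorphic functors. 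Evaluating at $M$ and taking homology yields the desired canonical isomorphism
\[
H^I_\ast(M) \;\cong\; H_\ast\bigl(\Gamma_{R/I}(M)\bigr) \;=\; H_\ast\bigl(\Gamma_{R/J}(M)\bigr) \;\cong\; H^J_\ast(M).
\]

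There is essentially no real obstacle in this argument; the only points requiring a little care are bookkeeping the grading conventions in the identification of $I$-local cohomology with the homology of the cellularization (homological versus cohomological indexing of complexes), and checking that ``canonical'' is to be understood as ``natural in $M$'', which it is, since both the cellularization triangle and the Dwyer--Greenlees comparison map are natural in $M$.
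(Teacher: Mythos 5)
Your proposal is correct and follows exactly the route the paper intends: the paper derives this proposition as an immediate consequence of Dwyer--Greenlees \cite[Proposition~6.11]{MR1879003} (cellularization with respect to $R/I$ computes $I$-local cohomology) combined with Corollary~\ref{cor rad} ($\sqrt{I}=\sqrt{J}$ implies $\Loc(R/I)=\Loc(R/J)$, hence equal cellularization functors). Your added care about the colocalization functor depending only on the localizing subcategory, and about naturality in $M$, is exactly the right bookkeeping and matches the paper's subsequent remark that the canonical isomorphism is induced by the inclusions $I\subset I+J\supset J$.
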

Notice that in the noetherian case this isomorphism is proved by showing that
both terms are isomorphic to $H_{\ast}^{\sqrt{I}}(M)$, so that these
isomorphisms are induced by the inclusions $ I\subset \sqrt{I} = \sqrt{J}
\supset J$, but this is definitely not true for non-noetherian rings.  In the
non-noetherian case, the isomorphisms are induced by the inclusions $I \subset I
+J \supset J$, for if $\sqrt{I} = \sqrt{J}$, then $\sqrt{I} = \sqrt{I+ J} =
\sqrt{J}$.

\begin{proposition}\label{prop finiteR/I-to-singleR/J}
Let $I$ and $J$ be finitely generated ideals in $R$, then
$$
\Loc(R/I, R/J)=\Loc(R/I \oplus R/J) = \Loc(R/(I\cdot J)) = \Loc(R/I\cap J) .
$$
\end{proposition}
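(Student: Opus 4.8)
The statement asserts four localizing subcategories coincide: $\Loc(R/I,R/J)$, $\Loc(R/I\oplus R/J)$, $\Loc(R/(IJ))$, and $\Loc(R/(I\cap J))$. The first equality $\Loc(R/I,R/J)=\Loc(R/I\oplus R/J)$ is essentially formal: a localizing subcategory is closed under finite sums and under retracts (the latter by the Eilenberg swindle, as noted after Definition~\ref{def locsubcat}), so $R/I\oplus R/J$ belongs to $\Loc(R/I,R/J)$ and conversely $R/I,R/J$ are retracts of $R/I\oplus R/J$. That disposes of one of the four. The remaining work is to identify $\Loc(R/I,R/J)$ with $\Loc(R/(IJ))$ and with $\Loc(R/(I\cap J))$.

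For these I would use Lemma~\ref{lem blongstoLR/I}, the Dwyer--Greenlees torsion characterization: for a finitely generated ideal $\mathfrak a$, a complex $E$ lies in $\Loc(R/\mathfrak a)$ iff every homology class is killed by a power of $\mathfrak a$. Note first that $IJ$ and $I\cap J$ are finitely generated when $I,J$ are, and that $\sqrt{IJ}=\sqrt{I\cap J}=\sqrt I\cap\sqrt J$; hence by Corollary~\ref{cor rad} we get $\Loc(R/(IJ))=\Loc(R/(I\cap J))$ for free, and it suffices to prove $\Loc(R/I,R/J)=\Loc(R/(IJ))$. For the inclusion $\Loc(R/(IJ))\subset\Loc(R/I,R/J)$: a class in $H_\ast(R/(IJ))$, indeed any $R/(IJ)$-module homology, is killed by $IJ\subset I$, hence by Lemma~\ref{lem blongstoLR/I} applied with $\mathfrak a=I$ we get $R/(IJ)\in\Loc(R/I)\subset\Loc(R/I,R/J)$. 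Alternatively and more cleanly: $\Loc(R/I)$ and $\Loc(R/J)$ are tensor ideals (Lemma~\ref{lem locistensideal}), and one has a short exact sequence exhibiting $R/(IJ)$ as built from $R/I$ and $R/J$ — namely tensoring, or using that $R/(IJ)$ surjects onto $R/I$ with kernel $I/IJ$ which is an $R/J$-module, giving a triangle $I/IJ\to R/(IJ)\to R/I\to\Sigma(I/IJ)$ with $I/IJ\in\Loc(R/J)$; then $R/(IJ)\in\Loc(R/I,R/J)$.

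For the reverse inclusion $\Loc(R/I,R/J)\subset\Loc(R/(IJ))$, I must show $R/I$ and $R/J$ each lie in $\Loc(R/(IJ))$. By Lemma~\ref{lem blongstoLR/I} with $\mathfrak a=IJ$: every homology class of $R/I$ is killed by $I$, hence by $IJ$ once (indeed $IJ\cdot(R/I)=0$ since $I\cdot R/I=0$), so $R/I\in\Loc(R/(IJ))$, and symmetrically for $R/J$. This gives $\Loc(R/I,R/J)\subset\Loc(R/(IJ))$ and completes the chain of equalities. The one point requiring a little care is the standing hypothesis of Lemma~\ref{lem blongstoLR/I} that the ideal be finitely generated — so I would state explicitly at the outset that $IJ$ and $I\cap J$ are finitely generated (for $IJ$ this is clear from generators; for $I\cap J$ in a non-noetherian ring it need not be, but we only need $\Loc(R/(I\cap J))$, and since $\sqrt{I\cap J}=\sqrt{IJ}$ we may replace it by the finitely generated $IJ$ via Corollary~\ref{cor rad} before invoking the lemma). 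I do not anticipate a serious obstacle; the only subtlety is bookkeeping with finite generation, handled by routing the $I\cap J$ statement through $IJ$.
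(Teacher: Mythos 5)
Your proof is correct, and for the crucial inclusion it takes a genuinely different route from the paper. Both arguments dispose of $\Loc(R/I,R/J)=\Loc(R/I\oplus R/J)$ by closure under sums and retracts, and both use the Dwyer--Greenlees torsion criterion (Lemma~\ref{lem blongstoLR/I}) for the ``easy'' inclusion $\Loc(R/I,R/J)\subset\Loc(R/(IJ))$ (resp.\ $\subset\Loc(R/(I\cap J))$). The difference is in how $R/(IJ)$, resp.\ $R/(I\cap J)$, is built out of $R/I$ and $R/J$: the paper exhibits $R/(I\cap J)$ as the homotopy pullback of $R/I\to R/(I+J)\leftarrow R/J$, using that $R/(I+J)$ is both an $R/I$- and an $R/J$-module, whereas you use the single extension $0\to I/IJ\to R/(IJ)\to R/I\to 0$ with $I/IJ$ killed by $J$. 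Your two-term filtration is arguably more economical (one triangle instead of a Mayer--Vietoris square), and working with $IJ$ rather than $I\cap J$ sidesteps more cleanly the fact that $I\cap J$ need not be finitely generated in a non-noetherian ring --- a point you rightly flag and which the paper itself glosses over when it invokes Lemma~\ref{lem blongstoLR/I} for the possibly non-finitely-generated ideal $I\cap J$. One small caveat: your appeal to Corollary~\ref{cor rad} to identify $\Loc(R/(I\cap J))$ with $\Loc(R/(IJ))$ still formally applies that corollary to a possibly non-finitely-generated ideal; the clean repair is to note that the surjections $R/(I\cap J)\twoheadrightarrow R/I$ and $R/(I\cap J)\twoheadrightarrow R/J$ give $R/I,R/J\in\Loc(R/(I\cap J))$ by Lemma~\ref{lem cycquot}, while $IJ$ kills $R/(I\cap J)$ so that $R/(I\cap J)\in\Loc(R/(IJ))$ by Lemma~\ref{lem blongstoLR/I} applied to the finitely generated ideal $IJ$.
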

\begin{proof}
The first equality is a triviality since localizing subcategories are closed
under retracts and direct sums, and the last because $\sqrt{I \cdot J}  = \sqrt{I \cap J}.$
For the second equality, first notice that both $R/I$ and $R/J$
are $R/(I\cap J)$-modules, so by Lemma~\ref{lem blongstoLR/I}, $\Loc(R/I, R/J)
\subset \Loc(R/(I \cap J))$.  For the reverse inclusion, consider the following
commutative diagram diagram of $R$-modules:
\[
\xymatrix{
0 \ar[r]  & I/ (I \cap J)  \ar[r] \ar@{=}[d] & R/I \cap J \ar[r] \ar[d] &R/J \ar[r] \ar[d] &  0 \\
 0 \ar[r] & (I+J)/J \ar[r] & R/J \ar[r]&  R/(I+J) \ar[r] & 0,
}
\]
where the left equality is one of the classical Isomorphism Theorems. 
This tells us that $R/(I \cap J)$ is a pullback in $R$-Mod of $R/J$, 
$R/I$ and $R/(I+J)$, and therefore that $R/(I \cap J)$ is in 
$D(R)$ a homotopy pullback of $R/J$, $R/I$ and $R/(I+J)$. 
Now, $R/(I+J)$ is both an $R/I$ and an $R/J$-module, so 
$R/(I+J)  \in \Loc(R/I,R/J)$, and since localizing subcategories
are closed under homotopy pullbacks, 
$R/(I \cap J) \in \Loc(R/I,R/J)$ as desired. 
\end{proof}

This Proposition together with  Proposition~\ref{prop comp-cycquot} yields immediately that:

\begin{corollary}\label{cor replacecompactbyquot}
  Let $S$ be a noetherian ring and let $C$ be a perfect complex over $S$.
  Then there exists a finitely generated ideal $J\subset S$ such that $\Loc(C) = \Loc(S/J)$.
\end{corollary}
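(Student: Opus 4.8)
The plan is to combine the two preceding results, Proposition~\ref{prop comp-cycquot} and Proposition~\ref{prop finiteR/I-to-singleR/J}, using the noetherian hypothesis only at the very first step. First I would observe that since $S$ is noetherian and $C$ is a perfect complex, its total homology $H_\ast(C)$ is a finitely generated $S$-module: each $H_n(C)$ is finitely generated (being a subquotient of a finitely generated free module, as $C$ is quasi-isomorphic to a bounded complex of finitely generated projectives), and there are only finitely many nonzero degrees, so the direct sum over $n$ is again finitely generated over $S$. Hence Proposition~\ref{prop comp-cycquot} applies and gives finitely many ideals $J_1,\dots,J_m$ in $S$ with
\[
\Loc(C) = \Loc(S/J_1,\dots,S/J_m).
\]
Note that each $J_i$ arises as an annihilator-type ideal in the inductive construction, and is automatically finitely generated since $S$ is noetherian (though as the remark below shows, finite generation of the $J_i$ is not even needed for the next step).

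Next I would collapse the finite list $S/J_1,\dots,S/J_m$ into a single quotient by iterating Proposition~\ref{prop finiteR/I-to-singleR/J}. That proposition gives $\Loc(S/I,S/J) = \Loc(S/(I\cap J))$ for finitely generated $I,J$; applying it $m-1$ times (say, with $J := J_1\cap\cdots\cap J_m$, built up one intersection at a time) yields
\[
\Loc(S/J_1,\dots,S/J_m) = \Loc(S/(J_1\cap\cdots\cap J_m)),
\]
and $J := J_1\cap\cdots\cap J_m$ is finitely generated because $S$ is noetherian (a finite intersection of ideals, or equivalently one uses that in a noetherian ring every ideal is finitely generated). Combining the two displays gives $\Loc(C) = \Loc(S/J)$, which is the assertion.

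The only genuine input is the passage from "$C$ perfect over a noetherian ring" to "$H_\ast(C)$ finitely generated"; everything else is bookkeeping with the cited propositions. I do not expect any real obstacle here — the statement is explicitly flagged in the excerpt as an immediate consequence — but if one wanted to be careful, the point to watch is that Proposition~\ref{prop finiteR/I-to-singleR/J} is stated for \emph{finitely generated} ideals, so one should make sure the intersections stay finitely generated; over a noetherian ring this is automatic. One could alternatively bypass intersections and use the chain $\Loc(S/I,S/J) = \Loc(S/(I\cdot J))$ from the same proposition, taking $J := J_1\cdots J_m$, which is manifestly finitely generated from finitely generated factors without invoking noetherianity a second time.
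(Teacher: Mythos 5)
Your proposal is correct and is exactly the argument the paper intends: the corollary is stated there as an immediate consequence of Proposition~\ref{prop comp-cycquot} (applicable since $H_*(C)$ is finitely generated for a perfect complex over a noetherian ring) combined with Proposition~\ref{prop finiteR/I-to-singleR/J} to collapse the finite list of cyclic quotients into a single $S/J$. Your remark that taking $J = J_1\cdots J_m$ sidesteps any worry about finite generation of intersections is a sensible bit of extra care, though over a noetherian ring it is indeed moot.
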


To get rid of the noetherian assumption, we need the following classical result
(see the Appendix of \cite{MR1174255}):

\begin{lemma}\label{noeth-subring}
Let $R$ be a commutative ring and let $C$ be a perfect complex in $D(R)$.  Then
there exists a noetherian subring $S \subset R$ and a perfect complex $C_S$ in
$D(S)$ such that $C=C_S \otimes_S R$.
\end{lemma}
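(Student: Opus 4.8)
We must show: for a commutative ring $R$ and a perfect complex $C$ in $D(R)$, there is a finitely generated ideal $J \subset R$ with $\Loc(C) = \Loc(R/J)$ — i.e., the noetherian hypothesis in Corollary~\ref{cor replacecompactbyquot} can be dropped. (This is the ``final statement'' — Lemma~\ref{noeth-subring} is the last boxed item, but read in context the target is the noetherian-free version of the preceding corollary, for which Lemma~\ref{noeth-subring} is the stated tool.)

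The plan is to reduce the statement to the descent of finitely many polynomial identities along an injective ring homomorphism; the noetherian subring will simply be a finitely generated $\Z$-subalgebra of $R$, which is noetherian by Hilbert's basis theorem. First I would replace $C$ by a quasi-isomorphic \emph{strictly} perfect complex, i.e.\ a bounded complex $\cdots\to C_{i+1}\to C_i\to C_{i-1}\to\cdots$ of finitely generated projective $R$-modules with differentials $d_i\colon C_i\to C_{i-1}$; this is harmless, since the conclusion only constrains $C$ up to isomorphism in $D(R)$. For each $i$ choose $k_i\in\N$ and an idempotent matrix $e_i\in M_{k_i}(R)$ realizing $C_i=\operatorname{im}(e_i)\subset R^{k_i}$, and let $\delta_i\in M_{k_{i-1}\times k_i}(R)$ be the matrix of the composite $R^{k_i}\to C_i\xrightarrow{d_i}C_{i-1}\hookrightarrow R^{k_{i-1}}$, where the first map is $e_i$ corestricted to its image. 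By construction these matrices satisfy the finite list of relations
\[
e_i^2=e_i,\qquad e_{i-1}\delta_i=\delta_i=\delta_i e_i,\qquad \delta_{i-1}\delta_i=0 .
\]

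Next, let $S\subset R$ be the subring generated over $\Z$ by the finitely many entries of all the $e_i$ and all the $\delta_i$; it is noetherian. Every relation above is an equality of integer-coefficient polynomials in those entries, so — the inclusion $S\hookrightarrow R$ being injective — each relation already holds in $S$. Hence over $S$ the $e_i$ are idempotents, the modules $C_i^S:=\operatorname{im}(e_i)\subset S^{k_i}$ are finitely generated projective, and the matrices $\delta_i$ restrict to $S$-linear maps $C_i^S\to C_{i-1}^S$ with $\delta_{i-1}\delta_i=0$; these assemble into a strictly perfect complex $C_S$ over $S$.

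Finally I would check $C_S\otimes_S R\cong C$: applying $-\otimes_S R$ to the split exact sequences $0\to\ker e_i\to S^{k_i}\to C_i^S\to 0$ identifies $C_i^S\otimes_S R$ with $\operatorname{im}(e_i\otimes_S R)=C_i$, compatibly with the differentials (on $\operatorname{im}(e_i)$ the matrix $\delta_i$ restricts to $d_i$ by the relations above), so $C_S\otimes_S R=C$ as complexes; since $C_S$ is a bounded complex of flat $S$-modules this ordinary tensor product also computes the derived one. I do not expect a real obstacle here — this is precisely why the result is "classical" — the only thing to watch is the bookkeeping that keeps the list of relations finite and makes the differentials manifestly compatible with the chosen idempotent presentations.
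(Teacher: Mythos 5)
Your argument is correct, and it is essentially the classical proof: the paper itself offers no proof of this lemma, citing instead the appendix of Neeman's \emph{The chromatic tower for $D(R)$}, where the same descent-of-matrices argument appears (replace $C$ by a strictly perfect complex, record the idempotents and differential matrices, and pass to the finitely generated $\Z$-subalgebra they generate, which is noetherian by the Hilbert basis theorem). One remark: your opening paragraph misidentifies the target as the statement $\Loc(C)=\Loc(R/J)$ (which is Proposition~\ref{thm comp-celleq-cyc}, proved separately in the paper \emph{using} this lemma), but the body of your argument in fact proves exactly Lemma~\ref{noeth-subring} as stated, with $C=C_S\otimes_S R$ understood, as it must be, up to isomorphism in $D(R)$.
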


\begin{proposition}\label{thm comp-celleq-cyc}
Let $R$ be a commutative ring, and let $C$ be a compact object in $D(R)$. 
Then there exists a finitely generated ideal $I \subset R$ such 
that $\Loc(C) = \Loc(R/I)$.
\end{proposition}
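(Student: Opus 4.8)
The plan is to reduce the general case to the noetherian case, which has already been settled in Corollary~\ref{cor replacecompactbyquot}, by using the noetherian descent result of Lemma~\ref{noeth-subring}. First I would invoke Lemma~\ref{noeth-subring} to obtain a noetherian subring $S \subset R$ together with a perfect complex $C_S \in D(S)$ such that $C \simeq C_S \otimes_S R$. Since $S$ is noetherian and $C_S$ is perfect, Corollary~\ref{cor replacecompactbyquot} supplies a finitely generated ideal $J \subset S$ with $\Loc_S(C_S) = \Loc_S(S/J)$, the localizing subcategories here being taken inside $D(S)$.

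The key step is then to transport this equality of localizing subcategories across the base change functor $- \otimes_S R \colon D(S) \to D(R)$. I would set $I := JR$, the (finitely generated) ideal of $R$ generated by $J$, and note the concrete identifications $C_S \otimes_S R \simeq C$ and $(S/J)\otimes_S R \simeq R/I$ in $D(R)$. The task reduces to showing $\Loc_R(C) = \Loc_R(R/I)$. For one inclusion, part~(2) of Lemma~\ref{lem tensideal} is exactly the tool: if $C_S \in \Loc_S(S/J)$, then tensoring with $R$ over $S$ shows $C = C_S \otimes_S R \in \Loc_R((S/J)\otimes_S R) = \Loc_R(R/I)$; symmetrically, from $S/J \in \Loc_S(C_S)$ we get $R/I \in \Loc_R(C)$. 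Hence the two localizing subcategories of $D(R)$ coincide.

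The main obstacle is making the base-change argument rigorous: $- \otimes_S R$ is a coproduct-preserving exact functor, so it sends $\Loc_S(X)$ into $\Loc_R(X \otimes_S R)$, and this is essentially the content invoked in Lemma~\ref{lem tensideal}(2) — but one should be a little careful that the relevant hypotheses (so that $\Loc(\mathbf 1) = \T$, or more directly that the functor is triangulated and commutes with arbitrary sums) are in force. Here $R$, viewed as an $S$-algebra, is flat enough only in the derived sense, but since we work with the derived tensor product and $C_S$ is perfect (hence $C_S \otimes_S^{\mathbf L} R = C_S \otimes_S R$ with no higher Tor intervening once we take a strictly perfect representative), these subtleties do not actually arise; the identifications $C_S \otimes_S R \simeq C$ and $(S/J)\otimes_S^{\mathbf L} R \simeq R/JR$ are the ones furnished by Lemma~\ref{noeth-subring} and by the right-exactness of $-\otimes_S R$ on the cyclic module. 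Once these identifications are in place, the proof is a two-line application of Lemma~\ref{lem tensideal}(2) in both directions, and the finitely generated ideal $I = JR$ is exhibited explicitly.
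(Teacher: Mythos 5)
Your overall strategy (reduce to a noetherian subring via Lemma~\ref{noeth-subring}, apply Corollary~\ref{cor replacecompactbyquot} there, then transport along $-\otimes_S R$) is exactly the paper's, and the transport step itself is fine: a coproduct-preserving triangulated functor sends $\Loc_S(X)$ into $\Loc_R(X\otimes_S^{\mathbf L}R)$, whether you see this via the general orthogonality argument or, as the paper does, via a finite recipe between the two compact objects. The genuine gap is your identification $(S/J)\otimes_S^{\mathbf L}R\simeq R/JR$, which you justify by ``right-exactness of $-\otimes_S R$ on the cyclic module.'' Right-exactness only computes $H_0$ of the derived tensor product; since $R$ need not be flat over $S$ and $S/J$ is not a perfect complex (unlike $C_S$), the groups $\Tor_i^S(S/J,R)$ for $i>0$ need not vanish, so the derived base change of $S/J$ is in general not quasi-isomorphic to $R/JR$. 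Your remark that ``these subtleties do not actually arise'' is correct for $C_S$ but precisely fails for $S/J$.

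This is the point the paper's proof is built to handle: it first replaces $S/J$ by the Koszul complex $K(J)$ using Proposition~\ref{DG-6.4}, so that $\Loc_S(C_S)=\Loc_S(K(J))$. Because $K(J)$ is strictly perfect, hence flat, its derived base change agrees with the underived one and is literally a Koszul complex for $JR$; applying Proposition~\ref{DG-6.4} again in $D(R)$ then returns you to $\Loc_R(R/JR)$. Your argument can be repaired by inserting exactly this detour (and indeed one can check a posteriori that $\Loc_R((S/J)\otimes_S^{\mathbf L}R)=\Loc_R(R/JR)$, but the only available proof of that equality goes through the Koszul complex), so the missing ingredient is the flat compact generator $K(J)$ of $\Loc_S(S/J)$ rather than the non-flat, non-compact object $S/J$ itself.
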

\begin{proof} By Lemma~\ref{noeth-subring}, we can find a noetherian subring
$S\subset R$ and a perfect complex $C_S$ such that $C_S \otimes_S R = C$.  For
this complex $C_S$, Corollary~\ref{cor replacecompactbyquot} provides us with a
finitely generated ideal $J \subset S$ such that $\Loc_S(C_S) = \Loc_S(S/J)$.
We claim that $JR \subset R$, the  $R$-ideal generated by the image of $J$,
in $R$ is the finitely generated ideal we are looking for.  To see this, note
first that by Proposition~\ref{DG-6.4} we have
$\Loc_S(C_S) = \Loc_S(S/J) = \Loc_S(K(J))$.  As $C_S$ and $K(J)$ are compact,
there exists a finite recipe as in 
Proposition~\ref{prop objincomLoc} in $D(S)$ 
to build $K(J)$ from $C_S$, say of length $n+1$,
\[
\xymatrix{
F_0 \ar[r]^{f_0} & F_1 \ar[r]^{f_1} & \cdots \ar[r]^{f_{n-1}} & F_n  .\\
}
\]

Applying the triangulated functor $- \otimes_S R : D(S) \to D(R)$
to the above sequence of maps shows that
$K(J) \otimes_S R$ belongs to $\Loc(C_S \otimes R) = \Loc(C)$.
The complex $K(J)$ is strictly perfect, hence flat, so to compute 
$K(J) \otimes_S R \in D(R)$, we may use the underived tensor product.
By direct
inspection, for any finite generating set of the ideal $J \subset S$ with
associated Koszul complex $K(J)$, the complex $K(J) \otimes_S R$ (underived)
is in fact equal
to a Koszul complex for the ideal $JR$, so $\Loc(R/JR) = \Loc(K(JR)) \subset
\Loc(C)$.  Exchanging the roles of $C_S$ and $K(J)$ in the above argument shows
in the same way that $\Loc(C) \subset \Loc(K(JR))= \Loc(R/JR)$.
\end{proof}


\subsubsection{The lattice of compactly generated localizing subcategories}

We turn now to the global structure of the poset of
compactly generated localizing subcategories in $D(R)$. 
This poset has meets and arbitrary joins:
 if $\{\setofobjects_\alpha\}_{\alpha \in A}$ is a set of sets
 of compact objects, the join is given by
 \[
   \bigvee_{\alpha\in A} \Loc(\setofobjects_\alpha)  
   = \Loc(\bigcup_{\alpha\in A} \  \setofobjects_\alpha ).
 \]
 The meet is a bit more complicated, since a priori the intersection 
of $\Loc(\setofobjects_1)$ and $\Loc(\setofobjects_2)$ might not be compactly 
generated. Nevertheless, it contains a largest compactly generated subcategory, 
namely the localizing subcategory generated by the compact objects it contains:
\[
 \Loc(\setofobjects_1) \meet \Loc(\setofobjects_2) = \Loc 
 \Big(\big(\Loc(\setofobjects_1)\cap \Loc(\setofobjects_2) \big) \cap D^\omega(R)\Big).
\]

Denote by $\kat{CGLoc}(D(R))$ the lattice of compactly generated 
localizing subcategories of $D(R)$, and by  
$\kat{fgCGLoc}(D(R))$ the subposet of finite elements.
We shall see shortly that $\kat{fgCGLoc}(D(R))$ is a distributive lattice
and that
$\kat{CGLoc}(D(R))$
is a coherent frame.
We first characterize the finite elements:
\begin{lemma}
  The finite elements in $\kat{CGLoc}(D(R))$ are the localizing subcategories of
  $D(R)$ that can be generated by a single compact object.
\end{lemma}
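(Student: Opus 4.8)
The plan is to prove the two inclusions separately. First I would show that a localizing subcategory generated by a single compact object is a finite element of $\kat{CGLoc}(D(R))$. So let $C$ be a compact object and suppose $\Loc(C) \leq \bigvee_{\alpha\in A} \Loc(\setofobjects_\alpha) = \Loc\big(\bigcup_{\alpha\in A}\setofobjects_\alpha\big)$, where each $\setofobjects_\alpha$ is a set of compact objects. Since $C$ is compact and lies in the localizing subcategory generated by the set $\bigcup_\alpha \setofobjects_\alpha$ of compact objects, the last sentence of Proposition~\ref{prop objincomLoc} gives a \emph{finite} subset $K \subset \bigcup_\alpha \setofobjects_\alpha$ with $C \in \Loc(K)$. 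This finite set meets only finitely many of the $\setofobjects_\alpha$, say those indexed by $\alpha_1,\dots,\alpha_k$, so $C \in \Loc(K) \subset \Loc(\setofobjects_{\alpha_1}\cup\cdots\cup\setofobjects_{\alpha_k}) = \Loc(\setofobjects_{\alpha_1})\vee\cdots\vee\Loc(\setofobjects_{\alpha_k})$. Hence $\Loc(C) \leq \bigvee_{i=1}^{k}\Loc(\setofobjects_{\alpha_i})$, which is exactly the statement that $\Loc(C)$ is a finite element.

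For the converse, suppose $\mathcal L = \Loc(\setofobjects)$ is a finite element of $\kat{CGLoc}(D(R))$, with $\setofobjects$ a set of compact objects. I would write $\mathcal L$ as the join of the localizing subcategories generated by the individual elements of $\setofobjects$: $\mathcal L = \bigvee_{s\in\setofobjects}\Loc(s)$, each $\Loc(s)$ being a compactly generated localizing subcategory. Since $\mathcal L$ is finite, there is a finite subset $\{s_1,\dots,s_m\}\subset\setofobjects$ with $\mathcal L = \Loc(s_1)\vee\cdots\vee\Loc(s_m) = \Loc(s_1,\dots,s_m) = \Loc(s_1\oplus\cdots\oplus s_m)$, using that localizing subcategories are closed under finite direct sums and retracts. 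The object $C := s_1\oplus\cdots\oplus s_m$ is a finite direct sum of compact objects, hence compact, and $\mathcal L = \Loc(C)$ is generated by the single compact object $C$.

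I do not expect a serious obstacle here: the argument is essentially the lattice-theoretic translation of the compactness statement in Proposition~\ref{prop objincomLoc}. The one point deserving a line of care is the reduction of an arbitrary join $\bigvee_\alpha\Loc(\setofobjects_\alpha)$ to a join over a \emph{set of compact objects}, so that the finiteness clause of Proposition~\ref{prop objincomLoc} applies verbatim; this is immediate from the description $\bigvee_\alpha\Loc(\setofobjects_\alpha) = \Loc\big(\bigcup_\alpha\setofobjects_\alpha\big)$ recalled just above the lemma. Everything else is bookkeeping with finite subsets and the closure properties of localizing subcategories.
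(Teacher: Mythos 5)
Your proof is correct, and both directions are sound. The first direction coincides with the paper's: finiteness of $\Loc(C)$ for $C$ compact is exactly the content of Proposition~\ref{cor compincompgen}, which you unpack correctly (the reduction of an arbitrary join to $\Loc\big(\bigcup_\alpha\setofobjects_\alpha\big)$ and the extraction of a finite subset $K$ meeting only finitely many $\setofobjects_\alpha$).

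For the converse you take a genuinely more elementary route at the last step. After reducing, as the paper also does, to $\mathcal L = \Loc(s_1,\dots,s_m)$ with each $s_i$ compact, you simply observe that $\Loc(s_1,\dots,s_m)=\Loc(s_1\oplus\cdots\oplus s_m)$ --- using closure of localizing subcategories under finite sums and retracts (the latter guaranteed by the Eilenberg swindle, as noted in Definition~\ref{def locsubcat}) --- and that a finite direct sum of compact objects is compact. The paper instead replaces each $\Loc(s_i)$ by $\Loc(R/I_{s_i})$ for a finitely generated ideal $I_{s_i}$ via Proposition~\ref{thm comp-celleq-cyc}, and then invokes Proposition~\ref{prop finiteR/I-to-singleR/J} to collapse the finite join to $\Loc(R/J)$ with $J$ the product of the ideals. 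Your argument is shorter and avoids the cyclic-module machinery entirely; what the paper's version buys is that the single generator it produces is of the standard form $R/J$, which is the parametrization it uses throughout (e.g.\ in Proposition~\ref{prop main}). For the bare statement of the lemma, your direct-sum argument suffices.
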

\begin{proof}
  A localizing subcategory generated by a single compact object is easily seen
  to be finite by Proposition~\ref{cor compincompgen}.
  Conversely if 
  $\setofobjects$ is a set of compact objects and $\Loc(\setofobjects)$ is a 
  finite element in $\kat{CGLoc}(D(R))$, then the equality
  $\Loc(\setofobjects) = \bigjoin_{s\in S} \Loc(s)$ implies that also
  $\Loc(\setofobjects) = \bigjoin_{s\in K} \Loc(s) = \Loc(K)$ for some finite subset $K 
  \subset \setofobjects$.  Now for each $s\in K$ we have $\Loc(s)=\Loc(R/I_s)$
  for some finitely generated ideal $I_s$,  and Proposition~\ref{prop finiteR/I-to-singleR/J}
  shows that then $\Loc(K) = \Loc(R/J)$ where $J$ is the product of the ideals 
  $I_s$.
\end{proof}

We analyze the join and meet inside $\kat{fgCGLoc}(D(R))$:
in case the generating set is just a single compact object, we may replace it by
a cyclic module $R/I$.
For the join operation, Proposition~\ref{prop finiteR/I-to-singleR/J} gives
\[
 \Loc(R/I) \vee \Loc(R/J)  = \Loc(R/I, R/J) = \Loc(R/(I\cdot J)) .
\]
For the meet operation, the following shows 
that the intersection of two localizing subcategories
each generated by one compact object is again a compactly generated localizing
subcategory generated by a single compact, so in this case meet is just 
intersection:

\begin{lemma}\label{lem interR/IetR/J}
If  $I$ and $J$ are finitely generated ideals in $R$, then 
\[\Loc(R/I) \cap
\Loc(R/J) = \Loc(R/(I + J)) .
\]
\end{lemma}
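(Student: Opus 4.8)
The plan is to reduce everything to the torsion characterization of $\Loc(R/K)$ supplied by Lemma~\ref{lem blongstoLR/I}, which is legitimate here because all three ideals in sight, namely $I$, $J$, and their sum $I+J$, are finitely generated. Throughout, I read the statement "$E\in\Loc(R/K)$" as "for every $x\in H_*(E)$ there is an integer $p$ with $K^p\cdot x=0$".

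First I would dispose of the easy inclusion $\Loc(R/(I+J))\subseteq\Loc(R/I)\cap\Loc(R/J)$. Since $R/(I+J)$ is simultaneously an $R/I$-module and an $R/J$-module, the ideal $I$ (respectively $J$) already annihilates $H_*(R/(I+J))=R/(I+J)$, so Lemma~\ref{lem blongstoLR/I} gives at once $R/(I+J)\in\Loc(R/I)$ and $R/(I+J)\in\Loc(R/J)$. As $\Loc(R/(I+J))$ is the smallest localizing subcategory containing $R/(I+J)$, it is then contained in each of $\Loc(R/I)$ and $\Loc(R/J)$, hence in their intersection.

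For the reverse inclusion, take $E\in\Loc(R/I)\cap\Loc(R/J)$ and an arbitrary $x\in H_*(E)$. By Lemma~\ref{lem blongstoLR/I} there are integers $p,q$ with $I^p\cdot x=0$ and $J^q\cdot x=0$. The one computational ingredient is the elementary ideal inclusion $(I+J)^{p+q}=\sum_{k=0}^{p+q} I^k J^{p+q-k}\subseteq I^p+J^q$: in each summand either $k\geq p$, so $I^kJ^{p+q-k}\subseteq I^p$, or $k<p$, which forces $p+q-k\geq q$ and hence $I^kJ^{p+q-k}\subseteq J^q$. Consequently $(I+J)^{p+q}\cdot x\subseteq I^p\cdot x+J^q\cdot x=0$, and since $I+J$ is finitely generated, Lemma~\ref{lem blongstoLR/I} yields $E\in\Loc(R/(I+J))$. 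This gives $\Loc(R/I)\cap\Loc(R/J)\subseteq\Loc(R/(I+J))$ and completes the proof.

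The argument is essentially immediate once Lemma~\ref{lem blongstoLR/I} is available; there is no genuine obstacle, and the only point requiring a moment's thought is the pigeonhole estimate $(I+J)^{p+q}\subseteq I^p+J^q$ (together with the bookkeeping observation that $I+J$ is finitely generated, so that the lemma applies to it). An alternative, more structural route to the second inclusion would mimic the homotopy-pullback argument used in the proof of Proposition~\ref{prop finiteR/I-to-singleR/J}, but the torsion computation above is the shortest path.
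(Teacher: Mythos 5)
Your proof is correct and follows essentially the same route as the paper's: both directions rest on the torsion characterization of Lemma~\ref{lem blongstoLR/I}, with the paper's ``direct computation'' being exactly the pigeonhole estimate you spell out (the paper uses the exponent $mn$ where you use $p+q$, but either works).
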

\begin{proof} For $E\in \Loc(R/I) \cap \Loc(R/J)$ and $x\in H_*(E)$,
by Lemma~\ref{lem blongstoLR/I} there exist $n,m \in \N$ such that $I^nx=0=J^m x$.  A direct computation
shows that $(I+ J )^{mn}x=0$, and therefore $E\in 
\Loc(R/(I + J))$.
Conversely, as $R/(I+ J)$ is both $I$ and $J$-torsion we
conclude that $\Loc(R/I) \cap \Loc(R/J) \supset \Loc(R/(I + J))$.
\end{proof}

The following result is a key point.
\begin{proposition}\label{prop main}
  The lattice $\kat{fgCGLoc}(D(R))$ of localizing subcategories 
generated by a single compact object is isomorphic to the opposite of the 
Zariski lattice:
\begin{eqnarray*}
  \kat{fgCGLoc}(D(R)) & \simeq & \fgRadId(R)\op  \\
  \Loc(R/I) & \leftrightarrow & \sqrt{I} .
\end{eqnarray*}
\end{proposition}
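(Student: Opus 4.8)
The plan is to exhibit the displayed rule $\sqrt I \leftrightarrow \Loc(R/I)$ as an order-reversing bijection between the Zariski lattice $\fgRadId(R)$ and $\kat{fgCGLoc}(D(R))$. Since an isomorphism of posets automatically transports all existing meets and joins, an order-reversing bijection in one direction is the same thing as an isomorphism of posets $\fgRadId(R)\op \to \kat{fgCGLoc}(D(R))$, and it will then be automatically an isomorphism of lattices; in particular this also supplies the distributivity of $\kat{fgCGLoc}(D(R))$ announced above, by transport of structure from the Zariski lattice.

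First I would check that the assignment $\sqrt I \mapsto \Loc(R/I)$ is well defined. An element of $\fgRadId(R)$ is the radical of some finitely generated ideal, and if $I,J$ are finitely generated ideals with $\sqrt I = \sqrt J$ then $\Loc(R/I) = \Loc(R/J)$ by Corollary~\ref{cor rad}; so the localizing subcategory depends only on the radical, not on the chosen generating ideal. Moreover $\Loc(R/I)$ genuinely lands in $\kat{fgCGLoc}(D(R))$: by Proposition~\ref{DG-6.4}, $R/I$ is cellularly equivalent to the Koszul complex $K(I)$, which is perfect, so $\Loc(R/I) = \Loc(K(I))$ is generated by a single compact object.

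For surjectivity I would invoke the preceding lemma, which characterizes the finite elements of $\kat{CGLoc}(D(R))$ as precisely the subcategories generated by a single compact object, together with Proposition~\ref{thm comp-celleq-cyc}, which rewrites any such as $\Loc(R/I)$ for a finitely generated ideal $I$; hence every element of $\kat{fgCGLoc}(D(R))$ lies in the image. For the order reversal I would simply cite Corollary~\ref{cor inc}: for finitely generated ideals $I,J$ one has $\sqrt I \subset \sqrt J$ if and only if $\Loc(R/J) \subset \Loc(R/I)$; and injectivity is then exactly the remaining implication of Corollary~\ref{cor rad}. Putting these together, $\sqrt I \mapsto \Loc(R/I)$ is an order-reversing bijection, i.e.\ an isomorphism of posets $\fgRadId(R)\op \to \kat{fgCGLoc}(D(R))$, hence an isomorphism of lattices.

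As a consistency check — not logically required, but reassuring — the lattice operations already computed match under the correspondence: the join $\Loc(R/I)\vee\Loc(R/J) = \Loc(R/(I\cdot J))$ of Proposition~\ref{prop finiteR/I-to-singleR/J} corresponds to $\sqrt I \wedge \sqrt J = \sqrt{I\cap J} = \sqrt{I\cdot J}$, which is the join in $\fgRadId(R)\op$, and the meet $\Loc(R/I)\cap\Loc(R/J) = \Loc(R/(I+J))$ of Lemma~\ref{lem interR/IetR/J} corresponds to $\sqrt I \vee \sqrt J = \sqrt{I+J}$, the meet in $\fgRadId(R)\op$. Since the substantial input is already in hand — the surjectivity statement of Proposition~\ref{thm comp-celleq-cyc} and the sharp inclusion criterion of Corollary~\ref{cor inc}, both ultimately resting on the Dwyer--Greenlees results — there is no real obstacle remaining; the proof is an assembly, and the one thing to keep straight is which direction is the ``opposite'' one, namely that suprema of localizing subcategories correspond to infima of radical ideals, which is anyway forced by Corollary~\ref{cor inc}.
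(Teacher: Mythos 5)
Your proof is correct and follows essentially the same route as the paper's: well-definedness of both assignments via Proposition~\ref{DG-6.4}, Corollary~\ref{cor rad}, and Proposition~\ref{thm comp-celleq-cyc}, with the order-reversal coming from the two-sided implication in Corollary~\ref{cor inc}. The extra details on surjectivity, injectivity, and the matching of lattice operations are just a more explicit unwinding of what the paper dismisses as ``obvious from the description''.
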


\begin{proof}
  The assignment from right to left, $I \mapsto \Loc(R/I)$, is well-defined by
  Proposition~\ref{DG-6.4}: $\Loc(R/I)=\Loc(K(I))$ which is compactly generated
  since $I$ is the radical of a finitely generated ideal, and 
  $\Loc(K(I))$ is
  insensitive to taking radical by Corollary~\ref{cor rad}.  The assignment
  $\Loc(R/I) \mapsto I$ is well-defined since by 
  Proposition~\ref{thm comp-celleq-cyc} for any perfect complex $C$ 
  there is a finitely generated
  ideal $I$ such that $\Loc(C) = \Loc(R/I)$, and by Corollary~\ref{cor inc},
  this finitely generated ideal is uniquely determined up to taking radical.
  Having established that the two assignments are well defined, it is obvious
  from the description that they constitute an inclusion-reversing bijection.
\end{proof}

We now extend this isomorphism to $\kat{CGLoc}(D(R))$.
We first give a rather formal argument, which relies on some results in
Section~\ref{sec ZarSpecTriaCat}, then give a more elementary proof
of more geometric flavor.
By Corollary~\ref{cor LocXTomega} we have
$\CGLoc(D(R)) = \Thick(D^\omega(R))$, and by \ref{lem locistensideal} 
and \ref{lem tensidIsrad}
all thick
subcategories are radical thick tensor ideals, so that $\CGLoc(D(R))$ is
a coherent frame by Theorem~\ref{thm coherence}.
\begin{theorem}\label{thm main}
  The isomorphism of Proposition~\ref{prop main} extends to an
  isomorphism of frames
$$
\CGLoc(D(R)) = \RadId(R)^\vee .
$$
\end{theorem}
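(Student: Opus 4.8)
The plan is to deduce Theorem~\ref{thm main} from Proposition~\ref{prop main} by invoking the coherence machinery recalled in Section~\ref{subsec Fr-Hoc-Sup}. The key structural fact, noted just before the statement, is that $\CGLoc(D(R))$ is a coherent frame whose sub-lattice of finite elements is exactly $\fgCGLoc(D(R))$; this follows from $\CGLoc(D(R)) = \Thick(D^\omega(R))$ together with Theorem~\ref{thm coherence}. On the other side, $\RadId(R)$ is by definition a coherent frame, and its Hochster dual $\RadId(R)^\vee$ is, by Definition~\ref{def hochdualframe}, the ideal lattice of $(\fgRadId(R))\op$, i.e.\ the join-completion of the opposite of the Zariski lattice. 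Since a coherent frame is recovered from its finite part by taking ideals (Proposition~II.3.2 of \cite{Johnstone:Stone-spaces}), and since, by the equivalence of distributive lattices with coherent frames (Corollary~II.3.3 of loc.\ cit.), a lattice isomorphism on finite parts extends uniquely to a frame isomorphism, it suffices to match the finite parts.

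First I would identify the finite part of $\RadId(R)^\vee$: by construction it is $\bigl(\RadId(R)^\omega\bigr)\op = \fgRadId(R)\op$. Next, by the Lemma characterizing finite elements in $\CGLoc(D(R))$, the finite part of $\CGLoc(D(R))$ is $\fgCGLoc(D(R))$, the localizing subcategories generated by a single compact object. Proposition~\ref{prop main} gives precisely a lattice isomorphism
$$
\fgCGLoc(D(R)) \ \simeq\ \fgRadId(R)\op,\qquad \Loc(R/I)\ \leftrightarrow\ \sqrt I .
$$
Applying the join-completion functor (taking ideals) to both sides, and using that this functor is an equivalence between distributive lattices and coherent frames, yields the desired frame isomorphism $\CGLoc(D(R)) \simeq \RadId(R)^\vee$, compatibly with the identifications of finite parts. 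One should also remark that joins on the left are computed as $\Loc(\bigcup \setofobjects_\alpha)$ and on the right as radicals of sums of ideals, so the extended isomorphism is the expected one on generators.

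The only real point requiring care is bookkeeping about which lattice is opposite to which: Proposition~\ref{prop main} already records that $\fgCGLoc(D(R))$ is \emph{anti}-isomorphic to the Zariski lattice, and the Hochster dual of the \emph{frame} $\RadId(R)$ is the join-completion of the \emph{opposite} of its finite part — so the two ``opposites'' cancel in the sense that the frame map $\CGLoc(D(R)) \to \RadId(R)^\vee$ is covariant (order-preserving), inclusion of localizing subcategories going to inclusion of Hochster-open sets. I would make this explicit by tracking the short exact sequence $0\to I\to R\to R/I\to 0$: the finitely generated ideal $I$ (a finite element of $\RadId(R)$, hence a Hochster-open set) corresponds to $\Loc(R/I)$, matching the informal description in the Introduction. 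There is no hard analytic content here; the substance is entirely in Proposition~\ref{prop main} and in Theorem~\ref{thm coherence}, and the present theorem is their formal consequence via Stone-type coherence (Theorem~\ref{spsp=cohfr=dlat}).
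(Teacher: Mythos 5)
Your proposal is correct and is essentially the paper's own argument: the paper likewise observes (just before the theorem) that $\CGLoc(D(R)) = \Thick(D^\omega(R))$ is a coherent frame by Theorem~\ref{thm coherence}, and then proves the theorem in one line by noting that both sides are the ideal frames (join completions) of the distributive lattices identified in Proposition~\ref{prop main}. Your additional bookkeeping about the two cancelling ``opposites'' and the identification of the finite parts is a faithful expansion of that same argument.
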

\begin{proof}
  The two frames are precisely the ideal frames (join completions) of the
  distributive lattices in Proposition~\ref{prop main}.
\end{proof}

The description of the isomorphism in Proposition~\ref{prop main}, and hence
Theorem~\ref{thm main}, relies on Proposition~\ref{DG-6.4} and
Corollary~\ref{cor rad}.  We provide a more geometrical reformulation that lifts
this dependence:

\begin{theorem}\label{thm maingeo}
  There is a natural inclusion-preserving bijection
  $$
  \Big\{ \begin{tabular}{c}
          \text{\rm Compactly generated} \\ \text{\rm localizing subcategories of $D(R)$} 
         \end{tabular}
\Big\}
  \leftrightarrow
  \Big\{ \begin{tabular}{c}
  \text{\rm Hochster open}\\ \text{\rm sets in $\Spec(R)$}
  \end{tabular}
  \Big\}
  $$  
  The bijection is given from left to right by
  $$
 f: \Loc(\setofobjects)  \ \longmapsto \ 
 \bigcup_{\underset{I \text{ \rm fin.~gen.}}{R/I\in \Loc(\setofobjects)}} \zeros{I} ,
  $$
  and from right to left by
  $$
    \Loc( K(I) \mid \zeros{I} \subset U, I \text{ \rm fin.~gen.} )  \ \longmapsfrom \ U : g.
$$
 
\end{theorem}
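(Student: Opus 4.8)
The plan is to reduce Theorem~\ref{thm maingeo} to Theorem~\ref{thm main} by translating the latter through Stone/Hochster duality, while simultaneously checking that the two explicitly given maps $f$ and $g$ realize the abstract isomorphism concretely. First I would recall that by Theorem~\ref{thm main} we have a frame isomorphism $\CGLoc(D(R)) \simeq \RadId(R)^\vee$, and that $\RadId(R)^\vee$ is by Definition~\ref{def hochdualframe} the ideal frame of $(\fgRadId(R))\op$, i.e.\ the frame of Hochster open sets in $\Spec_Z(R)$ via the correspondence of Theorem~\ref{spsp=cohfr=dlat}. So the bijection of sets is immediate; the content to verify is that it is \emph{inclusion-preserving} and that it agrees with the formulas for $f$ and $g$.

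Next I would unwind the finite part. By Proposition~\ref{prop main}, at the level of finite elements the isomorphism sends $\Loc(R/I) = \Loc(K(I))$ to $\sqrt{I} \in \fgRadId(R)$, which in $(\fgRadId(R))\op$ — equivalently, in the lattice of quasi-compact open sets of the Hochster dual — corresponds to the closed-with-quasi-compact-complement set $\zeros{I} \subset \Spec(R)$. (Here one uses that $\zeros{I}$ has quasi-compact complement exactly because $I$ is finitely generated, so $\zeros{I}$ is a basic Hochster open set.) Passing to ideal frames (join completions) on both sides, a general compactly generated localizing subcategory $\Loc(\setofobjects) = \bigjoin_{s} \Loc(s)$ is sent to the union of the corresponding $\zeros{I_s}$. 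Since, by Proposition~\ref{thm comp-celleq-cyc}, every compact object generates the same localizing subcategory as some $R/I$ with $I$ finitely generated, and since $\Loc(R/I) \subset \Loc(\setofobjects)$ iff $R/I$ lies in $\Loc(\setofobjects)$, this union is exactly $\bigcup \{\zeros{I} \mid R/I \in \Loc(\setofobjects),\ I \text{ fin.\ gen.}\}$, which is the formula for $f$. Running the ideal-frame equivalence in the other direction gives precisely $g$: a Hochster open $U$ is the join of the quasi-compact opens $\zeros{I} \subset U$ with $I$ finitely generated, and the corresponding join in $\CGLoc(D(R))$ is $\Loc(K(I) \mid \zeros{I} \subset U)$. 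That $f$ and $g$ are mutually inverse and inclusion-preserving then follows because they are the two directions of a frame isomorphism restricted to sets, but I would also note it can be seen directly: $f$ is visibly monotone, and $g\circ f = \mathrm{id}$ reduces to Corollary~\ref{cor contain-I} together with Corollary~\ref{cor inc}, i.e.\ to the fact that $\Loc(\setofobjects)$ is recovered from the finitely generated ideals $I$ with $R/I \in \Loc(\setofobjects)$.

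The main obstacle, and the step deserving the most care, is the clean identification of the combinatorics: showing that the lattice of quasi-compact open subsets of the Hochster dual of $\Spec_Z(R)$ is \emph{anti-}isomorphic to $\fgRadId(R)$ via $\sqrt{I} \leftrightarrow \zeros{I}$, with joins on the Hochster side matching meets of radical ideals (finite unions $\zeros{I} \cup \zeros{J} = \zeros{I \cap J}$) — and then checking this matches the join formula $\Loc(R/I) \vee \Loc(R/J) = \Loc(R/(I\cdot J))$ and the meet formula $\Loc(R/I) \cap \Loc(R/J) = \Loc(R/(I+J))$ from Lemma~\ref{lem interR/IetR/J}, under $\sqrt{I \cdot J} = \sqrt{I} \vee \sqrt{J}$ and $\sqrt{I+J} = \sqrt{I} \wedge \sqrt{J}$ in the Zariski lattice. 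Once that dictionary is laid out, the theorem is essentially a restatement of Theorem~\ref{thm main}; the only genuine input beyond formal frame theory is Proposition~\ref{thm comp-celleq-cyc} (replacing compacts by cyclic quotients) so that the indexing by $R/I$'s in the formulas for $f$ and $g$ is legitimate.
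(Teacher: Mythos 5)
Your argument is correct, but it takes the opposite route from the paper. You deduce Theorem~\ref{thm maingeo} from Theorem~\ref{thm main}: you identify $\RadId(R)^\vee$ with the frame of Hochster open sets via Stone duality, and then read off the formulas for $f$ and $g$ from coherence (every element of a coherent frame is the join of the finite elements below it, and the isomorphism of Proposition~\ref{prop main} matches $\Loc(R/I)$ with $\zeros{I}$ on finite elements). That is logically sound, since Theorem~\ref{thm main} is established first. The paper, however, explicitly presents Theorem~\ref{thm maingeo} as ``a more elementary proof of more geometric flavor'': its proof is a self-contained, hands-on verification that $f\circ g=\Id$ and $g\circ f=\Id$, using only the finite-recipe description of compact objects (Proposition~\ref{prop objincomLoc}), Proposition~\ref{thm comp-celleq-cyc}, Corollary~\ref{cor inc}, and the fact that sets $\zeros{J}$ with $J$ finitely generated are finite elements of the Hochster frame. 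The point of that second proof is independence from the formal argument behind Theorem~\ref{thm main}, whose proof leans on the coherence machinery of Section~\ref{sec ZarSpecTriaCat} (Theorem~\ref{thm coherence}). So what you buy is brevity --- the theorem becomes an unwinding of an isomorphism already in hand --- while what the paper buys is a second, independent derivation of the correspondence that never invokes the abstract frame of radical thick tensor ideals. One presentational remark: when you assert that $\bigcup_s \zeros{I_s}$ equals $\bigcup\{\zeros{I}\mid R/I\in\Loc(\setofobjects)\}$, the inclusion $\supset$ is the one that actually uses the frame isomorphism (monotonicity gives $\zeros{I}=\Phi(\Loc(R/I))\subset\Phi(\Loc(\setofobjects))$ whenever $R/I\in\Loc(\setofobjects)$); it would be worth saying so explicitly, since that is precisely the step the paper instead handles by a direct compactness argument.
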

\begin{proof}
  Note first that the new description of $f$ agrees with that of 
  Proposition~\ref{prop main}: given a perfect complex $C$,
the subset 
$$
\bigcup_{R/J \in \Loc(C), \ J \text{ f.g.}}\zeros{J}
$$
is of the form $\zeros{I}$
for some finitely generated ideal $I$.  
By Proposition~\ref{thm comp-celleq-cyc},
there is a finitely generated ideal $I$ such that
$\Loc(C) = \Loc(R/I)$, and for any finitely generated ideal $J$, by
Corollary~\ref{cor inc},
\[
R/J \in \Loc(R/I) \Leftrightarrow \sqrt{J}
\supset \sqrt{I} \Leftrightarrow \zeros{J} \subset \zeros{I}.
\]
So indeed $f(\Loc(R/I))
= \zeros{I}$.

  We now check that  $f\circ g = \Id$.  Given an arbitrary Hochster open set $U$, to
show that $f \circ g (U) = U$, it is enough to prove that if $J$ is a finitely
generated ideal in $R$ such that $R/J \in \Loc\big( K(I) \mid \zeros{I} \subset U, I
\text{ fin.~gen.} \big )$, then $\zeros{J} \subset U$.  Choose a Koszul complex
$K(J)$ for the ideal $J$.  By hypothesis $K(J) \in \Loc\big( K(I) \mid \zeros{I}
\subset U, I \text{ fin.~gen.} \big )$, and since it is compact there exist
finitely many ideals $J_1, \dots, J_k$ such that $\zeros{J_k} \in U$ and $K(J) \in
\Loc(K(J_1), \dots , K(J_k))= \Loc(R/J_1, \dots, R/J_k)$.  But $\Loc (R/J_1,
\dots , R/J_k) = \Loc(R/(J_1 \dots J_k))$, so by Corollary~\ref{cor inc},
$$
\sqrt{J} \supset \sqrt{J_1 \cdots J_k},
$$
hence 
$$
\zeros{J} \subset \zeros{J_1 \cdots J_k} = \bigcup_{i=1}^k \zeros{J_i}\subset U.
$$

Finally we establish that $g \circ f = \Id$.  Since by 
Proposition~\ref{thm comp-celleq-cyc}, any
perfect complex is cellularly equivalent to a finitely
generated cyclic module, it is clear that for any compactly
generated category $\Loc (\setofobjects)$, we have $(g \circ f)(\Loc (\setofobjects)) \supset
\Loc (\setofobjects)$.
For the reverse inclusion we argue as follows.  Given a compactly generated
localizing subcategory $\Loc (\setofobjects)$, let $J$ be a finitely generated ideal such
that $\zeros{J} \subset \bigcup_{\underset{I \text{ fin.~gen.}}{R/I\in \Loc (\setofobjects)}}
\zeros{I}$.  Then, because Hochster opens of the form $\zeros{J}$ are finite elements in
the Hochster frame, there exist finitely many ideals $I_1, \dots, I_n$ such that
$R/I_j \in \Loc (\setofobjects)$ for $1 \leq j \leq n$ and $\zeros{J} \subset \bigcup_{j=1}^n
\zeros{I_j}$.  Again by Proposition~\ref{prop main},
\[
 \Loc(K(J)) \subset \Loc(K(I_1), \dots, K(I_n)) \subset \Loc (\setofobjects).
\]

\vspace*{-1.7\bigskipamount}

\end{proof}

\subsection{Hochster duality in $D(R)$ and $\Spec_Z R$}

Usually in algebraic geometry 
the topology of interest on $\Spec R$ is the Zariski topology (or closely 
related Grothendieck topologies such as the \'etale topology), not the Hochster dual topology.
As discussed in the Introduction, it is somewhat mysterious that 
compactly generated localizing subcategories in $D(R)$ yield the Hochster dual topology 
on $\Spec R$
(Theorem~\ref{thm main}), in spite of the fact that it is actually
a Zariski-like construction, as we shall see in 
Section~\ref{sec ZarSpecTriaCat}.

It is natural to ask whether also the  Zariski frame itself can be realized
as a sub-lattice inside $D(R)$. For the lattice of finite elements
$\Loc(R/I)$, the dual lattice can be
obtained by  passing to the right orthogonal categories.  We shall show how
to describe the join completion of this lattice inside $D(R)$.

By Proposition~\ref{prop LOcCisBous} localizing subcategories generated by sets of compact objects admit
Bousfield localizations, so for any set of compact objects $\setofobjects$, we have
${}^\bot(\Loc( \setofobjects)^\bot) = \Loc(\setofobjects)$.  In particular, \cite[Prop. 4.9.1(6)]{MR2681709}
we have
the following order-reversing bijection of lattices:
\[
\xymatrix{
\big\{  \Loc(C) \mid C \text{ compact } \big\}  \ar@/^/[rr]^{(-)^\bot} & & 
\ar@/^/[ll]^{{}^\bot(-)} \big\{  \Loc(C)^{\bot} \mid C \text{ compact }\big\} \\
} 
\]
A priori on the right-hand side what we get are colocalizing subcategories as
explained in \ref{subsubsec BousfieldLoc}, but in this specific case we get
categories that are also localizing:

\begin{proposition}\label{prop dualcatisloc}
Let $\setofobjects$ be a set of compact objects
in a tensor
triangulated category $\T$ admitting arbitrary sums.
Then $\Loc (\setofobjects)^\bot$ is both a colocalizing and
a localizing subcategory.
\end{proposition}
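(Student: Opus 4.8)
The plan is to exploit Theorem~\ref{thm tensorloc}: since $\setofobjects$ consists of compact objects, both a Bousfield localization $L_\setofobjects$ and colocalization $\Gamma_\setofobjects$ exist by Proposition~\ref{prop LOcCisBous}, and $\Loc(\setofobjects)$ is a thick tensor ideal by Lemma~\ref{lem locistensideal}(1) (note $\Loc(\mathbf{1}) = \T$ is part of our standing hypotheses, or can be arranged; in fact for the statement at hand one only needs that $\Loc(\setofobjects)$ is a tensor ideal, which follows from Lemma~\ref{lem tensideal}(2)). The key point will be to verify condition (1) of Theorem~\ref{thm tensorloc}, namely that $\Loc(\setofobjects)^\bot$ is a tensor ideal; once we know this, the theorem gives $\Gamma_\setofobjects \simeq \Gamma_\setofobjects(\mathbf{1}) \otimes -$, and then closure of $\Loc(\setofobjects)^\bot$ under arbitrary sums (which makes it localizing, on top of being colocalizing as recorded in \ref{subsubsec BousfieldLoc}) will follow from the fact that $\Loc(\setofobjects)^\bot$ is precisely the essential image of $L_\setofobjects \simeq L_\setofobjects(\mathbf{1}) \otimes -$, a functor that commutes with sums.

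So the first step is to check that $\Loc(\setofobjects)^\bot$ is a tensor ideal, i.e.\ that if $x \in \Loc(\setofobjects)^\bot$ and $t \in \T$ then $x \otimes t \in \Loc(\setofobjects)^\bot$. Unwinding the definition, I must show $\Hom(c, x \otimes t) = 0$ for every $c \in \Loc(\setofobjects)$; since $\Loc(\setofobjects)^\bot = \Loc(\setofobjects)^\bot$ already, it is enough to check this for $c$ a suspension of an object $s \in \setofobjects$, i.e.\ for $c$ compact. For compact $c$ one has $\Hom(c, x\otimes t) \cong \Hom(c, \underline{\Hom}(Dt, x))$ or, more elementarily when working with $D(R)$, an adjunction/duality argument: if $c$ is a \emph{compact} object in a rigidly-generated (or just: compactly generated closed symmetric monoidal) triangulated category, then $c \otimes t'$ is again compact for $t'$ ranging over a suitable class, and one reduces $\Hom(c, x \otimes t) = 0$ to $\Hom(c', x) = 0$ for some compact $c' \in \Loc(\setofobjects)$. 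Concretely: it suffices to treat $t$ compact first (using that every object is a homotopy colimit of compacts and $x \in \Loc(\setofobjects)^\bot$ commutes appropriately), and for $t$ compact the dual $t^\vee$ exists, $\Hom(c, x \otimes t) \cong \Hom(c \otimes t^\vee, x)$, and $c \otimes t^\vee$ lies in $\Loc(\setofobjects)$ because $\Loc(\setofobjects)$ is a tensor ideal — hence the $\Hom$ vanishes.

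Having established that $\Loc(\setofobjects)^\bot$ is a tensor ideal, I invoke Theorem~\ref{thm tensorloc} to conclude $\Gamma_\setofobjects \simeq \Gamma_\setofobjects(\mathbf{1}) \otimes -$ and $L_\setofobjects \simeq L_\setofobjects(\mathbf{1}) \otimes -$. Now for the final step: by Proposition~\ref{prop LOcCisBous}, $\Loc(\setofobjects)$ is the essential image of $\Gamma_\setofobjects$, and dually $\Loc(\setofobjects)^\bot$ is the essential image of $L_\setofobjects$ (an object $y$ lies in $\Loc(\setofobjects)^\bot$ iff $\Gamma_\setofobjects(y) = 0$ iff $y \to L_\setofobjects(y)$ is an isomorphism). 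Since $L_\setofobjects \simeq L_\setofobjects(\mathbf{1}) \otimes -$ and $-\otimes-$ commutes with arbitrary sums in each variable (the tensor product on a triangulated category with sums is a triangulated functor, hence preserves sums), the essential image of $L_\setofobjects$ is closed under arbitrary sums: given a family $\{y_\alpha\}$ in $\Loc(\setofobjects)^\bot$, we have $\bigoplus_\alpha y_\alpha \cong \bigoplus_\alpha L_\setofobjects(\mathbf{1}) \otimes y_\alpha \cong L_\setofobjects(\mathbf{1}) \otimes \bigoplus_\alpha y_\alpha = L_\setofobjects(\bigoplus_\alpha y_\alpha)$, so the sum is again in the image. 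Being a full triangulated subcategory (it is already colocalizing) that is closed under arbitrary sums, $\Loc(\setofobjects)^\bot$ is localizing.

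The main obstacle I anticipate is the first step, verifying that $\Loc(\setofobjects)^\bot$ is a tensor ideal: this is where the compactness of the generators is genuinely used, via the reduction to compact $c$ and the rigidity/duality of compact objects (every compact object in $D(R)$ is strongly dualizable). The rest is bookkeeping with Bousfield (co)localization functors and the standard fact that tensoring preserves sums. One should also make sure the ambient hypothesis $\Loc(\mathbf{1}) = \T$ is either in force or genuinely not needed — for the clean statement here it is enough that $\Loc(\setofobjects)$ is a tensor ideal, and Lemma~\ref{lem tensideal} applies verbatim under $\Loc(\mathbf{1})=\T$; if one wants the bare hypothesis ``$\T$ tensor triangulated with arbitrary sums'' one replaces this by the observation that $\Loc(\setofobjects)$ being generated by compacts is automatically absorbing once one knows $s \otimes t \in \Loc(s)$ for $s$ compact, which is again a rigidity argument.
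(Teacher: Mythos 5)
Your strategy --- prove that $\Loc(\setofobjects)^\bot$ is a tensor ideal, deduce from Theorem~\ref{thm tensorloc} that $L_\setofobjects$ is smashing, and conclude closure under sums from $L_\setofobjects \simeq L_\setofobjects(\mathbf{1})\otimes -$ --- is a recognizable and workable route, but only under hypotheses that the proposition does not assume. The duality step $\Hom(c, x\otimes t)\cong \Hom(c\otimes t^\vee,x)$ needs the compact objects of $\T$ to be strongly dualizable, the reduction to compact $t$ needs $\T$ to be compactly generated, and the final step needs $\otimes$ to preserve arbitrary sums; none of these is part of the stated hypothesis ``tensor triangulated with arbitrary sums, $\setofobjects$ a set of compacts''. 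Moreover your reduction to compact $t$ is the dangerous point: if you phrase it as ``$\{t \mid x\otimes t\in\Loc(\setofobjects)^\bot\}$ contains the compacts and is localizing'', the closure of that subcategory under sums uses closure of $\Loc(\setofobjects)^\bot$ under sums, which is exactly what you are trying to prove. One must instead work with $\{t\mid \Hom(c,x\otimes t)=0\ \forall c\in\setofobjects\}$, whose closure under sums follows from compactness of $c$ alone. As written, the proposal does not establish the proposition in its stated generality, and its key step is under-justified.

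The paper's proof is much shorter and uses neither the tensor structure nor any rigidity: since $\Loc(\setofobjects)^\bot=\setofobjects^\bot$ (item (1) of \ref{subsubsec BousfieldLoc}), one only needs $\Hom_\T\bigl(C,\coprod_j N_j\bigr)=0$ for each $C\in\setofobjects$ and each family $N_j\in\Loc(\setofobjects)^\bot$; compactness of $C$ lets any map $C\to\coprod_j N_j$ factor through a finite subsum, which is a finite product of objects right-orthogonal to $C$, hence the map is zero. I recommend you adopt that argument; if you want to keep your smashing-localization route, state explicitly the extra standing hypotheses (rigidly-compactly generated, $\otimes$ preserving coproducts) under which it is valid, and fix the reduction to compact $t$ as indicated.
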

\begin{proof}
  As the subcategory $\Loc (\setofobjects)^\bot$ is colocalizing, it is
  triangulated; we just have to prove that it is closed under arbitrary sums.
  Consider an arbitrary sum $\coprod_{j \in J}N_j$ of objects $N_j \in \Loc
  (\setofobjects)^\bot$.  Given a compact generator $C$ of $\Loc (\setofobjects)$,
  consider any map $f \in \Hom_{\T}(C,\coprod_{j \in J}N_j)$.  Since $C$
  is compact, there exists a finite subset $K \subset J$ such that $f$ factors
  via $f_K : C \rightarrow \coprod_{j \in K} N_j = \prod_{j \in K} N_j$.  
  But then $f_K \in  \Hom_\T(C, \prod_{j \in
  K} N_j ) = \prod \Hom_\T(C, N_j ) = 0$, and $f=0$.  In particular
  $\coprod_{j \in J}N_j \in \Loc (\setofobjects)^\bot$ as we wanted.
\end{proof}

In the Zariski spectrum of a ring
there is a basis of principal open sets given by complements of Zariski closed sets defined by a single element of the ring. We first determine their corresponding localizing subcategories.

\begin{proposition}\label{prop orthOneGen}
For an element $f \in R$, we have $\Loc(R/\idealgen{f})^\bot = \Loc(R_f)$.
Moreover $\Loc(R_f)$ is the essential image of the functor 
$D(R_f) \rightarrow D(R)$ induced by restriction of scalars along 
the canonical map $R \rightarrow R_f$.
\end{proposition}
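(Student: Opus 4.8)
The plan is to prove the two assertions in turn, both by identifying objects through their homology and using the Dwyer--Greenlees torsion criterion (Lemma~\ref{lem blongstoLR/I}).

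First I would compute $\Loc(R/\idealgen f)^\bot$ directly from the definition. An object $E$ lies in $\Loc(R/\idealgen f)^\bot$ iff $\Hom_{D(R)}(R/\idealgen f, \Sigma^n E)=0$ for all $n$. Since $R\xrightarrow{f} R \to R/\idealgen f$ is a Koszul resolution, $\Hom_{D(R)}(R/\idealgen f,\Sigma^n E)$ fits in a long exact sequence computing the kernel and cokernel of multiplication by $f$ on $H_\ast(E)$; concretely $E\in\Loc(R/\idealgen f)^\bot$ iff $f$ acts invertibly on $H_\ast(E)$. On the other hand, for a module of the form $R_f$, and more generally for any object in $\Loc(R_f)$, multiplication by $f$ is invertible: this is clear for $R_f$ itself, and the property ``$f$ acts invertibly on homology'' is closed under suspensions, cones (by the five lemma) and arbitrary sums, hence holds on all of $\Loc(R_f)$. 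This gives $\Loc(R_f)\subset \Loc(R/\idealgen f)^\bot$. For the reverse inclusion I would argue that $\Loc(R/\idealgen f)^\bot$ is, by Proposition~\ref{prop dualcatisloc}, a localizing subcategory, and it is the essential image of the Bousfield localization functor $L_{\Loc(R/\idealgen f)}$; it therefore suffices to show $R_f$ generates it, i.e.\ that any $E$ on which $f$ acts invertibly lies in $\Loc(R_f)$. This is where the restriction-of-scalars description enters and does the real work.

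So I would next prove the ``moreover'' clause, which also closes the first part. Let $j\colon R\to R_f$ be the localization map and $j_\ast\colon D(R_f)\to D(R)$ restriction of scalars, with left adjoint $j^\ast=R_f\otimes_R-$. Since $R_f$ is flat over $R$, the unit $E\to j_\ast j^\ast E = R_f\otimes_R E$ is an isomorphism precisely when $f$ acts invertibly on $H_\ast(E)$ (as $R_f\otimes_R E$ just inverts $f$ on homology). Hence the essential image of $j_\ast$ is exactly $\{E \mid f \text{ acts invertibly on } H_\ast E\} = \Loc(R/\idealgen f)^\bot$ by the computation above. It remains to see that this essential image equals $\Loc(R_f)$. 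One inclusion is immediate: $R_f=j_\ast(R_f)$, so $\Loc(R_f)$ is contained in the essential image (which is triangulated and closed under sums, being $\Loc(R/\idealgen f)^\bot$). For the other inclusion, take $E=j_\ast N$ with $N\in D(R_f)$; since $\Loc_{R_f}(R_f)=D(R_f)$, there is a recipe building $N$ out of sums of suspensions of $R_f$ in $D(R_f)$, and applying the triangulated, sum-preserving functor $j_\ast$ shows $j_\ast N\in\Loc_{D(R)}(j_\ast R_f)=\Loc(R_f)$. Combining, $\Loc(R_f)=\operatorname{ess.im}(j_\ast)=\Loc(R/\idealgen f)^\bot$, which proves both claims.

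The main obstacle I anticipate is the careful bookkeeping that $R_f\otimes_R-$, applied to an object on which $f$ already acts invertibly, returns that object unchanged up to quasi-isomorphism --- i.e.\ the identification of the localization functor $L_{\Loc(R/\idealgen f)}$ with $R_f\otimes_R-$. In the tensor-triangulated setting this is an instance of Theorem~\ref{thm tensorloc}, since $\Loc(R/\idealgen f)^\bot$ is a tensor ideal (being localizing in $D(R)$, by Lemma~\ref{lem locistensideal}), so one gets $L_{\Loc(R/\idealgen f)}\simeq L_{\Loc(R/\idealgen f)}(R)\otimes-$ for free; one then only needs to check $L_{\Loc(R/\idealgen f)}(R)\simeq R_f$, which follows by comparing the localization triangle $\Gamma_{\Loc(R/\idealgen f)}(R)\to R\to L_{\Loc(R/\idealgen f)}(R)$ with the evident triangle $\Gamma\to R\to R_f$ once one knows $\Gamma_{\Loc(R/\idealgen f)}(R)\in\Loc(R/\idealgen f)$ has homology that is $\idealgen f$-torsion and $R_f$ has homology on which $f$ is invertible. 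Everything else is routine homological algebra with Koszul complexes and the five lemma.
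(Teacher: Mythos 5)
Your proof is essentially correct, but it takes a genuinely different route from the paper's. The paper identifies $\Loc(R/\idealgen f)^\bot$ with the essential image of the Bousfield localization functor $L_{R/\idealgen f}$, uses the tensor-ideal property (Theorem~\ref{thm tensorloc}) to write $L_{R/\idealgen f}\simeq L_{R/\idealgen f}(R)\otimes-$, and then explicitly computes $L_{R/\idealgen f}(R)\simeq R_f$ via the Dwyer--Greenlees stable Koszul complex $K^\bullet(f^\infty)\simeq(R\to R_f)$; the ``moreover'' clause is then dispatched exactly as in your last step. You instead characterize the orthogonal directly by the homology criterion ``$f$ acts invertibly on $H_*(E)$'' and identify that class with the essential image of restriction of scalars $j_*$, letting the adjunction $(R_f\otimes_R-,\;j_*)$ and flatness of $R_f$ do the work. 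Your route is more elementary and avoids most of the Bousfield machinery; the paper's route has the advantage that the explicit model $L_{R/\idealgen f}(R)\simeq(R\to R_f)[1$-shifted$]$ is reused verbatim in the proof of Theorem~\ref{thm orthfinitgen}, where the localization of $R$ at a finitely generated ideal is built as a tensor product of these stable Koszul complexes. Your closing paragraph correctly identifies the tensor-idempotent identification as the crux, and that is indeed the paper's chosen path.

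One small inaccuracy to repair: $R\xrightarrow{f}R$ is a resolution of $R/\idealgen f$ only when $f$ is a non-zero-divisor; for general $f$ the complex $K(f)$ has $H_1=\Ann(f)$, so $\Hom_{D(R)}(R/\idealgen f,\Sigma^nE)$ is not computed by that two-term complex. The fix is immediate and uses a tool you already have: by Proposition~\ref{DG-6.4}, $\Loc(R/\idealgen f)=\Loc(K(f))$, hence $\Loc(R/\idealgen f)^\bot=\{K(f)\}^\bot$, and applying $\Hom(-,\Sigma^nE)$ to the triangle $R\xrightarrow{f}R\to K(f)\to\Sigma R$ gives exactly the long exact sequence you want. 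With that substitution your criterion ``$E\in\Loc(R/\idealgen f)^\bot$ iff $f$ acts invertibly on $H_*(E)$'' is correct (and agrees with Lemma~\ref{lem belongstoLocRf}), and the rest of your argument goes through.
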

\begin{proof} 
From \cite[Prop. 4.9.1 and 4.10.1]{MR2681709}, $\Loc(R/\idealgen{f})^\bot$ is the essential
  image of the Bousfield localization functor associated to the compactly 
  generated $\Loc(R/\idealgen{f})$, and since 
$\Loc(R/\idealgen{f})$ is a tensor ideal (Lemma~\ref{lem locistensideal}), by
Theorem~\ref{thm tensorloc}, this localization functor is isomorphic to
$L_{R/\idealgen{f}}(R) \otimes - \;$.  We proceed to compute the fundamental
triangle:
\[
\xymatrix{
\Gamma_{R/\idealgen{f}}(R) \ar[r] &R \ar[r] & L_{R/\idealgen{f}}(R) \ar[r] & \Sigma \Gamma_{R/\idealgen{f}}(R).
}
\]
In~\cite{MR1879003}, Dwyer and Greenlees showed how to compute the complex
$\Gamma_{R/\idealgen{f}}(R)$.  For each power $f^k$, the Koszul complex
$K(f^k)$ is given by $R
\stackrel{f^k}{\longrightarrow}R$, and we may form an inductive system $K(f^k)
\rightarrow K(f^{k+1})$ via the commutative diagram:
\[
\xymatrix{R \ar@{=}[r] \ar[d]_{f^k} & R \ar[d]^{f^{k+1}} \\
R \ar[r]^{f} & R.
} 
\]
The homotopy colimit of these complexes is by definition the complex denoted by
$K^\bullet(f^\infty)$.  It is shown in \cite[Proposition~6.10]{MR1879003} that
$K^\bullet(f^\infty)$ is quasi-isomorphic to $\Gamma_{R/\idealgen{f}}(R)$ and
that it is also quasi-isomorphic to the complex $R \rightarrow R_f$, where $R$
is in degree $0$ and the map $K^\bullet(f^\infty) \rightarrow R$ is simply the
map
\[
\xymatrix{R \ar@{=}[d] \ar[r] & R_f \ar[d] \\
R \ar[r] & 0.
} 
\]
By direct computation using the long exact sequence in homology we get that
$L_{R/\idealgen{f}}(R)$ is quasi-isomorphic to the complex $R_f$ concentrated in degree
$0$.

Finally, following the discussion in the beginning of this proof, a complex $M$
belongs to $\Loc(R/\idealgen{f})^\bot$ if and only if it is quasi-isomorphic to $ L_{R/\idealgen{f}}(R)
\otimes M = R_f \otimes M \in \Loc(R_f)$.  The reverse inclusion is trivial.

To prove the last assertion, just notice that the functor $D(R_f) \rightarrow
D(R)$ induced by restriction of scalars along $R \rightarrow R_f$ is exact,
commutes with both products and sums and that $R_f$ generates $D(R_f)$ as
a triangulated category with infinite sums.
\end{proof}

From this result it is easy to extract a criterion for a complex to be in
$\Loc(R/\idealgen{f})^\bot$, much in the spirit of Lemma~\ref{lem blongstoLR/I}:

\begin{lemma}\label{lem belongstoLocRf}
For $f \in R$, a complex belongs to $\Loc(R/\idealgen{f})^\bot= \Loc(R_f)$ if and only if
its homology modules are $R_f$-modules.
\end{lemma}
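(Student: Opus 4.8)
The plan is to deduce the lemma directly from the computation carried out in the proof of Proposition~\ref{prop orthOneGen}, which already identifies $\Loc(R/\idealgen{f})^\bot = \Loc(R_f)$ with the essential image of the functor $R_f \otimes_R -$ and shows that $M$ lies in it exactly when $M$ is quasi-isomorphic to $R_f \otimes_R M$, the relevant map being the canonical one $M \to R_f \otimes_R M$ induced by the localization homomorphism $R \to R_f$. The only additional fact I would invoke is that $R_f$ is a flat $R$-module, so that $R_f \otimes_R -$ is exact, its formation commutes with passing to homology, and the underived and derived versions agree.

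For the forward implication I would argue: if $M \in \Loc(R_f)$ then $M \simeq R_f \otimes_R M$ in $D(R)$, whence $H_n(M) \cong H_n(R_f \otimes_R M) \cong R_f \otimes_R H_n(M)$ by flatness, so each $H_n(M)$ is an $R_f$-module. For the converse, assume every $H_n(M)$ is an $R_f$-module. By flatness the chain map $M \to R_f \otimes_R M$ induces on $H_n$ the localization map $H_n(M) \to R_f \otimes_R H_n(M)$, $x \mapsto 1 \otimes x$; since $f$ already acts invertibly on $H_n(M)$ this is an isomorphism, so $M \to R_f \otimes_R M$ is a quasi-isomorphism and $M \in \Loc(R_f)$ by the criterion just recalled.

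I do not expect a genuine obstacle: the statement is essentially a reformulation of Proposition~\ref{prop orthOneGen}. The single point to keep track of is the flatness of $R_f$, which is what makes the two tensor products coincide and commute with homology; granting that, both implications are immediate.
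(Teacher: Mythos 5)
Your proof is correct and follows essentially the same route as the paper: both identify $\Loc(R_f)$ as the essential image of the Bousfield localization $R_f\otimes -$ via Proposition~\ref{prop orthOneGen} and then use flatness of $R_f$ to commute the tensor product with homology (the paper phrases this via the collapse of the K\"unneth spectral sequence, which is the same observation).
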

\begin{proof}    By Bousfield
localization we know that $\Loc(R_f)$ is the essential image of the functor $R_f
\otimes -$. Given an arbitrary complex $M$, the K\"unneth spectral sequence
that computes the homology of $R_f \otimes M$ collapses onto the horizontal edge
at the page $E^2$ because $R_f$ is flat.  We conclude that 
$M \rightarrow R_f \otimes M$ is a quasi-isomorphism
if and only if for each $n \in \Z$ we have $H_n(M) \simeq R_f \otimes H_n(M)$,
and this happens precisely when the homology modules of $M$ are $R_f$-modules.
\end{proof}

We also get a description of $\Loc(R/I)^\bot$ for an arbitrary finitely
generated ideal $I$:

\begin{theorem}\label{thm orthfinitgen}
For a finitely generated ideal $I = \idealgen{f_1,\ldots,f_n}$ in $R$,
we have
$$\Loc(R/I)^\bot = \Loc(R_{f_1},\dots, R_{f_n}).$$
\end{theorem}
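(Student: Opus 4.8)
The plan is to reduce the statement about a general finitely generated ideal $I = \idealgen{f_1,\dots,f_n}$ to the one-generator case $\Loc(R/\idealgen{f})^\bot = \Loc(R_f)$ already established in Proposition~\ref{prop orthOneGen}. The key observation is that $\Loc(R/I)$ splits as a join of the one-generator pieces: since $I = \idealgen{f_1,\dots,f_n}$, the Koszul complex $K(I)$ is $K(f_1)\otimes\cdots\otimes K(f_n)$, and more simply $\sqrt{I} = \sqrt{(f_1)} \vee \cdots \vee \sqrt{(f_n)}$ in the Zariski lattice. Under the isomorphism $\fgCGLoc(D(R)) \simeq \fgRadId(R)\op$ of Proposition~\ref{prop main}, joins in the Zariski lattice correspond to meets of localizing subcategories, so one expects
\[
\Loc(R/I) = \Loc(R/\idealgen{f_1}) \cap \cdots \cap \Loc(R/\idealgen{f_n}).
\]
This is exactly the content of Lemma~\ref{lem interR/IetR/J} applied inductively: $\Loc(R/\idealgen{f_1}) \cap \Loc(R/\idealgen{f_2}) = \Loc(R/\idealgen{f_1,f_2})$, and so on, since $\idealgen{f_1}+\idealgen{f_2} = \idealgen{f_1,f_2}$.

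Next I would apply the orthogonal-complement operation $(-)^\bot$. The general fact is that for a finite collection of subcategories one has $\big(\bigcap_j \mathcal{L}_j\big)^\bot \supseteq \bigjoin_j \mathcal{L}_j^\bot$ always, but here I want equality, so I would argue more carefully. By Proposition~\ref{prop dualcatisloc}, each $\Loc(R/\idealgen{f_j})^\bot = \Loc(R_{f_j})$ is itself a localizing subcategory, and the order-reversing bijection between $\{\Loc(C) \mid C \text{ compact}\}$ and $\{\Loc(C)^\bot\}$ quoted before Proposition~\ref{prop dualcatisloc} turns the meet $\Loc(R/\idealgen{f_1}) \cap \cdots \cap \Loc(R/\idealgen{f_n})$ into the join $\Loc(R_{f_1}) \vee \cdots \vee \Loc(R_{f_n}) = \Loc(R_{f_1},\dots,R_{f_n})$. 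Concretely: $\Loc(R/I)^\bot = \big(\bigcap_j \Loc(R/\idealgen{f_j})\big)^\bot$, and since $(-)^\bot$ restricted to the lattice of compactly generated localizing subcategories is an anti-isomorphism onto its image (by the Bousfield-localization machinery, Proposition~\ref{prop LOcCisBous}, together with $\Loc(\setofobjects) = {}^\bot(\Loc(\setofobjects)^\bot)$), it sends the finite meet on the left to the finite join on the right.

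Alternatively — and this is probably the cleanest route to write down — I would give a direct homological argument via Lemma~\ref{lem belongstoLocRf}-style reasoning: a complex $M$ lies in $\Loc(R/I)^\bot$ iff $\Hom_{D(R)}(R/I, M[\ast]) = 0$, which by $\Loc(R/I) = \Loc(R/\idealgen{f_1}) \cap \cdots \cap \Loc(R/\idealgen{f_n})$ is equivalent to $M$ lying in each $\Loc(R/\idealgen{f_j})^\bot = \Loc(R_{f_j})$; then I would check that the intersection $\bigcap_j \Loc(R_{f_j})$ equals $\Loc(R_{f_1},\dots,R_{f_n})$ — but wait, that intersection is too small. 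So this alternative does not literally work, and I would fall back on the anti-isomorphism argument: the point is that $(-)^\bot$ converts the \emph{intersection} $\bigcap_j \Loc(R/\idealgen{f_j})$ into the \emph{join} $\bigjoin_j \Loc(R/\idealgen{f_j})^\bot$, not into their intersection. The main obstacle is precisely this: justifying that $(-)^\bot$ sends finite meets to finite joins on the nose. This follows because on the lattice of compactly generated localizing subcategories (equivalently, by Proposition~\ref{prop main}, on a distributive lattice), $(-)^\bot$ is a lattice anti-isomorphism onto its image, so it automatically interchanges $\meet$ and $\join$; the only thing to verify is that $\bigjoin_j \Loc(R_{f_j})$ really equals $\Loc(R_{f_1},\dots,R_{f_n})$, which is immediate from the formula for joins of compactly generated localizing subcategories. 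I would organize the written proof around Lemma~\ref{lem interR/IetR/J} (induction), then the anti-isomorphism, then Proposition~\ref{prop orthOneGen} for the base case.
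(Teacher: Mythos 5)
Your first step (decomposing $\Loc(R/I)$ as the intersection $\Loc(R/\idealgen{f_1})\cap\cdots\cap\Loc(R/\idealgen{f_n})$ via Lemma~\ref{lem interR/IetR/J}) matches the opening line of the paper's proof, and it correctly yields the easy inclusion $\Loc(R_{f_1},\dots,R_{f_n})\subset\Loc(R/I)^\bot$ once combined with Proposition~\ref{prop dualcatisloc}. The gap is in the reverse inclusion. The anti-isomorphism $(-)^\bot$ between $\{\Loc(C)\mid C\text{ compact}\}$ and $\{\Loc(C)^\bot\}$ does send the meet $\bigcap_j\Loc(R/\idealgen{f_j})$ to the join of the $\Loc(R_{f_j})$ \emph{computed in the poset of right orthogonals}, i.e.\ to the smallest subcategory of the form $\Loc(C)^\bot$ containing every $\Loc(R_{f_j})$. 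That is a tautology; it does not identify this join with the localizing subcategory \emph{generated} by $R_{f_1},\dots,R_{f_n}$. Your claim that the identification is ``immediate from the formula for joins of compactly generated localizing subcategories'' does not apply here: the modules $R_{f_j}$ are not compact, the subcategories $\Loc(R_{f_j})$ are not elements of $\CGLoc(D(R))$, and whether $\Loc(R_{f_1},\dots,R_{f_n})$ is even an element of the poset $\{\Loc(C)^\bot\}$ is precisely what the theorem asserts (for $C=K(I)$). So the lattice-theoretic argument is circular exactly at the point where the real work lies.

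For the hard inclusion $\Loc(R/I)^\bot\subset\Loc(R_{f_1},\dots,R_{f_n})$ the paper argues as follows: $\Loc(R/I)^\bot$ is the essential image of the Bousfield localization functor $L_{R/I}$, which satisfies $L_{R/I}\simeq L_{R/I}(R)\otimes-$ because $\Loc(R/I)$ is a tensor ideal (Theorem~\ref{thm tensorloc}); one then computes $L_{R/I}(R)$ explicitly from the model $K^\bullet(f_1^\infty)\otimes\cdots\otimes K^\bullet(f_n^\infty)$ of $\Gamma_{R/I}(R)$, finding that the cone of $R\to K^\bullet(f_1^\infty)\otimes\cdots\otimes K^\bullet(f_n^\infty)$ is a shift of the \v{C}ech-type complex $R_{f_1}\oplus\cdots\oplus R_{f_n}\to\cdots\to R_{f_1\cdots f_n}$, which visibly lies in $\Loc(R_{f_1},\dots,R_{f_n})$; the tensor-ideal property of the latter then closes the argument. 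Some explicit input of this kind is unavoidable, and it is the ingredient missing from your proposal.
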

\begin{proof}
First, we have $\Loc(R/I)^\bot=(\Loc(R/\idealgen{f_1}) \cap \cdots 
\cap \Loc(R/\idealgen{f_n}))^\bot$, so
$R_{f_1},\dots,R_{f_n}$ all belong to $\Loc(R/I)^\bot$, and since this is a
localizing subcategory, we have $\Loc(R_{f_1},\dots, R_{f_n}) \subset \Loc(R/I)^\bot$.
For the reverse inclusion consider again the localization triangle:
\[
 \xymatrix{
\Gamma_{R/I}(R) \ar[r] & R \ar[r] & L_{R/I}(R) \ar[r] & \Sigma \Gamma_{R/I}(R) 
 }
\]
By (the proof of) Proposition~\ref{prop orthOneGen},
a model for $L_{R/I}(R)$ is
$K^\bullet(f_1^\infty)\otimes \cdots \otimes K^\bullet(f_n^\infty)$.
Since each of these
complexes is quasi-isomorphic to a flat complex, 
$K^\bullet(f_i^\infty) \simeq (R \rightarrow R_{f_i})$, the
derived tensor may be computed using the  ordinary tensor product of complexes.  This is
then a complex of the following form:
\[
\xymatrix{
 R \ar[r] & R_{f_1} \oplus \cdots \oplus R_{f_n} \ar[r] & \cdots \ar[r] & R_{f_1\cdots f_n} 
}
\]
where in degree $-p+1$ we have the direct sum of the $\binom{n}{p}$ modules
obtained by choosing $p$ elements among the $n$ generators and localizing $R$ at
their product.  Then the cone of the map of complexes
\[
\xymatrix{
 R \ar@{=}[d] \ar[r] & R_{f_1} \oplus \cdots \oplus R_{f_n} 
 \ar[r] & \cdots \ar[r] & R_{f_1\cdots f_n} \\
 R &&&
}
\]
is quasi-isomorphic to the suspension of the complex 
\[
\xymatrix{
 R_{f_1} \oplus \cdots \oplus R_{f_n} \ar[r] & \cdots \ar[r] & R_{f_1\cdots f_n} 
}
\]
and this is clearly an element in $\Loc(R_{f_1},\dots, R_{f_n})$.  Since this
localizing subcategory is a tensor ideal, we find that $\Loc(R/I)^\bot$, the 
essential image of the Bousfield
localization functor $L_{R/I}(M)= L_{R/I}(R) \otimes M$, 
is contained in $\Loc(R_{f_1},\dots, R_{f_n})$.
\end{proof}

Observe that  for any finite set $f_1, \dots,f _n$ the localizing subcategory $
\Loc(R_{f_1},\dots, R_{f_n})$ only depends on the radical ideal generated by
$f_1, \dots, f_n$. For future reference we record two immediate consequences: 
 
\begin{corollary}\label{cor raddescrLocRf}
For any finitely generated ideal $J = \idealgen{f_1, \dots, f_n}$ we have:
\[
 \Loc(R_{f_1},\dots, R_{f_n}) = \Loc( R_f \mid f \in \sqrt{J}).
\]
\end{corollary}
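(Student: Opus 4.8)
The plan is to read off both inclusions from the order-reversing bijection between $\{\Loc(C)\mid C\text{ compact}\}$ and $\{\Loc(C)^\bot\mid C\text{ compact}\}$ recorded just before Proposition~\ref{prop dualcatisloc}, using the explicit orthogonals already computed in Proposition~\ref{prop orthOneGen} and Theorem~\ref{thm orthfinitgen} together with the radical criterion of Corollary~\ref{cor inc}. The point is simply that localizations $R_f$ of $R$ only arise inside $D(R)$ as orthogonals $\Loc(R/\idealgen{f})^\bot$, so the whole statement translates into a statement about the categories $\Loc(R/I)$, which we already control.

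For the inclusion $\Loc(R_{f_1},\dots,R_{f_n})\subseteq\Loc(R_f\mid f\in\sqrt J)$ there is nothing to do beyond noting that each generator $f_i$ lies in $J$, hence a fortiori in $\sqrt J$; thus each $R_{f_i}$ is literally one of the objects generating the right-hand side, and the inclusion of the generated localizing subcategories follows.

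For the reverse inclusion it suffices to prove that $R_f\in\Loc(R_{f_1},\dots,R_{f_n})$ for every $f\in\sqrt J$, since then the smallest localizing subcategory containing all such $R_f$ is contained in $\Loc(R_{f_1},\dots,R_{f_n})$. So fix $f\in\sqrt J$. Because $\sqrt J$ is an ideal, the whole principal ideal $\idealgen{f}$ is contained in $\sqrt J$, whence $\sqrt{\idealgen{f}}\subseteq\sqrt J$. Corollary~\ref{cor inc} then gives $\Loc(R/J)\subseteq\Loc(R/\idealgen{f})$; applying $(-)^\bot$ reverses this containment, and by Proposition~\ref{prop orthOneGen} and Theorem~\ref{thm orthfinitgen} the two orthogonals are exactly $\Loc(R_f)$ and $\Loc(R_{f_1},\dots,R_{f_n})$ respectively. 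Hence $R_f\in\Loc(R_f)\subseteq\Loc(R_{f_1},\dots,R_{f_n})$, as wanted.

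I do not expect any genuine obstacle: the orthogonality bijection, the explicit description of the orthogonals, and the dictionary between containments of radicals and of the associated localizing subcategories are all already in place, so the argument is just a short tracking of inclusions. The only points requiring a moment's care are the direction in which $(-)^\bot$ reverses inclusions, and the elementary fact that $f\in\sqrt J$ forces all of $\idealgen{f}$ (not merely $f$) into $\sqrt J$; alternatively one could bypass even this by invoking the radical-invariance of $\Loc(R_{g_1},\dots,R_{g_k})$ noted immediately after Theorem~\ref{thm orthfinitgen}, applied to the finite sets $\{f_1,\dots,f_n\}$ and $\{f_1,\dots,f_n,f\}$, which generate ideals with the same radical.
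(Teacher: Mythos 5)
Your argument is correct and is essentially the proof the paper intends (the paper records this as an ``immediate consequence'' of Theorem~\ref{thm orthfinitgen} together with the radical-sensitivity of $\Loc(R/I)$ from Corollary~\ref{cor inc}, which is exactly the combination you use). Both your main route through the order-reversal of $(-)^\bot$ and your alternative via radical-invariance of $\Loc(R_{g_1},\dots,R_{g_k})$ are sound.
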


\begin{corollary}\label{cor radidlsinLocRf}
Let $I$ and $J$ be finitely generated ideals in $R$. Then 
\[
 \Loc( R_f \mid f \in \sqrt{I}) \subset \Loc( R_f \mid f \in \sqrt{J}) \Leftrightarrow \sqrt{I} \subset \sqrt{J}.
\]
\end{corollary}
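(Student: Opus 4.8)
The plan is to reduce everything to the already-established correspondence between the finite lattices $\fgCGLoc(D(R))$ and $\fgRadId(R)\op$, using the orthogonality bijection to flip from localizing subcategories of the form $\Loc(R_f\mid f\in\sqrt{I})$ back to those of the form $\Loc(R/I)$.

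First I would identify $\Loc(R_f \mid f\in\sqrt I)$ with an orthogonal complement. Writing $\sqrt I$ as the radical of a finitely generated ideal $I=\idealgen{f_1,\dots,f_n}$, Corollary~\ref{cor raddescrLocRf} gives $\Loc(R_f\mid f\in\sqrt I)=\Loc(R_{f_1},\dots,R_{f_n})$, and Theorem~\ref{thm orthfinitgen} identifies this with $\Loc(R/I)^\bot$; likewise $\Loc(R_f\mid f\in\sqrt J)=\Loc(R/J)^\bot$. Moreover, by Proposition~\ref{DG-6.4} we have $\Loc(R/I)=\Loc(K(I))$ with $K(I)$ a single compact object, so both are instances of the order-reversing bijection $\Loc(C)\leftrightarrow\Loc(C)^\bot$ (for $C$ compact) recorded just before Proposition~\ref{prop dualcatisloc}, whose inverse ${}^\bot(-)$ is also order-reversing and satisfies ${}^\bot(\Loc(C)^\bot)=\Loc(C)$ by Proposition~\ref{prop LOcCisBous}.

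Next I would run the chain of equivalences. Since $(-)^\bot$ is order-reversing, $\Loc(R/J)\subset\Loc(R/I)$ immediately implies $\Loc(R/I)^\bot\subset\Loc(R/J)^\bot$; conversely, applying the order-reversing inverse ${}^\bot(-)$ to $\Loc(R/I)^\bot\subset\Loc(R/J)^\bot$ and using ${}^\bot(\Loc(R/I)^\bot)=\Loc(R/I)$ recovers $\Loc(R/J)\subset\Loc(R/I)$. Combining with the first step, $\Loc(R_f\mid f\in\sqrt I)\subset\Loc(R_f\mid f\in\sqrt J)$ is equivalent to $\Loc(R/J)\subset\Loc(R/I)$, which by Corollary~\ref{cor inc} is equivalent to $\sqrt I\subset\sqrt J$. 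This completes the argument.

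There is essentially no serious obstacle here: the only point requiring a little care is bookkeeping the two reversals (passing through $(-)^\bot$ and through the radical-versus-inclusion correspondence of Corollary~\ref{cor inc}) so that the final inclusion comes out in the asserted direction, and checking that $\Loc(R/I)$ really is generated by a single compact object so that the orthogonality bijection applies.
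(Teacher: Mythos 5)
Your proposal is correct and follows essentially the same route as the paper: the paper's proof is the one-line observation that the statement follows from the order-reversing nature of the right-orthogonal operation together with Corollary~\ref{cor inc}, which is exactly the chain you spell out via Theorem~\ref{thm orthfinitgen}, Corollary~\ref{cor raddescrLocRf}, and the bijectivity of $(-)^\bot$ on compactly generated localizing subcategories. Your more detailed bookkeeping of the two reversals and of the inverse ${}^\bot(-)$ is a faithful expansion of what the paper leaves implicit.
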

\begin{proof}
This follows readily from the fact that taking right orthogonal is an
order-reversing operation, and from Corollary~\ref{cor inc}.
\end{proof}

Summing up we have proved the following result.
\begin{proposition}
 The poset of localizing categories of the form $$\Loc( R_f \mid f \in
\sqrt{J}),$$
where $J$ is a finitely generated ideal, is isomorphic to the
poset of radicals of finitely generated ideals in $R$.  In particular since the
latter is a distributive lattice so is the former.
\end{proposition}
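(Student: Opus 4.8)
The plan is to recognize this proposition as a repackaging of Corollaries~\ref{cor raddescrLocRf} and~\ref{cor radidlsinLocRf} in the language of posets, so that essentially no new argument is required. Define
\[
\Phi\colon \fgRadId(R)\longrightarrow \{\,\Loc(R_f\mid f\in\sqrt J) : J\text{ fin.\ gen.}\,\},\qquad \sqrt J\longmapsto \Loc(R_f\mid f\in\sqrt J).
\]
First I would note that $\Phi$ is well defined on radicals: the generating family $\{R_f\mid f\in\sqrt J\}$ depends only on the set $\sqrt J$, so the assignment does not depend on the chosen representative ideal, and by Corollary~\ref{cor raddescrLocRf} this localizing subcategory also equals $\Loc(R_{f_1},\dots,R_{f_n})$ for any finite generating set $f_1,\dots,f_n$ of $J$. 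Surjectivity of $\Phi$ onto the target poset is immediate from the way that poset is described.

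The one substantive input is Corollary~\ref{cor radidlsinLocRf}, which says precisely that for finitely generated ideals $I,J$ one has $\Phi(\sqrt I)\subset\Phi(\sqrt J)$ if and only if $\sqrt I\subset\sqrt J$. This makes $\Phi$ simultaneously order-preserving and order-reflecting, and together with surjectivity it follows that $\Phi$ is an isomorphism of posets; in particular $\Phi$ is injective, so distinct radicals of finitely generated ideals yield distinct localizing subcategories of the stated form.

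For the final clause I would recall that $\fgRadId(R)$ is the Zariski lattice, which is a distributive lattice (Section~\ref{subsec Fr-Hoc-Sup}), and observe that being a distributive lattice is a property of the underlying poset alone: finite meets and joins are determined by the order, and the distributive law is then a condition on these, so the whole structure is transported verbatim along the poset isomorphism $\Phi$. There is no real obstacle here; the heavy lifting was already done in establishing Theorem~\ref{thm orthfinitgen} and its corollaries, and what remains is only the routine observation that an order isomorphism between a distributive lattice and another poset endows the latter with a distributive lattice structure.
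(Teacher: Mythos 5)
Your proposal is correct and matches the paper's intent exactly: the paper gives no separate proof, introducing the proposition with ``Summing up we have proved the following result,'' i.e.\ it regards the statement as an immediate consequence of Corollaries~\ref{cor raddescrLocRf} and~\ref{cor radidlsinLocRf}, which is precisely how you assemble it. The only content beyond those corollaries is the routine observation that a surjective, order-preserving and order-reflecting map is a poset isomorphism and that distributivity transports along such an isomorphism, which you handle correctly.
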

Recalling that radicals of finitely generated ideals correspond to quasi-compact
open sets in $\Spec_Z R$, we have the following
topological formulation:
\begin{proposition}\label{prop principaldual}
There is a natural isomorphism of lattices between the
lattice of localizing subcategories of $D(R)$ generated by finitely many
localizations of the ring $R$ and the lattice of quasi-compact open 
sets in $\Spec_Z R$  given by
$$\Loc(R_{f_1},\dots, R_{f_n}) \longmapsto \bigcup_{i=1}^n D(f_i)$$
  and
  $$
  \Loc(R_{f} \mid D(f) \subset U) \longmapsfrom U.
  $$
\end{proposition}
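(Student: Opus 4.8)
The plan is to assemble this from the pieces already established, since Proposition~\ref{prop principaldual} is essentially a repackaging of the preceding results into topological language. First I would set up the two candidate maps. Define $\Phi$ on the lattice of localizing subcategories of the form $\Loc(R_{f_1},\dots,R_{f_n})$ by $\Loc(R_{f_1},\dots,R_{f_n}) \mapsto \bigcup_{i=1}^n D(f_i)$, and define $\Psi$ on quasi-compact open sets by $U \mapsto \Loc(R_f \mid D(f) \subset U)$. The first thing to check is that $\Phi$ is well defined: two different finite families $\{f_i\}$ and $\{g_j\}$ that generate the same localizing subcategory must have $\bigcup D(f_i) = \bigcup D(g_j)$. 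This is exactly the content of Theorem~\ref{thm orthfinitgen} together with Corollary~\ref{cor radidlsinLocRf}: the subcategory $\Loc(R_{f_1},\dots,R_{f_n})$ equals $\Loc(R/I)^\bot$ for $I = \idealgen{f_1,\dots,f_n}$, and by Corollary~\ref{cor inc} (via orthogonality being order-reversing) such subcategories are in inclusion-reversing bijection with $\sqrt{I}$; meanwhile $\bigcup_{i=1}^n D(f_i) = D(I)$ is the quasi-compact open set corresponding to $\sqrt{I}$ under the standard dictionary between radicals of finitely generated ideals and quasi-compact opens of $\Spec_Z R$. Well-definedness of $\Psi$ is immediate from its defining formula, and by Corollary~\ref{cor raddescrLocRf} we have $\Psi(D(I)) = \Loc(R_f \mid f \in \sqrt{I}) = \Loc(R_{f_1},\dots,R_{f_n})$, so $\Psi$ does land in the stated poset.

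Next I would verify that $\Phi$ and $\Psi$ are mutually inverse. Given $\Loc(R_{f_1},\dots,R_{f_n}) = \Loc(R/I)^\bot$ with $I = \idealgen{f_1,\dots,f_n}$, we compute $\Phi$ of it to be $D(I)$, and then $\Psi(D(I)) = \Loc(R_f \mid f \in \sqrt{I})$, which by Corollary~\ref{cor raddescrLocRf} is again $\Loc(R_{f_1},\dots,R_{f_n})$; so $\Psi \circ \Phi = \Id$. Conversely, any quasi-compact open $U$ in $\Spec_Z R$ is of the form $D(I)$ for some finitely generated ideal $I = \idealgen{f_1,\dots,f_n}$, so $\Psi(U) = \Loc(R_f \mid f \in \sqrt{I}) = \Loc(R_{f_1},\dots,R_{f_n})$ and $\Phi(\Psi(U)) = \bigcup D(f_i) = D(I) = U$; so $\Phi \circ \Psi = \Id$. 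That the bijection is order preserving is precisely Corollary~\ref{cor radidlsinLocRf} combined with the fact that $\sqrt{I} \subset \sqrt{J}$ iff $D(I) \subset D(J)$. Hence it is an isomorphism of posets, and since both sides are lattices (the right-hand side is the lattice of quasi-compact opens, the left-hand side being isomorphic to it, or directly the preceding Proposition on distributivity), it is a lattice isomorphism.

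The one point that deserves slightly more care — and which I expect to be the only real obstacle — is matching up the two defining formulas for $\Psi$: the formula $U \mapsto \Loc(R_f \mid D(f) \subset U)$ in the statement ranges over \emph{all} $f \in R$ with $D(f) \subset U$, whereas Corollary~\ref{cor raddescrLocRf} is phrased as $\Loc(R_f \mid f \in \sqrt{J})$ for a fixed finitely generated $J$. These agree because, for $U = D(J)$, the condition $D(f) \subset D(J)$ is equivalent to $\sqrt{(f)} \subset \sqrt{J}$, i.e.\ $f \in \sqrt{J}$; so the indexing sets coincide. Modulo this translation the proof is just bookkeeping, so I would keep it short: cite Theorem~\ref{thm orthfinitgen}, Corollary~\ref{cor raddescrLocRf}, and Corollary~\ref{cor radidlsinLocRf}, note the standard correspondence between quasi-compact opens of $\Spec_Z R$ and radicals of finitely generated ideals, and conclude.
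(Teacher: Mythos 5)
Your proof is correct and follows the same route as the paper, which treats this proposition as an immediate topological reformulation of the preceding (unnumbered) proposition via Theorem~\ref{thm orthfinitgen}, Corollary~\ref{cor raddescrLocRf}, Corollary~\ref{cor radidlsinLocRf}, and the standard dictionary between radicals of finitely generated ideals and quasi-compact opens of $\Spec_Z R$. You have simply written out the bookkeeping (including the translation $D(f)\subset D(J)\Leftrightarrow f\in\sqrt{J}$) that the paper leaves implicit.
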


The join in
the lattice of localizing subcategories is given by
\begin{eqnarray*}
 \Loc( R_f \mid f \in \sqrt{J}) \join \Loc( R_f \mid f \in \sqrt{I})  
 & = & \Loc( R_f \mid f \in \sqrt{I + J}) \\
 & = & \Loc( R_f \mid f \in \sqrt{I} \cup \sqrt{J}).
\end{eqnarray*}
For the meet operation, we find
\begin{eqnarray*}
\Loc( R_f \mid f \in \sqrt{J}) \meet  \Loc( R_f \mid f \in \sqrt{I})  
& = & \Loc( R_f \mid f \in \sqrt{I}\cdot \sqrt{J}),
\end{eqnarray*}
and this is in fact intersection:

\begin{lemma}\label{lem meetis inters}
For any two finitely generated ideals $I$ and $J$ in $R$ we have
 \[
 \Loc( R_f \mid f \in \sqrt{I}\cdot \sqrt{J})  
= \Loc( R_f \mid f \in \sqrt{J}) \cap  \Loc( R_f \mid f \in \sqrt{I}).
\]
\end{lemma}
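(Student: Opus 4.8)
The plan is to prove the two inclusions separately. The easy direction is $\subset$: since taking right orthogonals is order-reversing and $\sqrt{I}\cdot\sqrt{J}\subset\sqrt{I}$ and $\sqrt{I}\cdot\sqrt{J}\subset\sqrt{J}$, we get from Corollary~\ref{cor radidlsinLocRf} that $\Loc(R_f\mid f\in\sqrt{I}\cdot\sqrt{J})$ is contained in both $\Loc(R_f\mid f\in\sqrt{I})$ and $\Loc(R_f\mid f\in\sqrt{J})$, hence in their intersection. Alternatively one can argue directly via Lemma~\ref{lem belongstoLocRf}: a complex in the left-hand side has homology modules that are $R_{fg}$-modules for $f\in\sqrt I$, $g\in\sqrt J$, hence in particular $R_f$-modules and $R_g$-modules.

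For the reverse inclusion $\supset$, I would use the orthogonality bijection together with the join formula already established. The key observation is that both sides of the claimed identity are right orthogonals of compactly generated localizing subcategories, so it suffices to dualize. Concretely, by Theorem~\ref{thm orthfinitgen} (or rather Corollary~\ref{cor raddescrLocRf}) we have $\Loc(R_f\mid f\in\sqrt I)=\Loc(R/I)^\bot$ and similarly for $J$ and for $I\cdot J$. Now the meet in $\CGLoc(D(R))$ of $\Loc(R/I)$ and $\Loc(R/J)$ is $\Loc(R/(I+J))$ by Lemma~\ref{lem interR/IetR/J}, and the join is $\Loc(R/(I\cdot J))$ by Proposition~\ref{prop finiteR/I-to-singleR/J}. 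Passing to right orthogonals turns joins into meets: since ${}^\bot(-)$ and $(-)^\bot$ set up an order-reversing bijection between the finite part of $\CGLoc(D(R))$ and the finite part of the lattice of the $\Loc(R_f\mid\cdots)$'s (Proposition~\ref{prop dualcatisloc} guarantees these orthogonals are again localizing), the meet $\Loc(R_f\mid f\in\sqrt I)\cap\Loc(R_f\mid f\in\sqrt J)$ equals the orthogonal of the join $\Loc(R/I)\vee\Loc(R/J)=\Loc(R/(I\cdot J))$, which is $\Loc(R/(I\cdot J))^\bot=\Loc(R_f\mid f\in\sqrt{I\cdot J})=\Loc(R_f\mid f\in\sqrt I\cdot\sqrt J)$. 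Note $\sqrt{I\cdot J}=\sqrt{\sqrt I\cdot\sqrt J}$, so the two descriptions of the radical agree.

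The main subtlety to check carefully is that the intersection $\Loc(R_f\mid f\in\sqrt I)\cap\Loc(R_f\mid f\in\sqrt J)$ really is the \emph{meet} in the lattice of localizing subcategories of this particular shape, i.e.\ that the intersection is again of the form $\Loc(R_g\mid g\in\sqrt{K})$ for some finitely generated $K$ rather than merely an infimum computed abstractly. This is exactly what the identification with $\Loc(R/(I\cdot J))^\bot$ provides, so the argument is self-contained once one invokes Proposition~\ref{prop dualcatisloc} to know the orthogonal is localizing and Theorem~\ref{thm orthfinitgen}/Corollary~\ref{cor raddescrLocRf} to identify it. A clean alternative, avoiding any appeal to the abstract lattice-theoretic meet, is to prove $\supset$ by hand using Lemma~\ref{lem belongstoLocRf}: if $M$ has homology modules that are simultaneously $R_f$-modules for all $f\in\sqrt I$ and $R_g$-modules for all $g\in\sqrt J$, then each $H_n(M)$ is an $R_{fg}$-module for all such $f,g$ (localizing an $R_f$-module further at $g$ does nothing), hence an $R_h$-module for all $h\in\sqrt I\cdot\sqrt J$, so $M\in\Loc(R_h\mid h\in\sqrt I\cdot\sqrt J)$ by Lemma~\ref{lem belongstoLocRf} again. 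I expect this direct homological argument to be the shortest route, with the orthogonality argument serving as a conceptual cross-check.
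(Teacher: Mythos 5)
Your orthogonality argument for the hard inclusion is correct and is essentially the paper's own proof: the paper computes
$\Loc( R_f \mid f \in \sqrt{I}\cdot \sqrt{J}) = \Loc(R/(I\cdot J))^\bot = \Loc(R/I,R/J)^\bot = \{R/I\}^\bot \cap \{R/J\}^\bot = \Loc( R_f \mid f\in \sqrt{I}) \cap \Loc( R_f \mid f \in \sqrt{J})$,
which is exactly your observation that passing to right orthogonals sends the join $\Loc(R/I)\vee\Loc(R/J)=\Loc(R/(I\cdot J))$ of Proposition~\ref{prop finiteR/I-to-singleR/J} to the intersection, using Theorem~\ref{thm orthfinitgen} and Corollary~\ref{cor raddescrLocRf} to identify the orthogonals. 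Your worry about the intersection genuinely being the meet in the sublattice is resolved in the same way.

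However, the ``clean alternative'' that you expect to be the shortest route does not work, and you should not rely on it. Lemma~\ref{lem belongstoLocRf} characterizes membership in $\Loc(R_f)$ for a \emph{single} $f$; it does not say that $M\in\Loc(R_f\mid f\in\sqrt{I})$ if and only if every $H_n(M)$ is an $R_f$-module for all $f\in\sqrt{I}$. That condition describes the intersection $\bigcap_{f\in\sqrt{I}}\Loc(R_f)$, whereas $\Loc(R_f\mid f\in\sqrt{I})$ is the join $\bigvee_{f\in\sqrt{I}}\Loc(R_f)$, which is much larger. Concretely, for $R=\Z$ and $I=(2,3)$ the object $\Z[1/2]\oplus\Z[1/3]$ lies in $\Loc(R_2,R_3)=\Loc(R_f\mid f\in\sqrt{I})$, yet its homology is neither a $\Z[1/2]$-module nor a $\Z[1/3]$-module. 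So the premise you extract for the direction $\supset$ (``$M$ in the intersection has homology modules that are $R_f$-modules for all $f\in\sqrt I$ and all $g\in\sqrt J$'') is false, and the same mischaracterization infects your second argument for $\subset$. The correct description of $\Loc(R_{f_1},\dots,R_{f_n})$ for $n\ge 2$ is the \v{C}ech-type one in the proof of Theorem~\ref{thm orthfinitgen}, and it is precisely the absence of a simple homological membership test for these joins that forces the detour through right orthogonals. Keep the orthogonality argument and drop the homological one.
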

\begin{proof}
Compute using the fact that we are dealing with right orthogonals
\begin{eqnarray*}
  \Loc( R_f \mid f \in \sqrt{I}\cdot \sqrt{J}) & = &  \Loc( R_f \mid f \in \sqrt{I\cdot J}) \\
  & = &  \Loc(R/(I \cdot J))^\bot \\
  & = & \Loc(R/I, R/J)^\bot \\
  & = & \{R/I , R/J\}^\bot \\
  & = & \{R/I\}^\bot \cap \{R/J\}^\bot \\
  & = & \Loc( R_f \mid f\in \sqrt{I}) \cap \Loc( R_f \mid f \in \sqrt{J}).
\end{eqnarray*}
\end{proof}

   Just as for the case of compactly generated localizing
   subcategories (Theorem~\ref{thm main}), we proceed to establish that the
   correspondence of Proposition~\ref{prop principaldual} extends by
   join-completion to a frame isomorphism, realizing the whole Zariski
   frame inside $D(R)$.  Again this amounts to dropping the ``finite
   generation'' assumption, considering now localizing categories
   generated by an arbitrary number of localizations of the ring $R$.

   We consider now subcategories
   $\Loc( R_f \mid f \in J)$, where a priori $J \subset R$ is an
   arbitrary subset.     The following lemma tells us that this is no more general than
   requiring $J$ to be a radical ideal.

\begin{lemma}\label{lem locparbyradidl}
Let $J$ be a arbitrary subset of $R$ and 
$\sqrt{J}$ the radical ideal it generates. Then 
\[
 \Loc( R_f \mid f \in J) = \Loc( R_f \mid f \in \sqrt{J}).
\]
\end{lemma}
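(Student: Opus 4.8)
The statement to prove is Lemma~\ref{lem locparbyradidl}: for an arbitrary subset $J \subset R$, one has $\Loc(R_f \mid f \in J) = \Loc(R_f \mid f \in \sqrt{J})$. Since $J \subset \sqrt{J}$, the inclusion $\subset$ is immediate, so the real content is showing $R_f \in \Loc(R_g \mid g \in J)$ for every $f \in \sqrt{J}$. The plan is to reduce to the finitely generated case already handled: if $f \in \sqrt{J}$, then $f^n \in (g_1, \dots, g_k)$ for finitely many $g_1, \dots, g_k \in J$ and some $n$, hence $f \in \sqrt{(g_1, \dots, g_k)}$. So it suffices to show that $R_f \in \Loc(R_{g_1}, \dots, R_{g_k})$ whenever $f \in \sqrt{(g_1, \dots, g_k)}$, and this is exactly the content of Corollary~\ref{cor raddescrLocRf}, which identifies $\Loc(R_{g_1}, \dots, R_{g_k})$ with $\Loc(R_g \mid g \in \sqrt{(g_1, \dots, g_k)})$. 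Stringing these together gives $R_f \in \Loc(R_{g_1}, \dots, R_{g_k}) \subset \Loc(R_g \mid g \in J)$, which is what we want.

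\textbf{Key steps, in order.} First, observe $\Loc(R_f \mid f \in J) \subset \Loc(R_f \mid f \in \sqrt{J})$ trivially. Second, fix $f \in \sqrt{J}$; by definition of the radical ideal generated by $J$, there is an integer $n$ and finitely many elements $g_1, \dots, g_k \in J$ together with $r_1, \dots, r_k \in R$ such that $f^n = \sum_i r_i g_i$, so $f \in \sqrt{(g_1, \dots, g_k)}$. Third, apply Corollary~\ref{cor raddescrLocRf} with the finitely generated ideal $(g_1, \dots, g_k)$: this gives $\Loc(R_{g_1}, \dots, R_{g_k}) = \Loc(R_g \mid g \in \sqrt{(g_1, \dots, g_k)})$, and since $f \in \sqrt{(g_1, \dots, g_k)}$ we get $R_f \in \Loc(R_{g_1}, \dots, R_{g_k})$. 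Fourth, since each $g_i \in J$, we have $\Loc(R_{g_1}, \dots, R_{g_k}) \subset \Loc(R_g \mid g \in J)$, hence $R_f \in \Loc(R_g \mid g \in J)$. Since $f \in \sqrt{J}$ was arbitrary, $\Loc(R_f \mid f \in \sqrt{J}) \subset \Loc(R_g \mid g \in J)$, and combined with the trivial inclusion this yields equality.

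\textbf{Main obstacle.} There is no serious obstacle: the argument is a routine reduction to the finitely generated case, whose substance was already absorbed into Corollary~\ref{cor raddescrLocRf} (and ultimately Theorem~\ref{thm orthfinitgen} and Corollary~\ref{cor inc}). The only point requiring a moment's care is the ``finitely many generators'' step — making explicit that membership $f \in \sqrt{J}$ is witnessed by an algebraic identity $f^n = \sum_{i=1}^k r_i g_i$ involving only finitely many elements of $J$, so that one can legitimately invoke the finitely generated result. One should also note that $\Loc(R_{g_1}, \dots, R_{g_k})$ is well-defined and that the $g_i$ being elements of $R$ (not necessarily generating $J$) is harmless, since we only claim an inclusion into $\Loc(R_g \mid g \in J)$, not an equality with it. No further machinery is needed.
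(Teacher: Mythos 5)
Your proposal is correct and follows essentially the same route as the paper: reduce membership $f \in \sqrt{J}$ to a finite subset $\{g_1,\dots,g_k\} \subset J$, invoke Corollary~\ref{cor raddescrLocRf} for the finitely generated ideal $(g_1,\dots,g_k)$ to get $R_f \in \Loc(R_{g_1},\dots,R_{g_k})$, and conclude by the obvious inclusion into $\Loc(R_g \mid g \in J)$. No gaps.
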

\begin{proof} 
  The inclusion $\Loc( R_f \mid f \in J) \subset \Loc( R_f \mid f \in 
  \sqrt{J})$ is clear.  To establish the other inclusion,
  we proceed as follows.  Let $g \in \sqrt{J}$ be an
  arbitrary element.  Then there exists a finite subset $K \subset J$ such that
  $g \in \sqrt{K}$.  By Corollary~\ref{cor raddescrLocRf}, we know that
  $R_g \in \Loc( R_f \mid f \in \sqrt{K})$.  Again by Corollary~\ref{cor
  raddescrLocRf} we have that $\Loc( R_f \mid f \in \sqrt{K}) = \Loc( R_f \mid f \in K)
  \subset \Loc( R_f \mid f \in J)$, whence the desired inclusion.
\end{proof}

From now on we will parametrize our categories by radical ideals, still denoted
as $\sqrt{J}$ to emphasize the \emph{radical} property.  Notice that in the
poset $\{\Loc( R_f \mid f \in \sqrt{J}) \}$, for any family of objects
$\{\Loc( R_f \mid f \in \sqrt{J_i})\}_{i \in I}$, the join is
\[
\bigjoin_{i \in I}\Loc( R_f \mid f \in \sqrt{J_i}) = \Loc( R_f \mid f \in \sqrt{\Sigma_{i \in I} J_i}).
\]

Concerning the meet operation, all we can say is that:
\begin{multline*}
\Loc( R_f \mid f \in \sqrt{J}) \meet \Loc( R_f \mid f \in \sqrt{I}) \\
\subset  \Loc( R_f \mid f \in \sqrt{J}) \cap \Loc( R_f \mid f \in \sqrt{I}),
\end{multline*}
with equality if and only if $\Loc( R_f \mid f \in \sqrt{J}) \cap \Loc( R_f \mid f \in
\sqrt{I})$ is an element in our poset; this is because $\Loc( R_f \mid f \in
\sqrt{J}) \cap \Loc( R_f \mid f \in \sqrt{I})$ is the largest localizing subcategory
contained in both $\Loc( R_f \mid f \in \sqrt{J})$ and $\Loc( R_f \mid f \in
\sqrt{I})$.  The last difficulty we have to cope with is to show that this frame
has as finite elements precisely the localizing subcategories parametrized by
radicals of finitely generated ideals.  

To prove this we need to be a little bit
more precise in our description of a localizing subcategory generated by a set
of objects $\setofobjects$.  Recall that for any ordinal $\alpha$, its cardinal
$\norm{\alpha}$ is the initial ordinal in the set of ordinals that can be put in
bijection with $\alpha$, so we can write $\norm{\alpha} \leq \alpha$ as
\emph{ordinals}.  Let us define a filtration of $\Loc(\setofobjects)$ as follows:

\begin{enumerate}
 \item $\Loc^0(\setofobjects)$ is the full subcategory consisting of the zero object.
 \item $\Loc^1(\setofobjects)$ is the full subcategory whose objects are 
 isomorphic to an arbitrary suspension of  elements  in $\setofobjects$.
 \item If $\alpha \geq 1$ is a successor ordinal,  then $\Loc^{\alpha + 1}(\setofobjects)$ 
 is the full subcategory consisting of objects that are 
\begin{itemize}
\item[--] either isomorphic to
 an arbitrary suspension of direct sums of less than
 $\norm{\alpha}$ objects  in $\Loc^\alpha(\setofobjects)$,
 \item[--] or isomorphic to an arbitrary suspension of a cone between two objects 
 in $\Loc^\alpha(\setofobjects)$.
        \end{itemize}
 \item If $\beta$ is a limit ordinal, then 
 $\Loc^\beta(\setofobjects) = \bigcup_{\alpha< \beta} \Loc^\alpha(\setofobjects)$.
\end{enumerate}

Notice that with this definition $\Loc^\alpha(\setofobjects)$ is always an
essentially small category, and if $\alpha \leq \beta$ then
$\Loc^\alpha(\setofobjects) \subset \Loc^\beta(\setofobjects)$.  Moreover for any
ordinal $\alpha$, $\Loc^\alpha(\setofobjects) \subset \Loc(\setofobjects)$.  For any
object $M$, if $\alpha$ is the least ordinal such that $M \in
\Loc^\alpha(\setofobjects)$, then we will say that $M$ can be constructed in
$\alpha$ steps from $\setofobjects$ or has length $\alpha$.
\begin{lemma}\label{lem ordfiltiscompl}
For any set of objects $\setofobjects$, we have
$$\bigcup_{\alpha} \Loc(\setofobjects) = \Loc(\setofobjects)$$.
\end{lemma}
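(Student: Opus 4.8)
\emph{Proof proposal.} Write $\mathcal{L} := \bigcup_{\alpha} \Loc^\alpha(\setofobjects)$. The plan is to prove the two inclusions separately: the inclusion $\mathcal{L} \subseteq \Loc(\setofobjects)$ is cheap, while for $\Loc(\setofobjects) \subseteq \mathcal{L}$ I will show that $\mathcal{L}$ is itself a localizing subcategory containing $\setofobjects$ and then appeal to the minimality of $\Loc(\setofobjects)$. For the first inclusion I would run a transfinite induction showing $\Loc^\alpha(\setofobjects) \subseteq \Loc(\setofobjects)$ for every ordinal $\alpha$: the cases $\alpha = 0,1$ hold because $\Loc(\setofobjects)$ is triangulated and contains $\setofobjects$; the successor step holds because $\Loc(\setofobjects)$ is closed under suspensions, arbitrary sums and cones; and the limit step is immediate. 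Taking the union over all $\alpha$ gives $\mathcal{L} \subseteq \Loc(\setofobjects)$.

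For the converse I would check that $\mathcal{L}$ is a full triangulated subcategory of $\T$ closed under arbitrary sums and containing $\setofobjects$. Containment of $\setofobjects$ is clear from $\setofobjects \subseteq \Loc^1(\setofobjects)$, and closure under suspension (in both directions, $\Sigma$ being an autoequivalence) is built into each successor stage. Given a morphism $f\colon M \to N$ with $M \in \Loc^\alpha(\setofobjects)$ and $N \in \Loc^\beta(\setofobjects)$, set $\gamma = \max(\alpha,\beta)+1$; this is a successor ordinal and, by monotonicity of the filtration, $M,N \in \Loc^\gamma(\setofobjects)$, so $\Cone(f) \in \Loc^{\gamma+1}(\setofobjects) \subseteq \mathcal{L}$. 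Since a full subcategory of a triangulated category that contains $0$, is closed under $\Sigma^{\pm 1}$ and is closed under cones is automatically a triangulated subcategory, $\mathcal{L}$ is triangulated.

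The one genuine point is closure under arbitrary sums, and this is where the cardinality bookkeeping of the definition is used. Given a family $\{M_i\}_{i\in I}$ of objects of $\mathcal{L}$, let $\alpha_i$ be the length of $M_i$ and choose a cardinal $\kappa$ larger than $\norm{I}$ and larger than the cardinality of $\sup_i \alpha_i$; then $\alpha_i < \kappa$ for all $i$, so by monotonicity each $M_i$ lies in $\Loc^\kappa(\setofobjects) \subseteq \Loc^{\kappa+1}(\setofobjects)$. Now $\kappa+1$ is a successor ordinal with $\norm{\kappa+1} = \kappa > \norm{I}$, so the coproduct $\coprod_{i\in I} M_i$, being a direct sum of fewer than $\norm{\kappa+1}$ objects of $\Loc^{\kappa+1}(\setofobjects)$, lies in $\Loc^{\kappa+2}(\setofobjects) \subseteq \mathcal{L}$. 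Hence $\mathcal{L}$ is localizing, so $\Loc(\setofobjects) \subseteq \mathcal{L}$, and with the first paragraph this gives the asserted equality. I expect the main (really the only) obstacle to be this last step: choosing a single ordinal stage of the filtration large enough to simultaneously absorb the index set $I$ and the lengths of all the summands, so that the "fewer than $\norm{\alpha}$" clause of the definition can be applied.
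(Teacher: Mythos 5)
Your proposal is correct and follows essentially the same route as the paper: the easy inclusion $\bigcup_\alpha \Loc^\alpha(\setofobjects)\subseteq\Loc(\setofobjects)$ by transfinite induction, and the reverse inclusion by showing the union is itself a localizing subcategory containing $\setofobjects$, with the only real work being the choice of a single ordinal stage large enough to absorb both the index set and the lengths of the summands. If anything, your bookkeeping at the sum-closure step (passing to $\Loc^{\kappa+2}$ via the successor ordinal $\kappa+1$ with $\norm{\kappa+1}=\kappa>\norm{I}$) is slightly more careful than the paper's, which asserts the coproduct already lies in $\Loc^\beta$.
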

\begin{proof}
  By construction $\bigcup_{\alpha} \Loc(\setofobjects)$ is closed 
  under suspension;
  we first prove that it is triangulated.  For this consider two objects $M,N
  \in \bigcup_{\alpha} \Loc(\setofobjects)$.  By definition there exists an
  ordinal $\alpha$ such that $M,N \in \Loc^\alpha(\setofobjects)$, for instance
  take any ordinal that is larger than the lengths of $M$ and $N$.  For any
  morphism $f : M \rightarrow N$, the cone of $f$ is in
  $\Loc^{\alpha+1}(\setofobjects)$.  It remains to show that $\bigcup_{\alpha}
  \Loc(\setofobjects)$ is closed under arbitrary sums.  For this, let
  $\{M\}_{i \in I}$ be a set of objects in $\bigcup_{\alpha}
  \Loc(\setofobjects)$, let $\{\alpha_{i}\}_{i \in I}$ be the set of lengths of
  these elements.  Then there exists an ordinal $\beta$ such that $\forall i \in
  I, \ \alpha_i \leq \beta$, and without loss of generality we may also assume
  that $\beta$ is larger than the cardinality of $I$.  Then, by definition,
  \[
  \coprod_{i \in I} M_i \in \Loc^\beta(\setofobjects).
  \]
  
  \vspace*{-2.5\bigskipamount}
  
\end{proof}

From this we can understand the meet of any two objects in our category,
starting with the meet with our (potential) finite elements:

\begin{proposition}\label{prop interinfty-to-one}
Let $\{f_j\}_{j \in J}$ be a family of elements in $R$, and let $g$ be an element in
$R$.  Then
$$\Loc(R_g) \cap \Loc( R_{f_j} \mid j \in J)
= \Loc( R_{gf_j } \mid j \in J).$$
\end{proposition}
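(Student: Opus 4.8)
The plan is to prove the two inclusions separately. The inclusion $\Loc(R_{gf_j}\mid j\in J)\subset \Loc(R_g)\cap\Loc(R_{f_j}\mid j\in J)$ is the easy direction: for each $j$ we have $R_{gf_j}=(R_g)_{f_j}$, so $R_{gf_j}$ is an $R_g$-module and hence lies in $\Loc(R_g)$ by Lemma~\ref{lem belongstoLocRf}; similarly $R_{gf_j}=(R_{f_j})_g$ is an $R_{f_j}$-module so it lies in $\Loc(R_{f_j})\subset\Loc(R_{f_j}\mid j\in J)$. Since both target categories are localizing, the generated subcategory $\Loc(R_{gf_j}\mid j\in J)$ is contained in their intersection.

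For the reverse inclusion I would proceed by ordinal induction using the filtration $\Loc^\alpha$ introduced just before Lemma~\ref{lem ordfiltiscompl}, applied to the generating set $\setofobjects=\{R_{f_j}\mid j\in J\}$. Concretely, I claim that for every ordinal $\alpha$,
\[
\Loc(R_g)\cap \Loc^\alpha(\setofobjects)\subset \Loc(R_{gf_j}\mid j\in J).
\]
By Lemma~\ref{lem ordfiltiscompl} this suffices. The base cases $\alpha=0,1$ are immediate: at stage $1$ an object is a suspension of some $R_{f_j}$, and if such an object also lies in $\Loc(R_g)$ then by Lemma~\ref{lem belongstoLocRf} its homology $R_{f_j}$ (up to shift) is an $R_g$-module, forcing $R_{f_j}=R_{gf_j}$, so the object is already in the right-hand side. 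The successor and limit steps are where the work is. The crucial observation is that $\Loc(R_g)$ is itself the essential image of the exact functor $R_g\otimes-$, which \emph{commutes with arbitrary sums and with cones} and is idempotent on its image; equivalently, intersecting with $\Loc(R_g)$ is the same as applying the Bousfield localization $L_{R/(g)}=R_g\otimes-$ and asking that nothing changes. So rather than intersecting stage by stage, I would instead show: \emph{applying $R_g\otimes-$ to any object of $\Loc^\alpha(\setofobjects)$ lands in $\Loc(R_{gf_j}\mid j\in J)$}, by induction on $\alpha$ --- since $R_g\otimes R_{f_j}=R_{gf_j}$, and $R_g\otimes-$ preserves suspensions, arbitrary sums and cones, this induction is formal. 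Then any object $M\in \Loc(R_g)\cap\Loc(\setofobjects)$ satisfies $M\simeq R_g\otimes M\in\Loc(R_{gf_j}\mid j\in J)$.

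The main obstacle, and the place that needs care, is the interaction between the transfinite-sum clause of the filtration and the functor $R_g\otimes-$: one must check that a sum of $<\norm\alpha$ objects, after tensoring with $R_g$, is still expressible as a sum in the \emph{same} cardinality-controlled sense inside $\Loc(R_{gf_j}\mid j\in J)$, so that one stays within the filtration of the target. This is harmless because $R_g\otimes-$ sends the sum $\coprod M_i$ to $\coprod (R_g\otimes M_i)$ with the same index set, and by induction each $R_g\otimes M_i$ lies in some $\Loc^{\beta_i}(R_{gf_j}\mid j\in J)$; one then bounds all $\beta_i$ by a common ordinal as in the proof of Lemma~\ref{lem ordfiltiscompl}. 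The cone clause is immediate from exactness of $R_g\otimes-$. Once this bookkeeping is in place the proposition follows, and I would phrase the write-up so that it also prepares the ground for the analogous statement with $\Loc(R_g)$ replaced by an arbitrary $\Loc(R_h\mid h\in\sqrt H)$, which is presumably the next step toward the full frame isomorphism.
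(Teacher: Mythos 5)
Your proposal is correct and follows essentially the same route as the paper: the easy inclusion via $R_g\otimes R_{f_j}\simeq R_{gf_j}$ together with the tensor-ideal/localizing properties, and the hard inclusion by writing $M\simeq R_g\otimes M$ and running a transfinite induction on the ordinal filtration $\Loc^\alpha(R_{f_j}\mid j\in J)$, using that $R_g\otimes-$ is flat, exact, and commutes with arbitrary sums. Your repackaging of the induction (showing $R_g\otimes-$ sends each $\Loc^\alpha(\setofobjects)$ into $\Loc(R_{gf_j}\mid j\in J)$) is if anything a slightly cleaner statement of the same argument, and the cardinality bookkeeping you worry about is unnecessary since it suffices to land anywhere in the target localizing subcategory, which is closed under arbitrary sums.
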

\begin{proof} Since $R_g$ is flat,  for any $j \in J$ we have $ R_g \otimes R_{f_j} \simeq R_{gf_j }$,
  and since localizing subcategories in $D(R)$ are all 
  tensor ideals by Lemma~\ref{lem locistensideal},
  from this we get the inclusion
  $$
  \Loc(R_g) \cap \Loc( R_{f_j} \mid j \in J) \supset \Loc( R_{gf_j} \mid j \in J).
  $$
  To get the reverse inclusion, consider an object $M \in \Loc(R_g) \cap \Loc(
  R_{f_j} \mid j \in J)$.  Because $M \in \Loc(R_{g})$, we know by
  Lemma~\ref{lem belongstoLocRf} that $M \simeq R_g \otimes M$.  Let $\alpha$
  be the length of $M$ in $\Loc( R_{f_j} \mid j \in J)$.  We prove by
  transfinite induction on $\alpha$ that $M \in \Loc( R_{gf_j} \mid j \in J)$.

  If $\alpha = 0$, there is nothing to prove. 

  If $\alpha = 1$, then there exists an integer $n \in \mathbb{Z}$ and an element
  $j \in J$ such that $M = \Sigma^n R_{f_j}$, but then:
  \[
  M= R_g \otimes M =
  \Sigma^n (R_{gf_j }) \in \Loc( R_{gf_j} \mid j \in J).
\]

  Let $\beta$ be an ordinal $\geq 1$ and assume by induction that the statement
  has been proved for all objects of length $\alpha < \beta $.  If 
  $\beta$ is a limit ordinal, then by construction,
  for any object $M \in  \Loc^\beta(R_{gf_j}\mid j \in J)$, there exists $\alpha < \beta$ such that $M \in
  \Loc^\alpha(R_{gf_j}\mid j \in J)$, and so by induction hypothesis $M
  \in \Loc( R_{f_j g} \mid j \in J)$.  If $\beta$ is a successor ordinal, say $\beta
  = \alpha +1$, then there are two cases.  Either there exists an integer $n \in
  \Z$, a set $Y$ of cardinality $< \norm{\beta}$, a family of objects $N_y
  \in  \Loc( R_{gf_j} \mid j \in J)^\alpha$ such that $M =\Sigma^n \coprod_{y \in Y} N_y$, and
  then $M = R_g \otimes M = \Sigma^n \coprod_{y \in Y} (R_g \otimes N_y)$.
  In this case, by induction hypothesis for all $y \in Y$ we have $R_g \otimes N_y \in
  \Loc(R_{gf_j} \mid j \in J)$, and because this class is localizing we conclude
  that $M \in \Loc( R_{gf_j} \mid j \in J)$.  Or there exists an integer $n \in
  \Z$ and two objects $N_1$ and $N_2$ in $\Loc^\alpha(R_{f_j}\mid j \in J)$
  such that we have a triangle
\[
\xymatrix{
   \Sigma^n N_1 \ar[r] &  \Sigma^n N_2 \ar[r] & \Sigma^n M \ar[r] & \Sigma^{n+1} N_1.
}  
\]
  Tensoring this triangle with $R_g$, which is flat, we get the exact triangle
 \[
\xymatrix{
 R_g \otimes \Sigma^n N_1  \ar[r] & R_g\otimes \Sigma^n N_2 \ar[r] 
 & R_g\otimes\Sigma^n M   \ar[r] &  R_g\otimes\Sigma^{n+1} N_1,
}  
\]
and by induction hypothesis $N_1$ and $R_g \otimes N_2$ belong to 
$\Loc( R_{gf_j} \mid j \in J)$,
which is triangulated, so $M \in \Loc( R_{gf_j} \mid j \in J)$ as we wanted.
\end{proof}

\begin{corollary}\label{cor interinfty-to-finite}
Let $\{f_j\}_{j \in J}$ be a family of elements in $R$, and let $\{g_i\}_{1 \leq i
\leq n}$ be a finite family of elements in $R$.  Then $$\Loc( R_{f_j} \mid j \in
J) \cap \Loc( R_{g_i} \mid 1 \leq i \leq n) = \Loc(R_{f_j g_i} \mid j \in J, \ 1
\leq i \leq n).$$
\end{corollary}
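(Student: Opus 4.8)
The plan is to deduce this from Proposition~\ref{prop interinfty-to-one} by intersecting with one generator $R_{f_j}$ at a time, exploiting that the finite join $\Loc(R_{g_1},\dots,R_{g_n})$ is the essential image of a \emph{smashing} Bousfield localization. Write $I=(g_1,\dots,g_n)$ and $C:=L_{R/I}(R)$. By Theorem~\ref{thm orthfinitgen} together with Theorem~\ref{thm tensorloc}, $\Loc(R_{g_i}\mid 1\le i\le n)=\Loc(R/I)^\bot$ is the essential image of the functor $C\otimes-$; in particular $C$ itself belongs to $\Loc(R_{g_i}\mid 1\le i\le n)$, and every object $M$ of this subcategory satisfies $M\simeq C\otimes M$. (Explicitly $C\simeq K^\bullet(g_1^\infty)\otimes\cdots\otimes K^\bullet(g_n^\infty)$ is a bounded complex of flat modules, cf.\ the proof of Theorem~\ref{thm orthfinitgen}, but we shall not need this.)

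The inclusion $\supset$ is immediate: $R_{f_jg_i}\simeq R_{f_j}\otimes R_{g_i}$ lies in $\Loc(R_{f_j})\cap\Loc(R_{g_i})$ since both of these subcategories are tensor ideals (Lemma~\ref{lem locistensideal}), hence in the intersection on the left.

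For $\subset$, I would first observe that $C\otimes-\colon D(R)\to D(R)$ is a triangulated functor preserving arbitrary coproducts, so that --- exactly as in the proof of Lemma~\ref{lem locistensideal}(2) --- the full subcategory $\{\,X\mid C\otimes X\in\Loc(C\otimes R_{f_j}\mid j\in J)\,\}$ is localizing; it contains each $R_{f_j}$, hence contains all of $\Loc(R_{f_j}\mid j\in J)$. Now take $M$ in $\Loc(R_{f_j}\mid j\in J)\cap\Loc(R_{g_i}\mid 1\le i\le n)$: then $M\simeq C\otimes M\in\Loc(C\otimes R_{f_j}\mid j\in J)$. For each fixed $j$, the object $C\otimes R_{f_j}$ lies in $\Loc(R_{g_i}\mid 1\le i\le n)$ (because $C$ does and this subcategory is a tensor ideal) and also in $\Loc(R_{f_j})$ (again a tensor ideal); and Proposition~\ref{prop interinfty-to-one}, applied to the single element $f_j$ and the finite family $g_1,\dots,g_n$, identifies $\Loc(R_{f_j})\cap\Loc(R_{g_i}\mid 1\le i\le n)$ with $\Loc(R_{f_jg_i}\mid 1\le i\le n)$. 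Hence $C\otimes R_{f_j}\in\Loc(R_{f_jg_i}\mid j\in J,\ 1\le i\le n)$ for every $j$, and therefore $M$ lies in $\Loc(R_{f_jg_i}\mid j\in J,\ 1\le i\le n)$, as required.

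The one point that needs care is the first paragraph: recognising $\Loc(R_{g_1},\dots,R_{g_n})=\Loc(R/I)^\bot$ as a smashing localization, so that intersecting with it is computed by the single coproduct-preserving exact functor $C\otimes-$, which can then be pushed through the transfinite construction of $\Loc(R_{f_j}\mid j\in J)$. This is what lets us avoid repeating the transfinite induction of Proposition~\ref{prop interinfty-to-one}; alternatively one could rerun that induction essentially verbatim, with the quasi-isomorphism $M\simeq R_g\otimes M$ used there replaced by $M\simeq C\otimes M$.
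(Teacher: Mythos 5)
Your proof is correct, and it rests on the same key object as the paper's: the flat complex $C=L_{R/\idealgen{g_1,\dots,g_n}}(R)$ from the proof of Theorem~\ref{thm orthfinitgen}, together with the smashing property $M\simeq C\otimes M$ for $M$ in $\Loc(R_{g_i}\mid 1\le i\le n)=\Loc(R/I)^\bot$. The difference is in how the tensoring is propagated through $\Loc(R_{f_j}\mid j\in J)$. The paper's proof is exactly your closing alternative: rerun the transfinite induction of Proposition~\ref{prop interinfty-to-one} verbatim, replacing $R_g\otimes-$ by $C\otimes-$ throughout. You instead avoid repeating the induction by two softer observations: (a) the preimage of a localizing subcategory under the coproduct-preserving exact functor $C\otimes-$ is localizing, so $M\in\Loc(R_{f_j}\mid j\in J)$ gives $C\otimes M\in\Loc(C\otimes R_{f_j}\mid j\in J)$ at once (this is just Lemma~\ref{lem tensideal}(2) for a set of generators); and (b) the generator-level fact $C\otimes R_{f_j}\in\Loc(R_{f_jg_i}\mid 1\le i\le n)$ is obtained by citing Proposition~\ref{prop interinfty-to-one} with the roles of the single element and the family swapped, rather than by inspecting the explicit \v{C}ech-type model of $C$. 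What your packaging buys is a proof that never re-enters the ordinal filtration and makes the logical dependence on the already-proved Proposition explicit; what the paper's one-line proof buys is brevity, at the cost of asking the reader to re-trace the induction. Both are complete; yours is arguably the more transparent write-up.
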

\begin{proof}
The same proof works, but instead of tensoring with the flat module $R_g$ one
tensors with the flat complex $L_{R/ \idealgen{g_i, 1 \leq i \leq n}}(R)$ described in the proof of Theorem~\ref{thm orthfinitgen}.
\end{proof}

As an immediate consequence we get that if $J$ is an arbitrary ideal and $I$ a
finitely generated ideal then:
\begin{multline*}
 \Loc( R_f \mid f \in \sqrt{J}) \meet  \Loc( R_f \mid f \in \sqrt{I}) 
 \ = \ \Loc( R_f \mid f \in \sqrt{I}\cdot \sqrt{J}) \\
 = \ \Loc( R_f \mid f \in \sqrt{J}) \cap  \Loc( R_f \mid f \in \sqrt{I}).
\end{multline*}

\begin{proposition}\label{prop compactinjoincopl}
In the  poset of localizing categories of the form  $\Loc( R_{f} \mid f \in 
\sqrt{J})$, ordered by inclusion, the finite 
elements are exactly those of the form $\Loc( R_f \mid f \in \sqrt{J})$ with $J$ 
a finitely generated ideal.
\end{proposition}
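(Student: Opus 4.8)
The plan is to prove the two set inclusions separately: every finite element of the poset has the stated form, and conversely $\Loc(R_f \mid f \in \sqrt{J})$ is a finite element whenever $J$ is finitely generated.

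For the first (easy) direction I would start from an arbitrary finite element $\mathcal L = \Loc(R_f \mid f \in \sqrt{J})$ and write it as the join $\mathcal L = \bigjoin_{g \in \sqrt{J}} \Loc(R_g)$; each summand $\Loc(R_g)$ equals $\Loc(R_f \mid f \in \sqrt{(g)})$ by Corollary~\ref{cor raddescrLocRf}, hence lies in the poset. Finiteness of $\mathcal L$ then produces $g_1,\dots,g_m \in \sqrt{J}$ with $\mathcal L = \bigjoin_{k=1}^m \Loc(R_{g_k}) = \Loc(R_{g_1},\dots,R_{g_m})$, which by Corollary~\ref{cor raddescrLocRf} again equals $\Loc(R_f \mid f \in \sqrt{(g_1,\dots,g_m)})$, with $(g_1,\dots,g_m)$ finitely generated.

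For the converse I would first reduce to a single localization: since $\Loc(R_f \mid f \in \sqrt{(f_1,\dots,f_n)}) = \Loc(R_{f_1},\dots,R_{f_n}) = \bigjoin_k \Loc(R_{f_k})$ by Corollary~\ref{cor raddescrLocRf}, and a finite join of finite elements is finite, it suffices to show that $\Loc(R_g)$ is a finite element for every single $g \in R$. So suppose $\Loc(R_g) \subset \bigjoin_{i \in I} \Loc(R_f \mid f \in \sqrt{J_i})$, the right-hand side being $\Loc(R_f \mid f \in J')$ with $J' := \sqrt{\sum_{i \in I} J_i}$ a radical ideal. The key step is to translate this categorical containment into a ring-theoretic one, and for this I would apply the triangulated, sum-preserving functor $R/J' \otimes -\colon D(R) \to D(R)$ (derived tensor). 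Such a functor carries $\Loc(\setofobjects)$ into $\Loc$ of the images of $\setofobjects$; but for every $f \in J'$ the module $R_f$ is flat and $f$ maps to $0$ in $R/J'$, so $R/J' \otimes R_f = (R/J')_f = 0$. Hence $R/J' \otimes R_g = 0$ in $D(R)$, i.e.\ $(R/J')_g = 0$, which forces $g^m \in J'$ for some $m$, and therefore $g \in \sqrt{\sum_{i \in I} J_i}$.

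To conclude I would use that $\sum_{i \in I} J_i$ is the directed union of the finite subsums $\sum_{i \in K} J_i$ ($K \subset I$ finite): the membership $g^m \in \sum_{i \in I} J_i$ already holds in some $\sum_{i \in K} J_i$, so $g \in \sqrt{\sum_{i \in K} J_i}$ and thus $R_g \in \Loc(R_f \mid f \in \sqrt{\sum_{i \in K} J_i}) = \bigjoin_{i \in K} \Loc(R_f \mid f \in \sqrt{J_i})$, exhibiting $\Loc(R_g)$ as finite. I expect the main obstacle to be exactly the passage from $R_g \in \Loc(R_f \mid f \in J')$ to $g \in \sqrt{J'}$; the device of base-changing along $R \to R/J'$ — using flatness of the $R_f$ to make the derived tensor computable, and the fact that $R/J'$ is a radical quotient — resolves it cleanly and, pleasantly, without invoking any prime ideals.
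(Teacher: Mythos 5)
Your proof is correct, and the easy direction and the reduction to a single $\Loc(R_g)$ match the paper; but your argument that $\Loc(R_g)$ is a finite element takes a genuinely different route. The paper first uses Proposition~\ref{prop interinfty-to-one} to write $\Loc(R_g)=\Loc(R_{fg}\mid f\in J)$ and reduce to the case $R=R_g$, where it must show that $\Loc(R_f\mid f\in J)=D(R)$ forces a finite subfamily to suffice: when $J$ generates the unit ideal it invokes Theorem~\ref{thm orthfinitgen}, and when $J$ generates a proper ideal it exhibits a nonzero object of the right orthogonal, namely an injective envelope $E(R/J)$. You instead apply the coproduct-preserving triangulated functor $R/J'\otimes-$ with $J'=\sqrt{\sum_i J_i}$, which annihilates every generator $R_f$ ($f\in J'$) and hence annihilates $R_g$; flatness of the localizations makes the derived tensor computable, cyclicity of $R/J'$ converts $(R/J')_g=0$ into $g\in\sqrt{J'}$, and the finitary nature of ideal membership finishes. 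Your route avoids the transfinite induction behind Proposition~\ref{prop interinfty-to-one}, the orthogonality computation of Theorem~\ref{thm orthfinitgen}, and the injective envelope (the one ingredient of the paper's proof that sits awkwardly with its point-free aspirations); what the paper's detour buys is the extra information that $\Loc(R_g)\cap\Loc(R_f\mid f\in J)$ is again of the prescribed form, which it needs elsewhere anyway. Both arguments rest on the same witness principle---detecting (non-)membership in a localizing subcategory via a sum-preserving exact functor vanishing on the generators---which the paper itself deploys in Proposition~\ref{prop pointsinLocComp} and Corollary~\ref{cor generdetectpoint}.
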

\begin{proof}
  First notice that  if $\Loc( R_{f} \mid f \in \sqrt{J})$ cannot be generated by finitely
  many localizations of the ring $R$, then it is not finite, for then we know that the equality 
  $\Loc( R_{f} \mid f \in \sqrt{J}) = \bigjoin_{f \in J} \Loc(R_f)$
  cannot factor  through a join over any finite subset of $J$.

  Also, if one proves that for any $g \in R$, $\Loc(R_g)$ is finite, then it is
  immediate that for any finite set $K \subset R$, $\Loc( R_g \mid g \in K)$ is
  finite, as it would be the join of finitely many finite elements.

  It remains to prove that, if $\Loc(R_g) \subset \Loc( R_f \mid f\in J)$, with $J$
  infinite, then there exists a finite subset $K \subset J$ such that $\Loc(R_g)
  \subset \Loc( R_f \mid f\in K)$.  By Proposition~\ref{prop interinfty-to-one}, we
  know that $\Loc(R_g) = \Loc(R_g) \cap \Loc( R_f \mid f \in J) =
  \Loc( R_{fg} \mid f \in J)$, and since $\Loc(R_g)$ is equivalent to $D(R_g)$ we may assume without
  loss of generality that $R= R_g$.  We now have to prove that if $\Loc( R_f \mid
  f\in J) = D(R)$, then there exists a finite set $K \in J$ such that 
  $\Loc( R_f \mid f \in K) = D(R)$.  There are two cases to consider.

  If the ideal generated by $J$ is $R$, then there exists a finite subset $K \in
  J$ such that this finite subset already generates $R$ as an ideal.  In this
  case we apply Theorem~\ref{thm orthfinitgen} to get $\Loc( R_f \mid f \in K) =
  \Loc(R/\idealgen{K})^\bot = \Loc(\{0\})^\bot = D(R)$ and we have $D(R) =
  \Loc( R_f \mid f \in K) \subset \Loc( R_f \mid f\in J) \subset D(R)$.

  On the other hand if the ideal generated by $J$, call it again $J$, 
  is a proper ideal, then $\Loc( R_f \mid f\in J)$ is a proper
  subcategory of $D(R)$, in contradiction with the assumption.
  Indeed, consider an injective envelope $E(R/J)$ of
  $R/J$, then a direct computation shows that for all $f \in J$ we have
  $\Hom_{D(R)}(R_f,E(R/J))=0$, so $\Loc( R_f \mid f \in J)^\bot \ni E(R/J) \neq
  0$.  In particular $\Loc( R_f \mid f \in J) \neq D(R)$.
\end{proof}
\begin{definition}
  Denote by $\RfGLoc(D(R))$ the poset 
  of localizing subcategories of $D(R)$ generated by sets of localizations of 
  $R$, and by $\fgRfGLoc(D(R))$ the sub-lattice of finite elements 
  (i.e.~localizing subcategories of $D(R)$ generated by finite sets of localizations of 
  $R$).
\end{definition} 
  
  The following theorem shows that $\RfGLoc(D(R))$ is in fact a coherent frame
  (and therefore $\fgRfGLoc(D(R))$ is a 
  distributive lattice).

\begin{theorem}\label{thm loccatisZar}
There is a natural isomorphism of posets
\[
\RfGLoc(D(R)) \simeq \RadId(R).
\]
given by:
\[
 \Loc(R_f\mid f \in J) \stackrel{f}{\longmapsto} \sqrt{\idealgen{f \mid f \in J}}
\]
\[
\Loc(R_f \mid f \in I  ) \stackrel{g}{\longmapsfrom} I
\]
Moreover, when restricted to their finite parts this isomorphism
induces an isomorphism of distributive lattices:
\[
\fgRfGLoc(D(R)) \simeq { \bf fgRadId}(R).
\]
\end{theorem}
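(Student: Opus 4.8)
The plan is to show that the map
$g\colon \RadId(R)\to\RfGLoc(D(R))$, $I\mapsto\Loc(R_f\mid f\in I)$,
is an isomorphism of posets, and then to read off all further assertions formally. That $g$ is order-preserving is clear, and it is surjective: by definition every object of $\RfGLoc(D(R))$ has the form $\Loc(R_f\mid f\in J)$ for some subset $J\subset R$, and by Lemma~\ref{lem locparbyradidl} this equals $\Loc(R_f\mid f\in\sqrt{\langle J\rangle})=g(\sqrt{\langle J\rangle})$. So it suffices to prove that $g$ is order-reflecting; this yields injectivity, hence makes the map $f$ of the statement the well-defined two-sided inverse of $g$ (for each localizing subcategory there is then a unique radical ideal mapping to it, and $\L=\Loc(R_f\mid f\in J)\mapsto\sqrt{\langle J\rangle}$ picks it out, independently of the chosen $J$).

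For the order-reflecting step, suppose $\Loc(R_f\mid f\in I)\subset\Loc(R_f\mid f\in J)$ with $I,J$ radical ideals, and let $a\in I$. Then $R_a\in\Loc(R_f\mid f\in J)=\bigjoin_{f\in J}\Loc(R_f)$. By Proposition~\ref{prop compactinjoincopl}, the subcategory $\Loc(R_a)=\Loc(R_f\mid f\in\sqrt{\langle a\rangle})$ is a finite element of the poset $\RfGLoc(D(R))$ (which, by the discussion preceding the theorem, has arbitrary joins), so the inclusion $\Loc(R_a)\subset\bigjoin_{f\in J}\Loc(R_f)$ already factors through a finite sub-join: there is a finite $K\subset J$ with $\Loc(R_a)\subset\Loc(R_f\mid f\in K)=\Loc(R_f\mid f\in\sqrt{\langle K\rangle})$. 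Now Corollary~\ref{cor radidlsinLocRf}, applied to the finitely generated ideals $\langle a\rangle$ and $\langle K\rangle$, gives $\sqrt{\langle a\rangle}\subset\sqrt{\langle K\rangle}$, whence $a\in\sqrt{\langle K\rangle}\subset\sqrt{\langle J\rangle}=J$. Thus $I\subset J$, as required.

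With the poset isomorphism in hand the rest is formal. Since $\RadId(R)$ is the Zariski frame of $R$, in particular a coherent frame, and coherence together with the distributive lattice of finite elements is a purely order-theoretic datum transported along poset isomorphisms, $\RfGLoc(D(R))$ is a coherent frame and $g$ matches up the lattices of finite elements. By Proposition~\ref{prop compactinjoincopl} the finite elements of $\RfGLoc(D(R))$ are precisely the $\Loc(R_f\mid f\in\sqrt J)$ with $J$ finitely generated, and under $g$ these correspond exactly to the radicals of finitely generated ideals, i.e.\ to $\fgRadId(R)=\RadId(R)^\omega$; hence $g$ restricts to a poset isomorphism $\fgRadId(R)\simeq\fgRfGLoc(D(R))$, which is automatically an isomorphism of lattices, and $\fgRfGLoc(D(R))$ inherits distributivity from $\fgRadId(R)$. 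As a sanity check, $g$ carries the join $\sqrt I\vee\sqrt J=\sqrt{I+J}$ to $\Loc(R_f\mid f\in\sqrt I)\vee\Loc(R_f\mid f\in\sqrt J)$ and the meet $\sqrt I\wedge\sqrt J=\sqrt{IJ}$ to the intersection $\Loc(R_f\mid f\in\sqrt I)\cap\Loc(R_f\mid f\in\sqrt J)$, by Corollary~\ref{cor interinfty-to-finite} (equivalently Lemma~\ref{lem meetis inters}). Naturality in $R$ is immediate from the formula for $g$ and the identification $R_f\otimes_R R'\simeq R'_f$ under base change.

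The main obstacle is the order-reflecting step of the second paragraph: promoting Corollary~\ref{cor radidlsinLocRf}, which compares only categories generated by \emph{finitely} many localizations, to arbitrary radical ideals. This is exactly where Proposition~\ref{prop compactinjoincopl} is indispensable --- it lets one replace an inclusion of possibly infinitely generated localizing subcategories by one between finitely generated ones, at which point the finite case applies.
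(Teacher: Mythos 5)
Your proof is correct, and it takes a slightly different (and in one respect more careful) route than the paper's. The paper's proof checks that the two composites are the identity: it declares $f\circ g=\Id$ to be obvious and concentrates on $g\circ f=\Id$, which it establishes by the explicit algebra ``$a^n$ is a combination of finitely many $j_i\in J$, hence $\Loc(R/\idealgen{j_1,\dots,j_n})\subset\Loc(R/\idealgen{a})$, take right orthogonals'' --- essentially re-proving Lemma~\ref{lem locparbyradidl}. You instead quote Lemma~\ref{lem locparbyradidl} for surjectivity of $g$ and then prove that $g$ is order-reflecting, by cutting an arbitrary inclusion down to a finitely generated one via Proposition~\ref{prop compactinjoincopl} and invoking Corollary~\ref{cor radidlsinLocRf}. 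This buys you two things the paper leaves implicit: it shows that $f$ is well defined independently of the chosen generating set $J$ (which is exactly what is needed for ``$f\circ g=\Id$'' to make sense as stated), and it shows directly that the bijection is an isomorphism of \emph{posets} rather than merely a bijection. The remaining assertions (coherence, identification of finite elements, distributivity of the finite part) are, as you say, transported formally, with Proposition~\ref{prop compactinjoincopl} identifying the finite elements on the categorical side; your sanity checks on joins and meets match the paper's surrounding discussion. In short: same underlying ingredients, but your decomposition into ``surjective + order-reflecting'' is cleaner and plugs a small gap in the published argument.
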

\begin{proof}
It is obvious that $f \circ g$ is the identity, so we just have to prove that $g
\circ f$ is the identity, namely that if $J \subset R$ then $\Loc( R_f \mid f \in J)
= \Loc( R_f \mid f \in \sqrt J )$, where $\sqrt  J $ stands for
the radical ideal generated by $J$.  It is clear that $\Loc( R_f \mid f \in J) \subset
\Loc( R_f \mid f \in \sqrt{ J })$.  Let $g \in \sqrt{ J
}$, then some power of $g$, say $g^n$, is a linear combination of finitely
many elements in $J$, say $j_1, \dots, j_n$, in particular we know that in this
case $\Loc(R\idealgen{g}) \supset \Loc(R/\idealgen{j_1, \dots, j_n})$, and taking right orthogonals
we get $\Loc(R_g) \subset \Loc(R_{j_1},  \dots ,R_{j_n}) \subset 
\Loc( R_f \mid f\in J)$.  So $\Loc( R_f \mid f \in J) \supset 
\Loc( R_f \mid f \in \sqrt{ J})$, as we wanted.
\end{proof}

As for the Hochster frame, this poset isomorphism shows that on the left-hand side we have a coherent  frame.  It is straightforward to check that the join operation on the left 
$$
\Loc(R_f\mid f \in I) \meet \Loc(R_g\mid g \in J) = 
\Loc(R_{fg}\mid (f,g) \in I \times J) 
$$
is given by taking ``localization closure''.  
We do not know whether the meet operation is always given by intersection or not.
Corollary~\ref{cor interinfty-to-finite} shows that ``meet is intersection'' if
just one of the localizing subcategories is generated by a finite number of 
objects (i.e.~is a finite element in the lattice).
We suspect that in general the meet may be strictly smaller than the 
intersection.

\subsubsection{Colocalizing subcategories}
In Theorem~\ref{thm loccatisZar} above, the involved localizing subcategories are also 
colocalizing.
Neeman~\cite{MR2794632}
has recently proved a theorem classifying colocalizing subcategories of
$D(R)$ in the case where $R$ is a noetherian ring:  they are in
inclusion-preserving one-to-one correspondence with arbitrary subsets of the
prime spectrum $\Spec R$.  This result does not involve any topology at all.
As a corollary to  Theorem~\ref{thm loccatisZar}
we obtain an interesting addendum to Neeman's
colocalizing classification in the noetherian case, namely a characterization of
those colocalizing subcategories that correspond to Zariski open subsets: they
are precisely the right orthogonals to the subcategories of the form $\Loc(R/I)$.

\subsubsection{Functoriality}\label{sec functorial}

To a commutative ring $R$, we can associate the Zariski frame or the Hochster 
frame.  These assignments are the object part of two covariant functors 
$\kat{Ring} \rightarrow \kat{Frm}$.  In
this subsection we describe these two functors in a way fitting our
description as frames embedded in the derived category of a ring.

Fix a ring homomorphism $\phi: S \rightarrow R$.  
Extension of scalars functor induces a triangulated functor
\[
\begin{array}{rcl}
\phi_\ast: D(S) & \longrightarrow & D(R) \\
E & \longmapsto & E \otimes_S R.
\end{array}
\]
Since extension of scalars sends the module $S$ onto $R$ and commutes with
arbitrary sums, the derived functor preserves compact objects and sends
localizing subcategories to localizing subcategories.  In particular,
\[
 \forall E \in D(S), \ \Loc(E) \otimes_S R \subset \Loc(E \otimes_S R).
\]

Hence we have a canonical map of frames:
\[
\begin{array}{ccc}
\Loc( C_i \mid i\in I) & \longmapsto & \Loc( C_i \otimes_S R \mid i \in 
I).
\end{array}
\]

For the Zariski spectrum the situation is similar.  Recall that if $I$ is a
subset of $S$, and $\zeros{I}$ is the Zariski \emph{closed} set associated to $I$,
then the preimage of $\zeros{I}$ via $\phi^\ast: \Spec_Z R \rightarrow \Spec_Z S$ is
$\zeros{\phi(I)}$.  At the level of open sets, this means that the preimage of the
Zariski open set $\bigcup_{f \in I} D(f)$ is $\bigcup_{f \in I} D(\phi(f))$.
But the extension of scalars is also compatible with localization, in fact since
for any element $f \in S$, the module $S_f$ is flat and is a ring it is
straightforward to check using the universal property of localization that
\[
 S_f \otimes_S R = R_{\phi(f)} . 
\]

In particular we have an induced map of frames
\[
\begin{array}{ccc}
\RfGLoc(D(S))	 & \longrightarrow& \RfGLoc(D(R)) \\
\Loc( S_f \mid f \in I) & \longmapsto & \Loc( R_{\phi(f)} \mid f \in I)
\end{array}
\]
 which
coincides with the map induced by the map $\Spec_Z R \rightarrow
\Spec_Z S$.

\subsection{Points in $\Spec R$}

We digress to give point-set characterizations of the Zariski and Hochster
open sets.

\begin{proposition}\label{prop pointsinLocRf}
Given a prime ideal $\mathfrak{p}\subset R$, and a finitely generated ideal $I$,
the following conditions are equivalent:
\begin{enumerate}
  \item[i)] The point $\mathfrak{p}$ belongs to the Zariski
  open set corresponding to $\Loc(R_f \mid f \in I)$,
  \item[ii)] $\exists f \in I $ such that $\kappa(\mathfrak{p}) \in \Loc(R_f)$.
 \end{enumerate}
 
\end{proposition}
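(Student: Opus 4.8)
The plan is to reduce both conditions to the single ring-theoretic statement $I\not\subset\mathfrak p$, i.e.\ $\mathfrak p\in D(I)$ where $D(I)=\{\mathfrak q\mid I\not\subset\mathfrak q\}=\bigcup_{f\in I}D(f)$. For condition (i) this is immediate from the identification of $\RfGLoc(D(R))$ with $\RadId(R)$ in Theorem~\ref{thm loccatisZar}: under that isomorphism the subcategory $\Loc(R_f\mid f\in I)$ corresponds to the radical ideal $\sqrt{I}$, hence to the Zariski open set $D(\sqrt I)=D(I)$; and $\mathfrak p$ lies in this open set precisely when $I\not\subset\mathfrak p$, i.e.\ when there is some $f\in I$ with $f\notin\mathfrak p$. (Note that finite generation of $I$ plays no role here; it is imposed only because in the ambient discussion such an $I$ yields a finite element.)

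The heart of the matter is then a purely local statement about a single element: for $f\in R$ one has $\kappa(\mathfrak p)\in\Loc(R_f)$ if and only if $f\notin\mathfrak p$. To prove this I would invoke Lemma~\ref{lem belongstoLocRf}, according to which a complex lies in $\Loc(R_f)$ exactly when all of its homology modules are $R_f$-modules. Since $\kappa(\mathfrak p)$ is concentrated in a single degree, with that single homology module the residue field itself, membership in $\Loc(R_f)$ amounts to $\kappa(\mathfrak p)$ being an $R_f$-module, i.e.\ to multiplication by $f$ being invertible on $\kappa(\mathfrak p)$. Now the image of $f$ under $R\to\kappa(\mathfrak p)$ is a unit when $f\notin\mathfrak p$ and is zero when $f\in\mathfrak p$; since $\kappa(\mathfrak p)\neq 0$, multiplication by $f$ is invertible on $\kappa(\mathfrak p)$ if and only if $f\notin\mathfrak p$.

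Assembling the two reductions finishes the proof: condition (ii) asserts the existence of some $f\in I$ with $\kappa(\mathfrak p)\in\Loc(R_f)$, which by the local statement is the existence of some $f\in I$ with $f\notin\mathfrak p$, which is exactly $I\not\subset\mathfrak p$, which by the first paragraph is condition (i). I do not expect a genuine obstacle; the only points requiring a little care are spelling out the dictionary ``localizing subcategory $\leftrightarrow$ radical ideal $\leftrightarrow$ Zariski open set'' so that (i) is correctly read as $\mathfrak p\in D(I)$, and recording that a residue field is nonzero so that ``$f$ acts invertibly'' genuinely fails precisely when $f\in\mathfrak p$.
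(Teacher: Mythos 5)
Your proposal is correct and follows essentially the same route as the paper: reduce (i) to the existence of $f\in I$ with $f\notin\mathfrak p$, observe that this is equivalent to multiplication by $f$ being invertible on $\kappa(\mathfrak p)$, i.e.\ to $\kappa(\mathfrak p)$ being an $R_f$-module, and conclude by Lemma~\ref{lem belongstoLocRf}. The only difference is that you spell out the dictionary via Theorem~\ref{thm loccatisZar} where the paper simply declares the equivalence of (i) with $I\not\subset\mathfrak p$ to be clear.
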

\begin{proof}
  Condition (i) is clearly equivalent to the condition:
  $\exists f \in I $ such that $f \notin \mathfrak{p}$.  But $f
  \notin \mathfrak{p}$ if and only if multiplication by $f$ is an isomorphism in
  the residue field $\kappa(\mathfrak{p})$; this happens if and only if
  $\kappa(\mathfrak{p})$ is canonically an $R_f$-module, and we conclude by
  Lemma~\ref{lem belongstoLocRf}.
\end{proof}


\begin{proposition}\label{prop pointsinLocComp}
Given a finitely generated ideal $I$ in $R$ and a prime ideal $\mathfrak{p}$,
the following are equivalent:
\begin{enumerate}
\item[i)] The point $\mathfrak{p}$ belongs to the Hochster open set $\Loc(R/I)$,
 \item[ii)] As ideals in $R$, we have $I \subset \mathfrak{p}$,
\item[iii)] $R/\mathfrak{p} \in \Loc(R/I)$,
\item[iv)] $R/I \otimes R_\mathfrak{p} \neq 0$
\item[v)] $\exists E \in \Loc(R/I)$ such that $E \otimes R_\mathfrak{p} \neq 0$.
\end{enumerate}
\end{proposition}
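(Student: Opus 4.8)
The plan is to establish $(i)\Leftrightarrow(ii)$ by unwinding the frame-theoretic identifications already set up, and then to close the cycle $(ii)\Rightarrow(iii)\Rightarrow(v)\Rightarrow(iv)\Rightarrow(ii)$ using Lemma~\ref{lem blongstoLR/I} and the flatness of $R_{\mathfrak p}$.

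First, $(i)\Leftrightarrow(ii)$. By Theorem~\ref{thm maingeo} the Hochster open set attached to the compactly generated localizing subcategory $\Loc(R/I)$ is the subset $\zeros{I}\subset\Spec R$, and by Lemma~\ref{lem pointHochster}, together with the observation in \S\ref{subsubsec points} that a coherent frame and its Hochster dual have the same points, a point of this frame is exactly a prime ideal $\mathfrak p$ of $R$, which lies in $\zeros{I}$ precisely when $I\subset\mathfrak p$. So this equivalence is a matter of definition-chasing.

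Next, $(ii)\Rightarrow(iii)$: if $I\subset\mathfrak p$ then $I$ annihilates the module $R/\mathfrak p$, hence it annihilates $H_\ast(R/\mathfrak p)$, which is $R/\mathfrak p$ concentrated in degree $0$; since $I$ is finitely generated, Lemma~\ref{lem blongstoLR/I} applies and gives $R/\mathfrak p\in\Loc(R/I)$. The implication $(iii)\Rightarrow(v)$ is then immediate, taking $E=R/\mathfrak p$: as $R_{\mathfrak p}$ is flat over $R$, the complex $R/\mathfrak p\otimes_R R_{\mathfrak p}$ is the localization $(R/\mathfrak p)_{\mathfrak p}$, which is the residue field $\kappa(\mathfrak p)\neq 0$.

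Finally, $(v)\Rightarrow(iv)$ I argue contrapositively: flatness of $R_{\mathfrak p}$ makes $-\otimes_R R_{\mathfrak p}: D(R)\to D(R)$ an exact functor commuting with arbitrary sums, so the full subcategory of objects $E$ with $E\otimes_R R_{\mathfrak p}=0$ is a localizing subcategory of $D(R)$; if $R/I\otimes_R R_{\mathfrak p}=0$ this subcategory contains $R/I$ and hence all of $\Loc(R/I)$, so no object of $\Loc(R/I)$ survives tensoring with $R_{\mathfrak p}$, contradicting $(v)$. And $(iv)\Rightarrow(ii)$ is elementary: since $R_{\mathfrak p}$ is flat, $R/I\otimes_R R_{\mathfrak p}=(R/I)_{\mathfrak p}=R_{\mathfrak p}/IR_{\mathfrak p}$, which is nonzero iff $IR_{\mathfrak p}\neq R_{\mathfrak p}$, and this holds iff $I\subset\mathfrak p$; the same computation yields $(ii)\Rightarrow(iv)$ directly as well. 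I do not expect any step to present a genuine obstacle: the proposition is a package of pointwise translations of membership in a Hochster open set, each extracted by specializing a tool already used for the global statements, and the only place requiring real care is the identification of points in $(i)\Leftrightarrow(ii)$.
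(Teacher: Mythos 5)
Your proof is correct and uses essentially the same ingredients as the paper's: Lemma~\ref{lem blongstoLR/I} for membership in $\Loc(R/I)$, flatness of $R_{\mathfrak p}$ to compute the derived tensor products, and the fact that the kernel of the exact, coproduct-preserving functor $-\otimes R_{\mathfrak p}$ is localizing for $(v)\Rightarrow(iv)$. The only difference is organizational (you close a single cycle where the paper proves the equivalences pairwise, and you spell out $(i)\Leftrightarrow(ii)$ via Theorem~\ref{thm maingeo} rather than leaving it implicit), which if anything makes the argument more self-contained.
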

\begin{proof}
The equivalences $i) \Leftrightarrow ii) \Leftrightarrow iii)$ are immediate 
consequences of
the characterization of the objects in $\Loc(R/I)$ given in 
Lemma~\ref{lem blongstoLR/I},
and the fact that $\mathfrak{p}$ is a prime ideal.

Let us prove that condition $ii)$ implies $ iv)$.
Since $I \subset \mathfrak{p}$, we have a surjection $R/I \twoheadrightarrow
R/\mathfrak{p}$.  Tensoring with $R_\mathfrak{p}$ we get a surjective map 
$R/I \otimes R_{\mathfrak{p}}
\twoheadrightarrow \kappa_\mathfrak{p}$ onto
the residue field at $\mathfrak{p}$, hence $R/I \otimes
R_\mathfrak{p} \neq 0$.

Conversely, let us prove by contraposition that 
$iv) \Rightarrow iii)$:  If $I \not\subset \mathfrak{p}$
then $R/I \otimes R_\mathfrak{p} = 0$.  For this
consider the exact sequence of $R$-modules:
\[
 \xymatrix{
 0 \ar[r] & I \ar[r]  & R \ar[r]  & R/I \ar[r] & 0 \\
 }
\]
Tensoring with the flat module $R_\mathfrak{p}$, gives the exact sequence
\[
 \xymatrix{
 0 \ar[r] & I\otimes R_\mathfrak{p} \ar[r]  & R_\mathfrak{p} \ar[r]  & R/I\otimes R_\mathfrak{p} \ar[r] & 0 \\
 }
\]
But as $I \not\subset R_\mathfrak{p}$, there is an element in $I$ that becomes invertible
in $R_\mathfrak{p}$, in particular the first arrow has to be an epimorphism and 
hence $R/I \otimes R_\mathfrak{p} =0$.

The implication $iv) \Rightarrow v)$ is trivial. To show the converse, observe
that, since the triangulated functor $-\otimes R_\mathfrak{p}$
commutes with arbitrary sums, if  $R/I$ belongs
to its kernel then so does the entire localizing subcategory 
generated by $R/I$, hence by contraposition $v) \Rightarrow iv)$.
\end{proof}

From this we recover Neeman's description of the correspondence
between compactly generated localizing
subcategories and Hochster open sets, but extended to the non-noetherian case,
see~\cite[Theorem 2.8]{MR1174255}:

\begin{corollary}\label{cor generdetectpoint}
Let $\setofobjects$ be a set of compact objects. Then 
a point $\mathfrak{p}$ belongs to the Hochster open set
$\Loc(\setofobjects)$ if and only if there exists
$C \in \setofobjects$ such that $C \otimes R_\mathfrak{p} \neq 0$.
\end{corollary}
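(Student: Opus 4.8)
The plan is to deduce this corollary from Proposition~\ref{prop pointsinLocComp}, which already settles the case of a single cyclic generator $R/I$, together with Proposition~\ref{thm comp-celleq-cyc} and the elementary observation that the kernel of the functor $-\otimes R_{\mathfrak p}$ is a localizing subcategory of $D(R)$.

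First I would unwind what it means for $\mathfrak p$ to lie in the Hochster open set attached to $\Loc(\setofobjects)$: under the bijection $f$ of Theorem~\ref{thm maingeo} this open set is the union of the $\zeros{I}$ taken over finitely generated ideals $I$ with $R/I\in\Loc(\setofobjects)$, so the condition is equivalent to the existence of a finitely generated ideal $I\subset\mathfrak p$ with $R/I\in\Loc(\setofobjects)$. Second, I would record, exactly as in the last paragraph of the proof of Proposition~\ref{prop pointsinLocComp}, that since $R_{\mathfrak p}$ is flat the functor $-\otimes R_{\mathfrak p}$ is triangulated and commutes with arbitrary sums, so its kernel is a localizing subcategory; consequently $\setofobjects$ is contained in this kernel if and only if $\Loc(\setofobjects)$ is, i.e.\ there exists $C\in\setofobjects$ with $C\otimes R_{\mathfrak p}\neq 0$ precisely when there exists an object $E\in\Loc(\setofobjects)$ with $E\otimes R_{\mathfrak p}\neq 0$.

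With these two reformulations the two implications become short. For the direction ``$\Rightarrow$'', I would take a finitely generated ideal $I\subset\mathfrak p$ with $R/I\in\Loc(\setofobjects)$; Proposition~\ref{prop pointsinLocComp} (implication $ii)\Rightarrow iv)$) gives $R/I\otimes R_{\mathfrak p}\neq 0$, so $\Loc(\setofobjects)$ is not contained in the kernel of $-\otimes R_{\mathfrak p}$, and hence neither is $\setofobjects$, producing the desired $C$. For ``$\Leftarrow$'', given $C\in\setofobjects$ with $C\otimes R_{\mathfrak p}\neq 0$, I would use Proposition~\ref{thm comp-celleq-cyc} to write $\Loc(C)=\Loc(R/I_C)$ with $I_C$ finitely generated; since the kernel of $-\otimes R_{\mathfrak p}$ is localizing and does not contain $C$ it does not contain $R/I_C$ either, so $R/I_C\otimes R_{\mathfrak p}\neq 0$, and Proposition~\ref{prop pointsinLocComp} (implication $iv)\Rightarrow ii)$) gives $I_C\subset\mathfrak p$. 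Since $R/I_C\in\Loc(R/I_C)=\Loc(C)\subset\Loc(\setofobjects)$ and $I_C$ is finitely generated, this exhibits $\mathfrak p$ in the Hochster open set attached to $\Loc(\setofobjects)$.

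There is no serious obstacle here: the whole argument is bookkeeping around Propositions~\ref{prop pointsinLocComp} and~\ref{thm comp-celleq-cyc}. The only points requiring a little care are using the correct description of the Hochster open set corresponding to $\Loc(\setofobjects)$ (the union of the $\zeros{I}$ over cyclic modules $R/I$ belonging to it, as supplied by Theorem~\ref{thm maingeo}) and using that containment in $\ker(-\otimes R_{\mathfrak p})$ can be tested on generators because that kernel is a localizing subcategory; both facts are already in place.
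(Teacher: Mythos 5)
Your proof is correct and follows essentially the same route as the paper's: both rest on Proposition~\ref{prop pointsinLocComp}, on Proposition~\ref{thm comp-celleq-cyc} to replace compacts by cyclic modules, and on the observation that the kernel of $-\otimes R_{\mathfrak p}$ is a localizing subcategory (which is exactly what the paper's ``tensor the recipe'' argument establishes). The only cosmetic differences are that you unwind the Hochster open set via the explicit description in Theorem~\ref{thm maingeo} and use the implications $ii)\Leftrightarrow iv)$, whereas the paper reduces to a single compact generator via ``join $=$ union of opens'' and quotes condition $v)$.
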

\begin{proof}
  We know that $\Loc(\setofobjects) =\bigvee_{C \in \setofobjects} \Loc(C)$, and
  since the join operation corresponds to the union of open sets, the point
  $\mathfrak{p}$ belongs to $\Loc(\setofobjects)$ if and only if it
  belongs to $\Loc(C)$ for some $C\in \setofobjects$.  The condition is then
  clearly necessary as it fulfills condition $v)$ in Proposition~\ref{prop
  pointsinLocComp}.
 
 Conversely, if for any $C \in \setofobjects$ we have $C \otimes R_\mathfrak{p}
 =0$, then given any $E \in \Loc(\setofobjects)$ and a recipe for $E$, tensoring
 this recipe with $R_\mathfrak{p}$ we conclude that $E \otimes R_\mathfrak{p} =
 0$ and again by $v)$ in Proposition~\ref{prop pointsinLocComp} we conclude that
 the point $\mathfrak{p}$ does not belong to the open set $\Loc(\setofobjects)$.
\end{proof}

More geometrically we have:
\begin{corollary}\label{cor hsuppisHochOpen}
For any perfect complex $C$ in $D(R)$ its homological support
\[
\supp C= \{ \mathfrak{p} \in \Spec R \mid C \otimes R_\mathfrak{p} \neq 0 \}
\]
is a Hochster open set.
\end{corollary}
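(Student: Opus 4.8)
The plan is to recognize $\supp C$ as the Hochster open set classified by the compactly generated localizing subcategory $\Loc(C)$, so that its Hochster-openness is automatic from the correspondence of Theorem~\ref{thm main} (equivalently Theorem~\ref{thm maingeo}).

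Concretely, I would first apply Proposition~\ref{thm comp-celleq-cyc} to write $\Loc(C) = \Loc(R/I)$ for some finitely generated ideal $I\subset R$. The crucial point is that membership of $\mathfrak{p}$ in $\supp C$ depends only on the localizing subcategory $\Loc(C)$, not on the chosen representative $C$: the exact functor $-\otimes R_\mathfrak{p}\colon D(R)\to D(R)$ commutes with arbitrary sums, so if $C\otimes R_\mathfrak{p}=0$ then, running through a recipe (Proposition~\ref{prop objincomLoc}) for an arbitrary $E\in\Loc(C)$ and tensoring it with $R_\mathfrak{p}$, every object of $\Loc(C)$ is annihilated; conversely $C$ itself witnesses non-annihilation. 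Hence $C\otimes R_\mathfrak{p}\neq 0$ if and only if there is some $E\in\Loc(R/I)$ with $E\otimes R_\mathfrak{p}\neq 0$, which is exactly condition (v) of Proposition~\ref{prop pointsinLocComp}. By that proposition this is equivalent to $I\subset\mathfrak{p}$, i.e.\ $\mathfrak{p}\in\zeros{I}$; thus $\supp C=\zeros{I}$, a Zariski closed set whose complement is a finite union of principal opens, hence quasi-compact. That is precisely a basic Hochster open set. (Equivalently, one can simply invoke Corollary~\ref{cor generdetectpoint} with the singleton $\setofobjects=\{C\}$, which states directly that $\supp C$ is the Hochster open set corresponding to $\Loc(C)$.)

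There is no genuine obstacle remaining: all the substance has already been carried out upstream, namely in Proposition~\ref{thm comp-celleq-cyc} (every perfect complex is cellularly equivalent to a cyclic quotient $R/I$) and in the chain of equivalences of Proposition~\ref{prop pointsinLocComp}. The only point demanding a little care is the propagation argument — that annihilation of a single compact generator under $-\otimes R_\mathfrak{p}$ forces annihilation of the entire localizing subcategory — which rests on $-\otimes R_\mathfrak{p}$ being triangulated and sum-preserving, and on the existence of recipes for objects of a compactly generated localizing subcategory.
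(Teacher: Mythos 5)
Your proposal is correct and follows essentially the same route as the paper, which obtains this corollary as an immediate consequence of Corollary~\ref{cor generdetectpoint} applied to the singleton $\setofobjects=\{C\}$ — itself proved by exactly the propagation-via-recipes argument and condition (v) of Proposition~\ref{prop pointsinLocComp} that you spell out. Your added identification $\supp C=\zeros{I}$ via Proposition~\ref{thm comp-celleq-cyc}, with the complement quasi-compact because $I$ is finitely generated, is the correct way to see that the resulting set is a basic Hochster open.
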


\section{Tensor triangulated categories}\label{sec ZarSpecTriaCat}

In this section we revisit Balmer's theory of spectra and supports of tensor
triangulated categories. 
The point-free approach reveals that  this construction
and its basic properties are so similar to the ring case, that they can be seen as a variation of 
Joyal's constructive account of the Zariski spectrum in terms of 
supports, dating back to the early 70s \cite{Joyal:Cahiers1975}.

\subsection{The Zariski spectrum of a tensor triangulated category}

\newcommand{\genstep}{G}
\begin{definition}\label{def gen}  
  Let $\setofobjects$ be a set of objects in a tensor triangulated category 
  $(\T,\otimes,\mathbf{1})$.  Define $\genstep(\setofobjects)$ to be the set consisting of
those objects of the form:
\begin{enumerate}
  \item[i)] an iterated suspension or desuspension of an object in $\setofobjects$,
\item[ii)] or a finite sum of objects in $\setofobjects$,
\item[iii)] or an object $s\otimes t$ with  $s\in \setofobjects$ and $t\in \T$,
\item[iv)] or an extension of two objects in $\setofobjects$,
\item[v)] or a direct summand of an object in $\setofobjects$,
\end{enumerate}

Clearly, if a thick tensor ideal contains $\setofobjects$ then it also contains $\genstep(\setofobjects)$, and
hence by induction it contains $\genstep^\omega(\setofobjects) := \bigcup_{n\in\N}\genstep^n(\setofobjects)$.  On the
other hand, it is easy to see that $\genstep^\omega(\setofobjects)$ is itself a thick tensor ideal,
hence it is the smallest thick tensor ideal containing $\setofobjects$.  We denote it
$\gen{\setofobjects}$.
\end{definition}

The following result expresses the finiteness in the definition of 
thick tensor ideal. 
  
\begin{lemma}\label{lem fingen}
  Let $\setofobjects$ be a set of objects and suppose $x\in \gen{\setofobjects}$.  Then there exists
  a finite subset $K \subset \setofobjects$ such that also $x\in \gen{K}$.
\end{lemma}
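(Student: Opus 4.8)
The plan is to prove Lemma~\ref{lem fingen} by transfinite (indeed ordinary) induction on the construction of $x$, exploiting the inductive definition of $\gen{\setofobjects} = \genstep^\omega(\setofobjects) = \bigcup_{n\in\N}\genstep^n(\setofobjects)$. Concretely, I would prove by induction on $n$ the statement: for every $x \in \genstep^n(\setofobjects)$ there is a finite subset $K \subset \setofobjects$ with $x \in \gen{K}$.

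\begin{proof}
  Recall $\gen{\setofobjects} = \bigcup_{n\in\N}\genstep^n(\setofobjects)$, so it suffices to prove, by induction on $n$, that every $x \in \genstep^n(\setofobjects)$ lies in $\gen{K}$ for some finite $K \subset \setofobjects$. For $n=0$ we have $x \in \setofobjects$ and we may take $K = \{x\}$. Suppose the claim holds for $n$, and let $x \in \genstep^{n+1}(\setofobjects) = \genstep(\genstep^n(\setofobjects))$. According to Definition~\ref{def gen}, $x$ is obtained from one or two objects of $\genstep^n(\setofobjects)$ by one of the operations (i)--(v). In each case, let $s$ (and $t$, if applicable) be the object(s) of $\genstep^n(\setofobjects)$ involved; by the induction hypothesis there are finite subsets $K_s, K_t \subset \setofobjects$ with $s \in \gen{K_s}$ and $t \in \gen{K_t}$. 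Put $K := K_s \cup K_t$ (or just $K := K_s$ in the cases (i), (iii), (v) that involve a single object of $\genstep^n(\setofobjects)$); this is a finite subset of $\setofobjects$. Since $\gen{K}$ is a thick tensor ideal containing $s$ and $t$, it is closed under suspension and desuspension (case (i)), finite sums (case (ii)), tensoring with arbitrary objects of $\T$ (case (iii)), extensions (case (iv)), and direct summands (case (v)); hence $x \in \gen{K}$. This completes the induction, and the lemma follows.
\end{proof}

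The only point requiring care is bookkeeping the arity of each generation step: cases (ii) and (iv) involve two objects and one takes the union of the two finite subsets, while cases (i), (iii), (v) involve a single object; a monotonicity remark (if $K \subset K'$ then $\gen{K} \subset \gen{K'}$, immediate from minimality of $\gen{K}$) makes the union step legitimate. There is no real obstacle here: once the inductive definition of $\gen{\setofobjects}$ is unwound, the argument is routine, and this is precisely why the statement is recorded as expressing ``the finiteness in the definition of thick tensor ideal.''
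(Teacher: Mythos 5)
Your proof is correct and follows essentially the same route as the paper: an induction on the stage $n$ of the filtration $\gen{\setofobjects}=\bigcup_n \genstep^n(\setofobjects)$, using that each generation step involves only finitely many objects of the previous stage (the paper phrases this as a ``downward induction'' tracing $x$ back to a finite subset of $\setofobjects$, which is the same argument read in the other direction). The only cosmetic imprecision is that case (ii) may involve a finite sum of more than two objects, but a finite union of finite subsets is still finite, so nothing changes.
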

\begin{proof}
    We have $x\in \genstep^n(\setofobjects)$ for some $n\in\N$.  This means $x$ is 
    obtained by one of the construction steps in $\genstep$ 
    from finitely many objects in $\genstep^{n-1}(\setofobjects)$.  By downward 
    induction, $x$ is then obtained from a finite set of objects  
    $K \subset \genstep^0(\setofobjects)=\setofobjects$, hence $x\in \gen K$.
\end{proof}

\subsubsection{Radical thick tensor ideals}

Fix a tensor triangulated category $(\T,\otimes, \mathbf{1})$.
To any thick tensor ideal $I$ (cf.~Definition~\ref{def tens-ideal})
we may associate its radical closure $\sqrt{I}$ just as in the ring case:
\[
 \sqrt{I} = \{ a \in \T \mid \exists n \in \N \textrm{ such that } a^{\otimes n} \in I\}.
\]
A thick tensor ideal $I$ is called {\em radical} when $I=\sqrt I$.

More generally, for any set of objects $\setofobjects$ we denote by $\sqrt \setofobjects$ the radical of
the thick tensor ideal $\gen \setofobjects$.

\begin{corollary}\label{finitethickrad}
    Let $\setofobjects$ be a set of objects and suppose $x\in \sqrt \setofobjects$.  Then there exists
  a finite subset $K \subset \setofobjects$ such that also $x\in \sqrt K$.
\end{corollary}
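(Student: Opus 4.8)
The plan is to bootstrap this from Lemma~\ref{lem fingen}, the corresponding finiteness statement for $\gen{\setofobjects}$, which is already available. Suppose $x \in \sqrt{\setofobjects}$. By the definition of the radical, this means there exists some $n \in \N$ such that $x^{\otimes n} \in \gen{\setofobjects}$. Now I would apply Lemma~\ref{lem fingen} to the element $x^{\otimes n}$: it guarantees a \emph{finite} subset $K \subset \setofobjects$ such that $x^{\otimes n} \in \gen{K}$.

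From here the conclusion is immediate: since $x^{\otimes n} \in \gen{K}$, the very definition of the radical of $\gen{K}$ gives $x \in \sqrt{K}$, with the same exponent $n$ witnessing membership. So the finite subset $K$ provided by Lemma~\ref{lem fingen} is exactly the one we want.

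I do not anticipate any real obstacle here; the statement is a formal consequence of the already-established finiteness for thick tensor ideals, combined with the fact that the radical construction only refers to a single tensor power $x^{\otimes n}$ of the element in question (as opposed to, say, a whole sequence of elements), so one application of the non-radical finiteness lemma suffices. The only point worth stating carefully is that the exponent $n$ is fixed before invoking Lemma~\ref{lem fingen}, so there is no circularity or uniformity issue. In short, the proof is: pick $n$ with $x^{\otimes n} \in \gen{\setofobjects}$, apply Lemma~\ref{lem fingen} to obtain finite $K$ with $x^{\otimes n} \in \gen{K}$, and conclude $x \in \sqrt{K}$.
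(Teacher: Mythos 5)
Your proof is correct and is exactly the paper's argument: the paper's proof is the one-liner ``apply Lemma~\ref{lem fingen} to a suitable power of $x$'', which you have simply spelled out. No issues.
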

\begin{proof}
  Apply Lemma~\ref{lem fingen} to a suitable power of $x$.
\end{proof}

\begin{lemma}\label{lem radofidealisideal}
If $I$ is a thick tensor ideal, then $\sqrt{I}$  is a radical thick tensor ideal.
\end{lemma}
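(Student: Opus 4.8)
The plan is to verify directly that $\sqrt I$ satisfies the four defining properties of a thick tensor ideal (Definition~\ref{def tens-ideal}) and that it equals its own radical. All but one of these points is completely formal, the single substantive ingredient being a ``binomial'' filtration for tensor powers of the middle vertex of a triangle, as in Balmer~\cite{Balmer:2005}.

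First the easy points. That $\sqrt I$ is radical is immediate: if $a^{\otimes n}\in\sqrt I$ then $(a^{\otimes n})^{\otimes m}=a^{\otimes nm}\in I$ for some $m$, so $a\in\sqrt I$. For property (4) (absorbing the tensor product): if $a^{\otimes n}\in I$ and $b\in\T$, then $(a\otimes b)^{\otimes n}=a^{\otimes n}\otimes b^{\otimes n}\in I$ since $I$ is a tensor ideal. For property (3) (thickness): if $(a\oplus b)^{\otimes N}\in I$, then expanding the tensor power into a finite direct sum exhibits $a^{\otimes N}$ as a direct summand of $(a\oplus b)^{\otimes N}$, so $a^{\otimes N}\in I$ because $I$ is thick, whence $a\in\sqrt I$, and likewise $b\in\sqrt I$. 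Finally $0\in\sqrt I$, and $\sqrt I$ is closed under (de)suspension because $(\Sigma a)^{\otimes n}\simeq\Sigma^{\pm n}(a^{\otimes n})$ and $I$ is triangulated.

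The one point requiring real work is that $\sqrt I$ is a triangulated subcategory. Here I would first prove the lemma: \emph{for a distinguished triangle $x\to y\to z\to\Sigma x$ and every $N\geq 1$, the object $y^{\otimes N}$ lies in the thick tensor ideal $\gen{x^{\otimes i}\otimes z^{\otimes(N-i)}\mid 0\leq i\leq N}$.} This goes by induction on $N$: the case $N=1$ is just $y\in\gen{x,z}$, and for the inductive step one tensors the triangle with $y^{\otimes(N-1)}$ to obtain a triangle $x\otimes y^{\otimes(N-1)}\to y^{\otimes N}\to z\otimes y^{\otimes(N-1)}\to$, and then applies the inductive hypothesis together with the routine fact that tensoring with a fixed object $u$ sends $\gen{\setofobjects}$ into $\gen{\setofobjects\otimes u}$. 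Granting this, let $a\to b\to c\to\Sigma a$ be a triangle with $a,c\in\sqrt I$, say $a^{\otimes m}\in I$ and $c^{\otimes n}\in I$, and put $N=m+n-1$. Each generator $a^{\otimes i}\otimes z^{\otimes(N-i)}$ with $z=c$ lies in $I$: if $i\geq m$ then already $a^{\otimes i}\in I$, and if $i<m$ then $N-i\geq n$ so $c^{\otimes(N-i)}\in I$; as $I$ is a thick tensor ideal it follows that $b^{\otimes N}\in I$, i.e.\ $b\in\sqrt I$. This ``two-out-of-three for the middle vertex'', combined with closure under suspension, handles the other two configurations by rotating the triangle; in particular $\sqrt I$ is closed under cones, and together with suspension-closure this makes $\sqrt I$ a full triangulated subcategory. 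Property (2) (closure under finite sums) then follows by applying the same statement to the split triangle $a\to a\oplus b\to b\to\Sigma a$ (or, directly, by the same binomial bookkeeping applied to $(a\oplus b)^{\otimes(m+n-1)}$).

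The only genuine obstacle is the binomial filtration lemma; everything else is pure manipulation with the tensor-ideal axioms. I do not expect serious difficulty there either, but some care is needed in the inductive step to track the index ranges and to invoke the stability of $\gen{-}$ under tensoring with a fixed object.
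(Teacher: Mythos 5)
Your proof is correct and follows essentially the same route as the paper: the substantive step in both is the ``binomial'' lemma that if two vertices of a triangle have tensor powers $p$ and $q$ in $I$, then the $(p+q-1)$-st power of the third vertex lies in $I$, with all remaining axioms checked formally. Your induction is packaged slightly differently (membership of $y^{\otimes N}$ in the ideal generated by the pure monomials $x^{\otimes i}\otimes z^{\otimes (N-i)}$, versus the paper's induction on the monomials $x^iy^jz^k$ of fixed total degree), but the two arguments are equivalent up to rotating the triangle.
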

Balmer proved this \cite[Lemma~4.2]{Balmer:2005} 
by establishing the classical formula 
$\sqrt{I} = \bigcap_{\mathfrak{p} \supset I} \mathfrak{p}$, valid assuming Zorn's lemma. We offer instead a direct point-free proof:
\begin{proof}
It is immediate to check from the definitions that $\sqrt{I}$ is closed under
suspension and desuspension, finite sums, direct summands,  and under tensoring 
with objects of $\T$.
Finally for a triangle $x \to y \to z \to \Sigma x$, the following general lemma
shows that if $x$ and $y$ belong to $\sqrt I$ then so does $z$.
\end{proof}

\begin{lemma}
  Let $I$ be a tensor ideal in a tensor triangulated category, and consider
  a triangle
  $x \rightarrow y \rightarrow z \rightarrow \Sigma x$.
  If $x^p$ and $y^q$ belong to $I$, then $z^{p+q-1}$ belongs to $I$.
\end{lemma}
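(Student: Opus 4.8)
The strategy is to build $z$ (or rather a suitable tensor power of $z$) out of $x$ and $y$ by repeatedly exploiting the octahedral axiom and the rotation of triangles, turning the single triangle $x \to y \to z \to \Sigma x$ into a binomial-style expansion of $z^{\otimes(p+q-1)}$. First I would set up notation: write $a = x$, $b = y$, $c = z$ with the given triangle, and note that tensoring the triangle with any object $w$ yields again a triangle (the tensor product is exact in each variable). The key observation is that tensoring the triangle with $c$ itself gives a triangle $x \otimes c \to y \otimes c \to z \otimes c = c^{\otimes 2} \to \Sigma(x \otimes c)$; since $I$ is a tensor ideal and $x^{\otimes p} \in I$, the object $x \otimes c$ has a tensor power lying in $I$ (namely $(x\otimes c)^{\otimes p} = x^{\otimes p} \otimes c^{\otimes p} \in I$), and similarly for $y \otimes c$. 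So morally one reduces the exponent on $x$ and $y$ at the cost of accumulating factors of $c$.

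The cleaner route is a direct induction on $p+q$. In the base case $p = q = 1$, the triangle $x \to y \to z \to \Sigma x$ with $x, y \in I$ forces $z \in I$ because a tensor ideal is in particular a thick triangulated subcategory (closed under cones and retracts). For the inductive step, consider the triangle obtained by tensoring the original triangle with $x^{\otimes(p-1)}$ (if $p > 1$): this is $x^{\otimes p} \to y \otimes x^{\otimes(p-1)} \to z \otimes x^{\otimes(p-1)} \to \Sigma x^{\otimes p}$. Here $x^{\otimes p} \in I$ by hypothesis, and $y \otimes x^{\otimes(p-1)}$ has $y^{\otimes q} \otimes x^{\otimes q(p-1)} \in I$, so it lies in $\sqrt I$ with exponent $q$; hence by the inductive hypothesis applied to this triangle (with exponents $1$ and $q$, total $1 + q < p + q$) we get $z \otimes x^{\otimes(p-1)} \in \sqrt I$, say $(z \otimes x^{\otimes(p-1)})^{\otimes N} \in I$ for some $N$. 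Tracking the bookkeeping carefully so that the final exponent comes out to exactly $p+q-1$ requires organizing the combinatorics: the natural guess is to tensor the triangle $z^{\otimes j}$ times and induct so that $z^{\otimes(p+q-1)}$ is an iterated extension of objects each divisible either by $x^{\otimes p}$ or by $y^{\otimes q}$, hence each in $I$.

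The most transparent formulation, which I would adopt, is: from the triangle $x \to y \to z \to \Sigma x$ one obtains, for each $k \geq 1$, a triangle expressing $z^{\otimes k}$ as built by $k$ successive extensions out of objects of the form (suspension of) $x^{\otimes i} \otimes z^{\otimes(k-1-i)} \otimes (\text{stuff})$ — more precisely, iterating the triangle shows $z^{\otimes k}$ sits in a filtration with subquotients $\Sigma^{?}\, x^{\otimes i} \otimes y^{\otimes(k-i)}$-ish terms, one for each $i = 0, \dots, k$. Taking $k = p+q-1$, every subquotient has either at least $p$ tensor factors of $x$ or at least $q$ tensor factors of $y$ (pigeonhole: if fewer than $p$ copies of $x$ then at least $p+q-1-(p-1) = q$ copies of $y$), hence every subquotient lies in $I$ because $I$ is a tensor ideal containing $x^{\otimes p}$ and $y^{\otimes q}$; since $I$ is thick and triangulated, it is closed under extensions, so $z^{\otimes(p+q-1)} \in I$.

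\textbf{Main obstacle.}
The routine part is the tensor-ideal/thickness closure properties; the genuine work is establishing the filtration of $z^{\otimes k}$ with the claimed subquotients. This is a tensor-power analogue of the standard fact that a cone is filtered by its two ends, and the honest proof proceeds by induction on $k$ using the octahedral axiom to splice the triangle for $z^{\otimes(k-1)}$ against the triangle $x \otimes z^{\otimes(k-1)} \to y \otimes z^{\otimes(k-1)} \to z^{\otimes k} \to \Sigma(\cdots)$. Getting the indices and suspensions exactly right — so that the pigeonhole argument on the exponent $p+q-1$ applies cleanly — is the step that needs care; everything downstream is then immediate.
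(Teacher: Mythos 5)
Your final formulation is correct and is essentially the paper's argument: the paper proves by induction on $k$ that $x^{\otimes i}\otimes y^{\otimes j}\otimes z^{\otimes k}\in I$ for all $i+j+k=p+q-1$ (the base case $k=0$ being exactly your pigeonhole, the inductive step being the triangle obtained by tensoring $x\to y\to z\to\Sigma x$ with $x^{\otimes i}\otimes y^{\otimes j}\otimes z^{\otimes k}$), which is your filtration of $z^{\otimes(p+q-1)}$ read one extension at a time. Note that organizing it this way dissolves the ``main obstacle'' you flag: no octahedral axiom or global filtration is needed, since at each step one only uses that the third vertex of a triangle whose other two vertices lie in $I$ also lies in $I$.
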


\begin{proof}
  More generally we show by induction on $k$ that
  $$
  x^i y^j z^k \in I \quad \forall i,j,k \text{ such that } i+j+k = p+q-1 
  $$
  (where for economy we omit the tensor sign between the factors).
  The case $k=0$ is clear since $I$ is a tensor ideal.
  For the monomial $x^i y^j z^{k+1}$ (with $i+j+k+1=p+q-1$), tensor the triangle
  $x \to y \to z \to \Sigma x$ with $x^i y^j z^k$.  By induction the first two
  vertices in the resulting triangle belong to $I$, and hence so does the third.
\end{proof}

Radical thick tensor ideals in $\T$ are naturally ordered by inclusion with a
top element $\T$ itself, and a bottom element
$$
\sqrt{0} = \{ a \in \T \mid \exists n \in \N \ 
\textrm{ such  that } a^{\otimes n} = 0\},
$$
the full subcategory of \emph{nilpotent} elements.
Although radical thick tensor ideals might not form a set,
we have well-defined frame operations:
\begin{enumerate}

    \item If $I_1$ and $I_2$ are two radical thick
tensor ideals then $I_1 \bigwedge I_2 =
I_1 \bigcap I_2$.  

   \item If $\{I_j\}_{j
\in J}$ is a set of radical thick tensor ideals, $\bigvee_{j \in J}
I_j$ is the radical of the thick tensor ideal generated by
the union $\bigcup_{j\in J} I_j$.  This is well defined by
Definition~\ref{def gen} and Lemma~\ref{lem radofidealisideal}.
\end{enumerate}

The main theorem in this subsection (Theorem~\ref{thm coherence} below)
states that the radical thick tensor ideals
of  a tensor triangulated category $\T$ form a coherent frame.  For this to make 
sense
it is necessary that there is only a set of them.
The easiest way to ensure this is to 
assume that $\T$ is essentially small, as in Balmer~\cite{Balmer:2005}.
A source of examples of this situation comes from
starting with a compactly generated triangulated category $\T$,
for then (as explained for instance in \cite[Chapter 3 and Remark 4.2.6]{MR1812507}),
the full subcategory of compact objects $\T^\omega$ is essentially
small.  If we add the assumption that the tensor unit is
compact and that the tensor product of two compact objects is again compact then
the full subcategory of compact objects $\T^\omega$ is an essentially small
tensor triangulated category.
Our main example is when $\T$ is 
the derived category of a commutative ring, or the derived
category of a coherent scheme as in Section~\ref{sec RecSpecSch}.
It follows that $\T^\omega$, the 
derived category of perfect complexes, is essentially small.

As pointed out by Balmer~\cite{Balmer:2005}, in many 
important situations, passage to the radical is a
harmless operation. For instance, in $D^\omega(R)$, all thick tensor ideals are
radical, which follows from the fact that every perfect complex is strongly 
dualizable, as we proceed to briefly recall.
Observe that in $D(R)$ all thick subcategories are automatically tensor 
ideals (by an argument similar to the proof of Lemma~\ref{lem 
locistensideal}), hence all thick subcategories are radical thick 
tensor ideals.

For an object $a\in \T$, put $a^\vee=\Hom(a,\mathbf{1})$.
An object $a$ is {\em strongly dualizable} if and only if the natural 
transformation
\[
- \otimes a^\vee \rightarrow \Hom(a,-)
\]
is an isomorphism.  It is well known \cite{MR866482} that any strongly dualizable object $a$ is
a direct summand of $a \otimes a \otimes a^\vee$, hence:

\begin{lemma}\label{lem tensidIsrad}
  If all compact objects in $\T$ are strongly dualizable, then all thick tensor
  ideals in $\T^\omega$ are radical.
\end{lemma}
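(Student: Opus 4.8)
The plan is to reduce the statement to the single implication: for a thick tensor ideal $I\subseteq\T^\omega$, if $a^{\otimes n}\in I$ for some $n\in\N$ then $a\in I$. This is exactly $\sqrt I\subseteq I$, since by definition $\sqrt I=\{a\mid a^{\otimes n}\in I\text{ for some }n\}$; the reverse inclusion $I\subseteq\sqrt I$ is trivial. So it remains to establish that implication under the hypothesis that every object of $\T^\omega$ (being compact) is strongly dualizable.

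The engine is the cited fact \cite{MR866482} that a strongly dualizable object $a$ is a direct summand of $a\otimes a\otimes a^\vee$. First I would bootstrap this to: for every $n\geq 1$, the object $a$ is a direct summand of $a^{\otimes n}\otimes(a^\vee)^{\otimes(n-1)}$. This goes by induction on $n$. The case $n=1$ is trivial (that object is just $a\otimes\mathbf{1}\simeq a$), and the case $n=2$ is the cited fact. For the inductive step, starting from a direct-summand inclusion of $a$ into $a^{\otimes n}\otimes(a^\vee)^{\otimes(n-1)}$ one substitutes, in one of the $n$ tensor factors equal to $a$, the direct-summand inclusion of $a$ into $a\otimes a\otimes a^\vee$ --- legitimate because tensoring with a fixed object preserves retracts --- thereby exhibiting $a$ as a direct summand of $a^{\otimes(n+1)}\otimes(a^\vee)^{\otimes n}$.

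Now suppose $a^{\otimes n}\in I$. Since $I$ is a tensor ideal it is absorbing for the tensor product, so $a^{\otimes n}\otimes(a^\vee)^{\otimes(n-1)}\in I$; and since $a$ is a direct summand of this object and $I$ is thick, we conclude $a\in I$. This proves $\sqrt I=I$, as desired.

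The only point requiring a word of care --- and it is not a genuine obstacle --- is that all objects appearing must lie in $\T^\omega$ in order for the tensor-ideal property of $I\subseteq\T^\omega$ to apply; concretely one needs $(a^\vee)^{\otimes(n-1)}\in\T^\omega$. This holds because a compact strongly dualizable $a$ has $a^\vee$ again strongly dualizable with $a^{\vee\vee}\simeq a$, so that $\Hom(a^\vee,-)\simeq -\otimes a^{\vee\vee}\simeq -\otimes a$ commutes with arbitrary sums and $a^\vee$ is compact, and tensor products of compact objects are compact in the tensor triangulated category $\T^\omega$. (In the motivating example $\T^\omega=D^\omega(R)$ this is immediate, the dual of a perfect complex being perfect, and \emph{all} thick subcategories being tensor ideals by the remark preceding the lemma.) The entire mathematical content of the lemma thus sits in the cited rigidity fact; the rest is the elementary retract bookkeeping above.
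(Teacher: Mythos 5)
Your argument is correct and coincides with the paper's: the paper offers no proof beyond the remark that a strongly dualizable $a$ is a direct summand of $a\otimes a\otimes a^\vee$ (citing \cite{MR866482}), and your induction exhibiting $a$ as a summand of $a^{\otimes n}\otimes(a^\vee)^{\otimes(n-1)}$, followed by absorption and thickness, is exactly the intended "hence". Your side remark on why $a^\vee$ lies in $\T^\omega$ is a worthwhile detail the paper leaves implicit (it holds under the paper's standing assumptions that $\mathbf{1}$ is compact and $\T^\omega$ is closed under $\otimes$).
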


\begin{lemma}\label{prop ZarFrameTriaCat}
  In the poset of radical thick tensor ideals, the infinite distributive law 
  holds:
for any radical thick tensor ideal $J$ and any set of radical thick
  tensor ideals $(I_\alpha)_{\alpha \in A}$, we have
  $$
  \bigvee_\alpha (J\wedge I_\alpha ) = J \cap (\bigvee_\alpha I_\alpha).
  $$  
\end{lemma}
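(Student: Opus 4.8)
The plan is to prove the nontrivial inclusion $J \cap (\bigvee_\alpha I_\alpha) \subset \bigvee_\alpha (J\wedge I_\alpha)$, since the reverse inclusion is immediate: each $J \wedge I_\alpha = J \cap I_\alpha$ is contained in both $J$ and $\bigvee_\alpha I_\alpha$, hence so is their join. For the hard inclusion, first I would unwind what membership in $\bigvee_\alpha I_\alpha$ means. By definition this join is $\sqrt{\gen{\bigcup_\alpha I_\alpha}}$, so an element $z$ lies in it precisely when some tensor power $z^{\otimes n}$ lies in $\gen{\bigcup_\alpha I_\alpha}$; by Lemma~\ref{lem fingen} this in turn means $z^{\otimes n} \in \gen{I_{\alpha_1} \cup \cdots \cup I_{\alpha_k}}$ for finitely many indices. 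So it suffices to treat the case of a finite join, and then, by an easy induction on $k$, the case of a join of two radical thick tensor ideals $I_1, I_2$; that is, I would reduce to showing
\[
J \cap (I_1 \vee I_2) \subset (J\wedge I_1) \vee (J \wedge I_2).
\]

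The core of the argument is then a step analogous to the classical commutative-algebra identity behind distributivity of the radical-ideal lattice. Take $z \in J \cap (I_1 \vee I_2)$. Since $z \in I_1\vee I_2 = \sqrt{\gen{I_1 \cup I_2}}$, some power $z^{\otimes n}$ lies in $\gen{I_1\cup I_2}$, and I would analyze the finite construction-recipe (via the operation $G$ of Definition~\ref{def gen}) that produces $z^{\otimes n}$ from objects of $I_1 \cup I_2$. The goal is to show that a suitable further tensor power of $z$ can be built inside $(J\wedge I_1) \vee (J\wedge I_2)$. The key observation is that tensoring the whole recipe with $z^{\otimes m}$ (for $m$ large enough) lets us absorb the factor $z$ — which lies in $J$ — into each building block: a block coming from $I_1$, once tensored with enough copies of $z\in J$, becomes an object of $J \cap I_1 = J\wedge I_1$ (using that $J$ is a tensor ideal, so $J$ is closed under tensoring with anything, in particular stays inside $J$), and similarly for $I_2$. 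Running the same construction steps on these modified blocks — suspensions, finite sums, tensoring with objects of $\T$, extensions, and summands are all operations available inside the radical thick tensor ideal $(J\wedge I_1)\vee(J\wedge I_2)$ — exhibits a tensor power $z^{\otimes N}$ inside that join, whence $z$ itself lies there by radicality.

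The step I expect to be the main obstacle is bookkeeping the bookkeeping in the recipe argument: making precise exactly how many extra copies of $z$ must be introduced at which stage so that every leaf of the construction tree lands in $I_1$ or $I_2$ after absorption, and checking that the intermediate objects of $G^j(\cdots)$ remain inside $(J\wedge I_1)\vee(J\wedge I_2)$ under each of the five construction steps. The extension step is the most delicate, since it combines two objects whose "$z$-debt" must be reconciled; here I would tensor the relevant triangle with a common power of $z$ so that both outer vertices simultaneously lie in the target join, and conclude for the middle vertex using that a radical thick tensor ideal is triangulated. Once this finite combinatorial lemma is in hand — which is really the tensor-triangulated analogue of the lemma already proved above for radicals of triangles — the statement follows, and in fact one sees it is the structural heart of the coherence theorem (Theorem~\ref{thm coherence}) promised earlier.
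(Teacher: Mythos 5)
Your proposal is correct, but it takes a more roundabout route than the paper. The paper proves the hard inclusion directly for the arbitrary join, with no reduction to the finite or binary case: for $x\in J\cap(\bigvee_\alpha I_\alpha)$ it defines $C_x=\{k\in\bigvee_\alpha I_\alpha \mid x\otimes k\in\bigvee_\alpha(J\wedge I_\alpha)\}$, checks that $C_x$ is a radical thick tensor ideal containing each $I_\alpha$ (if $k\in I_\alpha$ then $x\otimes k\in J\cap I_\alpha$ because both are tensor ideals), concludes $C_x=\bigvee_\alpha I_\alpha\ni x$, and strips the resulting extra factor from $x\otimes x$ by radicality. Your recipe-induction is exactly the unrolled form of the statement ``the set of good elements is closed under the five generating operations of Definition~\ref{def gen}'', and your reduction to finite joins via Lemma~\ref{lem fingen} (i.e.\ Corollary~\ref{finitethickrad}) is valid but unnecessary once one adopts the ideal-theoretic packaging. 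More importantly, the bookkeeping you flag as the main obstacle is not actually there: a \emph{single} tensor factor of $z\in J$ suffices and distributes uniformly through every construction step --- $z\otimes\Sigma w=\Sigma(z\otimes w)$, $z\otimes(w_1\oplus w_2)=(z\otimes w_1)\oplus(z\otimes w_2)$, tensoring a triangle with $z$ yields a triangle, summands go to summands --- so there is no ``$z$-debt'' to reconcile at extensions and no need for ever-larger powers $z^{\otimes m}$; radicality is invoked exactly once, at the very end. What your approach buys is a concrete, constructive picture of where each building block lands; what the paper's approach buys is brevity and the avoidance of any finiteness reduction, at the cost of verifying that $C_x$ is itself radical (needed so that it contains $\bigvee_\alpha I_\alpha=\sqrt{\bigcup_\alpha I_\alpha}$ and not merely $\gen{\bigcup_\alpha I_\alpha}$), a point your version handles instead by passing from $z^{\otimes n}\in\gen{I_1\cup I_2}$ to $z^{\otimes(n+1)}$ in the target.
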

\begin{proof}
  The inclusion $\subset$ is clear.  To get the reverse inclusion fix an object $x\in  J \cap (\bigvee_\alpha I_\alpha)$. By radicality it is enough to prove
  that $x\otimes x \in  \bigvee_\alpha (J\wedge I_\alpha )$.  Define 
  $$
  C_x = \{ k \in \bigvee_\alpha I_\alpha \mid x \otimes k \in \bigvee_\alpha 
  (J\wedge I_\alpha ) \};
  $$
  we are done if we can prove that $C_x$ is all of $\bigvee_\alpha I_\alpha$.
 It is trivial to check that $C_x$ is a triangulated category, because tensoring
  with $x$  preserves triangles.
 
  First we prove that $C_x$ is a thick subcategory. Suppose $a\oplus b 
  \in C_x$, this means that $x \otimes (a\oplus b) \in \bigvee_\alpha (J\wedge 
  I_\alpha )$.  But  the tensor product distributes over
  sums, so also $(x\otimes a) \oplus (x \otimes b) \in \bigvee_\alpha (J\wedge 
  I_\alpha )$. As the latter is a thick subcategory,  we conclude that already each of
  $(x\otimes a)$ and  $(x \otimes b)$ belong here, which is to say that 
  $a$ and $b$ are in  $C_x$.
  
  We show now $C_x$ is an ideal: let $a$ be an arbitrary object of the triangulated
  category, and let $k\in C_x \subset \bigvee_\alpha I_\alpha$.  Since
  $\bigvee_\alpha I_\alpha $ is an ideal, we also have $a\otimes k \in 
  \bigvee_\alpha I_\alpha$. For the same reason  $x \otimes (a \otimes k)  = a \otimes (x \otimes k)$ belongs to $\bigvee_\alpha (J\wedge 
  I_\alpha )$.  By definition of $C_x$ we therefore
  find that $a \otimes k\in C_x$ as required.
  
  Finally radicality of $C_x$:
  suppose $k^{\otimes n} \in C_x$.  This means that $x\otimes k^{\otimes n}
  \in \bigvee_\alpha (J\wedge I_\alpha )$.  But then we can tensor $n-1$ times
  more with $x$ to conclude that $(x\otimes k)^{\otimes n}  \in \bigvee_\alpha 
  (J\wedge I_\alpha )$,
  and since this is a radical ideal, it then follows that 
  $x\otimes k  \in \bigvee_\alpha (J\wedge I_\alpha )$, which is to say
  that $k\in C_x$.
  
  In conclusion, $C_x$ is a radical thick tensor ideal contained in
  $\bigvee_\alpha I_\alpha$, and it contains each $I_\alpha$, so it also
  contains their join, hence is equal to the whole join.
\end{proof}

\begin{theorem}\label{thm coherence}
  The radical thick tensor ideals of a tensor triangulated category
  $\T$ form a coherent frame, provided there is only a set of them.
  The finite elements are the principal radical thick tensor ideals, i.e.~of
  the form $\sqrt{a}$ for some $a \in \T$.
\end{theorem}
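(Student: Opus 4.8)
The plan is to verify the three defining properties of a coherent frame: that the poset of radical thick tensor ideals is a complete lattice satisfying the infinite distributive law (i.e.\ a frame), that every element is a join of finite elements, and that the finite elements form a sub-lattice. The frame axioms are essentially already in hand: the meet and join operations were defined just before the theorem (binary meet is intersection, arbitrary join is the radical of the thick tensor ideal generated by the union), the top element is $\T$ and the bottom is $\sqrt 0$, and the infinite distributive law is precisely the content of Lemma~\ref{prop ZarFrameTriaCat}. So the first step is simply to assemble these facts and observe that, under the standing hypothesis that there is only a set of radical thick tensor ideals, we obtain a genuine frame.

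The second step is to identify the finite elements. I would first show that each principal radical thick tensor ideal $\sqrt a$ is finite: if $\sqrt a \leq \bigvee_{\alpha} I_\alpha$, then $a \in \bigvee_\alpha I_\alpha$, which by construction is the radical of $\gen{\bigcup_\alpha I_\alpha}$, so some power $a^{\otimes n}$ lies in $\gen{\bigcup_\alpha I_\alpha}$; by Lemma~\ref{lem fingen} (or Corollary~\ref{finitethickrad}) this membership already occurs using only finitely many objects, hence finitely many of the $I_\alpha$, giving $\sqrt a \leq \bigvee_{\alpha\in K} I_\alpha$ for some finite $K$. Conversely, any radical thick tensor ideal $I$ is the join $\bigvee_{a\in I}\sqrt a$ of principal ones, so if $I$ is finite it is already such a finite sub-join $\bigvee_{i=1}^n \sqrt{a_i}$; then $I = \sqrt{a_1\oplus\cdots\oplus a_n}$ is itself principal (using that $(a_1\oplus\cdots\oplus a_n)^{\otimes m}$ is a sum of monomials in the $a_i$, and conversely each $a_i$ is a retract of $a_1\oplus\cdots\oplus a_n$). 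This simultaneously shows every element is a join of finite elements.

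The third step is that the finite elements form a sub-lattice: I must check $\T$ is finite (immediate, $\T=\sqrt{\mathbf 1}$, and it is finite since any join covering $\T$ contains $\mathbf 1$, handled as above) and that the meet of two finite elements is finite, i.e.\ $\sqrt a \cap \sqrt b$ is again principal. The natural candidate is $\sqrt{a\otimes b}$, and the inclusion $\sqrt{a\otimes b}\subseteq \sqrt a\cap\sqrt b$ is trivial from absorption. The reverse inclusion is the genuine content here and is where I expect the main obstacle: given $x$ with $x^{\otimes p}\in\gen a$ and $x^{\otimes q}\in\gen b$, I want $x^{\otimes N}\in\gen{a\otimes b}$ for some $N$. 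The idea is to expand $x^{\otimes p}$ using the recipe building it inside $\gen a$ (so it is obtained from finitely many shifts/sums/retracts/extensions of objects of the form $a\otimes t$) and then tensor that whole recipe with $x^{\otimes q}$; since $x^{\otimes q}\in\gen b$ absorbs into $\gen b$, each building block $a\otimes t\otimes x^{\otimes q}$ lies in $\gen{a}\cap\gen{b}$, and one pushes through the triangulated constructions to land in $\gen{a\otimes b}$ — concretely one argues that $a\otimes t\otimes x^{\otimes q}\in\gen{a\otimes x^{\otimes q}}\subseteq \gen{a\otimes b\otimes(\text{stuff})}$ after a further tensoring, so $x^{\otimes(p+q)}\in\gen{a\otimes b}$. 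Making this bookkeeping clean is the crux; once it is done, $\sqrt a\cap\sqrt b=\sqrt{a\otimes b}$ is finite, completing the verification that $\Zar(\T)$ is a coherent frame with finite part the principal radical thick tensor ideals.
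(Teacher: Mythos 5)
Your proposal is correct and, for the parts the paper's own proof actually writes out, it follows the same route: the frame structure comes from Lemma~\ref{prop ZarFrameTriaCat}, the finiteness of each $\sqrt a$ comes from Corollary~\ref{finitethickrad} applied to a power of $a$, and the identification of finite elements with principal ideals uses exactly the finite-sub-join argument plus thickness ($a_i$ is a retract of $a_1\oplus\cdots\oplus a_n$). The one genuine difference is your third step. The paper's proof of the theorem never explicitly checks that the meet of two finite elements is finite; that fact, $\sqrt a\cap\sqrt b=\sqrt{a\otimes b}$, is only established afterwards, in item (4) of Lemma~\ref{lem ZarSup}, as part of verifying that $a\mapsto\sqrt a$ is a support. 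You are right to flag this as the real content, and your sketch can be made clean without any recipe bookkeeping: the key fact is that if $u\in\gen{S}$ then $u\otimes z\in\gen{s\otimes z\mid s\in S}$ (the thick-tensor-ideal analogue of Lemma~\ref{lem tensideal}(2), proved by observing that $\{u\mid u\otimes z\in\gen{S\otimes z}\}$ is a thick tensor ideal containing $S$). Applying it twice: from $x^{\otimes q}\in\gen b$ one gets $a\otimes x^{\otimes q}\in\gen{a\otimes b}$, and from $x^{\otimes p}\in\gen a$ one gets $x^{\otimes(p+q)}\in\gen{a\otimes x^{\otimes q}}\subseteq\gen{a\otimes b}$, whence $x\in\sqrt{a\otimes b}$ by radicality. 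This is precisely what the paper's $R(a)$/$L(c)$ argument in Lemma~\ref{lem ZarSup} formalizes, so your approach and the paper's coincide once the crux is written out.
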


\begin{proof}
  The proof follows the same lines as the proof that the radical
  ideals in a commutative ring form a coherent frame, but instead of
  relying of finiteness of sums in a ring, it uses finiteness of
  generation of thick tensor ideals.  Some of the arguments have a 
  different flavor because of the thickness condition which has no
  reasonable analogue for commutative rings.

  Lemma~\ref{prop ZarFrameTriaCat} establishes that the radical
  thick tensor ideals form a frame.  
  We now establish that this frame is coherent.
  We first show that finite elements are generated by a single object.  Let $K$
  be a finite element in the frame.  Since there is only a set of principal
  radical thick tensor ideals by assumption, there is certainly a set $M(K)$ of
  those that are contained in $K$.  Then trivially $K = \bigvee_{\sqrt{c} \in
  M(K)} \sqrt{c}$, and, as $K$ is a finite element in the frame, there exists a
  finite subset $J \subset M(K)$ such that $K = \bigvee_{c \in J} \sqrt{c}$, so
  $K$ is generated by a finite set consisting of one generator for each $c \in
  J$.  It is now a direct consequence of the thickness assumption that if $K$ is
  generated by $c_1, \dots, c_k$ then it is generated by the single object $c_1
  \oplus \dots \oplus c_k$.

  Finally we show each ideal of the form $\sqrt{a}$ is indeed a finite
  element in the frame.  Given a set of radical thick tensor ideals
  $\{J_\alpha\}_{\alpha\in A}$ such that $\sqrt{a} \leq \bigvee_{\alpha\in A} 
  J_\alpha$,
  we need to find a finite 
  subset $B \subset A$ such that also $\sqrt{a} \leq  \bigvee_{\alpha\in B}
  J_\alpha$.  Since the join in question is a radical thick tensor ideal,
  it is enough to find $B\subset A$ such that $a\in \bigvee_{\alpha\in B}
  J_\alpha$. 
  Let $S$ denote the union of the ideals $J_\alpha$.  Then 
  $\bigvee_{\alpha\in A} J_\alpha = \sqrt S$.
  We have $a \in \sqrt S$.  But then by Corollary~\ref{finitethickrad},
  there is a finite subset $K \subset S$, such that $a\in \sqrt K$.
  Finitely many $J_\alpha$ are needed to contain this finite subset $K$,
  so take those.
\end{proof}

\begin{definition}
    The frame of radical thick tensor ideals in $\T$ is denoted $\Zar(\T)$ and
    called the {\em Zariski frame}.  The spectral space associated to $\Zar(\T)$ we
    call the {\em Zariski spectrum} of $\T$,  denoted $\Spec \T$.
\end{definition}


\subsection{Supports of a tensor triangulated category}

Let $(\T,\otimes,\mathbf{1})$ be a tensor triangulated category, form
now on assumed to have only a set of radical thick tensor ideals.  Just as in
the ring case, the coherent frame $\Zar(\T)$ comes equipped with a canonical
notion of support.  The universal property of the Zariski frame of a ring
(Theorem~\ref{thm JoyalSupIsinit}) readily carries over to the Zariski frame of
$\T$, and yields one of the main theorems of \cite{Balmer:2005}, as we proceed
to explain.  In order to stress the parallel with the classical case, we shall
use a slight modification of Balmer's notions:

\begin{definition}\label{def TriaSupport}
A \emph{support} on $(\T,\otimes,\mathbf{1})$ is a pair $(F,d)$ where
$F$ is a frame and $d: \operatorname{obj}(\T) \to F$ is a map satisfying
\begin{enumerate}
\item $d(0)  =  0$ and $d(\mathbf{1})  =  \mathbf{1},$
\item $\forall a \in \T : \ d(\Sigma a)  =  d(a),$
\item $\forall a,b \in \T :\  d(a\oplus b)  =  d(a) \vee d(b), $
\item $\forall a,b \in \T : \ d(a\otimes b) =  d(a) \wedge d(b),$
\item If  $a\to b\to c \to \Sigma a$ is a triangle in $\T$, then  $d(b) \leq d(a) \vee d(c).$
\end{enumerate}
A {\em morphism of supports} from $(F,d)$ to $(F',d')$ is a frame map $F \to F'$
compatible with the maps $d$ and $d'$.
\end{definition}

\begin{lemma}\label{lem ZarSup}
Let $(\T,\otimes,\mathbf{1})$ be a tensor triangulated category assumed to have
only a set of radical thick tensor ideals.  Then the assignment
  \begin{eqnarray*}
    \operatorname{obj}(\T) & \longrightarrow & \Zar(\T)  \\
    a & \longmapsto & \supz(a) :=\sqrt{a}
  \end{eqnarray*}
  is a support.
\end{lemma}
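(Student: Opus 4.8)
The plan is to verify the five axioms of Definition~\ref{def TriaSupport} one by one for the assignment $a\mapsto\sqrt{a}$, where $\sqrt{a}$ denotes the radical thick tensor ideal $\sqrt{\gen{a}}$ generated by $a$; this lands in $\Zar(\T)$ by Theorem~\ref{thm coherence}. The key observation running through the whole argument is that all five conditions translate into elementary statements about the thick-tensor-ideal closure operator $\gen{-}$ and its radical, and that each such statement is either immediate from Definition~\ref{def gen} or a quick consequence of the triangle/tensor axioms of a tensor triangulated category.

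First I would dispose of the easy axioms. For (1), $\sqrt{0}$ is the bottom element of $\Zar(\T)$ essentially by definition (it is the ideal of nilpotent objects, as noted just before Lemma~\ref{prop ZarFrameTriaCat}), and $\sqrt{\mathbf 1}=\T$ since the tensor unit generates everything as a tensor ideal. For (2), $\Sigma$ is invertible, so $\gen{a}=\gen{\Sigma a}$ by clause~(i) of Definition~\ref{def gen}, whence $\sqrt{\Sigma a}=\sqrt a$. For (3), $\sqrt a\vee\sqrt b$ is by definition the radical of the thick tensor ideal generated by $\{a,b\}$, and since $\gen{a,b}=\gen{a\oplus b}$ (one inclusion uses that $a$ and $b$ are retracts of $a\oplus b$, clause~(v); the other that $\gen{a,b}$ is closed under finite sums, clause~(ii)), we get $\sqrt a\vee\sqrt b=\sqrt{a\oplus b}$.

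For axiom (4), the inclusion $\sqrt{a\otimes b}\subset\sqrt a\cap\sqrt b$ is immediate since $a\otimes b\in\gen a$ and $a\otimes b\in\gen b$ by clause~(iii), hence $(a\otimes b)^{\otimes n}\in\gen a\cap\gen b$ makes $a\otimes b$ land in both radicals. For the reverse inclusion $\sqrt a\cap\sqrt b\subset\sqrt{a\otimes b}$, take $x$ with $x^{\otimes m}\in\gen a$ and $x^{\otimes n}\in\gen b$; then $x^{\otimes(m+n)}=x^{\otimes m}\otimes x^{\otimes n}$, and since $\gen a$ is a tensor ideal this lies in $\gen a$, while symmetrically tensoring $x^{\otimes n}\in\gen b$ with $x^{\otimes m}$ shows it lies in $\gen b$ as well — but I actually want it in $\gen{a\otimes b}$, so the cleaner move is: $x^{\otimes m}\in\gen a$ and $x^{\otimes n}\in\gen b$ give $x^{\otimes(m+n)}=x^{\otimes m}\otimes x^{\otimes n}\in\gen{a}\otimes\gen{b}$, and an element of $\gen a$ tensored with an element of $\gen b$ lies in $\gen{a\otimes b}$ (tensor the recipe/generation process for the first factor against $b$-stuff — concretely, $\gen{a}\otimes y\subset\gen{a\otimes y}$ is the tensor-ideal analogue of Lemma~\ref{lem tensideal}(2), provable by the same induction over the generation steps of Definition~\ref{def gen}). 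Hence $x\in\sqrt{a\otimes b}$. This is the step I expect to require the most care, since it is where the multiplicativity of $d$ genuinely uses the interplay of $\otimes$ with the generation operator; the induction showing $\gen a\otimes\gen b\subset\gen{a\otimes b}$ is routine but must be spelled out.

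Finally, axiom (5): given a triangle $a\to b\to c\to\Sigma a$, I must show $\sqrt b\subset\sqrt a\vee\sqrt c$. It suffices to show $b\in J:=\sqrt a\vee\sqrt c$. Now $a\in J$ and $c\in J$ by construction, and $J$ is a thick tensor ideal, in particular triangulated; from the triangle $a\to b\to c\to\Sigma a$ it then follows immediately that $b\in J$ (the middle term of a triangle whose other two vertices lie in a triangulated subcategory lies there too). This gives $\sqrt b\le\sqrt a\vee\sqrt c$, completing the verification of all five axioms, so $(\Zar(\T),\sqrt{-})$ is a support. $\qed$
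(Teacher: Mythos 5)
Your proof is correct and follows essentially the same route as the paper's: the easy axioms are handled identically, and your verification of axiom (4) rests on the same tensor-ideal-closure trick (the analogue of Lemma~\ref{lem tensideal}(2), which is exactly the $R(a)$/$L(c)$ argument the paper uses), merely reorganized so that the closure is applied at the level of $\gen{a}\otimes\gen{b}\subset\gen{a\otimes b}$ with an explicit power $x^{\otimes(m+n)}$ instead of directly on the radical ideals. No gaps.
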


\begin{proof}
  Items $(1)$, $(2)$ and $(5)$ in Definition~\ref{def TriaSupport} are trivially
  satisfied.  Let us check item $(3)$: given $a,b \in \T$, we have $\sqrt{a
  \oplus b} = \sqrt{a} \vee \sqrt{b}$.  Since we are dealing with thick ideals,
  $\sqrt{a} \subset \sqrt{a \oplus b}$ and $\sqrt{b} \subset \sqrt{a\oplus b}$,
  so $\sqrt{a} \vee \sqrt{b} \subset \sqrt{a \oplus b}$.  Conversely, $a \oplus
  b$ certainly is in $\sqrt{a} \vee \sqrt{b}$.
  
Finally let us check $(4)$.  Given $a,b \in \T$, we wish to show that $\sqrt{a
\otimes b} = \sqrt{a} \wedge \sqrt{b}$.  Certainly $a \otimes b$ belongs to both
$\sqrt{a}$ and $\sqrt{b}$, so $\sqrt{a \otimes b} \subset \sqrt{a} \wedge
\sqrt{b}$.  For the converse we will adapt the proof of Lemma~\ref{lem tensideal}.
Let $R(a) = \{ x \in \T \mid a \otimes x \in \sqrt{a \otimes b} \}$.  Then
$R(a)$ is a radical thick tensor ideal that trivially contains $b$, hence
$\sqrt{b} \subset R(a)$.  Now fix $c \in \sqrt{b}$ and consider $L(c) = \{ x
\in \T \mid x \otimes c \in \sqrt{a \otimes b} \}$.  Then $L(c)$ is a radical
thick tensor ideal that contains $a$ by the previous step.  Now, let $y \in
\sqrt{a} \cap \sqrt{b}$.  From the ideal $L(y)$ we know that $y \otimes y \in
\sqrt{a \otimes b}$, so $\sqrt{a} \cap \sqrt{b} \subset \sqrt{a\otimes b}$ as we
wanted.
\end{proof}

\begin{theorem}\label{thm ZarisInitialTria}
  Let $(\T,\otimes,\mathbf{1})$ be a tensor triangulated category, assumed
  to have only a set of radical thick tensor ideals. Then the  support 
  \begin{eqnarray*}
    \T & \longrightarrow & \Zar(\T)  \\
    a & \longmapsto & \sqrt{a}
  \end{eqnarray*}
  is initial among supports.
\end{theorem}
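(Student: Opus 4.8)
The plan is to imitate Joyal's proof of Theorem~\ref{thm JoyalSupIsinit}. Given an arbitrary support $d\colon\operatorname{obj}(\T)\to F$ valued in a frame $F$, define a candidate frame map
\[
\phi\colon \Zar(\T)\longrightarrow F,\qquad \phi(I)=\bigjoin_{a\in I} d(a).
\]
There are three things to check: that $\phi(\sqrt a)=d(a)$ for every object $a$; that $\phi$ is a frame map; and that $\phi$ is the unique frame map with the first property. Uniqueness is the cheap part: by Theorem~\ref{thm coherence} the frame $\Zar(\T)$ is coherent, hence every radical thick tensor ideal $I$ is the join of the finite elements $\sqrt a$ with $a\in I$; any frame map sending $\sqrt a$ to $d(a)$ preserves this join and therefore coincides with $\phi$.

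The only place where genuine work is needed is the following key lemma: if $c\in\sqrt a$ then $d(c)\leq d(a)$. Granting it, $a\in\sqrt a$ contributes $d(a)$ and every other element of $\sqrt a$ contributes something $\leq d(a)$, so $\phi(\sqrt a)=d(a)$; in particular the assignment $\sqrt a\mapsto d(a)$ is consistent. To prove the lemma, first use axiom~(4) of Definition~\ref{def TriaSupport} to get $d(c^{\otimes n})=d(c)$ for $n\geq 1$, which reduces us to the case $c\in\gen a$. Writing $\gen a=\bigcup_n \genstep^n(\{a\})$, argue by induction on $n$ that every object of $\genstep^n(\{a\})$ has $d$-value $\leq d(a)$: the base case is trivial, and for the inductive step one runs through the five construction types of Definition~\ref{def gen} --- $d$ is unchanged under (de)suspension; $d$ of a finite sum is the join of the summands' $d$-values by axiom~(3); $d(s\otimes t)=d(s)\meet d(t)\leq d(s)$ by axiom~(4); an extension $e$ of two objects $s,s'$ satisfies $d(e)\leq d(s)\join d(s')$ by axiom~(5); and a direct summand $x$ of $s$ satisfies $d(x)\leq d(s)$ since $d(s)=d(x)\join d(x')$. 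This is routine but uses every one of the support axioms, and it is precisely the step where the point-free argument replaces Balmer's detour through prime ideals; this is the main obstacle.

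It remains to verify that $\phi$ preserves finite meets and arbitrary joins. For finite meets, the frame distributive law together with axiom~(4) gives $\phi(I)\meet\phi(J)=\bigjoin_{a\in I,\,b\in J}\bigl(d(a)\meet d(b)\bigr)=\bigjoin_{a\in I,\,b\in J}d(a\otimes b)\leq\phi(I\cap J)$, since each $a\otimes b$ lies in $I\cap J$; conversely each $c\in I\cap J$ has $d(c)=d(c\otimes c)$ with $c\in I$ and $c\in J$, giving the reverse inequality. The top element is preserved because $\phi(\T)\geq d(\mathbf 1)=1$. For arbitrary joins, recall $\bigjoin_\alpha I_\alpha=\sqrt S$ where $S=\bigcup_\alpha I_\alpha$; one inclusion is monotonicity, and for the other take $c\in\sqrt S$, apply Corollary~\ref{finitethickrad} to find a finite $K=\{k_1,\dots,k_m\}\subset S$ with $c\in\sqrt K=\sqrt{k_1\oplus\cdots\oplus k_m}$, pick indices with $k_i\in I_{\alpha_i}$, and use the key lemma to conclude $d(c)\leq d(k_1\oplus\cdots\oplus k_m)=\bigjoin_i d(k_i)\leq\bigjoin_\alpha\phi(I_\alpha)$. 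Taking the join over all such $c$ yields $\phi(\bigjoin_\alpha I_\alpha)\leq\bigjoin_\alpha\phi(I_\alpha)$, completing the proof. Everything here is formal manipulation with joins, entirely parallel to the ring case.
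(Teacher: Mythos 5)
Your proposal is correct and follows essentially the same route as the paper: your key lemma ($c\in\sqrt a\Rightarrow d(c)\leq d(a)$, proved by induction over the generation steps of Definition~\ref{def gen} plus the radical reduction via axiom~(4)) is exactly the paper's observation that $I(a)=\{c\in\T\mid d(c)\leq d(a)\}$ is a radical thick tensor ideal containing $a$, and uniqueness is handled identically via coherence. The only difference is that you verify the frame-map axioms for $\phi$ by hand, where the paper delegates this to the equivalence between distributive lattices and coherent frames; both are fine.
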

\begin{proof}
  For an arbitrary support $d: \T \to F$, we need to exhibit a frame map
  $u:\Zar(\T) \to F$, compatible with the maps from $\T$, and check that this
  map is unique.  Since $\Zar(\T)$ is coherent, every element is a join of
  finite elements, so $u$ is completely determined by its value on finite
  elements.  The finite elements are those of form $\sqrt{a}$ and there is no
  choice: we must send $\sqrt a $ to $d(a)$.  So there is at most one support 
  map
  $u$.
  We only need to check it is well-defined, this means to check that
\[
\forall a,b \in \T, \ \sqrt a = \sqrt b \Rightarrow d(a) = d(b).
\]
For $a \in \T$, define $I(a) = \{c \in \T \mid d(c) \leq d(a)\}$.
Then the properties of a support show that $I(a)$ is a radical thick tensor ideal
containing $a$ and hence $\sqrt{a}$. If $\sqrt{b} \subset \sqrt{a}$ we deduce that $d(b) \leq d(a)$ and by symmetry we get our result.
\end{proof}

The fact that this support is initial implies  functoriality: 
any triangulated functor
$F : \T \rightarrow \mathcal{S}$ 
induces a coherent frame map $\Zar(\T) \rightarrow \Zar(\mathcal{S})$,
taking $\sqrt{I}$ to $\sqrt{F(I)}$.

\subsection{Tensor nilpotence}

The tensor nilpotence theorem by Devinatz, Hopkins and Smith~\cite{MR960945},
one of the deep theorems in stable homotopy, has a version for derived
categories, which in the work Neeman~\cite{MR1174255} and
Thomason~\cite{Thomason:1997} is a basic tool to analyze localizing
subcategories.  The theorem says that if a morphism has empty
support, then it is tensor nilpotent.  As observed by Balmer~\cite{Balmer:2005},
this is in fact a consequence of general topological
properties of the spectrum of a tensor triangulated category, and as we shall
see, it comes out very elegantly in the point-free setting.

Let $(\T,\otimes,\mathbf{1})$ be a tensor triangulated category, assumed to 
have only a set of radical thick tensor ideals.
Given a morphism $f:x\to y$ in $\mathcal 
T$, we write $z | f$ to express that there exists a factorization
$$
\xymatrix{
& z \ar[rd] \\
x \ar[ru] \ar[rr]_f && y\,.
}
$$
We define the {\em support} of $f$ as
$$
\supz(f) := \bigwedge_{z| f} \supz(z) = \bigcap_{z| f} \sqrt z  \ \in 
\Zar(\mathcal T).
$$
(The notation $z | f$  is inspired by rings: $\sqrt{n} = \bigcap_{p|n} 
\sqrt{p}$.)  
Note that strictly speaking, to ensure that the meet is indexed over a set,
we should write it as the meet of all radical thick tensor ideals that occur
as support of an element $z|f$.

The notion of support for morphisms extends the usual notion of support of objects:
\begin{lemma}
  We have $\supz(\id_x) = \supz (x)$.
\end{lemma}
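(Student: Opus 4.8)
The plan is to unwind the definitions. By definition $\supz(\id_x)=\bigcap_{z\mid\id_x}\sqrt{z}$, the intersection being over all objects $z$ through which the morphism $\id_x$ factors, and $\supz(x)=\sqrt{x}$; so the claim amounts to the equality $\bigcap_{z\mid\id_x}\sqrt{z}=\sqrt{x}$.

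The inclusion $\subset$ is immediate: the object $x$ itself divides $\id_x$, via the trivial factorization $x\xrightarrow{\id}x\xrightarrow{\id}x$, so $\sqrt{x}$ is one of the terms of the intersection, whence $\bigcap_{z\mid\id_x}\sqrt{z}\subset\sqrt{x}$.

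For the reverse inclusion I would fix an object $z$ with $z\mid\id_x$ and show $\sqrt{x}\subset\sqrt{z}$; intersecting over all such $z$ then gives $\sqrt{x}\subset\bigcap_{z\mid\id_x}\sqrt{z}$. A factorization of $\id_x$ through $z$ is a pair of maps $x\xrightarrow{g}z\xrightarrow{h}x$ with $hg=\id_x$, so $g$ is a split monomorphism. Completing $g$ to a triangle $x\xrightarrow{g}z\to\Cone(g)\to\Sigma x$ and invoking the standard fact that a triangle whose first arrow admits a retraction is split, we obtain $z\cong x\oplus\Cone(g)$. Since $z\in\gen{z}$, also $x\oplus\Cone(g)\in\gen{z}$, and as $\gen{z}$ is thick (Definition~\ref{def tens-ideal}\,(3)) this forces $x\in\gen{z}\subset\sqrt{z}$. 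Finally $\sqrt{z}$ is a radical thick tensor ideal containing $x$, while $\sqrt{x}$ is by definition the smallest such, so $\sqrt{x}\subset\sqrt{z}$, as wanted.

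I do not expect a genuine obstacle here: the only nontrivial ingredient is the elementary fact that a split monomorphism in a triangulated category splits off a direct summand, after which the conclusion is purely formal, from thickness of $\gen{z}$ and the minimality of the radical closure. (Alternatively one can bypass the triangle entirely and simply cite that thick tensor ideals are closed under retracts.)
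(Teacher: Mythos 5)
Your proof is correct and follows essentially the same route as the paper: the trivial factorization gives one inclusion, and for the other, any factorization of $\id_x$ through $z$ exhibits $x$ as a retract (direct summand) of $z$, so thickness of $\sqrt{z}$ forces $x\in\sqrt{z}$ and hence $\sqrt{x}\subset\sqrt{z}$. The paper simply cites closure under retracts directly, which is the shortcut you mention in your final parenthetical.
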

\begin{proof}
  The trivial factorization of $\id_x$ shows that $\supz(\id_x) \leq
  \supz(x)$.  Any other factorization $x \to y \to x$, exhibits $x$ as
  a retract of $y$, and hence $\supz(x) \leq \supz(y)$ by thickness.
\end{proof}


\begin{theorem}(Tensor nilpotence.)\label{tensornilpotencemorphisms}
  If $\supz(f)$ is the bottom element of $\Zar(\mathcal{T})$, then $f$ is tensor nilpotent,
  i.e.~there is $n\in \N$ such that $f^{\otimes n}$ is the zero map.
\end{theorem}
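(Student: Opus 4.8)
The plan is to exploit the coherence of $\Zar(\T)$ together with the relation between $\supz(f)$ and the behavior of $f$ under tensoring with objects in the defining morphisms of a thick tensor ideal. The key observation is that the set $\gen{z \mid z \,|\, f^{\otimes k} \text{ for some } k}$ of all ``divisors'' of iterated tensor powers of $f$ generates a thick tensor ideal, and $\supz(f)$ is precisely its radical; if this radical is the bottom element $\sqrt 0$, then the thick tensor ideal it generates contains $\mathbf 1^{\otimes n} = \mathbf 1$ for some $n$ — and from this we will extract tensor nilpotence of $f$ by a careful tracking of how $f$ itself is involved.

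More concretely, here is the route I would take. First, I would reformulate: $\supz(f) = \bigcap_{z \mid f} \sqrt z$, and I claim this equals $\sqrt{\,\gen{z \mid z \mid f}\,}$. Indeed, each $\sqrt z$ for $z \mid f$ contains $\gen{z \mid z \mid f}$ and hence its radical, giving one inclusion; for the other, a radical thick tensor ideal containing each generator $z$ with $z \mid f$ contains their intersection's worth of information — more carefully, $\bigcap_{z \mid f}\sqrt z$ is itself a radical thick tensor ideal containing no single $z$ a priori, so this direction is the delicate one and I would instead argue directly that any element of $\bigcap_{z\mid f}\sqrt z$ lies in $\sqrt{\gen{\{z : z\mid f\}}}$, using that $\mathbf 1$ (if it divides nothing relevant) forces... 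Actually the cleaner statement I want is: $\supz(f) = \sqrt 0$ iff $\mathbf 1 \in \gen{\,z \mid z \mid f^{\otimes k},\ k \geq 1\,}$, by unfolding what it means for the meet of supports to be the bottom element of a \emph{coherent} frame, together with Lemma~\ref{lem fingen} (finiteness of generation). The hypothesis $\supz(f)=\sqrt 0$ means $\sqrt 0 = \bigcap_{z\mid f}\sqrt z$, and since $\Zar(\T)$ is coherent hence Kolmogorov (every prime element is detected), no prime radical thick tensor ideal contains every $z$ with $z\mid f$; equivalently the join of the complements is everything, equivalently $\gen{\{z : z \mid f\}}$ contains $\mathbf 1$ — but one must be cautious: emptiness of intersection of a family of \emph{closed} sets in a spectral space is equivalent, by quasi-compactness (coherence!), to some finite subfamily having empty intersection, i.e.\ $\mathbf 1 \in \gen{z_1,\dots,z_m}$ for finitely many divisors $z_i \mid f$.

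So the second step is: from $\mathbf 1 \in \gen{z_1,\dots,z_m}$ with each $z_i \mid f$, deduce tensor nilpotence. The trick here — this is the heart of the matter and where I expect the real work — is to observe that if $z \mid f$, i.e.\ $f = (x \to z \to y)$, then in the localization/quotient $\T / \gen{z}$ the image of $f$ is zero, because $f$ factors through an object killed in the quotient. More usefully at the level of objects: $z \mid f$ implies that $\Cone(f)$ becomes... hmm, rather I would use that $f\otimes \id_z$ — no. The classical argument (à la Balmer~\cite{Balmer:2005}, following the spectrum philosophy) runs: for each prime $\mathfrak p$ of $\T$, either $z_i \in \mathfrak p$ for some $i$ (impossible for all $\mathfrak p$ since $\mathbf 1\in\gen{z_1,\dots,z_m}$), so actually every prime misses some $z_i$, hence the image of $f$ in $\T/\mathfrak p$ is zero since $z_i\mid f$ and $z_i\notin\mathfrak p$ forces $z_i$ invertible-ish... and then Balmer's local-to-global / tensor nilpotence bookkeeping gives that $f^{\otimes N}=0$ for $N$ depending on the length of the expression of $\mathbf 1$ in $\gen{z_1,\dots,z_m}$ — concretely, using the construction steps defining $\gen{\cdot}$ and an induction on those steps, tensoring $f$ with itself once per ``level'', exactly as in the proof that $\gen{\setofobjects}=\genstep^\omega(\setofobjects)$. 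I would make this precise by proving by induction on $n$: if $w \in \genstep^n(\{z_1,\dots,z_m\})$ then $f^{\otimes c(n)}$ factors through $w \otimes (\text{something})$ for a suitable constant $c(n)$; applying this to $w = \mathbf 1$ yields $f^{\otimes c(n)} = f^{\otimes c(n)} \otimes \id_{\mathbf 1}$ factoring through $\mathbf 1 \otimes (\dots)$ in a way forcing it to vanish. The main obstacle is precisely controlling the bookkeeping through the five construction steps (suspension, finite sums, tensoring, extensions, retracts) — the extension step is the one that increases the tensor power needed, via the octahedral axiom, and getting a clean inductive invariant there is the crux; everything else is formal manipulation that parallels the proof of Lemma~\ref{lem radofidealisideal} and its auxiliary lemma on triangles.
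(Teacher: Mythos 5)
Your proposal contains a genuine confusion of meet with join in $\Zar(\T)$, and this derails the whole argument. The support $\supz(f)=\bigcap_{z\mid f}\sqrt z$ is a \emph{meet}; it is not $\sqrt{\gen{z \mid z\mid f}}$, which is the \emph{join} $\bigvee_{z\mid f}\sqrt z$ (your claim that ``each $\sqrt z$ contains $\gen{z\mid z\mid f}$'' is false: $\sqrt z$ contains $z$ but not the other divisors). Consequently your reformulation ``$\supz(f)=\sqrt 0$ iff $\mathbf 1\in\gen{z\mid z\mid f^{\otimes k}}$'' is wrong: the latter says the \emph{join} of the $\sqrt z$ is the top element (the corresponding opens cover the spectrum), whereas the hypothesis says their \emph{meet} is the bottom element (they have empty intersection). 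A counterexample to your reformulation is $f=\id_{\mathbf 1}$ in any nonzero $\T$: here $\mathbf 1\mid f$ and $\mathbf 1\in\gen{\mathbf 1}$, yet $\supz(f)=\sqrt{\mathbf 1}=\T$ is the top, not the bottom, and $f$ is certainly not tensor nilpotent. So even if your second stage (the induction over the construction steps $\genstep^n$, with octahedral bookkeeping) could be carried out, it would be aimed at a premise that does not follow from, and does not imply, the hypothesis. The same dualization error appears in your point-set aside: ``$f=0 \operatorname{mod}\mathfrak p$'' corresponds to the \emph{existence} of some $z\mid f$ with $z\in\mathfrak p$, not to $z\notin\mathfrak p$.

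Your correct instinct is that coherence reduces the problem to finitely many divisors $z_1,\ldots,z_k$ of $f$; that is indeed the first step. But the right way to exploit the \emph{finite meet} being $\sqrt 0$ is the support axiom $\sqrt{a\otimes b}=\sqrt a\wedge\sqrt b$ (Lemma~\ref{lem ZarSup}(4)), which gives $\sqrt 0=\bigcap_{i=1}^k\sqrt{z_i}=\sqrt{z_1\otimes\cdots\otimes z_k}$. Membership in $\sqrt 0$ means by definition that the single object $z_1\otimes\cdots\otimes z_k$ is tensor nilpotent, say $(z_1\otimes\cdots\otimes z_k)^{\otimes n}=0$. Since $f$ factors through each $z_i$, tensoring the $k$ factorizations shows $f^{\otimes k}$ factors through $z_1\otimes\cdots\otimes z_k$, hence $f^{\otimes kn}$ factors through the zero object and $f$ is tensor nilpotent. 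No induction over the generation of thick ideals, and no passage to quotients $\T/\mathfrak p$, is needed; the ``heart of the matter'' you anticipated dissolves once the meet is converted into a single principal radical ideal.
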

\begin{proof}
  The premise says that $\supz(0) =\bigwedge_{z| f} \supz(z)$, and by
  coherence a finite meet of such elements will do: there exist 
  $z_1|f,\,\ldots\,,z_k|f$ such that $\supz(0) =\bigwedge_{i=1}^k \supz(z_i)$.
  In other words, 
  $$
  \sqrt 0 = \bigcap_{i=1}^k \sqrt{z_i} = \sqrt{z_1\otimes\cdots \otimes z_k} .
  $$
  So there is $n\in \N$ such that $\big( z_1\otimes\cdots \otimes 
  z_k\big)^{\otimes n} = 0$.  Now
  $f^{\otimes kn}$ factors through $\big( z_1\otimes\cdots \otimes 
  z_k\big)^{\otimes n}$, hence is the zero map.  So $f$ is tensor nilpotent.
\end{proof}

\subsection{Points}

In order to relate the results of this section to Balmer's work and the earlier 
literature, we proceed to
extract the points.


According to Balmer~\cite{Balmer:2005}, the spectrum of an essentially small
tensor triangulated category $\T$ is the topological space $X = \Spec \mathcal T$
whose set of points is the set of \emph{prime} thick tensor ideals
in $\T$.  Just as in the ring case, a thick tensor 
ideal $\mathfrak{p}$ is {\em prime} when
\[
\forall a,b \in \T : [\ a \otimes b \in \mathfrak{p} \Rightarrow a \in 
\mathfrak{p} \textrm{ or } b \in \mathfrak{p}].
\]
The topology on $X$ is defined by taking as a basis of open sets
the subsets of the form
$$
U(a) := \{ \mathfrak p\in \Spec \T \mid a \in \mathfrak p\}
$$
for each object $a\in\T$.  We shall see that this is the Hochster dual of the 
Zariski spectrum.

Recall from \ref{subsubsec points} that for a frame point $x:F\to \{0,1\}$,
the corresponding frame prime ideal is ${\mathcal P}_x := \{ u \in F \mid x(u)=0
\}$, and that the prime ideals in turn are in natural bijection
with the prime elements of $F$: the prime element corresponding to $x$ is
$e_x := \bigvee_{b \in \mathcal{P}_x} b$, and then $\mathcal{P}_x = (e_x)$.
A point $x$ belongs to the open set corresponding to a frame element $u\in F$
iff $u \notin \mathcal{P}_x$, iff $u\not\leq e_x$.
%
%
%
%
%
Now specialize to the case $F=\Zar(\T)$.  It is easy to see that the prime 
elements in $\Zar(\T)$ are precisely the prime thick tensor ideals, in analogy 
with \ref{lem pointHochster}.
A point $x$ corresponding to a prime element $e_x = \mathfrak p$
belongs to the 
open set corresponding to $u = \sqrt a \in 
\Zar(\T)$ iff $\sqrt a \not\leq \mathfrak p$.  Altogether we have:
%
%
%

%
\begin{proposition}\label{prop BalptisFrpt}
The frame-theoretic points in $\Zar(\T)$ correspond bijectively to 
prime thick tensor ideals in $\T$.  Under this correspondence,
a finite element $\sqrt a \in \Zar(\T)$ corresponds to the set of prime thick
tensor ideals
$$
\{\mathfrak p\in \Spec \T \mid a \notin \mathfrak p\}.
$$
\end{proposition}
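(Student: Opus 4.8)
The plan is to deduce the proposition from the general frame-theoretic description of points recalled in~\ref{subsubsec points}: the points of any frame $F$ biject with its prime ideals, each prime ideal $\mathcal P$ is principal, generated by a prime element $e_{\mathcal P}=\bigvee_{b\in\mathcal P}b$, and a point lies in the open set attached to $u\in F$ exactly when $u\not\le e_{\mathcal P}$. So everything reduces to identifying the prime elements of $\Zar(\T)$ with the prime thick tensor ideals of $\T$, plus an easy translation of the membership condition.

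First I would record that a prime thick tensor ideal $\mathfrak p$ (which, following Balmer, is by convention proper, consistently with the requirement $e_{\mathcal P}\neq 1$ on the frame side) is automatically radical: if $a^{\otimes n}\in\mathfrak p$, writing $a^{\otimes n}=a\otimes a^{\otimes(n-1)}$ and applying primality $n-1$ times gives $a\in\mathfrak p$. Hence a prime thick tensor ideal is a genuine element of $\Zar(\T)$, and it remains to match the two notions of primality. Unwinding the definition in~\ref{subsubsec points}, $\mathfrak p\in\Zar(\T)$ is a \emph{prime element} iff $\mathfrak p\neq\T$ and, for radical thick tensor ideals $I,J$, one has $I\cap J\subseteq\mathfrak p\Rightarrow I\subseteq\mathfrak p$ or $J\subseteq\mathfrak p$ (using that meet in $\Zar(\T)$ is intersection and that a prime ideal of the frame is principal on the indicated generating element). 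For ``prime thick tensor ideal $\Rightarrow$ prime element'': if $I\cap J\subseteq\mathfrak p$ with $I\not\subseteq\mathfrak p$, pick $a\in I\setminus\mathfrak p$; for each $b\in J$ we have $a\otimes b\in I$ and $a\otimes b\in J$ since both are ideals, so $a\otimes b\in I\cap J\subseteq\mathfrak p$, whence $b\in\mathfrak p$ by primality of $\mathfrak p$, and thus $J\subseteq\mathfrak p$. For the converse, suppose $\mathfrak p$ is a radical prime element and $a\otimes b\in\mathfrak p$. By the multiplicativity of the support proved inside Lemma~\ref{lem ZarSup} we have $\sqrt{a\otimes b}=\sqrt a\cap\sqrt b$, and radicality of $\mathfrak p$ gives $\sqrt a\cap\sqrt b=\sqrt{a\otimes b}\subseteq\mathfrak p$; primality of the element $\mathfrak p$ then forces $\sqrt a\subseteq\mathfrak p$ or $\sqrt b\subseteq\mathfrak p$, i.e.\ $a\in\mathfrak p$ or $b\in\mathfrak p$. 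So the prime elements of $\Zar(\T)$ are exactly the prime thick tensor ideals, and the bijection of the proposition follows by composing with the points–to–prime-elements bijection of~\ref{subsubsec points}.

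For the final assertion, a point corresponding to the prime element $\mathfrak p$ lies in the open set attached to a finite element $\sqrt a\in\Zar(\T)$ iff $\sqrt a\not\subseteq\mathfrak p$; since $\mathfrak p$ is radical, $\sqrt a\subseteq\mathfrak p$ is equivalent to $a\in\mathfrak p$, so this open set is $\{\mathfrak p\in\Spec\T\mid a\notin\mathfrak p\}$. No step here is a serious obstacle once Lemma~\ref{lem ZarSup} is available; the only point requiring care is precisely the alignment of the frame-theoretic notion of prime element (meet-primality among radical thick tensor ideals) with Balmer's object-level notion of prime ideal, which is what the two implications above supply.
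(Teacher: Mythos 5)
Your proposal is correct and follows exactly the route the paper takes: reduce to the general points/prime-ideals/prime-elements dictionary of~\ref{subsubsec points} and then identify the prime elements of $\Zar(\T)$ with the prime thick tensor ideals. The paper dismisses that identification as ``easy to see, in analogy with Lemma~\ref{lem pointHochster}''; your two implications (using primality of $\mathfrak p$ one way, and $\sqrt{a\otimes b}=\sqrt a\cap\sqrt b$ from Lemma~\ref{lem ZarSup} plus radicality the other way) supply precisely the missing details, and your translation of the open-set condition is the same as the paper's.
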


Balmer's $U(a)$ is the complement of this, so we get in particular:
\begin{corollary}
  Balmer's spectrum is the Hochster dual of the Zariski spectrum.
\end{corollary}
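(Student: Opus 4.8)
The plan is to compare the two topologies carried by the set of prime thick tensor ideals of $\T$: Balmer's, whose basic open sets are the $U(a)=\{\mathfrak p\mid a\in\mathfrak p\}$, and the Hochster dual of the Zariski spectrum $\Spec\T$, i.e.\ of the spectral space associated to the coherent frame $\Zar(\T)$. First I would invoke Proposition~\ref{prop BalptisFrpt}: the points of $\Zar(\T)$ are in natural bijection with the prime thick tensor ideals, so Balmer's underlying set is identified with that of $\Spec\T$, hence also with that of its Hochster dual. The same proposition records that, under this identification, the basic open set of $\Spec\T$ cut out by a finite element $\sqrt a\in\Zar(\T)$ is precisely $\{\mathfrak p\mid a\notin\mathfrak p\}$.

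Next I would use Theorem~\ref{thm coherence}, which says that every finite element of $\Zar(\T)$ is of the form $\sqrt a$; therefore the sets $\{\mathfrak p\mid a\notin\mathfrak p\}$ are exactly the quasi-compact open sets of $\Spec\T$, and since a coherent frame is spatial (Theorem~\ref{spsp=cohfr=dlat}) they form a basis for the topology of $\Spec\T$. By the definition of Hochster duality (Definition~\ref{def hochdualframe} together with the topological description preceding it, namely that the dual is generated by the closed sets with quasi-compact open complement), a basis for the Hochster dual of $\Spec\T$ is given by the complements of the sets $\{\mathfrak p\mid a\notin\mathfrak p\}$. But those complements are exactly the sets $U(a)=\{\mathfrak p\mid a\in\mathfrak p\}$, i.e.\ Balmer's basic opens. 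Two topologies sharing a basis coincide, so Balmer's spectrum equals the Hochster dual of $\Spec\T$.

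I do not expect a genuine obstacle: the corollary is, by design, a matter of unwinding definitions, with the substantive inputs already in place --- Theorem~\ref{thm coherence} identifies the finite elements of $\Zar(\T)$, and Proposition~\ref{prop BalptisFrpt} pins down the correspondence of points together with its effect on basic opens. The only thing to keep straight is the bookkeeping between ``finite element of the frame $\Zar(\T)$'' and ``basic quasi-compact open of the spectral space $\Spec\T$'', which is the standard correspondence of Stone duality (Theorem~\ref{spsp=cohfr=dlat}); once that translation is made explicit the argument is immediate.
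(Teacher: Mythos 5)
Your argument is correct and is essentially the paper's own: Proposition~\ref{prop BalptisFrpt} identifies the points and shows that the basic open of $\Spec\T$ attached to a finite element $\sqrt a$ is $\{\mathfrak p\mid a\notin\mathfrak p\}$, whose complement is Balmer's $U(a)$, so the two topologies are generated by complementary bases, i.e.\ are Hochster dual. Your extra remarks (that Theorem~\ref{thm coherence} guarantees all finite elements are of the form $\sqrt a$, and that quasi-compact opens form a basis) just make explicit the bookkeeping the paper leaves implicit.
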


We wish to point out that modulo the passage between frames and point-set spaces,
and the identification of points just established, our 
Theorem~\ref{thm coherence} subsumes several results from Balmer's seminal
paper~\cite{Balmer:2005}, and in particular his Classification of Radical Thick
Tensor Ideals.  
Balmer proves that the 
topological space $X=\Spec\mathcal T$ of prime ideals in $\mathcal T$, with
basic open sets $U(a)$ as above,
is a spectral space, and proceeds to set up an
order-preserving bijection (his Classification Theorem 4.10) between radical
thick tensor ideals in $\T$ and subsets of $X$ of the form ``arbitrary unions of
closed sets with quasi-compact complement''.  These are clearly precisely the
Hochster dual open sets of $X$.  So after eliminating the implicit double
Hochster duality, his Classification Theorem says that there is an
order-preserving bijection between radical thick tensor ideals in $\T$ and
Zariski open sets in the Zariski spectrum (in our terminology).  From the
viewpoint of Theorem~\ref{thm coherence}, this is a tautology.

\bigskip

Finally we express tensor nilpotence in terms of points, recovering the
now classical result of Balmer.
%
%
We first characterize the points of $\supz(f)$:

\begin{lemma}
  For $f$ a morphism in $\mathcal T$, and $\mathfrak p\in \Spec\mathcal T$, we 
  have
  $$
  \mathfrak p \in \supz(f) \ \Longleftrightarrow \ f \neq 0 \ \operatorname{mod} \mathfrak p .
  $$
\end{lemma}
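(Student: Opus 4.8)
The plan is to unwind both sides of the claimed equivalence into a single statement about factorizations of $f$ and the primality of $\mathfrak p$, and then dispatch it in the same spirit as the proof of Theorem~\ref{tensornilpotencemorphisms}. For the left-hand side I would use the point/prime-element dictionary of \ref{subsubsec points} (together with Proposition~\ref{prop BalptisFrpt}): a point $\mathfrak p$ lies in the open set of $\Spec\T$ attached to a frame element $u\in\Zar(\T)$ exactly when $u\not\subseteq\mathfrak p$, so $\mathfrak p\in\supz(f)$ means precisely $\supz(f)=\bigwedge_{z\mid f}\supz(z)\not\subseteq\mathfrak p$. For the right-hand side I would recall the standard Verdier-localization fact that a morphism becomes zero in $\T/\mathcal J$ ($\mathcal J$ a thick subcategory) if and only if it factors through an object of $\mathcal J$ — one direction is trivial, the other comes from choosing $s$ with $\Cone(s)\in\mathcal J$ and $sf=0$ and rotating the triangle on $s$. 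Since a prime thick tensor ideal is automatically radical (if $a^{\otimes n}\in\mathfrak p$ then $a\in\mathfrak p$, by induction on $n$), the condition ``$f$ factors through an object of $\mathfrak p$'' is equivalent to ``there exists $z\mid f$ with $z\in\mathfrak p$''. Hence the lemma reduces to the clean assertion: $\supz(f)\subseteq\mathfrak p$ if and only if some $z\mid f$ lies in $\mathfrak p$.

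The implication from right to left is immediate and gives the easy half $\mathfrak p\in\supz(f)\Rightarrow f\neq 0\bmod\mathfrak p$: if $z_0\mid f$ and $z_0\in\mathfrak p$, then $\supz(f)=\bigwedge_{z\mid f}\supz(z)\leq\supz(z_0)=\sqrt{z_0}\subseteq\mathfrak p$, the last inclusion because $\mathfrak p$ is radical.

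For the converse, suppose $\supz(f)=\bigwedge_{z\mid f}\supz(z)\subseteq\mathfrak p$. Each $\supz(z)=\sqrt z$ is a finite element of the coherent frame $\Zar(\T)$ (Theorem~\ref{thm coherence}), so — exactly as in the proof of Theorem~\ref{tensornilpotencemorphisms}, where one realizes such a meet inside the Hochster-dual coherent frame — coherence lets one replace the possibly infinite meet by a finite sub-meet lying below $\mathfrak p$: there are $z_1\mid f,\dots,z_k\mid f$ with $\supz(z_1)\wedge\cdots\wedge\supz(z_k)=\sqrt{z_1\otimes\cdots\otimes z_k}\subseteq\mathfrak p$. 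Then $z_1\otimes\cdots\otimes z_k\in\mathfrak p$, and primality of $\mathfrak p$ forces some $z_i\in\mathfrak p$; as $z_i\mid f$, this is exactly what we needed, and hence $\supz(f)\subseteq\mathfrak p\Rightarrow f=0\bmod\mathfrak p$.

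The step I expect to demand the most care is precisely the passage from $\bigwedge_{z\mid f}\supz(z)$ to a finite sub-meet below $\mathfrak p$ — the analogue, for an arbitrary prime element rather than the bottom element, of the ``by coherence'' reduction used in Theorem~\ref{tensornilpotencemorphisms}. Everything else is bookkeeping: the two translations and the trivial direction. I would spend the bulk of the write-up making that coherence reduction airtight (either by quotienting by $\mathfrak p$ and applying the bottom-element version of the argument in $\Zar(\T/\mathfrak p)$, or by arguing directly with the finite elements $\sqrt{z}$ in the Hochster dual), and flag it as the single non-formal ingredient.
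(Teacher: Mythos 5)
Your proposal has the same skeleton as the paper's proof: both sides are translated --- ``$f=0\ \operatorname{mod}\ \mathfrak p$'' into ``some $z\mid f$ lies in $\mathfrak p$'' via the Verdier-quotient characterization (Balmer's Lemma~2.22, which the paper simply cites), and ``$\mathfrak p\in\supz(f)$'' into ``$\supz(f)\not\leq\mathfrak p$'' via the point/prime-element dictionary of Proposition~\ref{prop BalptisFrpt} --- and your easy implication is the same as the paper's. The divergence, and the problem, is in the hard implication $\supz(f)\leq\mathfrak p\Rightarrow\exists\, z\mid f$ with $z\in\mathfrak p$, which the paper disposes of by reading $\mathfrak p\in\bigwedge_{z\mid f}\supz(z)\Leftrightarrow\forall z\mid f:\mathfrak p\in\supz(z)$ off the definitions, whereas you route it through a finite-meet reduction.

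That step --- ``coherence lets one replace the possibly infinite meet by a finite sub-meet lying below $\mathfrak p$'' --- is not a consequence of coherence, and as a statement about general coherent frames it is false. Coherence is a finiteness condition on \emph{joins} ($c\leq\bigvee S$ forces $c\leq\bigvee K$ for a finite $K\subseteq S$); it has no dual for meets. Concretely, in the coherent frame $\RadId(\mathbb{Z})$ the elements $\sqrt{(p)}=(p)$, $p$ prime, are finite, their meet is $\bigcap_p(p)=(0)$, and $(0)$ is a prime element (the generic point); so the full meet lies below a prime element while every finite sub-meet $(p_1\cdots p_k)$ does not. The phenomenon also occurs inside a frame of the form $\Zar(\T)$: for $R=k[x_1,x_2,\dots]$ the finite elements $\sqrt{K(x_n)}$ of $\Zar(D^\omega(R))$ have meet $\bot$ (no nonzero perfect complex is supported inside $V(x_1,x_2,\dots)=\{\mathfrak m\}$, since supports of perfect complexes are $V(J)$ with $J$ finitely generated), yet no finite sub-meet lies below the prime element corresponding to $\mathfrak m$. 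So a finite-meet reduction, if available at all, must exploit the special structure of the family $\{z : z\mid f\}$ --- or, as in Balmer's original argument, quasi-compactness of the closed subsets $\{\mathfrak q : z\notin\mathfrak q\}$ of the spectral space --- not coherence of the frame; the same caveat applies to importing the ``by coherence'' step from the proof of Theorem~\ref{tensornilpotencemorphisms} and a fortiori to extending it from the bottom element to an arbitrary prime element. You correctly flag this as the one non-formal ingredient, but neither proposed repair is carried out, and the first (working in $\Zar(\T/\mathfrak p)$) would in addition require comparing factorizations of $f$ in $\T$ with factorizations of its image in the Verdier quotient, which is essentially the content of Balmer's lemma itself. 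As written, the direction $f\neq 0\ \operatorname{mod}\ \mathfrak p\Rightarrow\mathfrak p\in\supz(f)$ therefore has a genuine gap.
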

\begin{proof}
  The first two steps of the biimplication,
  $$
  \mathfrak p \in \supz(f) 
  \ \Leftrightarrow \
  \forall z | f : \mathfrak p \in \supz(z)
  \ \Leftrightarrow \
  \forall z | f : z \notin \mathfrak p
  $$
  follow by definition of support for morphisms and by a support reformulation of 
  Proposition~\ref{prop BalptisFrpt}.  The final step is easier to do negated:
%
%
  $$
    \exists z | f : z\in \mathfrak p
  \ \Leftrightarrow \
  f = 0 \ \operatorname{mod}  \mathfrak p ,
  $$
  which is straightforward (see~\cite[Lemma 2.22]{Balmer:2005}).
\end{proof}

\begin{corollary}(Balmer)
  If $f = 0 \ \operatorname{mod}  \mathfrak p$ for all prime thick tensor ideals $\mathfrak p$,
  then $f$ is tensor nilpotent.
\end{corollary}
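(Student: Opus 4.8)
The plan is to deduce this corollary from the point-free Tensor Nilpotence Theorem~\ref{tensornilpotencemorphisms}, so that the statement becomes essentially a translation back from points to frame elements. The first step is to rephrase the hypothesis. By the preceding Lemma we have, for every prime thick tensor ideal $\mathfrak p$, the equivalence $\mathfrak p \in \supz(f) \Leftrightarrow f \neq 0 \ \operatorname{mod} \mathfrak p$; hence the assumption that $f = 0 \ \operatorname{mod} \mathfrak p$ for all such $\mathfrak p$ says exactly that \emph{no} point of $\Spec \T$ lies in the open set corresponding to the frame element $\supz(f) \in \Zar(\T)$. Equivalently, by the description of points recalled in Section~\ref{sec ZarSpecTriaCat} (a point $\mathfrak p$ lies in the open set of $u$ iff $u \not\subseteq \mathfrak p$), every prime thick tensor ideal $\mathfrak p$ satisfies $\supz(f) \subseteq \mathfrak p$.

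The second step is to upgrade this to the frame-theoretic statement $\supz(f) = \sqrt 0$. Here I would invoke that $\Zar(\T)$ is a coherent frame (Theorem~\ref{thm coherence}), hence spatial \cite[Thm.~II.3.4]{Johnstone:Stone-spaces}; consequently every element of $\Zar(\T)$ is the meet of the prime elements lying above it, and in particular the meet of all prime thick tensor ideals is the bottom element $\sqrt 0$. Since by the first step $\supz(f)$ is contained in every prime, it is contained in $\sqrt 0$, and therefore equals it. (Alternatively one could invoke Balmer's classical formula $\sqrt 0 = \bigcap_{\mathfrak p} \mathfrak p$, but spatiality of $\Zar(\T)$ is already at hand and fits the point-free spirit.)

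The last step is immediate: with $\supz(f)$ equal to the bottom element of $\Zar(\T)$, Theorem~\ref{tensornilpotencemorphisms} applies and yields that $f$ is tensor nilpotent. I do not expect a genuine obstacle: the only nontrivial ingredient is spatiality of $\Zar(\T)$, which is already packaged in Theorem~\ref{thm coherence}; the remaining manipulations are pure bookkeeping between the point-set and point-free pictures. The payoff is that Balmer's classical formulation of tensor nilpotence drops out of the point-free version with no extra work.
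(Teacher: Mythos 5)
Your proposal is correct and follows essentially the same route as the paper: translate the hypothesis via the preceding lemma into ``$\supz(f)$ has no points,'' use spatiality of the coherent frame $\Zar(\T)$ to conclude $\supz(f)$ is the bottom element, and apply Theorem~\ref{tensornilpotencemorphisms}. The paper's proof is exactly this argument, stated slightly more tersely.
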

\begin{proof}
  The premise says that there are no points in $\supz(f)$.  But since
  $\Zar(\mathcal T)$ is coherent, the only frame element
  without points is the bottom element --- this is a special case of the fact
  that a coherent frame has enough points, i.e.~is 
  spatial~\cite[Theorem II.3.4]{Johnstone:Stone-spaces}.  We conclude by
  Theorem~\ref{tensornilpotencemorphisms}.
\end{proof}

\section{Reconstruction of coherent schemes}\label{sec RecSpecSch}

In this section we show how to assemble our results as local data to 
obtain a new proof of the classical results of
Thomason~\cite{Thomason:1997} on the classification of thick subcategories 
of $D_{\qc}^\omega(X)$ for $X$ a
coherent scheme, but again without having to bother about points.
We also reconstruct the structure sheaf of $X$ from its derived 
category of perfect complexes.

The key ingredients are on one hand our explicit results in the affine case, and
on the other hand the result that the Zariski frame of a tensor triangulated
category is coherent and is the recipient of the initial support.  From this we
will establish that the Zariski frame of
$D^\omega_{\qc}(X)$  is isomorphic to the Hochster dual of the Zariski frame of $X$;
 we establish this by checking it in an affine open cover of $X$.  In each
such affine open, the isomorphism is essentially Theorem~\ref{thm main}.  We
then pass from local to global using the fact that coherent schemes are the
schemes finitely built from affine schemes.
\subsection{Coherent schemes and the Hochster topology}

\subsubsection{Coherent schemes}

Recall that a scheme is {\em coherent} when it is
quasi-compact and quasi-separated; this is the  terminology 
recommended in SGA4~\cite[exp.~VI]{SGA4.2}.  A scheme is coherent precisely
when its frame of Zariski open sets is coherent.  
In terms of distributive lattices, coherent schemes are those ringed lattices which can be covered by a
finite number of Zariski lattices, cf.~Coquand-Lombardi-Schuster~\cite{MR2595208}
who call such schemes ``spectral schemes''.
The fact that coherent schemes are thus ``finitely built'' from affine schemes
allows a natural passage from local to global, and is encompassed in the 
following Reduction Principle,  which we learned from
\cite{MR1996800}, where a proof can be found.

\begin{lemma}[Reduction Principle]\label{prop extprin}
    Let P be a property of schemes.  Assume that
    
    (H0): Property P holds for all affine schemes.
    
    (H1): If $X$ is a scheme and $X= X_1 \cup X_2$ is an open cover with 
    intersection $X_{12}$, and if property P holds for $X_{12}$, 
    $X_1$, and $X_2$, then property P holds for $X$.
    
    Then property P holds for all coherent schemes.
\end{lemma}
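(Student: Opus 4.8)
The plan is to reduce the statement to a nested induction on the size of a finite affine open cover. Since a coherent scheme is by definition quasi-compact and quasi-separated, I would fix for $X$ a finite affine open cover $X = U_1 \cup \cdots \cup U_n$ with the property that all pairwise intersections $U_i \cap U_j$ are quasi-compact --- this is precisely what quasi-separatedness of the cover buys us. The statement to establish, by induction on $n$, is then: property P holds for every scheme admitting an affine open cover of size $n$ whose pairwise intersections are quasi-compact. The base case $n=1$ is exactly hypothesis (H0).

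For the inductive step I would split the cover as $X = X_1 \cup X_2$ with $X_1 = U_1 \cup \cdots \cup U_{n-1}$ and $X_2 = U_n$. The scheme $X_1$ is quasi-compact (a finite union of affine opens) and quasi-separated (an open subscheme of $X$), and it inherits an affine cover of size $n-1$ with quasi-compact pairwise intersections, so the induction hypothesis yields P for $X_1$; and (H0) yields P for $X_2$. By (H1) it then remains only to verify P for $X_{12} = X_1 \cap X_2 = \bigcup_{i<n}(U_i \cap U_n)$.

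The one nontrivial point is that $X_{12}$ is a quasi-compact open subscheme of the \emph{affine} scheme $U_n$, and for such schemes P follows from a second induction. Writing $U_n = \Spec A$, every quasi-compact open is a finite union of principal opens $D(f_1) \cup \cdots \cup D(f_m)$; each $D(f_k) = \Spec A_{f_k}$ is affine, and $D(f_j) \cap D(f_k) = D(f_j f_k)$ is again principal, hence affine. I would then induct on $m$: the case $m=1$ is (H0), and for $m > 1$ I apply (H1) to $\big(\bigcup_{k<m} D(f_k)\big) \cup D(f_m)$, noting that the first piece and its intersection with $D(f_m)$, namely $\bigcup_{k<m} D(f_k f_m)$, are finite unions of at most $m-1$ principal opens and hence have P by the inner induction hypothesis. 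This inner induction shows P holds for every quasi-compact open of an affine scheme, in particular for $X_{12}$, completing the outer step and the proof.

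The main obstacle is not any single deep fact but the bookkeeping: a naive induction on the number of affines in a cover of $X$ breaks down because the intersections $U_i \cap U_n$ need not be affine and might a priori require more than $n$ affines to cover. The nested induction circumvents this precisely because the principal opens of an affine scheme are affine and closed under pairwise intersection, so that within a fixed affine the relevant covers genuinely shrink at each step. All of this is carried out in detail in \cite{MR1996800}.
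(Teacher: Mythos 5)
Your proof is correct. The paper itself gives no proof of this lemma, deferring to Bondal--van den Bergh \cite{MR1996800}, and your nested double induction --- an outer induction on the size of an affine cover with quasi-compact pairwise intersections, resolved via an inner induction on the number of principal opens covering a quasi-compact open of an affine scheme --- is precisely the standard argument found there, including the key observation that principal opens are affine and closed under pairwise intersection.
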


%
%

\subsubsection{Hochster topology}\label{subsec RecSpecHoch}

For a coherent scheme, we denote by $D_{\qc}(X)$ the derived category
of complexes of $\cO_X$-modules with quasi-coherent homology.  This is
a compactly generated triangulated category (see Bondal and van den
Bergh~\cite[Theorem 3.1.1]{MR1996800}, and also \cite{MR2861069}).
Its subcategory $D^\omega_{\qc}(X)$ of compact objects is the category
of perfect complexes, i.e.~locally isomorphic to bounded complexes of
finitely generated projective $\cO_X$-modules, and Bondal and van den
Bergh~\cite[Theorem 3.1.1]{MR1996800} show that it is generated by a
single compact object.

The following observation shows that one
can apply the results from Section~\ref{sec ZarSpecTriaCat}.

\begin{lemma}\label{lem CompgenisDual}
Let $E,F \in D^\omega_{qc}(X)$, then
\begin{enumerate}
 \item[i)] $\Hom(E,F) \in D_{cq}^\omega(X)$.
 \item[ii)] the complex $E$ is strongly dualizable.
\end{enumerate}

\end{lemma}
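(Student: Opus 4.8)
The plan is to reduce both assertions to the affine case, where they follow from the standard structure theory of perfect complexes. First I would note that both statements are local on $X$ for the Zariski topology: assertion (i) because being a perfect complex is by definition a local condition, and assertion (ii) because a morphism in $D_{\qc}(X)$ is an isomorphism precisely when its restriction to each member of an open cover is one, while --- $E$ being perfect, hence pseudo-coherent --- the formation of $E^\vee := \Hom(E,\mathcal O_X)$, of $\otimes$, and of the internal hom $\Hom(E,-)$ all commute with restriction to an open subscheme. So, invoking the Reduction Principle (Lemma~\ref{prop extprin}), or simply passing to an affine open cover, I may assume $X = \Spec R$. There every perfect complex is quasi-isomorphic to a \emph{strictly perfect} complex, i.e.\ a bounded complex of finitely generated projective $R$-modules \cite[Thm.~2.4.3]{Thomason-Trobaugh}, so I may assume $E$ and $F$ themselves are strictly perfect.

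For (ii) in this affine situation, I would use that the strongly dualizable objects of a tensor triangulated category form a thick subcategory, stable under suspension, and containing the unit object $\mathbf 1 = R$ (with $R^\vee \simeq R$); see \cite{MR866482}. Since a strictly perfect complex lies in the thick subcategory generated by $R$, it is strongly dualizable. Concretely, $E^\vee$ is computed by applying $\Hom_R(-,R)$ termwise, which is again a bounded complex of finitely generated projectives; in particular $E^\vee$ is itself perfect. (Alternatively, one can argue by induction on the number of nonzero terms of $E$, using the stupid truncation triangles.)

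Assertion (i) then follows: by (ii) the natural transformation $-\otimes E^\vee \to \Hom(E,-)$ is an isomorphism, so $\Hom(E,F)\simeq F\otimes E^\vee$; and since $F$ and $E^\vee$ are perfect and the derived tensor product of two perfect complexes is again perfect, $\Hom(E,F)$ is perfect, hence compact in $D_{\qc}(X)$. Unwinding the reduction, $\Hom(E,F)$ restricts to a strictly perfect complex on every affine open, so it lies in $D^\omega_{\qc}(X)$, and $E$ is strongly dualizable in $D_{\qc}(X)$.

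I do not expect any real obstacle; the only point deserving care is the compatibility of the internal hom with restriction to open subschemes, which is exactly why one needs $E$ perfect rather than an arbitrary complex, and which I would take directly from the Thomason--Trobaugh formalism. The point of the lemma is that it supplies the hypotheses needed to apply Lemma~\ref{lem tensidIsrad} and Theorem~\ref{thm coherence} to $\T=D^\omega_{\qc}(X)$.
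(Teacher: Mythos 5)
Your proof is correct and follows essentially the same strategy as the paper's: both reduce to the affine case by observing that isomorphisms and perfection are detected locally and that the relevant functors commute with restriction along open immersions when the source is perfect. The only cosmetic difference is the order of the two parts --- the paper establishes boundedness of $\Hom(E,F)$ first by citing Bondal--van den Bergh and then checks compactness locally, whereas you prove strong dualizability first and deduce (i) from the identification $\Hom(E,F)\simeq F\otimes E^\vee$; either ordering works.
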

\begin{proof}
  (i) In \cite[Lemma 3.3.8]{MR1996800},
  it is shown using the reduction principle, that the complex 
  $\Hom(E,F)$ is  bounded. So compactness
  can then be checked locally, and the statement is clearly
  true on an affine scheme.
  
  (ii) We have to check that the canonical map 
  $F \otimes \Hom(E,\mathcal{O}_X) \rightarrow \Hom(F,E)$
  is an isomorphism for all compact objects $E,F$.
  But isomorphisms can be detected locally,  
  and the statement is clearly true on an affine 
  scheme, where all the involved objects are bounded complexes 
  of finitely projective modules.
\end{proof}

From this lemma we get that  $D^\omega_{\qc}(X)$
satisfies the assumptions of the
theorems in Section~\ref{sec ZarSpecTriaCat}. 
 In particular by Theorem~\ref{thm coherence}, radical
thick tensor ideals in $D_{\qc}^\omega(X)$ form a coherent frame,
and the map $C \mapsto
\sqrt{C}$ is the initial support by Theorem~\ref{thm ZarisInitialTria}.
We will now compare this with a
homologically defined support, cf.~Thomason~\cite[Definition~3.2]{Thomason:1997}.

\begin{definition}\label{def hsupp}
For $C \in D_{\text{qc}}^\omega(X)$, the {\em homological support} is the subspace 
$\supp(C) \subset X$ of those points $x$ at which the stalk complex of
$\cO_{X,x}$-modules $C_x$ is not acyclic.
\end{definition}

\begin{lemma}
  For any perfect complex $C$, $\supp(C)$ is a Zariski closed set with
  quasi-compact complement (and in particular a Hochster open set).
\end{lemma}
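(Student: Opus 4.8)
The statement is local in nature, so the plan is to apply the Reduction Principle (Lemma~\ref{prop extprin}) to the property $P(X)$: ``for every perfect complex $C \in D^\omega_{\qc}(X)$, the homological support $\supp(C)$ is a Zariski closed set with quasi-compact complement.'' First I would check (H0), the affine case. When $X = \Spec R$, a perfect complex $C$ has finitely generated homology modules, and one identifies $\supp(C) = \bigcup_n \supz_R(H_n C)$, a finite union of closed sets $V(\Ann H_n C)$; more efficiently, since $C$ is compact, Proposition~\ref{thm comp-celleq-cyc} gives a finitely generated ideal $I$ with $\Loc(C) = \Loc(R/I)$, and Lemma~\ref{lem blongstoLR/I} together with Proposition~\ref{prop pointsinLocComp} identifies $\supp(C)$ with $\zeros{I}$, which is Zariski closed with quasi-compact complement $D(I) = \bigcup_{i} D(f_i)$ for finite generators $f_i$ of $I$. (Indeed Corollary~\ref{cor hsuppisHochOpen} already records that $\supp C$ is a Hochster open set in the affine case; one just notes that the \emph{basic} Hochster opens $\zeros{I}$ with $I$ finitely generated are exactly the Zariski closed sets with quasi-compact complement, and a single perfect complex lands in such a basic open by Proposition~\ref{thm comp-celleq-cyc}.)

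Next I would verify (H1), the gluing step. Suppose $X = X_1 \cup X_2$ with $X_{12} = X_1 \cap X_2$, and that $P$ holds on $X_1$, $X_2$, $X_{12}$. Given a perfect complex $C$ on $X$, the formation of the homological support is clearly local: $\supp(C) \cap X_i = \supp(C|_{X_i})$, since whether $C_x$ is acyclic depends only on the stalk. By hypothesis $\supp(C) \cap X_i$ is Zariski closed in $X_i$, so $\supp(C)$ is Zariski closed in $X$ (being locally closed on an open cover). For quasi-compactness of the complement $U := X \setminus \supp(C)$: we have $U \cap X_i$ quasi-compact open in $X_i$ by hypothesis on $X_i$; then $U = (U\cap X_1) \cup (U \cap X_2)$, and to see this union is quasi-compact one uses quasi-separatedness of $X$ — the two quasi-compact opens $U\cap X_1$ and $U \cap X_2$ meet in $U \cap X_{12}$, which is quasi-compact by the hypothesis on $X_{12}$ (or directly because $X$ is quasi-separated and both pieces are quasi-compact), and a union of two quasi-compact sets with quasi-compact intersection inside a quasi-separated scheme is quasi-compact. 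This is precisely the standard bookkeeping that makes ``quasi-compact open'' a well-behaved notion on coherent schemes.

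The main obstacle I anticipate is nothing deep but rather the careful handling of quasi-compactness in (H1): one must genuinely use that $X$ (equivalently $X_i$, $X_{12}$) is quasi-separated so that intersections of quasi-compact opens are quasi-compact, otherwise ``union of two quasi-compact opens is quasi-compact'' can fail. Once that lemma is in hand, both (H0) and (H1) are routine, and the Reduction Principle delivers the statement for all coherent schemes. An alternative to invoking Lemma~\ref{prop extprin} would be to argue that $X$ has a finite affine cover $\{U_i\}$, run the affine case on each $U_i$ to get $\supp(C) \cap U_i$ Zariski closed with quasi-compact complement, and then glue; but this amounts to the same quasi-separatedness input and the Reduction Principle packages it most cleanly.
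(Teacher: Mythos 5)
Your proof is correct and follows essentially the same route as the paper: reduce to a finite affine cover and invoke the affine results (Proposition~\ref{thm comp-celleq-cyc} identifying $\supp(C)\cap\Spec R$ with $\zeros{I}$ for $I$ finitely generated, whose complement $D(I)$ is a finite union of basic opens, hence quasi-compact). The paper does exactly this, without the Reduction Principle scaffolding.

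One correction to your step (H1), which you single out as ``the main obstacle'': a finite union of quasi-compact subsets of \emph{any} topological space is quasi-compact --- given an open cover of $(U\cap X_1)\cup(U\cap X_2)$, finitely many members cover each piece, hence finitely many cover the union. Your claim that ``union of two quasi-compact opens is quasi-compact can fail'' without quasi-separatedness is false; quasi-separatedness is what you need for \emph{intersections} of quasi-compact opens to be quasi-compact, and it plays no role here. So the gluing step is entirely routine and the anticipated obstacle is a non-issue. A second, minor point: your first suggested identification $\supp(C)=\bigcup_n \zeros{\Ann H_nC}$ gives closedness but not quasi-compactness of the complement over a non-noetherian ring, since $\Ann H_nC$ need not be finitely generated; your ``more efficient'' route via Proposition~\ref{thm comp-celleq-cyc} is the one that actually delivers a finitely generated defining ideal, and is the one the paper uses.
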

\begin{proof}
  By quasi-compactness of $X$, we can cover $X$ by finitely many open affine
  subschemes on which $C$ is quasi-isomorphic to a bounded complex of finitely
  generated projective modules.  Since an affine scheme $\Spec R$ is
  quasi-compact, it is enough to show that on each of these, $\supp(C) \cap
  \Spec(R)$ is of the form $\zeros{I}$ for some finitely generated ideal $I \subset
  R$.  But $C\vert_{\Spec R}$ is a perfect complex of $R$-modules, and the stalk
  at a point $\mathfrak{p}$ can be computed by tensoring the complex with
  $R_\mathfrak{p}$, so the statement is our Corollary~\ref{cor hsuppisHochOpen}.
\end{proof}

We wish to avoid points as much as possible, 
so we express instead this notion of support in a more conceptual  manner:

\begin{lemma}\label{lem supp=supp}
The assignment
  \begin{eqnarray*}
    D_{\text{qc}}^\omega(X) & \longrightarrow & \Zar(X)^\vee  \\
    C & \longmapsto & \supp C
  \end{eqnarray*}
is a notion of support in the sense of Definition~\ref{def TriaSupport}.
\end{lemma}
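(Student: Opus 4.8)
The statement to prove is that $C \mapsto \supp C$ defines a support on $D^\omega_{\qc}(X)$ in the sense of Definition~\ref{def TriaSupport}. The strategy is to verify the five axioms, and the natural tool is the Reduction Principle (Lemma~\ref{prop extprin}) together with the affine case, which is essentially contained in the earlier results of Section~\ref{sec LocsubcatDR}. However, most of the axioms are so local in nature that they can be checked directly on stalks without any machinery. Recall that $\supp C = \{x \in X \mid C_x \not\simeq 0\}$, and that for a perfect complex the stalk $C_x$ is a bounded complex of finitely generated free $\cO_{X,x}$-modules, so $C_x \simeq 0$ iff its homology vanishes. The key observation, used repeatedly, is that the formation of stalks commutes with $\Sigma$, with finite direct sums, with the (derived) tensor product, and sends triangles to triangles, since $(-)_x$ is an exact symmetric monoidal functor $D^\omega_{\qc}(X) \to D(\cO_{X,x})$.

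First I would check axioms (2), (3) and (5) purely stalkwise. For (2): $(\Sigma C)_x = \Sigma(C_x)$, which is acyclic iff $C_x$ is, so $\supp(\Sigma C) = \supp(C)$. For (3): $(C \oplus D)_x = C_x \oplus D_x$, and a direct sum of complexes is acyclic iff both summands are, giving $\supp(C \oplus D) = \supp(C) \cup \supp(D)$, which is the join in $\Zar(X)^\vee$ (where joins are unions of Hochster opens). For (5): given a triangle $A \to B \to C \to \Sigma A$, stalkwise we get a triangle $A_x \to B_x \to C_x \to \Sigma A_x$ in $D(\cO_{X,x})$; if $B_x$ is not acyclic then not both $A_x$ and $C_x$ can be acyclic (the long exact homology sequence), so $\supp(B) \subseteq \supp(A) \cup \supp(C)$. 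For (1), $d(0) = \emptyset$ is clear, and $d(\mathcal O_X) = X$ since the stalk $\cO_{X,x}$ is never the zero ring. In all these one must also observe that $\supp C$ really is an element of $\Zar(X)^\vee$, i.e.\ a Hochster open set; but that was established in the immediately preceding Lemma.

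The one axiom deserving more care is (4), $\supp(C \otimes D) = \supp(C) \cap \supp(D)$. One inclusion is easy: if $C_x \simeq 0$ or $D_x \simeq 0$ then $(C \otimes D)_x = C_x \otimes_{\cO_{X,x}} D_x \simeq 0$, so $\supp(C \otimes D) \subseteq \supp(C) \cap \supp(D)$. The reverse inclusion is the place where a genuinely local computation over the local ring $\cO_{X,x}$ is needed: if $C_x$ and $D_x$ are both non-acyclic bounded complexes of finitely generated free modules over the local ring $\cO_{X,x}$, one must show their derived tensor product is non-acyclic. This is the standard fact that over a local ring the tensor product of two nonzero objects of the derived category which are perfect (hence have a lowest nonvanishing homology by Nakayama) is nonzero --- concretely, taking the lowest degrees $m, n$ in which $H_*(C_x), H_*(D_x)$ are nonzero, a K\"unneth-type spectral sequence or direct minimal-resolution argument shows $H_{m+n}(C_x \otimes D_x) \cong H_m(C_x) \otimes_{\cO_{X,x}} H_n(D_x)$, which is nonzero over the local ring by Nakayama's lemma. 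This is precisely the content underlying the affine tensor-product formula, and alternatively one can invoke the affine results of Section~\ref{sec LocsubcatDR} directly. Hence $x \in \supp(C) \cap \supp(D)$ implies $x \in \supp(C \otimes D)$, completing (4). I expect this local non-vanishing of the tensor product --- the only step that is not a formal consequence of exactness and monoidality of $(-)_x$ --- to be the main (and only real) obstacle.
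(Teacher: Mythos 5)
Your proof is correct and follows essentially the same route as the paper's: axioms (1), (2), (3) and (5) are verified stalkwise using exactness and monoidality of $(-)_x$, and axiom (4) is reduced to the non-vanishing of the derived tensor product of two non-acyclic perfect complexes over the local ring $\cO_{X,x}$, which the paper dispatches with the K\"unneth formula. Your explicit Nakayama argument for the lowest non-vanishing homology of the tensor product is a welcome filling-in of the one step the paper leaves implicit (and you correctly write $\supp(C\otimes D)=\supp(C)\cap\supp(D)$ where the paper's proof contains a typographical $\cup$).
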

\begin{proof}
The fact that $\supp(\Sigma C) = \supp(C)$ is trivial,
as is the fact that $\supp 0 = \emptyset$ and $\supp \cO_X = X$.
For  the thickness property observe that if $ C_1,C_2$ are perfect complexes, then  $(C_1 \oplus C_2)\vert_{x} = C_1\vert_x \oplus C_2\vert_x$, and that  $C_1\vert_x \oplus C_2\vert_x$ is acyclic if and only if  both $C_1\vert_x$  and $ C_2\vert_x$ are acyclic.
For the compatibility with the tensor product, observe first that $(C_1 \otimes C_2)\vert_x = C_1\vert_x \otimes C_2\vert_x$. Then, by the K\"unneth formula we have that $C_1\vert_x \otimes C_2\vert_x$ is acyclic if and only if $C_1\vert_x$ or $C_2\vert_x$ are acyclic. So indeed $\supp(C_1 \otimes C_2) = \supp(C_1) \cup \supp(C_2)$.
For the compatibility with triangles, let 
\[
\xymatrix{
 C_1 \ar[r] & C_2\ar[r]  & C_3 \ar[r] &\Sigma C_1
 }
\]
be a triangle of perfect complexes. Since taking stalks is an exact functor, the long exact sequence in homology associated to the triangle of stalks shows immediately that $\supp C_3 \subset \supp C_1 \cup \supp C_2$.
\end{proof}

The next two results can be found in Thomason~\cite{Thomason:1997} as Lemmas~3.4
and 3.14 respectively.  The difference lies in the fact that we 
deduce them from an analysis of the affine case. In this way we
avoid the use of points in the proof of the first result  and 
avoid both the Tensor Nilpotence  and the  Absolute Noetherian 
Approximation theorems in the second proof.

\begin{lemma}\label{lem one}
  In a coherent scheme $X$, let $Z\subset X$ be a Zariski closed set with quasi-compact
  complement.  Then there exists $E \in D_{\text{qc}}^\omega(X)$ with
  $\supp E = Z$.
\end{lemma}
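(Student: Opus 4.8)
The plan is to apply the Reduction Principle (Lemma~\ref{prop extprin}) to the property $P(X)$ that every Zariski closed set $Z\subset X$ with quasi-compact complement equals $\supp E$ for some $E\in D^\omega_{\qc}(X)$.

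For the base case (H0) let $X=\Spec R$. A Zariski closed set with quasi-compact complement is of the form $\zeros I$ for a finitely generated ideal $I$, and the Koszul complex $K(I)$ is a perfect complex with $\Loc(K(I))=\Loc(R/I)$ by Proposition~\ref{DG-6.4}. By Corollary~\ref{cor hsuppisHochOpen} its homological support is a Hochster open set, and by Proposition~\ref{prop pointsinLocComp} (which identifies the Hochster open set attached to $\Loc(R/I)$ with $\zeros I$) we get $\supp K(I)=\zeros I = Z$. So (H0) holds.

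For the gluing step (H1), let $X=X_1\cup X_2$ be an open cover of a coherent scheme, with intersection $X_{12}$, and assume $P$ holds for $X_1$, $X_2$ (and $X_{12}$). Let $Z\subset X$ be closed with quasi-compact complement. Then $Z_i:=Z\cap X_i$ is closed in $X_i$ with quasi-compact complement, since the intersection $X_i\cap(X\setminus Z)$ of two quasi-compact opens is quasi-compact ($X$ being quasi-separated); so $P(X_i)$ furnishes $E_i\in D^\omega_{\qc}(X_i)$ with $\supp E_i=Z_i$. The crucial point is \emph{not} to extend $E_i$ from $X_i$ to $X$ directly --- that would leave the support of the extension uncontrolled over $X_j$ --- but first to enlarge to the coherent open $W_i:=X_i\cup(X\setminus Z)$. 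Since $\supp E_i=Z_i$, the complex $E_i$ is acyclic on $X_i\cap(X\setminus Z)=X_i\setminus Z_i$, hence agrees there with the zero complex, so by Zariski descent for perfect complexes the pair $(E_i,0)$ glues to $\bar E_i\in D^\omega_{\qc}(W_i)$ with $\supp\bar E_i=Z_i\subset Z$. Now invoke the standard extension result for perfect complexes (Thomason--Trobaugh): after replacing $\bar E_i$ by $\bar E_i\oplus\Sigma\bar E_i$ --- which has the same support, its class in $K_0$ now being trivial --- there is $\hat E_i\in D^\omega_{\qc}(X)$ with $\hat E_i|_{W_i}\simeq\bar E_i\oplus\Sigma\bar E_i$. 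Then $\supp\hat E_i\cap W_i=Z_i$ while $\supp\hat E_i\setminus W_i\subset X\setminus W_i=(X\setminus X_i)\cap Z\subset Z$, so $Z_i\subset\supp\hat E_i\subset Z$. Finally $E:=\hat E_1\oplus\hat E_2$ has $\supp E=\supp\hat E_1\cup\supp\hat E_2$, which is contained in $Z$ and contains $Z_1\cup Z_2=Z\cap(X_1\cup X_2)=Z$; hence $\supp E=Z$, proving (H1). By the Reduction Principle, $P$ holds for all coherent schemes, which is the assertion.

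The step I expect to be the main obstacle is the extension in (H1): promoting a perfect complex on the quasi-compact open $W_i$ to a perfect complex on $X$. This is precisely where the Thomason--Trobaugh technology enters (a perfect complex on a quasi-compact open of a coherent scheme extends to the whole scheme once its class in $K_0$ is killed, e.g.\ by passing to $\bar E_i\oplus\Sigma\bar E_i$). Everything else is bookkeeping with supports --- but it hinges on the observation that one should extend over $X_i\cup(X\setminus Z)$ rather than over $X_i$, so that the locus $X\setminus W_i$ where the extension might manufacture spurious support is already contained in $Z$.
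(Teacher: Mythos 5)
Your proof is correct, and the overall strategy is the same as the paper's: the Reduction Principle, the affine case settled by the explicit computation $\supp K(I)=\zeros{I}$, the trick of replacing a complex by $F\oplus\Sigma F$ to kill its class in $K_0$, the Thomason--Trobaugh extension theorem, and finally taking the direct sum $E_1\oplus E_2$ of the two extensions. The one genuine difference is in how the extension step is organized. The paper invokes the \emph{relative} form of Thomason--Trobaugh (their Lemma 5.6.2a), which extends a perfect complex from $X_i$ to $X$ \emph{with prescribed acyclicity on $X\setminus Z$}, the obstruction living in $K_0(X_i \text{ on } Z_i)$; the support control you were worried about is therefore already built into the statement being cited, and no detour through $W_i$ is needed. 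You instead use only the absolute extension theorem, compensating by first extending $E_i$ by zero over $X\setminus Z$ to get a perfect complex on $W_i=X_i\cup(X\setminus Z)$, so that the locus $X\setminus W_i$ where the extension is uncontrolled is automatically inside $Z$. This is a legitimate and rather elegant substitute; the only point to be slightly careful about is your appeal to ``Zariski descent'' for the gluing of $(E_i,0)$ --- for a cover by two opens this is unproblematic (one forms the homotopy pullback, and perfection is local), but it deserves a word since gluing in derived categories is delicate in general. Both routes rest on the same Thomason--Trobaugh technology, so neither is more elementary; the paper's is marginally shorter, yours makes the support bookkeeping more transparent.
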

 \begin{proof} We apply the Reduction Principle (\ref{prop extprin}) to the
  property $P$ that asserts that for any $Z \subset X$, a Zariski closed with
  quasi-compact complement in a scheme the lemma holds.
The affine case  is Theorem~\ref{thm maingeo}.

For the induction step we need the following deep result due to
Thomason-Trobaugh~\cite[Lemma 5.6.2a]{Thomason-Trobaugh}: Fix a coherent scheme
$X$, $U$ a Zariski open set in $X$ and $Z$ a closed set with quasi-compact
complement.  Let $F$ be a perfect complex on $U$, acyclic on $U \setminus U \cap
Z$.  Then there exists a perfect complex $E$ on $X$, acyclic on $X\setminus Z$
such that $E_{|U} \simeq F$ if and only if $[F] \in K_0(U \textrm{ on } U \cap
Z)$ is in the image of the map induced by restriction:
\[
K_0(X \textrm{ on }Z) \rightarrow K_0(U \textrm{ on }U \cap Z).
\]

Let $X = X_1 \cup X_2$ be a scheme covered by two open subschemes.  We assume
that $P$ is true on $X_i$, $i=1,2$, so we have a perfect complex $F_i$ on each
$X_i$ such that $\supp F_i = Z_i = X_i \cap Z$.  Observe that $\supp (F_i \oplus
\Sigma F_i) = Z_i$, but also that by definition of the sum in $K$-theory (see
the proof of \cite[Theorem 2.1]{Thomason:1997})$[F_i \oplus \Sigma F_i] = 0$ in
$K_0(X_i \textrm{ on } Z_i)$.  Therefore we have two perfect complexes $E_1$ and
$E_2$ on $X$, with support included in $Z$ such that $E_{i\vert U_i} \simeq
F_i$.
We claim that $E_1 \oplus E_2$ is the perfect complex we are looking for. Indeed
\[
\supp (E_1 \oplus E_2) \supset \supp E_1 \cup \supp E_2 \supset Z_1 \cup Z_2 = Z,
\]
and by construction, $\supp (E_1 \oplus E_2) \subset Z$.
\end{proof}

\begin{lemma}\label{lem two}
Let $X$ be a coherent scheme.
Given two perfect complexes $E,F \in D_{\text{qc}}^\omega(X)$, we have:
$$
\supp(E) \subset \supp(F)  \ \Leftrightarrow \ \sqrt{E} \subset 
\sqrt{F}.
$$
\end{lemma}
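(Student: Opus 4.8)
The plan is to deduce the global biconditional from the affine case (Corollary~\ref{cor inc}, repackaged through Theorem~\ref{thm maingeo}) together with the Reduction Principle~\ref{prop extprin} and the initiality of the support $C\mapsto\sqrt{C}$ established in Theorem~\ref{thm ZarisInitialTria}. The cleanest route is to observe that both $\supp$ and $\sqrt{\phantom i}$ are supports on $D^\omega_{\qc}(X)$ in the sense of Definition~\ref{def TriaSupport}: for $\sqrt{\phantom i}$ this is part of Theorem~\ref{thm ZarisInitialTria} (or Lemma~\ref{lem ZarSup}), and for $\supp$ it is exactly Lemma~\ref{lem supp=supp}, with values in the coherent frame $\Zar(X)^\vee$. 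By initiality there is a unique frame map $\theta:\Zar(D^\omega_{\qc}(X))\to\Zar(X)^\vee$ with $\theta(\sqrt{C})=\supp C$. The implication $\sqrt{E}\subset\sqrt{F}\Rightarrow\supp E\subset\supp F$ is then immediate from $\theta$ being a frame map (hence order-preserving), applied to $\sqrt E\le\sqrt F$. So the whole content of the lemma is the reverse implication, and it suffices to show $\theta$ is injective on finite elements, i.e.\ that $\supp E=\supp F\Rightarrow\sqrt E=\sqrt F$; by symmetry this reduces to $\supp E\subset\supp F\Rightarrow\sqrt E\subset\sqrt F$, which is what we prove.

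The strategy for the reverse implication is the Reduction Principle applied to the property $P(X)$: ``for all perfect complexes $E,F$ on $X$, $\supp E\subset\supp F$ implies $\sqrt E\subset\sqrt F$''. Step (H0), the affine case $X=\Spec R$: here $\Zar(D^\omega(R))\cong\RadId(R)^\vee$ by Theorem~\ref{thm main}, finite elements are the $\Loc(R/I)$ with $I$ finitely generated, and by Theorem~\ref{thm maingeo} the support $\Loc(C)\leftrightarrow f(\Loc(C))$ is precisely the bijection onto Hochster open sets, with $\supp C$ in Thomason's sense matching $f(\Loc(C))$ via Corollary~\ref{cor hsuppisHochOpen} and Proposition~\ref{prop pointsinLocComp}. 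Thus on an affine scheme $\supp E\subset\supp F$ is equivalent to $\Loc(F)\subset\Loc(E)^{\vee\vee}$... more directly: write $\Loc(E)=\Loc(R/I)$, $\Loc(F)=\Loc(R/J)$ via Proposition~\ref{thm comp-celleq-cyc}; then $\supp E=V(I)$, $\supp F=V(J)$, and $V(I)\subset V(J)$ iff $\sqrt J\subset\sqrt I$ iff $\Loc(R/I)\subset\Loc(R/J)$ by Corollary~\ref{cor inc}, which (all thick tensor ideals in $D^\omega(R)$ being radical) is exactly $\sqrt E\subset\sqrt F$. That settles (H0).

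Step (H1), the gluing step, is where I expect the only real work. Suppose $X=X_1\cup X_2$ with intersection $X_{12}$, and $P$ holds for $X_1$, $X_2$, $X_{12}$. Given $E,F$ perfect on $X$ with $\supp E\subset\supp F$, restrict to each piece: $\supp(E|_{X_i})\subset\supp(F|_{X_i})$, so by $P(X_i)$ we get $E|_{X_i}\in\sqrt{F|_{X_i}}=\langle F|_{X_i}\rangle$ (radicality again, by Lemma~\ref{lem CompgenisDual}). The issue is to pass from membership in the thick tensor ideal generated by $F$ \emph{locally} on an open cover to membership \emph{globally}. For this I would invoke the standard ``local-to-global'' principle for thick tensor ideals in $D^\omega_{\qc}(X)$: a perfect complex lies in $\langle F\rangle$ iff $\supp E\subset\supp F$ iff its restriction to each member of a finite affine (or the two-element $\{X_1,X_2\}$) cover lies in the restricted thick tensor ideal --- this is where one uses that the classification over each piece, already established, glues. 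Concretely, one can argue: the support function is a \emph{sheaf-like} invariant (support of a restriction is the restriction of the support, visible from Definition~\ref{def hsupp} and exactness of stalks), so $\supp E\subset\supp F$ globally is equivalent to the same inclusion on $X_1$ and on $X_2$; applying $P(X_i)$ converts each into $E|_{X_i}\in\langle F\rangle|_{X_i}$; then because $P(X_1)$, $P(X_2)$, $P(X_{12})$ together say that the pieces of the classification are mutually compatible on overlaps, a Mayer--Vietoris / descent argument --- precisely the same mechanism used to prove the coherence and gluing in Bondal--van den Bergh~\cite{MR1996800} and in Thomason~\cite[Lemma~3.14]{Thomason:1997} --- yields $E\in\langle F\rangle$, i.e.\ $\sqrt E\subset\sqrt F$. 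The delicate point, and the main obstacle, is ensuring the descent is clean: one needs that if a perfect complex restricts into $\langle F\rangle$ on $X_1$ and on $X_2$ and these are compatible on $X_{12}$, then it is globally in $\langle F\rangle$; I would handle this by the same device as in Lemma~\ref{lem one}, using the Thomason--Trobaugh gluing lemma~\cite[Lemma~5.6.2a]{Thomason-Trobaugh} to realise the relevant object as the restriction of a global perfect complex supported where required, then comparing supports. Once (H0) and (H1) are in place, the Reduction Principle gives $P(X)$ for all coherent $X$, completing the proof of the lemma.
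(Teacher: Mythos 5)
Your handling of the easy direction (via the initial support and the induced frame map) and of the affine case (H0) is fine, but the argument has a genuine gap at the gluing step (H1) --- which you yourself identify as ``the only real work'' and then do not actually carry out. The statement you need there, namely that $E|_{X_1}\in\gen{F|_{X_1}}$ and $E|_{X_2}\in\gen{F|_{X_2}}$ imply $E\in\gen{F}$, is precisely the local-to-global principle for thick tensor ideals, i.e.\ essentially the content of the lemma itself, and it does not follow from the ingredients you invoke. Membership in the thick tensor ideal generated by $F$ is not in any obvious way a stalk-local or Zariski-local condition, so the observation that supports restrict well does not help; and the Thomason--Trobaugh lemma \cite[Lemma 5.6.2a]{Thomason-Trobaugh} extends a perfect complex with prescribed support from an open subscheme (which is exactly what Lemma~\ref{lem one} needs for surjectivity) but says nothing about descent of membership in $\gen{F}$. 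Thomason's own proof of this implication goes through the Tensor Nilpotence Theorem, which this paper deliberately avoids as an input (it is instead derived as a corollary in Section~\ref{sec ZarSpecTriaCat}); completing your route along his lines would reintroduce exactly the machinery the paper is organized to bypass.

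The paper closes this gap by converting the non-local membership condition into a local \emph{vanishing} condition. One works in the big category $D_{\qc}(X)$ and forms the Bousfield localization $L_F$ and cellularization $\Gamma_F$ attached to $\Loc(F)$; tensoring the localization triangle of $\cO_X$ with $E$ shows that $E\in\Loc(F)$ --- hence $E\in\sqrt{F}$ by compactness and Corollary~\ref{cor LocXTomega} --- if and only if $L_FE=0$, and vanishing of a complex \emph{can} be checked on stalks, hence affine-locally. On an affine open $\Spec R$ one writes $F|_R$ as cellularly equivalent to $R/I$ (Proposition~\ref{thm comp-celleq-cyc}) and uses the explicit Dwyer--Greenlees model for $L_{R/I}(R)$ together with the K\"unneth spectral sequence: $(L_FE)_x$ vanishes for $x\in\supp F$ because $(L_F\cO_X)_x=0$ there, and for $x\notin\supp F$ because $E_x=0$ by the hypothesis $\supp E\subset\supp F$. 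This device --- replacing ``$E$ is built from $F$'' by ``$L_FE=0$'' --- is the missing idea; without it (or without assuming tensor nilpotence) your step (H1) does not go through.
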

\begin{proof}
The implication ``$\Leftarrow$'' is obvious: if for some $n \geq 1$ we have
$E^{\otimes n}$ can be built from $F$, consider any  finite recipe for $E^{\otimes n}$.
Then at any point $x$, if the stalk of $F$ at $x$ is zero so is the stalk of the
recipe and hence $E^{\otimes n}$ itself.  Therefore $\supp(E^{\otimes n}) =
\supp(E) \subset \supp(F)$.

For the converse implication $\Rightarrow$, we first enlarge a bit the setting and consider $\Loc(F) \subset D_{qc}(X)$. Denote by $L_F$ and $\Gamma_F$ the Bousfield localization and cellularization functors associated to $\Loc(F)$. Then we have a triangle:
\[
 \xymatrix{
 \Gamma_F E \ar[r] & E \ar[r]  & L_F E \ar[r] & \Sigma \Gamma_F E,
 }
\]
obtained by tensoring the triangle
\[
 \xymatrix{
 \Gamma_F \cO_X \ar[r] & \cO_X \ar[r]  & L_F \cO_X \ar[r] & \Sigma \Gamma_F \cO_X
 }
\]
by $E$.  We claim that $L_F E =0$ in $D_{qc}(X)$, so that the leftmost morphism
in the top triangle is an isomorphism.  If this is the case, then $E \in
\Loc(F)$ by definition of $L_F E$, and as $E$ is compact, $E \in \Loc(F) \cap
D_{qc}^\omega(X) = \sqrt{F}$.  That a complex is quasi-isomorphic to the trivial
complex can be checked on the stalks.  Since $E$ is perfect we may apply the
K\"unneth spectral sequence to compute the homology of $(L_F E)_x = L_F
(\cO_X)_x \otimes E_x$.  First restrict the triangle to an affine open set
$\Spec R$.  Since restriction is a triangulated functor that preserves arbitrary
sums and respects compact objects we have that $(\Gamma_F\cO_X)_{\vert_R} = \Gamma_{F_{\vert_R}}$ and we get the triangle:

\[
 \xymatrix{
 \Gamma_{F_{\vert_R}} R \otimes E_{\vert_R}\ar[r] & E_{\vert_R} \ar[r]  & L_{F_{\vert_R}} R \otimes E_{\vert_R} \ar[r] & \Sigma \Gamma_{F_{\vert_R}} R \otimes E_{\vert_R}.
 }
\]

In $D(R)$, by Proposition~\ref{thm comp-celleq-cyc}, 
$F_{\vert_R}$ is cellularly equivalent to $R/I$ for some finitely generated ideal $I$.
An explicit description of $L_{R/I}(R)$ is provided by Dwyer-Greenlees,
see the proof of Theorem~\ref{thm orthfinitgen}, and from this it is immediate to 
check that for a point $x \in \supp F$, $(L_{F_{\vert_R}}R)_{x} = 0$. 
As a consequence  the $E_2$ page of the K\"unneth spectral sequence is trivial
for these points. Now,  if on the contrary $x \notin \supp F$, then 
as $\supp E \subset \supp F$ we have that $E_x = 0$ by definition, and 
the spectral sequence is again trivial. 
\end{proof}

\begin{theorem}\label{thm Zar D = Hoch}
  For $X$ a coherent scheme, the Zariski frame of $D^\omega_{\qc}(X)$ 
  is the Hochster dual of the Zariski frame of $X$ itself.
\end{theorem}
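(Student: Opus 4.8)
The plan is to reduce the statement, via the universal property of the Zariski frame and coherence, to the two homological lemmas just established, so that the theorem becomes pure bookkeeping. First I would check that $D^\omega_{\qc}(X)$ meets the standing hypotheses of Section~\ref{sec ZarSpecTriaCat}: it is essentially small, and by Lemma~\ref{lem CompgenisDual} every compact object is strongly dualizable, so every thick tensor ideal is radical. Hence Theorem~\ref{thm coherence} applies and $\Zar(D^\omega_{\qc}(X))$ is a coherent frame whose finite elements are exactly the principal ideals $\sqrt C$, while Theorem~\ref{thm ZarisInitialTria} tells us that $C\mapsto\sqrt C$ is the initial support. On the other side, $X$ being coherent means its frame $\Zar(X)$ of Zariski opens is coherent, hence so is its Hochster dual $\Zar(X)^\vee$, whose finite elements are the quasi-compact opens taken with reversed order, i.e.\ the Zariski closed sets with quasi-compact complement ordered by inclusion.

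Next, Lemma~\ref{lem supp=supp} exhibits $C\mapsto\supp C$ as a support on $D^\omega_{\qc}(X)$ with values in the coherent frame $\Zar(X)^\vee$, so the universal property in Theorem~\ref{thm ZarisInitialTria} produces a unique frame map
\[
 u\colon \Zar(D^\omega_{\qc}(X))\longrightarrow \Zar(X)^\vee, \qquad \sqrt C\longmapsto \supp C .
\]
Since each $\supp C$ is a Zariski closed set with quasi-compact complement, $u$ carries finite elements to finite elements, i.e.\ it is a \emph{coherent} frame map. By Stone duality (Theorem~\ref{spsp=cohfr=dlat}) it then suffices to prove that the induced map on finite parts, $\sqrt C\mapsto\supp C$, is an isomorphism of distributive lattices.

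Then I would verify this directly. Injectivity together with order-reflection is precisely Lemma~\ref{lem two}: $\sqrt E\subset\sqrt F$ if and only if $\supp E\subset\supp F$. Surjectivity is Lemma~\ref{lem one}: every Zariski closed $Z\subset X$ with quasi-compact complement is $\supp E$ for some perfect complex $E$. A bijective map between lattices that both preserves and reflects the order is a lattice isomorphism, so the map on finite parts is an isomorphism of distributive lattices, and hence $u$ itself is an isomorphism of coherent frames --- which is exactly the assertion $\Zar(D^\omega_{\qc}(X))\simeq\Zar(X)^\vee$.

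The honest remark about the main obstacle is that all of it has already been absorbed into the preliminaries: the genuine content lives in Lemmas~\ref{lem one} and~\ref{lem two}, and through them in the affine Theorem~\ref{thm maingeo}, in the Thomason--Trobaugh gluing lemma, and in the Reduction Principle~\ref{prop extprin}. What is left for this proof is only the two checks above --- that the support-induced map is coherent, and that an order-isomorphism of finite parts lifts to a frame isomorphism --- both of which follow immediately from the generalities recalled in Section~\ref{sec framework}.
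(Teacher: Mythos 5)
Your proposal is correct and follows essentially the same route as the paper: Lemma~\ref{lem supp=supp} plus the universal property of Theorem~\ref{thm ZarisInitialTria} give the canonical map $u$, and Lemmas~\ref{lem one} and~\ref{lem two} show it is an isomorphism. The only difference is that you make explicit the (correct, and in fact needed) bookkeeping step that a coherent frame map restricting to an order-isomorphism on finite elements is a frame isomorphism, which the paper leaves implicit in the phrase ``surjective and injective.''
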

\begin{proof}
  By Lemma~\ref{lem supp=supp}, $(\Zar(X)^\vee,\supp)$
  is a support.  Now we invoke the universal 
  property of the Zariski frame of $D_{\qc}^\omega(X)$
  to get a unique morphism of supports
  $$
  \Zar(D_{\qc}^\omega(X))  \stackrel u \longrightarrow 
  \Zar(X)^\vee 
  $$
  sending $\sqrt{C}$ to $\supp(C)$.  It is surjective by 
  Lemma~\ref{lem one} and injective by Lemma~\ref{lem two}. 
\end{proof}

\subsection{Zariski topology, structure sheaf, and reconstruction of schemes}
\label{sec strucsheaves}

Theorem~\ref{thm Zar D = Hoch} shows that the underlying topological space
of a coherent scheme $X$ can be reconstructed from its derived category.
We wish to reconstruct also the structure sheaf $\cO_X$. The structure sheaf
refers to the Zariski topology on $\Spec X$, not to the Hochster dual topology, so to get
it we need to pass to the Hochster dual of the Zariski frame of 
$D_{\qc}^\omega(X)$. 
The key point is the standard fact
that in a tensor triangulated category $(\T, \otimes, \mathbf{1})$,
the endomorphism ring of the tensor unit $\mathrm{End}_{\T}(\mathbf{1})$ 
is a commutative ring, by the Eckmann-Hilton argument.

Recall that a sheaf of rings on a frame $F$ is a functor $F\op\to
\kat{Ring}$ satisfying an exactness condition. For a coherent
frame it is enough to specify the values on the finite elements (playing the
role of a basis for a topology). 

\subsubsection{The affine case}

For an affine scheme $X= \Spec_Z R$, the structure sheaf on the Zariski frame
$\Zar(X)= \RadId(R)$ is completely specified by the assignment
\begin{eqnarray*}
  \RadId(R)\op & \longrightarrow & \kat{Ring}  \\
  \sqrt{f} & \longmapsto & R_f ,
\end{eqnarray*}
corresponding to the fact that the principal open sets $D(f) = \Spec R 
\setminus \zeros{f}$ form a basis for the Zariski topology.

We are concerned with the coherent frame
$\kat{RfGLoc}(D(R))$ and its distributive lattice of finite elements
$\kat{fgRfGLoc}(D(R))$ consisting of localizing subcategories generated by a 
finite number of modules of the form $R_f$.
These are both localizing and colocalizing (see
Proposition~\ref{prop dualcatisloc}), and we have at our disposal a Bousfield
localization functor $L_{\Loc(R_{f_1}, \dots, R_{f_n})}$ with values in our
categories $\Loc(R_{f_1}, \dots, R_{f_n})$.  All these are naturally tensor
triangulated categories as they are tensor ideals in $D(R)$, and have as tensor
unit the localization of the unit in $D(R)$.
The natural presheaf
\[
\begin{array}{ccl}
\fgRfGLoc(D(R))\op	 & \longrightarrow & \kat{Ring} \\
 \Loc(R_{f_1}, \dots, R_{f_n})  & \longmapsto & 
 \mathrm{End}_{D(R)}(L_{\Loc(R_{f_1}, \dots, R_{f_n})}(R))
\end{array}
\]
yields by sheafification a sheaf  
\[
\begin{array}{rcl}
\mathcal{E}nd : 
\RfGLoc(D(R))\op	 & \longrightarrow & \kat{Ring} .
\end{array}
\]

\begin{proposition}\label{thm ZarSheaf}
Under the isomorphism $$\kat{RfGLoc}(D(R)) \simeq \RadId(R)$$ of 
Theorem~\ref{thm loccatisZar},
  the sheaf $\mathcal{E}nd$ is canonically isomorphic to the structure sheaf on 
$\Spec_Z R$.
\end{proposition}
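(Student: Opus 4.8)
The plan is to reduce the claim to the basis of principal open sets and identify the defining presheaf there, exploiting that a sheaf on a coherent frame is determined by its restriction to any basis. Under the frame isomorphism $\RfGLoc(D(R)) \simeq \RadId(R)$ of Theorem~\ref{thm loccatisZar}, the principal opens $D(f) = \Spec_Z R \setminus \zeros{f}$ correspond to the localizing subcategories $\Loc(R_f)$, and since every radical ideal satisfies $\sqrt{J} = \bigvee_{f \in \sqrt{J}} \sqrt{\idealgen{f}}$, the principal opens form a basis for the Zariski frame. So it suffices to produce a natural isomorphism, compatible with restrictions, between $\mathcal{E}nd$ and the structure sheaf $\cO_{\Spec_Z R}$ when both are restricted to $\{\Loc(R_f)\mid f\in R\}$.

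First I would compute the defining presheaf of $\mathcal{E}nd$ on a principal open. By Proposition~\ref{prop orthOneGen}, $\Loc(R_f) = \Loc(R/\idealgen{f})^\bot$ is the essential image of the Bousfield localization functor associated to $\Loc(R/\idealgen{f})$, so the functor ``$L_{\Loc(R_f)}$'' occurring in the definition of the presheaf is exactly $L_{R/\idealgen{f}}$, and the same proposition gives $L_{R/\idealgen{f}}(R) \simeq R_f$ concentrated in degree $0$. Since $R_f$ is flat over $R$ and concentrated in a single degree, $\End_{D(R)}(R_f) = \Hom_R(R_f,R_f)$, and by the universal property of localization every $R$-linear endomorphism of $R_f$ is automatically $R_f$-linear, whence $\Hom_R(R_f,R_f) = R_f$ via $a\mapsto (x\mapsto ax)$. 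So the presheaf takes $\Loc(R_f)$ to $R_f$, canonically. For the restriction maps: if $D(f)\subseteq D(g)$, equivalently $\sqrt{\idealgen{f}}\subseteq\sqrt{\idealgen{g}}$, equivalently (Corollary~\ref{cor inc}) $\Loc(R/\idealgen{g})\subseteq\Loc(R/\idealgen{f})$, there is the canonical comparison map between the two nested Bousfield localizations, and evaluating at $R$ it becomes the localization homomorphism $R_g \to R_f$ (which is legitimate since $g$ is a unit in $R_f$ precisely because $D(f)\subseteq D(g)$). This is exactly the restriction map of $\cO_{\Spec_Z R}$. Thus, restricted to the basis of principal opens, the defining presheaf of $\mathcal{E}nd$ coincides with the structure presheaf $D(f)\mapsto R_f$ of $\Spec_Z R$, and this common restriction is \emph{already} a sheaf on the basis --- that is just the sheaf axiom for $\cO_{\Spec_Z R}$, i.e.\ that $R_f$ is the equalizer of $\prod_j R_{g_j}\rightrightarrows\prod_{j,k}R_{g_jg_k}$ whenever $D(f)=\bigcup_j D(g_j)$.

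Finally I would invoke that the sheafification of a presheaf on a frame depends only on its restriction to a basis: as $\mathcal{E}nd$ is by construction the sheafification of its defining presheaf, and the latter agrees on the basis of principal opens with the structure presheaf of $\Spec_Z R$ on the same basis, the two sheafifications are canonically isomorphic, i.e.\ $\mathcal{E}nd \simeq \cO_{\Spec_Z R}$ under the isomorphism of Theorem~\ref{thm loccatisZar}. The one point requiring genuine care --- the reason this passage is not purely formal --- is that on a \emph{non}-principal finite element $\Loc(R_{f_1},\dots,R_{f_n})$ the defining presheaf evaluates to $\End_{D(R)}\big(L_{R/I}(R)\big)$ with $L_{R/I}(R)$ the truncated \v{C}ech-type complex of Theorem~\ref{thm orthfinitgen}, whose higher homology records the (possibly nonzero) higher cohomology of $\cO$ on $\bigcup_i D(f_i)$; so the defining presheaf is genuinely not a sheaf on all of $\RfGLoc(D(R))$, and the reduction to the principal basis is essential rather than cosmetic --- but once one is on principal opens this obstruction disappears, because each $D(f)$ is affine.
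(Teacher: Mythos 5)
Your proof is correct and follows essentially the same route as the paper: reduce to the basis of principal opens, identify $L_{\Loc(R_f)}(R)\simeq R_f$ via Proposition~\ref{prop orthOneGen}, and compute $\End_{D(R)}(R_f)=R_f$ (the paper does this through the equivalence $\Loc(R_f)\simeq D(R_f)$, you through the universal property of localization --- the same computation). Your extra verification of the restriction maps and the remark on non-principal finite elements are welcome amplifications but do not change the argument.
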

\begin{proof}
  It is enough to compute the sheaf on a basis of the topology, and for this we
  take the lattice of localizing subcategories generated by a single
  localization, corresponding to the basis of principal open sets in $\Spec_Z R$.  We know
  that as tensor triangulated categories $\Loc(R_f) \simeq D(R_f)$
  (Proposition~\ref{prop orthOneGen}), hence:
  \begin{eqnarray*}
   \End_{D(R)}(L_{\Loc(R_f)} (R)) & = &  \End_{D(R_f)}(L_{\Loc(R_f)} (R)) \\
  & = &  \End_{D(R_f)}(R_f) \\
   & = & R_f,
  \end{eqnarray*}
as rings. But  $R_f$ is precisely  the value of the structure sheaf of $\Spec_Z R$
on the principal open set $D(f)=\Spec R \setminus \zeros{f}$.
\end{proof}

\subsubsection{Reconstruction of a general coherent scheme}

First we enlarge the framework to
that of the whole derived category of complexes of modules with quasi-coherent
homology $D_{\qc}(X)$. 
It follows from Corollary~\ref{cor LocXTomega} we have an isomorphism of posets
between the poset of localizing subcategories of $D_{\qc}(X)$ generated by a
single perfect complex and the poset of thick subcategories of
$D_{\qc}^\omega(X)$ generated by a single perfect complex; via the assignment
$\mathcal L \mapsto \mathcal L \cap D_{\qc}^\omega(X)$.  Since $\cO_X$ generates
$D_{\qc}(X)$ as a localizing category, all localizing subcategories are tensor
ideals by Lemma~\ref{lem locistensideal}, and hence also all the thick
subcategories are thick tensor ideals, and since all perfect complexes are
strongly dualizable, all thick tensor ideals are radical thick tensor ideals.
Altogether we have an isomorphism of posets between the localizing subcategories
of $D_{\qc}(X)$ generated by a single perfect complex, and the Zariski lattice
$\Zar(D_{\qc}^\omega(X))^\omega = \{ \sqrt C \mid C \in D_{\qc}^\omega(X)\}$ of
principal radical thick tensor ideals, i.e.~the distributive lattice of finite
elements in $\Zar(D_{\qc}^\omega(X))$.

We know that the Hochster dual of this lattice is the basis of the
topology of $X$ given by the quasi-compact open sets in $X$. To flip this lattice
as in the affine case we take right orthogonals. The relations between right and left orthogonals for (co)localizing subcategories as stated for instance in  \cite[Prop 4.9.1 and 4.10.1]{MR2681709} imply that we have order-reversing inverse bijections:
\[
\xymatrix{
\big\{  \Loc(C) \mid C \in  D_{\qc}^\omega(X) \big\}  \ar@/^/[rr]^{(-)^\bot} & & 
\ar@/^/[ll]^{{}^\bot(-)} \big\{  \Loc(C)^{\bot} \mid C \in  D_{\qc}^\omega(X)\big\} \\
} 
\]

We therefore have:

\begin{proposition}\label{thm recSpecSch}
Let $X$ be a coherent scheme.  There is a canonical isomorphism between the
distributive lattice $\big\{ \Loc(C)^{\bot} \mid C \in D_{\qc}^\omega(X)\big\}$
and the Zariski lattice of $X$ (i.e.~the lattice of quasi-compact open sets in 
$X$).
\end{proposition}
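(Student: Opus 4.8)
The plan is to combine the affine result (Theorem~\ref{thm loccatisZar}, via Proposition~\ref{prop principaldual}) with the Reduction Principle (Lemma~\ref{prop extprin}), exactly as was done for Theorem~\ref{thm Zar D = Hoch}. First I would set up the chain of identifications already outlined in the text: by Corollary~\ref{cor LocXTomega} the poset of localizing subcategories of $D_{\qc}(X)$ generated by a single perfect complex is isomorphic to the poset of thick (tensor) ideals of $D^\omega_{\qc}(X)$ generated by a single perfect complex, which is the lattice of finite elements $\Zar(D^\omega_{\qc}(X))^\omega$. By Lemma~\ref{lem CompgenisDual} and Lemma~\ref{lem tensidIsrad} these are automatically radical thick tensor ideals, so Theorem~\ref{thm coherence} applies. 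Then by Theorem~\ref{thm Zar D = Hoch} this lattice is the Hochster dual of the Zariski lattice of $X$, i.e.~the opposite of the lattice of quasi-compact open sets in $X$. Finally, taking right orthogonals $\Loc(C) \mapsto \Loc(C)^\bot$ is an order-reversing bijection onto $\{\Loc(C)^\bot \mid C \in D^\omega_{\qc}(X)\}$ (using Proposition~\ref{prop dualcatisloc} to know the orthogonals are again localizing, and \cite[Prop.~4.9.1 and 4.10.1]{MR2681709} for the bijection), so composing the two order-reversals gives an order-\emph{preserving} isomorphism with the lattice of quasi-compact opens.

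The one point that still needs a genuine argument is that this composite isomorphism is \emph{canonical}, in the sense of being independent of the chosen affine cover used to invoke Theorem~\ref{thm Zar D = Hoch}, and that it is compatible with restriction to affine opens. For this I would apply the Reduction Principle to the property $P(X)$: ``there is an isomorphism $\{\Loc(C)^\bot\mid C\in D^\omega_{\qc}(X)\} \simeq \Zar(X)^\omega$ which on each affine open $\Spec R \subset X$ restricts to the isomorphism of Proposition~\ref{prop principaldual}, $\Loc(R_{f_1},\dots,R_{f_n})\leftrightarrow \bigcup_i D(f_i)$.'' Hypothesis (H0) is precisely Proposition~\ref{prop principaldual} together with the affine case of Proposition~\ref{prop orthOneGen} (which identifies $\Loc(R/I)^\bot = \Loc(R_{f_1},\dots,R_{f_n})$, so the two descriptions of the finite elements agree). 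Hypothesis (H1) follows because the lattice of finite elements of a coherent frame is a sheaf of distributive lattices on $X$ (this is the sheaf-on-a-basis formulation used in Section~\ref{sec strucsheaves}): given a cover $X = X_1\cup X_2$ with intersection $X_{12}$, both the orthogonal-lattice and the quasi-compact-open lattice are glued from their restrictions along the cover, and an isomorphism agreeing on $X_1$, $X_2$ and $X_{12}$ glues; functoriality in Section~\ref{sec functorial} guarantees the restriction maps are the expected ones.

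The main obstacle I anticipate is the gluing in (H1): one must check that the functor $D^\omega_{\qc}(-)$ and the operation ``lattice of finite elements of $\Zar$'' genuinely form sheaves on $X$ with respect to restriction to Zariski opens, so that an isomorphism defined compatibly on an open cover descends. The key input is that a perfect complex $C$ on $X$ is determined, up to cellular equivalence, by its restrictions to the pieces of a finite affine cover, together with the fact that $\supp C$ is local (it is the union of $\supp(C|_{X_i})$). This is exactly the content already exploited in the proofs of Lemma~\ref{lem one} and Lemma~\ref{lem two}, so the descent goes through by the same mechanism; the orthogonality operation, being defined purely in terms of $\Hom$-vanishing, transports along these identifications. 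Once that local-to-global descent is in place, the statement follows formally.
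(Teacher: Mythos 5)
Your first paragraph is exactly the paper's argument: identify $\{\Loc(C)\mid C\in D^\omega_{\qc}(X)\}$ with the finite elements of $\Zar(D^\omega_{\qc}(X))$ via Corollary~\ref{cor LocXTomega}, Lemma~\ref{lem locistensideal} and strong dualizability, apply Theorem~\ref{thm Zar D = Hoch} to identify these with the opposite of the quasi-compact-open lattice, and compose with the order-reversing bijection $\Loc(C)\mapsto\Loc(C)^\bot$. The remaining two paragraphs address a non-issue. The isomorphism of Theorem~\ref{thm Zar D = Hoch} is not assembled from a choice of affine cover: it is the unique morphism of supports $\sqrt{C}\mapsto \supp(C)$ furnished by the universal property of $\Zar(D^\omega_{\qc}(X))$ (Theorem~\ref{thm ZarisInitialTria}), defined globally and independently of any cover; the affine pieces enter only in \emph{verifying} that this canonical map is bijective (Lemmas~\ref{lem one} and~\ref{lem two}). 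Consequently canonicity and compatibility with restriction to affine opens are automatic, and no further Reduction-Principle gluing or sheaf-of-lattices descent is needed at this stage --- that machinery would only duplicate what Theorem~\ref{thm Zar D = Hoch} already provides.
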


To reconstruct the sheaf we proceed again as in the affine case.  The categories
$\Loc(C)^{\bot}$ are localizing as they are the right orthogonal categories to a
compact object (cf. Proposition~\ref{prop dualcatisloc}).  By Lemma~\ref{lem locistensideal}, they are tensor ideals and
we may apply Bousfield localization techniques, see for instance \cite{MR2681709}.  The localization of the tensor unit $\cO_X$ at $\Loc(C)^\bot$,
namely $L_{C^\bot}(\cO_X)$ is the tensor unit in this tensor triangulated
category; its ring of endomorphisms is a commutative ring and we get a presheaf
\begin{eqnarray*}
  \big\{ \Loc(C)^{\bot} \mid C \in D_{\qc}^\omega(X)\big\}\op & \longrightarrow 
  & \kat{Ring}  \\
  \Loc(C)^\bot & \longmapsto & \End_{\Loc(C)^{\bot}}(L_{C^\bot}(\cO_X)) .
\end{eqnarray*}
Sheafification of this presheaf defines the sheaf $\mathcal{E}nd$.

Finally we get the reconstruction theorem, slightly generalizing that proved by
Balmer~\cite{MR1938458}, who  did the special case 
where $X$ is topologically noetherian:

\begin{theorem}
  Under the isomorphism  $\Zar(D_{\qc}^\omega(X))^\vee \simeq \Zar(X)$
  of Theorem~\ref{thm Zar D = Hoch}, 
  the sheaf $\mathcal{E}nd$ is canonically isomorphic to the structure sheaf on 
  $X$.
\end{theorem}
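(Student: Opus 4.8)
The plan is to run the Reduction Principle (Lemma~\ref{prop extprin}) on the property $P(X)$: ``the canonical comparison morphism $\theta_X\colon \mathcal{E}nd_X\to\cO_X$ of sheaves on $X$ is an isomorphism.'' First I would pin down $\theta_X$. A quasi-compact open $j\colon U\hookrightarrow X$ corresponds under Proposition~\ref{thm recSpecSch} to an orthogonal subcategory $\Loc(C)^\bot$, and the restriction functor $j^*\colon D_{\qc}(X)\to D_{\qc}(U)$ is tensor triangulated and preserves arbitrary sums and perfect complexes. Because $\Loc(C)^\bot$ is a tensor ideal (Lemma~\ref{lem locistensideal}), the associated Bousfield localization is smashing by Theorem~\ref{thm tensorloc}, so applying $j^*$ to the localization triangle of $\cO_X$ yields the localization triangle of $\cO_U$ at $\Loc(j^*C)^\bot$; in particular $j^*L_{C^\bot}(\cO_X)\simeq\cO_U$, the tensor unit of that localized category (concretely $L_{C^\bot}(\cO_X)\simeq Rj_*\cO_U$, which on an affine open is the explicit \v{C}ech complex of Theorem~\ref{thm orthfinitgen}). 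Hence $j^*$ induces a ring map $\End(L_{C^\bot}(\cO_X))\to\End(\cO_U)=\cO_X(U)$, and these assemble after sheafification into $\theta_X$; its naturality with respect to open immersions is formal, as it only uses composition of restriction functors.

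Next, hypothesis (H0): for $X=\Spec_Z R$ affine, Proposition~\ref{prop orthOneGen} and Theorem~\ref{thm orthfinitgen} identify the two frame presentations, turning $\Loc(C)^\bot$ into $\Loc(R_{f_1},\dots,R_{f_n})$ and $L_{C^\bot}(R)$ into the \v{C}ech-type complex; on the basis of single localizations $\End_{D(R_f)}(R_f)=R_f$, and Proposition~\ref{thm ZarSheaf} already identifies the resulting sheaf with the structure sheaf of $\Spec_Z R$ in a way compatible with $\theta$, so $P$ holds for affine schemes. For hypothesis (H1), write $X=X_1\cup X_2$ with $X_{12}=X_1\cap X_2$. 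Being an isomorphism of sheaves is local, so it suffices that $\theta_X|_{X_i}$ is an isomorphism for $i=1,2$; by naturality $\theta_X|_{X_i}=\theta_{X_i}$ once one knows that restriction along $X_i\hookrightarrow X$ identifies $\mathcal{E}nd_X|_{X_i}$ with $\mathcal{E}nd_{X_i}$, i.e.\ that the whole chain (right orthogonal, Bousfield localization, endomorphism ring, sheafification) commutes with $j^*$. Thus $P(X_1),P(X_2),P(X_{12})$ imply $P(X)$, and the Reduction Principle gives $P(X)$ for all coherent schemes; the assertion of the theorem then follows, since under the identification of Theorem~\ref{thm Zar D = Hoch} both $\mathcal{E}nd_X$ and $\cO_X$ live on $\Zar(X)$ and $\theta_X$ is the claimed canonical isomorphism.

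The one genuinely delicate step is the compatibility of the Bousfield localization with restriction: that $j^*L_{C^\bot}(\cO_X)\simeq L_{(j^*C)^\bot}(\cO_U)$, equivalently that $j^*$ maps $\Loc(C)^\bot$ into $\Loc(j^*C)^\bot$ compatibly with the topological restriction $\Zar(X)\to\Zar(U)$. Smashing-ness (Theorem~\ref{thm tensorloc}) reduces this to identifying two idempotent $\otimes$-units of $D_{\qc}(U)$, which can be checked on an affine open cover of $X$: there $j^*$ is extension of scalars, the localization units are the explicit complexes built from the $R_f$ of Theorem~\ref{thm orthfinitgen}, and the identity $S_f\otimes_S R=R_{\phi(f)}$ recorded in Section~\ref{sec functorial} makes the comparison transparent. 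Everything else — that $\theta_X$ respects the ring structure, that sheafification commutes with restriction to opens, and that an isomorphism of sheaves may be tested on an open cover — is routine and not an obstacle.
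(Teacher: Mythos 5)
Your proposal is correct and follows essentially the same route as the paper, whose entire proof is the one-line observation that the isomorphism of sheaves can be checked on the sub-basis of affine opens, reducing to Proposition~\ref{thm ZarSheaf}. You simply make explicit what the paper leaves implicit — the construction of the comparison map via restriction functors and the compatibility $j^*L_{C^\bot}(\cO_X)\simeq L_{(j^*C)^\bot}(\cO_U)$ — and these verifications are sound.
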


\begin{proof}
  The isomorphism of sheaves can be checked on the sub-basis of affine open
  subsets, whence we reduce to the case of Proposition \ref{thm ZarSheaf}.
\end{proof}

\subsubsection{The domain sheaf}

An affine scheme $X=\Spec R$ also has a natural sheaf for the Hochster dual 
topology, given by sheafification of the presheaf
 \[
\begin{array}{rcl}
(\fgRadId\op)\op & \longrightarrow & \kat{Ring} \\ 
I & \longmapsto & R/I .
\end{array}
\]
Note that while the usual structure sheaf for the Zariski topology is a 
local-ring object in the petit Zariski topos, the structure sheaf for the Hochster dual 
topology is instead a domain object~\cite[V.4]{Johnstone:Stone-spaces}.
(Or in terms of points: the stalk of this sheaf at a prime $\mathfrak{p}$ is
the domain $R/\mathfrak{p}$.)

Also the domain sheaf of $\Spec_H R$ can be reconstructed from the
derived category $D(R)$, simply by copying over the definition of the sheaf
as sheafification of the presheaf
 \[
\begin{array}{rcl}
\kat{fgCGLoc}(D(R))\op & \longrightarrow & \kat{Ring} \\ 
\Loc(R/I) & \longmapsto & R/I .
\end{array}
\]

In principle this local description can be globalized to account for some
notion of scheme defined as ``ringed space which is locally the domain spectrum
of a commutative ring''.  Having no feeling for this notion, we postpone further 
investigations of  this point.


\section*{Acknowledgment}
It's a pleasure to thank Wojciech Chach\'olski, J\'er\^ome Scherer, Henning
Krause, Fei Xu and Vivek Mallick for many stimulating discussions related to
this work.  We also wish to thank Patrick Brosnan for pointing out a flaw in a
proof in an earlier version of this work, and the anonymous referee for many 
valuable remarks.

\bibliographystyle{plain}\label{biblography}

\end{document}